\newtheorem{theorem}{Theorem}[section]
\newtheorem{lemma}[theorem]{Lemma}
\newtheorem{corollary}[theorem]{Corollary}
\newtheorem{proposition}[theorem]{Proposition}
\theoremstyle{definition}
\newtheorem{definition}[theorem]{Definition}
\newtheorem{example}[theorem]{Example}
\theoremstyle{remark}
\newtheorem{remark}[theorem]{Remark}
\numberwithin{equation}{section}
\begin{document}


\title[On the Bauer-Furuta and Seiberg-Witten invariants]{On the Bauer-Furuta and Seiberg-Witten invariants of families of $4$-manifolds}
\author{David Baraglia, Hokuto Konno}

\address{School of Mathematical Sciences, The University of Adelaide, Adelaide SA 5005, Australia}
\email{david.baraglia@adelaide.edu.au}

\address{Graduate School of Mathematical Sciences, the University of Tokyo, 3-8-1 Komaba, Meguro, Tokyo 153-8914, Japan}
\email{konno@ms.u-tokyo.ac.jp}

\begin{abstract}
We show how the families Seiberg-Witten invariants of a family of smooth $4$-manifolds can be recovered from the families Bauer-Furuta invariant via a cohomological formula. We use this formula to deduce several properties of the families Seiberg-Witten invariants. We give a formula for the Steenrod squares of the families Seiberg-Witten invariants leading to a series of mod $2$ relations between these invariants and the Chern classes of the spin$^c$ index bundle of the family. As a result we discover a new aspect of the ordinary Seiberg-Witten invariants of a $4$-manifold $X$: they obstruct the existence of certain families of $4$-manifolds with fibres diffeomorphic to $X$. As a concrete geometric application, we shall detect a non-smoothable family of $K3$ surfaces. Our formalism also leads to a simple new proof of the families wall crossing formula. Lastly, we introduce $K$-theoretic Seiberg-Witten invariants and give a formula expressing the Chern character of the $K$-theoretic Seiberg-Witten invariants in terms of the cohomological Seiberg-Witten invariants. This leads to new divisibility properties of the families Seiberg-Witten invariants.
\end{abstract}


\date{\today}



\maketitle


\section{Introduction}
 
In \cite{bafu}, Bauer and Furuta constructed a refinement of the Seiberg-Witten invariants of smooth $4$-manifolds taking values in stable cohomotopy. The refined Bauer-Furuta invariant contains strictly more information than the ordinary Seiberg-Witten invariants. Moreover the existence of the refined invariant is useful even if one's primary interest is in the Seiberg-Witten invariants themselves. To understand this point, recall that the usual definition of the Seiberg-Witten invariants involves the construction of a smooth moduli space and one must use perturbations to achieve transversality. However the Bauer-Furuta stable cohomotopy refinement affords us the luxury of working in the setting of algebraic topology so that issues of transversality can be bypassed.

The Seiberg-Witten invariants of smooth $4$-manifolds have been extended to invariants of families of smooth manifolds \cite{rub1, rub3, liu, liliu, na1}. By a family of smooth $4$-manifolds we mean a smooth locally trivial fibre bundle over a smooth base manifold whose fibres are a fixed compact smooth $4$-manifold. Additionally the family is assumed to be equipped with a spin$^c$-structure on the vertical tangent bundle. Naturally one may also ask for a families extension of the Bauer-Furuta invariant. The existence of such an extension can already been seen implicitly in Bauer-Furuta (\cite[Theorem 2.6]{bafu}) and was further developed by Szymik in \cite{szy}. Neither of these works establish how, if at all, one can recover the families Seiberg-Witten invariants from the families Bauer-Furuta invariant. In this paper we answer this question in the affirmative, showing that the families Seiberg-Witten invariants can be recovered from the families Bauer-Furuta invariant and we give an explicit cohomological formula relating the two (Theorem \ref{thm:pdclass}). In subsequent sections of the paper we use our cohomological formula to extract a number of results concerning the families Seiberg-Witten invariants. Specifically:
\begin{itemize}
\item{We compute the Steenrod powers of the families Seiberg-Witten invariants (Section \ref{sec:steenrod}).}
\item{We give a simple new proof of the wall crossing formula of Li and Liu  \cite{liliu} for for families Seiberg-Witten invariants (Section \ref{sec:wallcrossing}).}
\item{We introduce $K$-theoretic families Seiberg-Witten invariants and show how they are related to the usual cohomological families invariants via the Chern character. This leads to certain divisibility properties of the families Seiberg-Witten invariants (Section \ref{sec:ktheory}).}
\end{itemize}

\subsection{Families Seiberg-Witten invariants}\label{sec:introfsw}
To state our main results we need to recall the construction of the families Seiberg-Witten and Bauer-Furuta invariants. The setting is as follows: let $\pi : E \to B$ be a smooth locally trivial fibre bundle over a compact smooth base manifold $B$ whose fibres are a fixed compact smooth $4$-manifold $X$. More precisely, fix a basepoint $b_0 \in B$ and a diffeomorphism $X \cong E_{b_0} = \pi^{-1}(b_0)$ of $X$ with the fibre of $E$ over $b_0$. Assume that the vertical tangent bundle $E$ is equipped with a spin$^c$-structure $\mathfrak{s}_E$ and let $\mathfrak{s}_X$ be the corresponding spin$^c$-structure on $X$ obtained by restriction to the fibre $E_{b_0}$. The data $(\pi : E \to B , \mathfrak{s}_E)$ (or $(E,\mathfrak{s}_E)$ for short) will be referred to as a {\em spin$^c$ family} of $4$-manifolds with fibres diffeomorphic to $(X , \mathfrak{s}_X)$. In this introduction we will consider only the case that $b_1(X)=0$. The necessary modifications to accommodate the case $b_1(X) > 0$ will be explained further in the paper.

Choose a smoothly varying fibrewise metric $g = \{g_b\}_{b\in B}$ on $E$ and a smoothly varying family of $2$-forms $\eta = \{ \eta_b\}_{b\in B}$, where $\eta_b$ is a $g_b$-self-dual $2$-form on the fibre $E_b$. The families Seiberg-Witten invariants will in general depend on a choice of chamber $\phi$ (see Definition \ref{def:chamber}). If $b^+(X) > dim(B)+1$ then there exists a unique chamber, otherwise the Seiberg-Witten invariants will generally depend on $\phi$. Assume that the metric $g$ and $2$-form perturbation $\eta$ are chosen so that $(g,\eta)$ lies in the chamber corresponding to $\phi$. 

Let $\mathcal{M} = \mathcal{M}(E,\mathfrak{s}_E , g , \eta )$ denote the moduli space of gauge equivalence classes of solutions to the Seiberg-Witten equations on the fibres of $E$ with respect to the fibrewise spin$^c$-structure $\mathfrak{s}_E$, the fibrewise metric $g$ and fibrewise perturbation $\eta$. For generic $(g,\eta)$ the moduli space $\mathcal{M}(E,\mathfrak{s}_E , g , \eta)$ is a smooth manifold of dimension $dim(B) + (2d-b^+-1)$, where $d = \frac{c_1(\mathfrak{s}_X)^2 - \sigma(X)}{8}$ and $b^+ = b^+(X)$ is the dimension of the space of harmonic self-dual $2$-forms on $X$. Let $\pi_{\mathcal{M}} : \mathcal{M} \to B$ denote the natural projection to $B$. The gauge theoretical construction of $\mathcal{M}$ determines a complex line bundle $\mathcal{L} \to \mathcal{M}$ as follows. Recall that the gauge group in Seiberg-Witten theory is $\mathcal{G} = Map(X , U(1))$. In the families setting this single group is replaced by a bundle of groups over $B$ and $\mathcal{M}$ is obtained as a fibrewise quotient. Let $\mathcal{G}_0$ be the subgroup of $\mathcal{G}$ given by:
\[
\mathcal{G}_0 = \left\{ g \in \mathcal{G} \; \left| \; g = e^{if}, \; \; \int_X f dvol_X = 0 \right. \right\}.
\]
Since $b_1(X) = 0$, we have that $\mathcal{G}_0/\mathcal{G} \cong S^1$. The definition of $\mathcal{G}_0$ also works in the families setting since the smoothly varying family of metrics $\{ g_b\}$ determines a smoothly varying family of fibrewise volume forms $\{ dvol_{E_b} \}$. Let $\widetilde{\mathcal{M}}$ be the families moduli space obtained by taking the quotient by the reduced gauge group $\mathcal{G}_0$. Then $\widetilde{\mathcal{M}} \to \mathcal{M}$ is a principal circle bundle and we let $\mathcal{L} \to \mathcal{M}$ be the associated line bundle.

Let $H^+ \to B$ be the vector bundle on $B$ whose fibre over $b$ is the space $H^+(E_b,g_b)$ of $g_b$-harmonic self-dual $2$-forms on $E_b$. To keep the introduction simple we will assume here that $H^+$ is oriented. The general case is dealt with in the paper using local systems. In the unparametrised setting, an orientation of the Seiberg-Witten moduli space corresponds to a choice of orientation on $H^+(X)$ (when $b_1(X)=0$). In the families setting this translates to the statement that a relative orientation of $\pi_{\mathcal{M}} : \mathcal{M} \to B$ is determined by a choice of orientation of the bundle $H^+$. In this paper we define the families Seiberg-Witten invariants of $(E , \mathfrak{s}_E)$ (with respect to the chamber $\phi$) to be the collection of cohomology classes $\{ SW_m(E , \mathfrak{s}_E , \phi) \}_{m \ge 0}$, where
\[
SW_m(E , \mathfrak{s}_E , \phi) = (\pi_{\mathcal{M}})_*(  c_1(\mathcal{L})^m ) \in H^{2m-(2d-b^+-1)}(B ; \mathbb{Z}).
\]
In the case that $H^+$ is not orientable, the families Seiberg-Witten invariants are instead valued in a local coefficient system. One needs to check that this definition does not depend on the particular choice of metric and perturbation $(g , \eta)$. The proof is much the same as in the unparametrised setting.

\subsection{Families Bauer-Furuta invariants}

In \cite{bafu} Bauer and Furuta constructed a stable cohomotopy refinement of the Seiberg-Witten invariant by taking a finite dimensional approximation of the Seiberg-Witten equations. This construction was extended to families of $4$-manifolds in \cite{szy}. We will recall how this finite dimensional approximation is constructed in Subsection \ref{sec:fda}. Suppose again that $\pi : E \to B$ is a spin$^c$ family of $4$-manifolds with fibres diffeomorphic to $X$. For simplicity we continue to assume in this introduction that $b_1(X) = 0$. The case $b_1(X)  > 0$ is explained in Section \ref{sec:fm}. The process of taking a finite dimensional approximation of the Seiberg-Witten equations parametrised by $B$ produces an $S^1$-equivariant map of sphere bundles over $B$:
\[
f : S_{V,U} \to S_{V',U'}.
\]
Here $V,V'$ are complex vector bundles over $B$ of ranks $a,a'$ and $U,U'$ are real vector bundles over $B$ of ranks $b,b'$. $S_{V,U}$ denotes the unit sphere bundle of $\mathbb{R}\oplus V \oplus U$, or equivalently the fibrewise one point compactification of $V\oplus U$ and $S_{V',U'}$ is defined similarly. The unit circle $S^1$ acts on $V,V'$ by fibrewise scalar multiplication and acts trivially on $U,U'$. The $S^1$ actions on $V\oplus U$ and $V' \oplus U'$ extend smoothly to the sphere bundles $S_{V,U}, S_{V',U'}$. 

As will be shown in Subsection \ref{sec:fda}, the vector bundles $V,V',U,U'$ satisfy:
\[
V - V' = D \in K^0(B), \quad U' - U = H^+ \in KO^0(B),
\]
where $D \in K^0(B)$ denotes the families index of the families spin$^c$ Dirac operator of $(E , \mathfrak{s}_{E})$ and $H^+$ is the vector bundle over $B$ whose fibres are the space of harmonic self-dual $2$-forms on the corresponding fibres of the family $E \to B$. Therefore
\[
a-a' = d = rank_{\mathbb{C}}(D) = \frac{c_1(\mathfrak{s}_X)^2 - \sigma(X)}{8}, \quad b'-b = b^+ = rank_{\mathbb{R}}(H^+).
\]
Lastly, we will see that the finite dimensional approximation $f$ can be constructed so as to have the property that $U' \cong U \oplus H^+$ and that the restriction $f|_U$ of $f$ to $U$ is given by the inclusion $U \to U'$.

For simplicity we assume in this introduction that the bundle $H^+$ is orientable. In this case $U$ and $U'$ can also be chosen to be orientable. The general case where $H^+$ is not necessarily orientable is dealt with in the body of the paper. By definition, a {\em chamber} of $f$ is a homotopy class of section $\phi : B \to U' \setminus U$, or equivalently a homotopy class of section of $H^+ \setminus \{ 0 \}$. As explained in Subsection \ref{sec:fda}, there is a canonical bijection between chambers for the families Seiberg-Witten equations of $(E , \mathfrak{s}|_E)$ and chambers of the finite dimensional approximation $f$. Thus to any chamber $[\phi]$ represented by a map $\phi : B \to U' \setminus U$, the families Seiberg-Witten invariants are defined: $SW_m(E , \mathfrak{s}_E , [\phi]) \in H^{2m-(2d-b^+-1)}(B ; \mathbb{Z})$. We prove a cohomological formula for these invariants in terms of $f$ and $\phi$. To state this formula we must introduce one additional construction.

Let us define $\widetilde{Y} = S_{V,U} \setminus N_{V,U}$, where $N_{V,U}$ is an $S^1$-invariant tubular neighbourhood of $S_{0,U}$ in $S_{V,U}$. Then $\widetilde{Y}$ is a smooth compact manifold with boundary $\partial \widetilde{Y}$. We also have that $S^1$ acts freely on $\widetilde{Y}$ and the quotient $Y = \widetilde{Y}/S^1$ is a compact manifold with boundary $\partial Y = \partial \widetilde{Y}/S^1$. Then we have isomorphisms of cohomology groups:
\[
H^*_{S^1}(S_{V,U} , S_{0,U} ; \mathbb{Z}) \cong H^*_{S^1}( \widetilde{Y} , \partial \widetilde{Y}) \cong H^*(Y , \partial Y),
\]
where the first isomorphism is excision and the second isomorphism follows since $S^1$ acts freely on $\widetilde{Y}$. Using Poincar\'e-Lefschetz duality we get a push-forward map
\[
(\pi_Y)_* : H^*( Y , \partial Y) \to H^{*-(2a+b-1)}(B ; \mathbb{Z}),
\]
since the fibres of $\pi_Y$ have dimension $2a+b-1$. 

The representative $\phi : B \to U' \setminus U$ of the chamber $[\phi]$ induces a pushforward map
\[
\phi_* : H^*_{S^1}(B ; \mathbb{Z}) \to H^{*+(2a'+b')}_{S^1}( S_{V',U'} , S_{0,U} ; \mathbb{Z})
\]
and the finite dimensional approximation $f$ induces a pullback map
\[
f^* : H^*_{S^1}(S_{V',U'} , S_{0,U} ; \mathbb{Z}) \to H^*_{S^1}(S_{V,U} , S_{0,U} ; \mathbb{Z}).
\]

Now we are ready to state our first main result:
\begin{theorem}\label{thm:cohomologyformula}
Let $(E , \mathfrak{s}_E)$ be a spin$^c$ family over $B$. Let $f : S_{V,U} \to S_{V',U'}$ be a finite dimensional approximation of the Seiberg-Witten monopole map, as described above. Let $[\phi]$ be a chamber of the families Seiberg-Witten equations represented by a section $\phi : B \to U' \setminus U$. Then:
\[
SW_m( E , \mathfrak{s}_E , [\phi]) = (\pi_Y)_*( x^m \smallsmile f^* \phi_*(1) ) \in H^{2m-(2d-b^+-1)}(B ; \mathbb{Z}),
\]
where $x$ is the standard generator of $H^2_{S^1}( pt ; \mathbb{Z})$.
\end{theorem}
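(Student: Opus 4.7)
The strategy is to give both sides of the formula a common geometric interpretation as an integral over the families moduli space $\mathcal{M}$. First, I would replace $\phi$ by an $S^1$-equivariant perturbation within its homotopy class so that $f$ is transverse to $\phi(B) \subset S_{V',U'}\setminus S_{0,U}$. Because both sides are invariant under homotopies of $\phi$ avoiding $S_{0,U}$ and under admissible perturbations of $f$, no information is lost by this. The preimage $Z := f^{-1}(\phi(B))$ is then a smooth compact $S^1$-invariant submanifold of $S_{V,U}\setminus S_{0,U}$ of fibrewise dimension $2d - b^+$, on which $S^1$ acts freely. By the chamber correspondence established in Subsection \ref{sec:fda}, the quotient $\mathcal{Z} := Z/S^1 \subset Y$ is canonically identified, as a family over $B$, with the Seiberg--Witten moduli space $\mathcal{M}$; moreover the principal $S^1$-bundle $Z \to \mathcal{Z}$ matches $\widetilde{\mathcal{M}} \to \mathcal{M}$. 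In particular the generator $x \in H^2_{S^1}(\mathrm{pt}; \mathbb{Z})$, viewed via pullback as a class on $Y$ and then restricted to $\mathcal{Z}$, agrees with $c_1(\mathcal{L})$.

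Second, I would interpret $\phi_*(1) \in H^{2a'+b'}_{S^1}(S_{V',U'}, S_{0,U}; \mathbb{Z})$ as the equivariant Poincar\'e--Lefschetz dual of the embedded submanifold $\phi(B) \subset S_{V',U'}\setminus S_{0,U}$. Naturality of Poincar\'e--Lefschetz duality under the transverse map $f$ then shows that $f^*\phi_*(1)$, pushed through the excision/free-quotient isomorphism $H^*_{S^1}(S_{V,U}, S_{0,U}; \mathbb{Z}) \cong H^*(Y, \partial Y)$, is the Poincar\'e--Lefschetz dual of $\mathcal{Z} \subset Y$. Cupping with $x^m$ and pushing forward along $\pi_Y$ therefore reduces, by the projection formula, to the push-forward along $\pi_{\mathcal{M}}: \mathcal{M} \to B$ of $x^m|_{\mathcal{M}} = c_1(\mathcal{L})^m$, which by definition is $SW_m(E, \mathfrak{s}_E, [\phi])$. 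A quick degree count using $a - a' = d$ and $b' - b = b^+$ confirms that the output lies in $H^{2m - (2d - b^+ - 1)}(B; \mathbb{Z})$, matching the statement.

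The principal obstacle is verifying the identification $\mathcal{Z} \cong \mathcal{M}$ together with the matching of circle bundles: one must check that the finite-dimensional approximation of the monopole map constructed in Subsection \ref{sec:fda} has the property that its zero locus against a chamber-representing section fibrewise reproduces the Seiberg--Witten moduli space, and that the associated $S^1$-bundle on the zero locus agrees with $\widetilde{\mathcal{M}} \to \mathcal{M}$. A secondary difficulty is performing the equivariant perturbation of $\phi$ (or of $f$) while preserving the stabilisation property $U' \cong U \oplus H^+$ and the condition that $f|_U$ is the inclusion, which is what makes the chamber correspondence meaningful; this is standard but requires care. Once these identifications are in place, the formula follows from the naturality of push-forward under transverse pullback and the projection formula.
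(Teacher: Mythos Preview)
Your cohomological argument—perturbing $f$ to be transverse to $\phi(B)$, identifying $f^*\phi_*(1)$ with the Poincar\'e--Lefschetz dual of the quotient $\mathcal{Z}=f^{-1}(\phi(B))/S^1$ inside $(Y,\partial Y)$, and then invoking the projection formula—is essentially the paper's proof of Theorem~\ref{thm:pdclass}, and that part is correct.

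The gap is the claimed identification $\mathcal{Z}\cong\mathcal{M}$. The finite-dimensional approximation $\hat f=\rho\circ f|_{S_{V'}}$ is obtained by composing with the retraction $\rho:S_{\mathbb{W}}\setminus S(W'')\to S_{W'}$, which genuinely alters the solution set; its moduli space is \emph{not} diffeomorphic to the infinite-dimensional Seiberg--Witten moduli space in general, and Subsection~\ref{sec:fda} establishes only the correspondence of chambers, not of moduli. What the paper actually proves (Theorem~\ref{thm:swequality}) is that the two moduli spaces are \emph{cobordant} as triples $(M,\pi_M,\psi_M)$, and only once the approximation parameter $\epsilon$ is taken sufficiently small. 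Building this cobordism occupies all of Subsection~\ref{sec:equalitysw}: one passes through a chain $M_1\sim M_2\sim M_4$, using a contraction-mapping argument (Lemma~\ref{lem:contraction}) to write the infinite-dimensional solutions as graphs over the finite-dimensional subspace $V'$, together with uniform norm estimates (Lemmas~\ref{lem:notsmall}, \ref{lem:notsmall2}) ensuring that the interpolating homotopies never hit the chamber representative on the boundary sphere. This is the substantive analytic input, not a verification. Since the Seiberg--Witten invariants of a triple are cobordism invariants (Lemma~\ref{lem:cobordisminvariance}), the cobordism gives $SW_m(E,\mathfrak{s}_E,[\phi])=SW_m(\hat f,\hat\eta)$, after which your argument (equivalently Theorem~\ref{thm:pdclass}) finishes the proof.
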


Theorem \ref{thm:cohomologyformula} is a special case of the theorem that we prove in the paper (Theorem \ref{thm:pdclass}). Our general result applies also to families of $4$-manifolds with $b_1(X) > 0$ and without any orientability assumption on $H^+$.

\subsection{Applications}

The remainder of the paper is concerned with applications of our main formula, Theorem \ref{thm:cohomologyformula}, which we will now discuss.

Assume once again that $E \to B$ is a spin$^c$ family of $4$-manifolds with fibres diffeomorphic to $X$ and for simplicity we continue to assume that $b_1(X) = 0$. Recall that $D \in K^0(B)$ denote the virtual index bundle of the family of spin$^c$ Dirac operators determined by $E$ and that $H^+ \in KO^0(B)$ denotes the bundle of harmonic self-dual $2$-forms on the fibres of $E \to B$. Let $c(D) = 1 + c_1(D) + c_2(D) + \cdots $ denote the total Chern class of $D$. For each $j \ge 0$, define the {\em $j$-th Segre class} $s_j(D) \in H^{2j}(B ; \mathbb{Z})$ of $D$ as follows. Letting $s(D) = 1 + s_1(D) + s_2(D) + \cdots $ denote the total Segre class of $D$, then $s(D)$ is defined by the equation $c(D)s(D) = 1$.

Let $\phi$ denote a chamber for the families Seiberg-Witten equations of the family $(E , \mathfrak{s}_E)$. Consider the mod $2$ reductions of the families Seiberg-Witten invariants:
\[
SW_m^{\mathbb{Z}_2}( E , \mathfrak{s}_E , \phi ) \in H^{2m-(2d-b^+-1)}(B ; \mathbb{Z}_2).
\]
Then as a consequence of Theorem \ref{thm:cohomologyformula}, we compute the Steenrod squares of the mod $2$ families Seiberg-Witten invariants:

\begin{theorem}
The Steenrod squares of the mod $2$ families Seiberg-Witten invariants are given by:
\begin{equation*}
\begin{aligned}
& Sq^{2j}( SW_m^{\mathbb{Z}_2}(E,\mathfrak{s}_E,\phi) ) = \\
& \quad \quad \sum_{l=0}^j \sum_{k=0}^{j-l} \binom{d-1-m+l+k}{l} s_k(D) w_{2j-2l-2k}(H^+) SW_{m+l}^{\mathbb{Z}_2}(E,\mathfrak{s}_E , \phi), \\
& Sq^{2j+1}( SW_m^{\mathbb{Z}_2}(E,\mathfrak{s}_E,\phi) ) = \\
& \quad \quad \sum_{l=0}^j \sum_{k=0}^{j-l} \binom{d-1-m+l+k}{l} s_k(D) w_{2j-2l-2k+1}(H^+) SW_{m+l}^{\mathbb{Z}_2}(E,\mathfrak{s}_E , \phi).
\end{aligned}
\end{equation*}
for all $m,j \ge 0$.
\end{theorem}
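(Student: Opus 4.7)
The plan is to apply the total mod $2$ Steenrod square $Sq = \sum_{k\ge 0} Sq^k$ to both sides of the mod $2$ reduction of Theorem~\ref{thm:cohomologyformula},
\[
SW_m^{\mathbb{Z}_2}(E,\mathfrak{s}_E,\phi) = (\pi_Y)_*\bigl(x^m \smallsmile f^*\phi_*(1)\bigr),
\]
and to manipulate the right-hand side using three standard ingredients: naturality of $Sq$ under $f^*$, the Cartan formula, and the Wu/Riemann--Roch formula $Sq\circ g_* = g_*\bigl(w(T_g)^{-1} \smallsmile Sq(\,\cdot\,)\bigr)$ for a proper map $g$ with virtual relative tangent bundle $T_g$.

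Two geometric computations feed into this. First, $Y$ fibres over the complex projectivization $\mathbb{P}_B(V)$ with fibre a closed disk inside $\mathbb{R}\oplus U$, so stably in $K(Y)$ one has $T^v Y = V\otimes \mathcal{L} + U - \underline{\mathbb{R}}$, where $\mathcal{L}$ is the complex line bundle with $c_1(\mathcal{L})=x$ associated to the principal $S^1$-bundle $\widetilde{Y}\to Y$. Consequently $w(T^vY) \equiv c(V\otimes\mathcal{L})\cdot w(U) \pmod 2$, with $c(V\otimes\mathcal{L})=\sum_{j}c_j(V)(1+x)^{a-j}$. Second, the section $\phi:B\to S_{V',U'}$ lands in the $S^1$-fixed locus, and its $S^1$-equivariant normal bundle is $V'\oplus U'$ with standard rotation on $V'$ and trivial action on $U'$. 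The equivariant Wu formula for an embedding, combined with the projection formula $\phi_*(\phi^*\gamma) = \phi_*(1)\smallsmile \gamma$, yields $Sq(\phi_*(1)) = \phi_*(1)\smallsmile \pi^*\bigl(c(V'\otimes\mathcal{L})\cdot w(U')\bigr)$ mod $2$. Finally, since $Sq(x) = x(1+x)$, I have $Sq(x^m) = x^m(1+x)^m$.

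Assembling these via Cartan and the projection formula for $(\pi_Y)_*$, and using the cancellations $c(V'\otimes\mathcal{L})/c(V\otimes\mathcal{L}) = s(D\otimes\mathcal{L})$ with $D = V-V'$ and $w(U')/w(U)=w(H^+)$, I obtain
\[
Sq(SW_m^{\mathbb{Z}_2}) = w(H^+)\smallsmile (\pi_Y)_*\bigl( s(D\otimes\mathcal{L})\cdot x^m(1+x)^m\cdot f^*\phi_*(1)\bigr).
\]
The splitting-principle identity $s(D\otimes\mathcal{L}) = \sum_{k\ge 0} s_k(D)(1+x)^{-d-k}$ (from writing $D$ formally as a sum of $d$ line bundles) combines with the $(1+x)^m$ factor to give $\sum_k s_k(D)(1+x)^{m-d-k}$; expanding $(1+x)^{m-d-k}$ in powers of $x$ contributes binomial coefficients $\binom{m-d-k}{l}\equiv \binom{d-1-m+k+l}{l}\pmod 2$, and each resulting $(\pi_Y)_*(x^{m+l}f^*\phi_*(1))$ equals $SW_{m+l}^{\mathbb{Z}_2}$ by definition. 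Separating out the degree-$(2j-2l-2k)$ or degree-$(2j-2l-2k+1)$ component of $w(H^+)$ then yields the two stated formulas. The main subtlety is the correct $S^1$-equivariant accounting of the $\mathcal{L}$-twists in $T^vY$ and in $N_\phi$; once these are in place, the Segre classes $s_k(D)$ and the binomial coefficients emerge by formal manipulation.
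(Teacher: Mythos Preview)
Your argument is correct and takes a more direct route than the paper's. The paper stabilises so that $V,U$ are trivial, then applies $Sq$ to the relation $\eta\cdot\delta\tau_{0,U}=f^*\phi_*(1)$ using only the Thom formula $Sq(\tau_E)=w(E)\tau_E$; this produces a \emph{recursion} among the $Sq^{2j}(SW_m)$ (Lemma~\ref{lem:steenrodrecursive}), which is then solved by guessing the closed form and verifying it through a chain of mod~$2$ binomial identities (Lemmas~\ref{lem:recur0}--\ref{lem:recurf2}, including the nontrivial Lemma~\ref{lem:vzero}). You instead invoke the Wu relation $Sq\circ(\pi_Y)_*=(\pi_Y)_*\bigl(w(T^vY)^{-1}\smallsmile Sq(\cdot)\bigr)$ directly: the ratio $w^{S^1}(V'\oplus U')/w^{S^1}(V\oplus U)$ collapses to $s(D\otimes\mathcal{L})\,w(H^+)$, and the splitting-principle expansion $s(D\otimes\mathcal{L})=\sum_k s_k(D)(1+x)^{-d-k}$ delivers the binomial coefficients in closed form with no recursion at all. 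Your approach is conceptually cleaner, makes the dependence on $D$ and $H^+$ transparent, and works without first trivialising $V,U$; the paper's approach keeps its inputs more elementary (only the Thom-class formula, never the full Wu relation for a pushforward) at the cost of the combinatorics. Both routes share the computation $Sq(\phi_*(1))=w^{S^1}(V'\oplus U')\,\phi_*(1)$, which is exactly the content of the paper's Lemma~\ref{lem:steenrodeta}. One point worth making explicit in your write-up is the Wu formula for the relative pushforward $(\pi_Y)_*:H^*(Y,\partial Y;\mathbb{Z}_2)\to H^*(B;\mathbb{Z}_2)$: this is most cleanly justified via the factorisation through $\mathbb{P}_B(V)$ implicit in Proposition~\ref{prop:pushforwardsegre}, where it reduces to the standard Wu relation for the closed fibre bundle $\mathbb{P}_B(V)\to B$ (with vertical tangent bundle stably $V\otimes\mathcal{L}$), the extra factor $w(U)$ coming from $Sq(\delta\tau_{0,U})=w(U)\,\delta\tau_{0,U}$.
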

These formulas imply some surprising mod $2$ relations between the families Seiberg-Witten invariants and the characteristic classes of $D$, $H^+$. For example, in the case of $Sq^2$, we get:
\[
Sq^2( SW_m^{\mathbb{Z}_2}(E , \mathfrak{s}_E , \phi) ) = (d+m)SW_{m+1}^{\mathbb{Z}_2}( E , \mathfrak{s}_E , \phi) + (c_1(D) + w_2(H^+))SW_m^{\mathbb{Z}_2}(E , \mathfrak{s}_E , \phi).
\]
Suppose that $b^+ = 2p+1$ is odd and set $m = d-p-1$. Then $SW_m^{\mathbb{Z}_b}( E , \mathfrak{s}_E , \phi) \in H^0(B ; \mathbb{Z}_2) \cong \mathbb{Z}_2$ is the mod $2$ reduction of the ordinary Seiberg-Witten invariant $SW(X , \mathfrak{s}_X)$ of the $4$-manifold $X$ with respect to the spin$^c$-structure $\mathfrak{s}|_X = \mathfrak{s}_E |_X$. In this case the above equation reduces to:
\[
(p+1) SW_{m+1}^{\mathbb{Z}_2}(E , \mathfrak{s}_E , \phi) = (c_1(D) + w_2(H^+)) SW(X , \mathfrak{s}_X)  \in H^2(B ; \mathbb{Z}_2).
\]
If $p$ is even, this gives a formula for $SW_{m+1}^{\mathbb{Z}_2}(E , \mathfrak{s}_E , \phi)$ in terms of $SW(X , \mathfrak{s}_X)$, $c_1(D)$ and $w_2(H^+)$. On the other hand if $p$ is odd (so $b^+ = 3 \; (mod \; 4)$), this says that $(c_1(D) + w_2(H^+))SW(X , \mathfrak{s}_X) = 0 \in H^2(B ; \mathbb{Z}_2)$, which leads to the following interesting consequence:
\begin{corollary}\label{cor:w2}
Let $(X , \mathfrak{s}_X)$ be a compact smooth spin$^c$ $4$-manifold with $b_1(X) = 0$ and $b^+(X) = 3 \; ({\rm mod} \; 4)$. Then:
\begin{itemize}
\item{If there exists a spin$^c$ family $E \to B$ with fibre $(X , \mathfrak{s}_X)$ and $c_1(D) \neq w_2(H^+) \; ({\rm mod} \; 2)$, then $SW(X , \mathfrak{s}_X)$ is even.}
\item{If $SW(X , \mathfrak{s}_X)$ is odd, then $c_1(D) = w_2(H^+) \; ({\rm mod} \; 2)$ for any spin$^c$ family $E \to B$ with fibre $(X , \mathfrak{s}_X)$.}
\end{itemize}
\end{corollary}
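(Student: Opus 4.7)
The proposal is to read the corollary directly off of the Steenrod-square identity already displayed above its statement, so almost no new work is required.

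The plan is to start from the specialization
\[
(p+1)\, SW_{m+1}^{\mathbb{Z}_2}(E,\mathfrak{s}_E,\phi) = (c_1(D)+w_2(H^+))\, SW(X,\mathfrak{s}_X) \in H^2(B;\mathbb{Z}_2),
\]
which has already been derived from the $Sq^2$ formula under the hypotheses $b_1(X)=0$, $b^+=2p+1$, and $m=d-p-1$, using that $SW_m^{\mathbb{Z}_2}$ in degree $0$ recovers the mod $2$ reduction of the ordinary Seiberg--Witten invariant. The congruence $b^+(X)\equiv 3\pmod 4$ is exactly the statement that $p$ is odd, so $p+1$ is even, and the left-hand side therefore vanishes in $H^2(B;\mathbb{Z}_2)$. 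This gives the key identity
\[
(c_1(D)+w_2(H^+))\, SW(X,\mathfrak{s}_X) = 0 \in H^2(B;\mathbb{Z}_2).
\]

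For the first bullet, I would argue by contrapositive: if $SW(X,\mathfrak{s}_X)$ is odd, then its mod $2$ reduction is the unit $1 \in H^0(B;\mathbb{Z}_2)$, and the displayed identity forces $c_1(D)+w_2(H^+)=0$ in $H^2(B;\mathbb{Z}_2)$, i.e.\ $c_1(D)\equiv w_2(H^+)\pmod 2$; so if instead $c_1(D)\neq w_2(H^+)\pmod 2$ for some family, $SW(X,\mathfrak{s}_X)$ must be even. The second bullet is the same contrapositive read directly: whenever $SW(X,\mathfrak{s}_X)$ is odd, the identity above, applied to \emph{any} spin$^c$ family with fibre $(X,\mathfrak{s}_X)$, yields $c_1(D)\equiv w_2(H^+)\pmod 2$.

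There is no substantive obstacle beyond what has already been established: all the content is in the Steenrod square computation (which the paper reduces to Theorem \ref{thm:cohomologyformula}), and the corollary itself is a one-line parity manipulation on top of that. The only minor care point is to make sure the specialization $m=d-p-1$ places $SW_m^{\mathbb{Z}_2}$ in $H^0(B;\mathbb{Z}_2)$ and that multiplication by this degree-$0$ class is the usual scalar action, so that ``odd'' really does mean ``equal to $1$'' in $H^0(B;\mathbb{Z}_2)$; both are immediate from the definitions recalled in Subsection \ref{sec:introfsw}.
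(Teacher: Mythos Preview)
Your proposal is correct and matches the paper's approach exactly: the corollary is stated immediately after the identity $(c_1(D)+w_2(H^+))SW(X,\mathfrak{s}_X)=0$ is derived from the $Sq^2$ formula in the case $p$ odd, and no further argument is given in the paper beyond this parity observation and the contrapositive you spell out.
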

In particular, we see that the mod $2$ Seiberg-Witten invariants of $X$ can be used to obstruct the existence of certain spin$^c$ families with fibres diffeomorphic to $X$. We give an application of this result to $K3$ surfaces:

\begin{theorem}
\label{theo: family of K3 over torus intro}
There exists a continuous family $E$ of $K3$ surfaces over the two torus $T^2$
satisfying the following conditions:
\begin{itemize}
\item The total space of $E$ is smoothable as a manifold.
\item The restriction of $E$ to any $1$-dimensional submanifold of $T^2$ is smoothable as a family.
\item However, $E$ is not smoothable as a family.
\end{itemize}
\end{theorem}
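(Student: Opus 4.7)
The strategy is to apply Corollary \ref{cor:w2} to $K3$. Since $b_1(K3)=0$, $b^+(K3)=3\equiv 3\pmod 4$, and $SW(K3,\mathfrak{s}_0)=1$ is odd for the canonical spin$^c$-structure $\mathfrak{s}_0$, that corollary asserts that every \emph{smooth} spin$^c$ family $E\to B$ with fiber $(K3,\mathfrak{s}_0)$ satisfies $c_1(D)\equiv w_2(H^+)\pmod 2$ in $H^2(B;\mathbb{Z}/2)$. It therefore suffices to exhibit a continuous family $E\to T^2$ of $K3$'s for which $c_1(D)+w_2(H^+)\ne 0$ in $H^2(T^2;\mathbb{Z}/2)\cong\mathbb{Z}/2$, while simultaneously having the total space and every $1$-dimensional restriction smoothable.

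I would construct $E$ by clutching on the CW decomposition $T^2=(S^1\vee S^1)\cup e^2$. Pick commuting smooth diffeomorphisms $f_1,f_2$ of $K3$ as the monodromies around the two circle factors, so that the family is smoothly defined over the $1$-skeleton; the $2$-cell is then attached using a loop $\gamma$ representing a class in $\pi_1(\operatorname{Homeo}(K3))$ that is \emph{not} in the image of $\pi_1(\operatorname{Diff}(K3))$. Such a $\gamma$ is expected from the nontriviality of the homotopy fiber of $B\operatorname{Diff}(K3)\to B\operatorname{Homeo}(K3)$, and concretely can be sought among loops of Freedman-type topological self-homeomorphisms. Because the topological index bundle $D\in K^0(T^2)$ and the bundle $H^+\to T^2$ of maximal positive subspaces of the fiberwise intersection form depend only on the continuous family and its topological spin$^c$-structure, the classes $c_1(D)$ and $w_2(H^+)$ are well-defined regardless of smoothability. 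One would choose $\gamma$ so that a direct computation from the clutching data gives $c_1(D)+w_2(H^+)\ne 0$, whereupon Corollary \ref{cor:w2} rules out smoothness as a family.

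For the remaining two conditions: any compact $1$-dimensional submanifold of $T^2$ is isotopic into the $1$-skeleton, where the family is smooth by construction, so its restriction is a smoothable family. For smoothability of the $6$-dimensional total space, Kirby--Siebenmann theory reduces the question to vanishing of the single class $ks(E)\in H^4(E;\mathbb{Z}/2)$; the loop $\gamma$ should be arranged so that, although it carries nontrivial bundle-theoretic information into the pair $(D,H^+)$, it is trivial at the level of the underlying topological $6$-manifold --- for instance by being null-homotopic in $\operatorname{Homeo}(K3)$ after forgetting the fibration structure --- so that $ks(E)=0$.

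The principal obstacle is the simultaneous achievement of these competing properties by a single concrete $\gamma$: it must act nontrivially on the topological index data $(c_1(D),w_2(H^+))$ to obstruct smoothness as a family, yet must leave the ambient $6$-manifold smoothable and the $1$-skeleton restrictions smooth. Exhibiting such a $\gamma$, most likely via a topologically exotic self-homeomorphism loop of $K3$ combined with an explicit families index-theoretic computation of its effect on $D$ and on $H^+$, is the technical heart of the proof.
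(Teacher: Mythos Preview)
Your proposal has a genuine gap: you never construct the family. The crucial loop $\gamma\in\pi_1(\operatorname{Homeo}(K3))$ with the required properties is only ``expected'' and ``sought,'' and the claim that it can be arranged to make $c_1(D)+w_2(H^+)\ne 0$ while leaving the total $6$-manifold smoothable is asserted without any mechanism. Moreover, your invocation of a ``topological index bundle $D$'' for a merely continuous family is problematic: the Dirac index is only defined once the family is smooth, so $c_1(D)$ cannot be computed in advance to obstruct smoothability. Finally, the argument for the second bullet is wrong as stated: an embedded essential circle of slope $(p,q)$ with $p,q\ne 0$ is \emph{not} isotopic into the $1$-skeleton $S^1\vee S^1$ of $T^2$, so smoothness over the $1$-skeleton does not directly give smoothability of every $1$-dimensional restriction.

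The paper's construction avoids all of these difficulties by working the other way around. It uses the topological decomposition $K3\cong 2(-E_8)\#3(S^2\times S^2)$ and builds two \emph{explicitly commuting} homeomorphisms $f_1,f_2$ of $K3$ (each a connected sum of the identity with sign-reversal on two of the three $S^2\times S^2$ summands), taking $E$ to be their mapping torus over $T^2$; no auxiliary loop $\gamma$ is needed. The obstruction is then purely $w_2(H^+)$: since the $f_i$ preserve a spin structure, the family carries a families topological spin structure, and the paper observes (Proposition~\ref{prop:obstruction}) that if $E$ were smoothable then the Dirac index bundle would be quaternionic, forcing $c_1(D)=0$ and hence $w_2(H^+)=0$ by Corollary~\ref{cor:w2}. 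A direct computation of the action of $f_1,f_2$ on $H^+(K3)$ gives $w_2(H^+)=xy\ne 0$. For the $1$-dimensional restrictions, the point is not an isotopy into the $1$-skeleton but rather that the monodromy around \emph{any} embedded circle is $f_1^a f_2^b$, which preserves the orientation of $H^+$ and hence is topologically isotopic to a diffeomorphism by a theorem of Matumoto; this handles all slopes at once. Smoothability of the total space is taken from \cite{kkn}.
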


\begin{remark} Here are three remarks on Theorem~\ref{theo: family of K3 over torus intro}.
\begin{enumerate}
\item The precise meaning of smoothablity as a family will be given in Definition~\ref{defi: smoothable as families}.
\item For comparison, see Theorem~1.4 of \cite{kkn}, in which other non-smoothable families have been detected by a different technique based on 10/8-type inequalities.
\item The proofs that $E$ satisfies the first and second conditions are based on \cite{kkn} and  \cite{mat} respectively. To prove that $E$ satisfies the third condition, we shall use Corollary~\ref{cor:w2}.
\end{enumerate}
\end{remark}

Theorem~\ref{theo: family of K3 over torus intro} immediately implies that the fundamental group of the homotopy fiber of the natural map $BDiff(K3) \to BHomeo(K3)$ is non-trivial.
Recall that this homotopy fiber is homotopy equivalent to the homotopy quotient 
\[
Homeo(K3)\sslash Diff(K3) := \left(EDiff(K3) \times Homeo(K3)\right)/Diff(K3).
\]
Therefore we have:

\begin{corollary}
\label{cor: family of K3 over torus intro}
$\pi_{1}\left(Homeo(K3)\sslash Diff(K3)\right) \neq 0$.
\end{corollary}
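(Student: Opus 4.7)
The plan is to reinterpret Theorem~\ref{theo: family of K3 over torus intro} as the non-vanishing of a primary obstruction in $\pi_{1}$ of the homotopy fibre $F$ of the natural map $p : BDiff(K3) \to BHomeo(K3)$. Recall that a continuous $K3$-bundle over a base $B$ is classified (up to fibrewise isomorphism) by a homotopy class of maps $B \to BHomeo(K3)$, and the bundle is smoothable as a family if and only if this classifying map lifts, up to homotopy, along $p$. Since $F \simeq Homeo(K3) \sslash Diff(K3)$, it will suffice to show $\pi_{1}(F) \neq 0$.

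Equip $T^{2}$ with the standard CW-structure: a single $0$-cell $b_{0}$, two $1$-cells realised as smoothly embedded circles $C_{1}, C_{2} \subset T^{2}$ meeting transversely only at $b_{0}$ and representing generators of $\pi_{1}(T^{2})$, and a single $2$-cell attached along the commutator. Let $f : T^{2} \to BHomeo(K3)$ classify the family $E$ of Theorem~\ref{theo: family of K3 over torus intro}. Since each $C_{i}$ is a $1$-dimensional submanifold of $T^{2}$, the hypothesis of that theorem gives that $f|_{C_{i}}$ lifts along $p$; equivalently, $[f|_{C_{i}}] \in \pi_{1}(BHomeo(K3))$ lies in the image of $p_{\ast}$. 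Choosing a basepoint lift of $f(b_{0})$ and pointed representatives of the lifts on each $C_{i}$, one assembles a lift $\tilde{f}^{(1)} : T^{(1)} \to BDiff(K3)$ of $f|_{T^{(1)}}$.

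Suppose, for contradiction, that $\pi_{1}(F) = 0$. By standard cellular obstruction theory for lifts through a fibration, the obstruction to extending $\tilde{f}^{(1)}$ across the unique $2$-cell of $T^{2}$ is an element of $\pi_{1}(F)$, constructed from the loop $\tilde{f}^{(1)} \circ \partial$ in $BDiff(K3)$ together with the null-homotopy of $p \circ \tilde{f}^{(1)} \circ \partial = f \circ \partial$ supplied by the fact that $f$ is defined on all of $T^{2}$. Under the standing assumption this obstruction vanishes automatically, so $\tilde{f}^{(1)}$ extends to a global lift $\tilde{f} : T^{2} \to BDiff(K3)$ of $f$. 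But this exhibits $E$ as smoothable as a family, contradicting Theorem~\ref{theo: family of K3 over torus intro}. Hence $\pi_{1}(F) \neq 0$.

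The step requiring the most care is the construction of the $1$-skeleton lift: one must verify that the individual lifts on $C_{1}$ and $C_{2}$ can be arranged to agree on the common basepoint $b_{0}$, which amounts to the vanishing of a $\pi_{0}(F)$-valued obstruction on each $1$-cell. This is exactly what the hypothesis on $1$-dimensional submanifolds supplies, once one fixes a basepoint lift and adjusts within the fibre; the remainder of the argument is then a direct invocation of the primary $\pi_{1}(F)$-valued obstruction on the $2$-cell, which is all that survives because $\dim T^{2} = 2$.
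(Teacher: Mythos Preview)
Your argument is exactly the obstruction-theoretic unpacking of the paper's one-line claim, and the overall strategy is correct. There is, however, one step that is not fully justified: the assertion that the lifts over $C_1$ and $C_2$ can be made to agree at $b_0$ by ``adjusting within the fibre''. Unbased liftability of $f|_{C_i}$ only tells you that $[f|_{C_i}]$ lies in \emph{some} conjugate of $\mathrm{im}\bigl(p_*\colon \pi_1(BDiff,\tilde b_0)\to\pi_1(BHomeo)\bigr)$; for a based lift at a \emph{common} $\tilde b_0$ you need $[f|_{C_i}]$ to lie in $\mathrm{im}(p_*)$ itself. Your phrase ``adjusts within the fibre'' only moves a lift's basepoint within its path-component of $F$, and since $\pi_0(F)\cong\mathbb{Z}/2$ here this is not automatically enough.

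The fix is short and specific to $K3$: the image of $\pi_0(Diff(K3))\to\pi_0(Homeo(K3))$ has index $2$ (as recalled in \S\ref{sec:k3}), hence is normal, so all its conjugates coincide and unbased liftability of each loop upgrades to based liftability at any fixed $\tilde b_0$. Alternatively, one may simply note from the explicit construction that the monodromies $f_1,f_2$ preserve orientation on $H^+$ and are therefore topologically isotopic to diffeomorphisms for the standard smooth structure, which produces the $1$-skeleton lift directly. With either observation in place, your $\pi_1(F)$-obstruction argument on the $2$-cell goes through and yields the corollary.
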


\begin{remark} Here are two remarks on Corollary~\ref{cor: family of K3 over torus intro}.
\begin{enumerate}
\item By the long exact sequence of homotopy groups it follows that $\pi_0( Diff(K3)) \to \pi_0(Homeo(K3))$ is not injective or $\pi_1( Diff(K3)) \to \pi_1(Homeo(K3))$ is not surjective. Building upon this result, we have shown in subsequent work that $\pi_1( Diff(K3)) \to \pi_1(Homeo(K3))$ is not surjective \cite{bako}. It follows that there exists a topological family of $K3$ surfaces over the $2$-sphere which can not be made into a smooth family. To the best of our knowledge, this is the first known example of a $4$-manifold $M$ where $\pi_1( Diff(M) ) \to \pi_1(Homeo(M))$ is not surjective.
\item It is interesting to compare Corollary~\ref{cor: family of K3 over torus intro} with Theorem~1.9 of \cite{kkn}, which states that
\[
\pi_{1}\left(Homeo(K3\#S^{2}\times S^{2})\sslash Diff(K3\#S^{2}\times S^{2})\right) \neq 0.
\]
\end{enumerate}
\end{remark}

Our next application of Theorem \ref{thm:cohomologyformula} is to give a simple new proof of the wall crossing formula for the families Seiebrg-Witten invariants. The families wall crossing formula was originally proven in \cite{liliu} using parametrised Kuranishi models and obstruction bundles. Using the technique of finite dimensional approximations of the Seiberg-Witten equations allows us to bypass these technicalities. Here we will state the result for spin$^c$ families $E \to B$ with fibres diffeomorphic to a $4$-manifold with $b_1(X) = 0$, however our general result also covers the case that $b_1(X) > 0$.

\begin{theorem}
Let $(E , \mathfrak{s}_E)$ be a spin$^c$ family with fibres diffeomorphic to a $4$-manifold $X$ with $b_1(X) = 0$. Let $D$ denote the virtual index bundle of the family of spin$^c$ Dirac operators determined by $E$ and set $d = rank_{\mathbb{C}}(D)$. Let $\phi, \psi$ be chambers of the families Seiberg-Witten equations for $(E , \mathfrak{s}_E)$. Then:
\[
SW_m(E , \mathfrak{s}_E,\phi)-SW_m(E , \mathfrak{s}_E,\psi) = \begin{cases} 0 & \text{if} \; \; m < d-1, \\ Obs(\phi,\psi) \smallsmile s_{m-(d-1)}(D) & \text{if} \; \; m \ge d-1, \end{cases}
\]
where $Obs(\phi , \psi) \in H^{b^+(X)-1}(B ; \mathbb{Z}_{w_1(H^+)})$ is the primary difference class of $\phi,\psi$ (see \textsection \ref{sec:relativeobstruction}) and $s_j(D)$ is the $j$-th Segre class of $D$.
\end{theorem}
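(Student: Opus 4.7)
The plan is to apply Theorem~\ref{thm:cohomologyformula} to both chambers, form the difference, and reduce the problem to a topological computation in the $S^1$-equivariant cohomology of the sphere bundles arising from the finite dimensional approximation. By the linearity of the constructions in Theorem~\ref{thm:cohomologyformula},
\[
SW_m(E,\mathfrak{s}_E,\phi) - SW_m(E,\mathfrak{s}_E,\psi) = (\pi_Y)_*\bigl(x^m \smallsmile f^*(\phi_*(1) - \psi_*(1))\bigr),
\]
so the entire task reduces to identifying the difference class $\phi_*(1) - \psi_*(1) \in H^{2a'+b'}_{S^1}(S_{V',U'},S_{0,U})$ in a useful form and then computing the effect of $f^*$ and $(\pi_Y)_*$.

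The first main step is to identify $\phi_*(1) - \psi_*(1)$ with the primary difference class times the Thom class of $V'$. Since $U' = U \oplus H^+$, the chambers $\phi$ and $\psi$ are equivalent to nowhere-zero sections of $H^+$. Taking the straight-line homotopy $h(b,t) = (1-t)\psi(b) + t\phi(b)$, the preimage $Z := h^{-1}(U) \subset B \times [0,1]$ is a codimension-$b^+$ submanifold disjoint from the boundary, and its Poincar\'e dual, pushed through the connecting isomorphism $H^{b^+}(B \times [0,1], \partial) \cong H^{b^+-1}(B;\mathbb{Z}_{w_1(H^+)})$, is by definition $Obs(\phi,\psi)$. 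Interpreting the Gysin pushforwards $\phi_*(1)$ and $\psi_*(1)$ inside the equivariant Thom space of $V' \oplus H^+ \cong (V' \oplus U')/U$ then yields the identity
\[
\phi_*(1) - \psi_*(1) \;=\; \tau_{V'} \smallsmile \pi_B^* Obs(\phi,\psi),
\]
where $\tau_{V'}$ is the $S^1$-equivariant Thom class of $V'$ pulled back to this Thom space, and $\pi_B$ denotes the projection to $B$.

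The second main step exploits the defining property $f|_U = \mathrm{incl} : U \hookrightarrow U'$ of the finite dimensional approximation. This forces $f^*\tau_{V'}$ to equal $\tau_V$ multiplied by the formal $S^1$-equivariant Euler class of $-D = V' - V$, which over $H^*_{S^1}(B) = H^*(B)[x]$ has the expansion $e^{S^1}(-D) = \sum_{j \ge 0} s_j(D)\, x^{-d-j}$. Substituting, the class $x^m \smallsmile f^*(\phi_*(1)-\psi_*(1))$ becomes $\tau_V \smallsmile \pi_B^* Obs(\phi,\psi) \cdot \sum_j s_j(D)\, x^{m-d-j}$. The pushforward $(\pi_Y)_*$ now amounts to projective-bundle integration along $S(V)/S^1 \cong \mathbb{P}(V)$, which extracts precisely the coefficient of $x^{-1}$ in the resulting $x$-series. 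This gives $0$ when $m < d-1$, and $Obs(\phi,\psi) \smallsmile s_{m-(d-1)}(D)$ when $m \ge d-1$, as required.

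The main obstacle will be the identification in the second paragraph: making rigorous the identity $\phi_*(1) - \psi_*(1) = \tau_{V'} \smallsmile \pi_B^* Obs(\phi,\psi)$ with the correct twisted coefficient system $\mathbb{Z}_{w_1(H^+)}$. This requires careful bookkeeping with $S^1$-equivariant Poincar\'e--Lefschetz duality in the non-orientable case, including tracking orientation data through the straight-line homotopy and its effect on the relevant Thom isomorphisms.
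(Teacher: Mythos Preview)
Your overall strategy---apply the cohomological formula to both chambers and compute the difference $\phi_*(1)-\psi_*(1)$---is exactly the paper's approach. The first step is essentially correct: after stabilising so that $V'$ is trivial, the paper factors $\phi$ and $\psi$ through $S(H^+)\xrightarrow{\iota}S_{V',U'}$ and shows $\phi_*(1)-\psi_*(1)=\theta(\phi,\psi)\cdot\iota_*(1)$ with $\theta(\phi,\psi)=Obs(\phi,\psi)$, which is your identification up to naming.

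The gap is in your second step. Your claim that ``$f|_U=\mathrm{incl}$ forces $f^*\tau_{V'}=\tau_V\cdot e^{S^1}(-D)$'' is not justified and, as written, is not well-posed: the expression $e^{S^1}(-D)=\sum_{j\ge 0}s_j(D)\,x^{-d-j}$ involves negative powers of $x$, which do not exist in $H^*_{S^1}(B;\mathbb{Z})=H^*(B;\mathbb{Z})[x]$, and your description of $(\pi_Y)_*$ as ``extracting the coefficient of $x^{-1}$'' is a formal heuristic, not an identity in the cohomology ring. More seriously, knowing $f|_U$ tells you nothing about $f^*$ on a genuine Thom class of $V'$, since the monopole map is nonlinear away from the fixed locus---that nonlinearity is precisely what encodes the Seiberg--Witten invariants.

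What actually makes the argument work (and what the paper does) is much simpler than a localisation-style manipulation. One computes $\iota_*(1)=e_{V'}\cdot\delta\tau_{0,U}=x^{a'}\delta\tau_{0,U}$ in $H^*_{S^1}(S_{V',U'},S_{0,U};\mathbb{Z})$, where $\delta$ is the coboundary of the triple $(S_{V',U'},S_{0,U},B_{V',U'})$. The crucial point is then that $f^*(\delta\tau_{0,U})=\delta\tau_{0,U}$: this follows from naturality of the long exact sequence together with the fact that $f$ restricted to $S_{0,U}$ is the identity. So $f^*$ acts \emph{trivially} on the difference class---no formal inverse Euler classes are needed. Finally, $(\pi_Y)_*(x^{m+a'}\delta\tau_{0,U})=s_{m+a'-(a-1)}(V)=s_{m-(d-1)}(D)$ by the explicit pushforward computation in the paper (Proposition~\ref{prop:pushforwardsegre}), which is a genuine identity in integral cohomology.

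You also misidentify the main obstacle: the twisted-coefficient bookkeeping in step one is routine, whereas making step two rigorous without inverting $x$ is where the actual content lies.
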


In Subsection \ref{sec:uwcf}, we show how to recover the wall crossing formula for the ordinary Seiberg-Witten invariant of a $4$-manifold $X$ with $b^+(X)=1$ and $b_1(X)$ even as a special case of the families wall crossing formula. Note that even in this case the families formalism is still relevant: the parameter space of the family is the Jacobian torus $\mathbb{T} = H^1(X ; \mathbb{R}/\mathbb{Z})$.

Our final application concerns certain divisibility conditions of the families Seiberg-Witten invariants that arise from $K$-theoretic considerations. Recall from Subsection \ref{sec:introfsw} that the families Seiberg-Witten invariants of a spin$^c$ family $(E , \mathfrak{s}_E)$ over $B$ with fibres diffeomorphic to $X$ are certain cohomology classes $SW_m(E , \mathfrak{s}_E , \phi)$ valued in the cohomology of $B$:
\begin{equation}\label{equ:defsw}
SW_m(E , \mathfrak{s}_E , \phi) = (\pi_{\mathcal{M}})_*(  c_1(\mathcal{L})^m ) \in H^{2m-(2d-b^+-1)}(B ; \mathbb{Z}),
\end{equation}
where for simplicity we are assuming in this introduction that $b_1(X)=0$ and $H^+$ is orientable. Here $\mathcal{M}$ is the families Seiberg-Witten moduli space and $\pi_{\mathcal{M}} : \mathcal{M} \to B$ is the natural map to $B$. In Section \ref{sec:ktheory}, we introduce a new set of invariants of the family $(E , \mathfrak{s}_E)$, the {\em $K$-theoretic Seiberg-Witten invariants} $SW^K_m(E,\mathfrak{s}_E , \phi) \in K^{b^+-1}(B)$. To define them, assume that $H^+$ admits a spin$^c$-structure. Then one can show that $\pi_\mathcal{M} : \mathcal{M} \to B$ is relatively spin$^c$ (see Remark \ref{rem:kty}) and imitate (\ref{equ:defsw}) in $K$-theory:
\[
SW^K_m(E , \mathfrak{s}_E , \phi) = (\pi_{\mathcal{M}})^K_*( \mathcal{L}^m) \in K^{b^+-1}(B),
\]
where $(\pi_{\mathcal{M}})^K_*$ denotes the $K$-theoretic pushforward map induced by $\pi_\mathcal{M}$. The proof of Theorem \ref{thm:cohomologyformula} carries over to the $K$-theoretic setting, and thus we have:

\begin{theorem}
Let $(E , \mathfrak{s}_E)$ be a spin$^c$ family over $B$. Let $f : S_{V,U} \to S_{V',U'}$ be a finite dimensional approximation of the Seiberg-Witten monopole map, as described above. Let $[\phi]$ be a chamber of the families Seiberg-Witten equations represented by a section $\phi : B \to U' \setminus U$. Then:
\[
SW_m^K( E , \mathfrak{s}_E , \phi) = (\pi_Y)^K_*( \xi^m \smallsmile f^* \phi^K_*(1) ),
\]
where $\xi \in K^0_{S^1}(pt) = R[S^1]$ corresponds to the standard $1$-dimensional representation of $S^1$.
\end{theorem}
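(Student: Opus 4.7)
The plan is to rerun the proof of Theorem~\ref{thm:cohomologyformula} (i.e., Theorem~\ref{thm:pdclass}) verbatim, but in equivariant and non-equivariant $K$-theory in place of ordinary cohomology. Every ingredient of that argument has a $K$-theoretic analog once enough spin$^c$-orientation data is in hand: the generator $\xi \in R[S^1] = K^0_{S^1}(pt)$ corresponding to the standard character replaces the Euler class $x \in H^2_{S^1}(pt)$; the complex structures on $V, V'$ together with the hypothesized spin$^c$-structure on $H^+$ supply the Thom classes that define all the relevant pushforwards; and Poincar\'e--Lefschetz duality is available in $K$-theory for compact relatively spin$^c$ fibrations.

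The first step is to check orientability. Since $V$ is complex and $U' \cong U \oplus H^+$, a spin$^c$-structure on $H^+$ promotes $U$ and $U'$ to spin$^c$ bundles over $B$. This makes $\pi_Y : Y \to B$ a compact fiber bundle with relatively spin$^c$ fibers of dimension $2a+b-1$, so the $K$-theoretic Gysin map
\[
(\pi_Y)^K_* : K^*(Y, \partial Y) \to K^{*}(B)
\]
is defined (with a mod $2$ degree shift). The same orientation data makes $\pi_{\mathcal{M}} : \mathcal{M} \to B$ relatively spin$^c$, so that $(\pi_{\mathcal{M}})^K_*$ in the definition of $SW^K_m$ is well-defined. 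Next I would establish the $K$-theoretic version of the excision isomorphism
\[
K^*_{S^1}(S_{V,U}, S_{0,U}) \xrightarrow{\cong} K^*_{S^1}(\widetilde{Y}, \partial \widetilde{Y}) \xrightarrow{\cong} K^*(Y, \partial Y),
\]
where the first map is excision and the second identifies equivariant $K$-theory with ordinary $K$-theory of the free quotient. The pushforward $\phi^K_* : K^*_{S^1}(B) \to K^*_{S^1}(S_{V',U'}, S_{0,U})$ is constructed from the Thom isomorphisms for $V'$ and $H^+$ exactly as in the cohomological case, and the pullback $f^*$ is purely functorial.

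With these tools, the identification of the families Seiberg-Witten class as a Poincar\'e dual pulled back from the finite-dimensional approximation proceeds in the same way as in the proof of Theorem~\ref{thm:pdclass}, with the factor $c_1(\mathcal{L})^m$ replaced throughout by $\mathcal{L}^m$ and with the generator $x$ replaced by $\xi$. The $S^1$-line bundle $\mathcal{L}$ pulls back to the restriction of $\xi$ under the identification of $\mathcal{M}$ (more precisely its image in $Y$) with the relevant locus cut out by $f$ and $\phi$, so $\xi^m \smallsmile f^*\phi^K_*(1)$ restricts to $\mathcal{L}^m$ and pushes forward via $(\pi_Y)^K_*$ to $(\pi_{\mathcal{M}})^K_*(\mathcal{L}^m)$.

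The main obstacle, as in the cohomological proof, is the bookkeeping of orientations and Thom classes: one must check that the spin$^c$-structures used to define each $K$-theoretic Thom class and each pushforward are mutually compatible; in particular, the spin$^c$-structure induced on the fiber of $\pi_{\mathcal{M}}$ by those on $V$ and $H^+$ must be the one used to define $SW^K_m$, and the multiplicativity $\tau_{V \oplus W} = \tau_V \cdot \tau_W$ of $K$-theoretic Thom classes must be applied in the order that correctly transports the $\xi^m$ factor through excision and $f^*$. Once this bookkeeping is settled, the argument is a formal translation of the cohomological proof, so I would not expect any genuinely new analytical or topological difficulties.
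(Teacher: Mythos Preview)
Your proposal is correct and matches the paper's approach exactly: the paper simply asserts that ``the proof of Theorem~\ref{thm:cohomologyformula} carries over to the $K$-theoretic setting'' and, in the remark following the definition of $SW^K_m$, that the geometric and cohomological expressions agree ``arguing in much the same way as we did for the cohomological invariants.'' Your proposal is precisely this translation, with the appropriate attention to the spin$^c$ orientation data needed for the $K$-theoretic pushforwards.
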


Then by an application of Grothendieck-Riemann-Roch, we obtain a formula relating the Chern character of the $K$-theoretic Seiberg-Witten invariants to the cohomological Seiberg-Witten invariants. To state the result, we introduce the following notation: for a complex vector bundle $V$ over $B$, let $Td^{S^1}(V) \in H^*_{S^1}(B ; \mathbb{Q}) \cong H^*(B ; \mathbb{Q}) \otimes H^*_{S^1}(pt ; \mathbb{Q})$ denote the $S^1$-equivariant Todd class of $V$, where $S^1$ acts on $V$ by scalar multiplication. This can be expanded as a formal power series in $x$, where $x$ is the standard generator of $H^2_{S^1}(pt ; \mathbb{Z})$:
\[
Td^{S^1}(V) = \sum_{j \ge 0} Td_j(V) x^j,
\]
for some characteristic classes $Td_j(V) \in H^{ev}(B ; \mathbb{Q})$.

\begin{theorem}
Let $\kappa \in H^2(B ; \mathbb{Z})$ be the first Chern class of the spin$^c$ line bundle associated to $H^+$. Then the $K$-theoretic and cohomological Seiberg-Witten invariants are related by:
\begin{equation*}
\begin{aligned}
& Ch( SW_m^K(E , \mathfrak{s}_E,\phi)) = \\
& \quad \quad e^{-\kappa/2}\hat{A}(H^+)^{-1} \sum_{j \ge 0} Td_j(D) \sum_{k \ge 0} \frac{m^k}{k!}SW_{j+k}(E , \mathfrak{s}_E,\phi) \in H^*(B ; \mathbb{Q})
\end{aligned}
\end{equation*}
\end{theorem}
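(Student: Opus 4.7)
\emph{Plan.} I would apply Grothendieck-Riemann-Roch (GRR) to the $K$-theoretic pushforward formula of the preceding theorem, $SW_m^K = (\pi_Y)^K_*(\xi^m\smallsmile f^*\phi^K_*(1))$, and compare with the cohomological formula $SW_j = (\pi_Y)_*(x^j\smallsmile f^*\phi_*(1))$ of Theorem~\ref{thm:cohomologyformula}. Under the isomorphism $K^*_{S^1}(S_{V,U},S_{0,U})\cong K^*(Y,\partial Y)$, the equivariant generator $\xi$ descends to the line bundle $L$ associated to $\widetilde{Y}\to Y$; writing $\ell = c_1(L)$, one has $Ch(\xi^m) = e^{m\ell}$, and the $S^1$-equivariant Todd classes of $V, V'$ descend under the substitution $x\mapsto \ell$.

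The first step is to determine the spin$^c$-Todd factor for $\pi_Y$. Because $\widetilde{Y}$ is an open subset of the unit sphere bundle of $\mathbb{R}\oplus V\oplus U$ on which $S^1$ acts freely, $\pi_Y^*T_{\pi_Y} \simeq V\oplus U - \mathbb{R}$ as $S^1$-equivariant virtual bundles. The spin$^c$ structure is induced by the complex structure on $V$ and by the spin$^c$ structure on $U$ coming from $U'\cong U\oplus H^+$ and the assumed spin$^c$ structures on $U', H^+$. Multiplicativity of $e^{c_1(\mathfrak{s})/2}\hat{A}$, together with $V = V'+D$ (as $S^1$-equivariant virtual bundles), $c_1(\mathfrak{s}_U) = c_1(\mathfrak{s}_{U'})-\kappa$ and $\hat{A}(U) = \hat{A}(U')\hat{A}(H^+)^{-1}$, gives
\[
e^{c_1(\mathfrak{s})/2}\hat{A}(T_{\pi_Y}) = Td^{S^1}(D)\cdot e^{-\kappa/2}\hat{A}(H^+)^{-1}\cdot Td^{S^1}(V')\cdot e^{c_1(\mathfrak{s}_{U'})/2}\hat{A}(U').
\]

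The second step absorbs the last two factors via GRR for $\phi^K_*$. The section $\phi: B\to S_{V',U'}$ is a spin$^c$ embedding whose normal bundle is $V'\oplus U'$ pulled back from $B$, so GRR for spin$^c$ embeddings gives $Ch(\phi^K_*(1)) = \phi_*\bigl(Td^{S^1}(V')^{-1}\cdot e^{-c_1(\mathfrak{s}_{U'})/2}\hat{A}(U')^{-1}\bigr)$. The projection formula, combined with the fact that $f$ covers $\mathrm{id}_B$, yields the identity $Ch(f^*\phi^K_*(1))\cdot Td^{S^1}(V')\cdot e^{c_1(\mathfrak{s}_{U'})/2}\hat{A}(U') = f^*\phi_*(1)$ on $Y$. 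Substituting both computations into GRR for $(\pi_Y)^K_*$ and pulling the $B$-pulled-back factor $e^{-\kappa/2}\hat{A}(H^+)^{-1}$ through $(\pi_Y)_*$ via the projection formula gives
\[
Ch(SW_m^K) = e^{-\kappa/2}\hat{A}(H^+)^{-1}\cdot (\pi_Y)_*\bigl(e^{m\ell}\cdot Td^{S^1}(D)\cdot f^*\phi_*(1)\bigr).
\]
Expanding $e^{m\ell} = \sum_k (m^k/k!)\,\ell^k$ and $Td^{S^1}(D) = \sum_j Td_j(D)\,\ell^j$ on $Y$ (under $x\mapsto\ell$), a final application of the projection formula and the cohomological identity $(\pi_Y)_*(\ell^{j+k}\cdot f^*\phi_*(1)) = SW_{j+k}$ from Theorem~\ref{thm:cohomologyformula} produces the claimed formula.

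The main obstacle is the bookkeeping in step one: correctly identifying $T_{\pi_Y}$ together with its $S^1$-equivariant spin$^c$ structure, and checking that the various Todd/$\hat{A}$ factors split into a ``$D$ part'' giving $Td^{S^1}(D)$, an ``$H^+$ part'' giving $e^{-\kappa/2}\hat{A}(H^+)^{-1}$, and a ``$V'\oplus U'$ part'' exactly annihilated by the GRR correction for $\phi^K_*$. A secondary subtlety is the translation between $S^1$-equivariant classes on $\widetilde{Y}$ and their descents to $Y$ under $x\mapsto\ell$, and the verification that both applications of GRR remain valid in the relative, equivariant, compact-with-boundary setting.
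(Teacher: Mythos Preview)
Your proposal is correct and follows essentially the same route as the paper's proof: apply Grothendieck--Riemann--Roch to $(\pi_Y)^K_*$, identify the relative Todd factor via the stable equivalence $T_{\pi_Y}\simeq V\oplus U$, use GRR for the section $\phi$ to cancel the $Td^{S^1}(V')\,e^{c_1(\mathfrak{s}_{U'})/2}\hat{A}(U')$ piece, combine $Td^{S^1}(V)/Td^{S^1}(V')=Td^{S^1}(D)$ and $e^{\kappa_U/2}\hat{A}(U)\cdot e^{-\kappa_{U'}/2}\hat{A}(U')^{-1}=e^{-\kappa/2}\hat{A}(H^+)^{-1}$, and finish by expanding $e^{mx}$ and $Td^{S^1}(D)$ in powers of $x$ and invoking the cohomological formula. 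The only difference is cosmetic: the paper carries out the splitting $Td^{S^1}(V)=Td^{S^1}(D)\,Td^{S^1}(V')$ after applying GRR to $\phi$, whereas you do it beforehand.
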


This formula can be used to extract certain divisibility properties of the families Seiberg-Witten invariants. For example, we prove the following:

\begin{corollary}
Let $(E , \mathfrak{s}_E)$ be as above and suppose that $B = S^{2r}$ is an even dimensional sphere with $r \ge 5$. Suppose that fibres of $E$ are diffeomorphic to a $4$-manifold $X$ with $b_1(X)=0$ and $b^+(X) = 2p+1$ odd. Suppose that $n = r+d-p-1 \ge 0$. Then $SW_{n}(E,\mathfrak{s}_E,\phi) \in H^{2r}(S^{2r} ; \mathbb{Z}) \cong \mathbb{Z}$ is divisible by the denominators of $a_{p-r,l}$, for $l = 0,1, \dots n$, where the rational numbers $a_{p,l}$ are defined to be the coefficients of the Taylor expansion:
\[
log(1-y)^p = \sum_{l=0}^\infty a_{p,l} y^{p+l}.
\]
\end{corollary}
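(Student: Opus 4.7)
The plan is to apply the Chern character formula from the preceding theorem and exploit the integrality of the Chern character on $S^{2r}$. Since $b^{+}-1 = 2p$ is even, Bott periodicity identifies $K^{2p}(S^{2r}) \cong K^{0}(S^{2r}) \cong \mathbb{Z}^{2}$, and the Chern character sends $K^{0}(S^{2r})$ onto the integer lattice $H^{0}(S^{2r}; \mathbb{Z}) \oplus H^{2r}(S^{2r}; \mathbb{Z})$ inside $H^{\text{ev}}(S^{2r}; \mathbb{Q})$. Consequently $(Ch(SW^{K}_{m}))_{2r} \in \mathbb{Z}$ for every integer $m \geq 0$.

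On $S^{2r}$ with $r \geq 2$ one has $H^{2}(S^{2r}) = 0$, so $\kappa = 0$ and $e^{-\kappa/2} = 1$. By dimension the only cohomological Seiberg-Witten invariants that can be nonzero are $\Sigma := SW_{n-r}(E,\mathfrak{s}_E,\phi) \in H^{0}$ (the ordinary invariant of the fibre) and $T := SW_{n}(E,\mathfrak{s}_E,\phi) \in H^{2r}$, both regarded as integers. The degree-$2r$ component of the Chern character formula then takes the form
\[
F(m) := (Ch(SW^{K}_{m}))_{2r} = T\cdot P_{n}(m) + \Sigma \cdot Q_{n}(m) \in \mathbb{Z}
\]
for all $m \geq 0$, where $P_{n}(m) = [z^{n}](e^{mz} g(z)^{d})$ with $g(z) = z/(1-e^{-z})$, and $Q_{n}(m)$ is a polynomial of degree $n-r$ in $m$ whose coefficients come from the degree-$2r$ parts of $Td^{S^{1}}(D)$ and $\hat{A}(H^{+})^{-1}$.

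The key computation is a residue identity for $P_{n}(m)$. Under the substitution $w = 1-e^{-z}$ one has $g(z) = -\log(1-w)/w$, $e^{mz} = (1-w)^{-m}$, and $dz = dw/(1-w)$; a direct contour manipulation yields
\[
P_{n}(m) = (-1)^{d+n+1}\sum_{l=0}^{n} a_{p-r,l}\, \binom{m+n-l}{n-l},
\]
using that $d-n-1 = p-r$. Since $\{\binom{m+k}{k}\}_{k \geq 0}$ is an integer-valued basis of polynomials of degree $\leq n$, integrality of $F(m)$ for all nonnegative integers $m$ is equivalent to each coefficient in this basis being an integer.

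For $k > n-r$, equivalently $l := n-k < r$, the polynomial $Q_{n}$ of degree $n-r$ contributes nothing, so the coefficient equals $T \cdot (-1)^{d+n+1} a_{p-r,l}$ and integrality immediately gives $T \cdot a_{p-r,l} \in \mathbb{Z}$ for $l = 0,1,\ldots,r-1$. For $l = r,\ldots,n$ the coefficient receives a contribution from $\Sigma \cdot Q_{n}$; here I would combine the (automatic) integrality of $(Ch(SW^{K}_{m}))_{0} = \Sigma \cdot P_{n-r}(m)$ coming from the virtual rank with the explicit expansion
\[
Td^{S^{1}}(D) = g(z)^{d} + c_{r}(D)\cdot \frac{(-1)^{r-1}}{(r-1)!}\, g(z)^{d}(\log g)^{(r)}(z)
\]
valid on $S^{2r}$, which follows from $c_{i}(D) = 0$ for $0 < i < r$ and $c_{r}(D)^{2} = 0$. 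The main obstacle is this final step: matching the denominator structure of $Q_{n}$ to the pattern $a_{p-r,l}$ via the same $w = 1 - e^{-z}$ substitution applied to the integrand $g(z)^{d}(\log g)^{(r)}(z)$, and exploiting $r \geq 5$ to control the remaining characteristic-class contributions from $c_{r}(D)$ and (when $r$ is even) $p_{r/2}(H^{+})$.
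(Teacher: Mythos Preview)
Your setup is exactly the paper's: integrality of the Chern character on $S^{2r}$ applied to the formula for $Ch(SW^K_m)$, followed by the residue substitution $w = 1-e^{-z}$ and the difference-operator trick on the basis $\binom{m+l}{l}$. The gap is that you never use the hypothesis $r \ge 5$, and this is precisely what eliminates the troublesome $Q_n$ term.

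Two observations you are missing. First, since $\pi_1(S^{2r}) = 0$, the flat bundle $R^2\pi_*\mathbb{R}$ over $B$ is trivial, and hence so is $H^+$ (choose a constant maximal positive subspace). Thus not only $\kappa = 0$ but $\hat{A}(H^+) = 1$ identically; there is no $p_{r/2}(H^+)$ to worry about. Second, and this is the real content of $r \ge 5$: the families index theorem gives $Ch(D) = \pi_*\bigl(e^{c(\mathfrak{s}_E)/2}\hat{A}(T_{E/B})\bigr)$, and a degree count on the $4$-dimensional fibre forces the degree-$2r$ part of $Ch(D)$ to vanish once $r \ge 5$. On $S^{2r}$ the only possibly nonzero Chern class of $D$ is $c_r(D)$, so this yields $c_r(D) = 0$, hence all Chern classes of $D$ vanish and $Td^{S^1}(D) = g(z)^d$ exactly.

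With both simplifications in hand, the degree-$2r$ part of $Ch(SW^K_m)$ receives contributions only from $SW_n \in H^{2r}$: the term $\Sigma = SW_{n-r}$ lives in $H^0$ and is multiplied only by rational scalars, so it contributes nothing in degree $2r$. In your notation $Q_n \equiv 0$, and you already have
\[
(Ch(SW^K_m))_{2r} = (-1)^{d-n-1}\sum_{l=0}^{n} a_{p-r,\,n-l}\binom{m+l}{l}\, SW_n(f,\phi) \in \mathbb{Z}
\]
for all $m \ge 0$. Your difference-operator argument then gives $a_{p-r,l}\cdot SW_n \in \mathbb{Z}$ for every $l = 0,\ldots,n$, not just $l < r$. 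The attempt to control $Q_n$ via the expansion $g(z)^d(\log g)^{(r)}(z)$ is unnecessary and, as you suspected, does not close the argument on its own.
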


\subsection{Structure of paper}
A brief outline of the contents of this paper is as follows. In Subsection \ref{sec:familiesmonopole} we introduce the notion an infinite dimensional families monopole map, of which the families Seiberg-Witten monopole map is a special case. We define chambers and families Seiberg-Witten invariants for such maps. In Subsection \ref{sec:fsw} we introduce the notion of a finite dimensional monopole map and define chambers and Seiberg-Witten invariants for them. In Subsection \ref{sec:fda} we show that any infinite dimensional families monopole map has a finite dimensional approximation and in Subsection \ref{sec:equalitysw} we prove that the families Seiberg-Witten invariants are preserved under the process of taking a finite dimensional approximation (provided the approximation is ``sufficiently large"). In Section \ref{sec:cohom} we prove our main formula which expresses the families Seiberg-Witten invariants of a finite dimensional monopole map in terms of purely cohomological operations. The remaining sections of the paper are concerned with applications of the cohomological formula. In Section \ref{sec:steenrod} we compute the Steenrod powers of the families Seiberg-Witten invariants and in Subsection \ref{sec:k3} we give an application of these results to $K3$ surfaces. In Section \ref{sec:wallcrossing} we give a new proof of the wall crossing formula for the families Seiberg-Witten invariants. In Section \ref{sec:ktheory}, we introduce the $K$-theoretic families Seiberg-Witten invariants and compute their Chern character in terms of the cohomological Seiberg-Witten invariants. We also prove a wall crossing formula for these invariants.\\

\noindent{\bf Acknowledgments}. From Nobuo Iida, the second author heard Mikio Furuta's argument to relate the Seiberg-Witten moduli space to the corresponding moduli space of a finite dimensional approximation of the monopole map in the unparameterised setting. This helped the authors to consider its families version. 
After the version 1 of this paper had appeared on arXiv, the second author uploaded the paper \cite{kkn} with Tsuyoshi Kato and Nobuhiro Nakamura on arXiv.
The work on \cite{kkn} and a comment by Nakamura on the version 1 of this paper prompted the authors to conceive the idea of Theorem~\ref{theo: family of K3 over torus intro} and Corollary~\ref{cor: family of K3 over torus intro}, which are added after \cite{kkn} appeared on arXiv.
The authors are grateful to Kato and Nakamura for these.
D. Baraglia was financially supported by the Australian Research Council Discovery Early Career Researcher Award DE160100024 and Australian Research Council Discovery Project DP170101054. H. Konno was supported by JSPS KAKENHI Grant Number 16J05569 and the Program for Leading Graduate Schools, MEXT, Japan.

\section{The families monopole map and finite dimensional approximations}\label{sec:fm}

We now turn to the construction of the families monopole map and its finite dimensional approximations. While the primary motivation for this construction is the Bauer-Furuta and Seiberg-Witten invariants for a family of smooth $4$-manifolds, the construction holds in a more general setting which we outline below.

\subsection{Families monopole map and families Seiberg-Witten invariants}\label{sec:familiesmonopole}

In this subsection we introduce a general notion of a families monopole map and its associated families Seiberg-Witten invariants. In the case of the families Seiberg-Witten monopole map of a spin$^c$ family of $4$-manifolds, we recover the notion of the families Seiberg-Witten invariants. In the following subsections we will show how to construct a finite dimensional approximation of the families monopole map and recover the families Seiberg-Witten invariants from the approximation.

The general setting for the families monopole map is as follows. Let $B$ be a compact smooth manifold. Let $\mathbb{V}, \mathbb{W} \to B$ be infinite dimensional separable Hilbert space bundles, each a direct sum of a real and a complex Hilbert space bundle of infinite dimension:
\[
\mathbb{V} = \mathbb{V}_{\mathbb{C}} \oplus \mathbb{V}_{\mathbb{R}}, \quad \mathbb{W} = \mathbb{W}_{\mathbb{C}} \oplus \mathbb{W}_{\mathbb{R}}
\]
where $\mathbb{V}_{\mathbb{C}}, \mathbb{W}_{\mathbb{C}}$ are complex infinite dimensional Hilbert bundles and $\mathbb{V}_{\mathbb{R}}, \mathbb{W}_{\mathbb{R}}$ are real infinite dimensional Hilbert bundles.

Let $S^1$ act on $\mathbb{V}_{\mathbb{C}}, \mathbb{W}_{\mathbb{C}}$ by scalar multiplication and act trivially on $\mathbb{V}_{\mathbb{R}}, \mathbb{W}_{\mathbb{R}}$. Let $f : \mathbb{V} \to \mathbb{W}$ be an $S^1$-equivariant bundle map. We will say that $f$ is a {\em families monopole map} (or {\em monopole map} for short) if it satisfies the following conditions:
\begin{itemize}
\item[(M1)]{$f = l + c$, where $l : \mathbb{V} \to \mathbb{W}$ is an $S^1$-equivariant fibrewise linear Fredholm map and $c : \mathbb{V} \to \mathbb{W}$ is an $S^1$-equivariant family of compact operators.}
\item[(M2)]{$l$ and $c$ are both smooth as maps between Hilbert manifolds (hence $f$ itself is smooth)}
\item[(M3)]{$f$ satisfies a boundedness property: the preimage under $f$ of a disc bundle in $\mathbb{W}$ is contained in a disc bundle in $\mathbb{V}$.}
\item[(M4)]{By $S^1$-equivariance we can write $l = l_{\mathbb{C}} \oplus l_{\mathbb{R}}$, where $l_{\mathbb{C}} : \mathbb{V}_{\mathbb{C}} \to \mathbb{W}_{\mathbb{C}}$, $l_{\mathbb{R}} : \mathbb{V}_{\mathbb{R}} \to \mathbb{W}_{\mathbb{R}}$. We assume that $l_{\mathbb{R}}$ is injective.}
\item[(M5)]{We assume that $c(v) = 0$ for all $v \in \mathbb{V}_{\mathbb{R}}$.}
\end{itemize}

\begin{example}\label{ex:b1=0}
The following example is the main motivation for the above notion of a monopole map. Suppose we have a family $\pi : E \to B$ of $4$-manifolds over $B$ with fibres diffeomorphic to $X$. Here we discuss the simple case that $b_1(X) = 0$. The case $b_1(X)>0$ will be addressed in Example \ref{ex:b1>0}. Assume that the vertical tangent bundle $E$ is equipped with a spin$^c$-structure $\mathfrak{s}_E$. Choose a smoothly varying fibrewise metric $g = \{g_b\}_{b\in B}$ on $E$. Fix a smoothly varying family of $U(1)$-connections $A = \{ A_b \}_{b \in B}$ for the determinant line of the spin$^c$-structure (for instance one could choose a globally defined connection on the total space of $E$ and define $A_b$ as the restriction of this connection to the fibre over $b$). Fix an integer $k >2$. We define the following Hilbert bundles over $B$:
\begin{equation*}
\begin{aligned}
&\mathbb{V}_{\mathbb{C}} = L^2_k( S^+), &&\quad \mathbb{V}_{\mathbb{R}} = L^2_k(\wedge^1 T^*X), \\
&\mathbb{W}_{\mathbb{C}} = L^2_{k-1}(S^-), &&\quad \mathbb{W}_{\mathbb{R}} = L^2_{k-1}(\wedge^+ T^*X) \oplus L^2_{k-1}(\mathbb{R})_0
\end{aligned}
\end{equation*}
where $S^\pm$ denote the spinor bundles, $L^2_k(E)$ denotes the Sobolev space of $L^2_k$ sections of $E$ and $L^2_{k-1}(\mathbb{R})_0$ denotes the subspace of sections $f \in L^2_{k-1}(\mathbb{R})$ satisfying $\int_X f dvol_X = 0$.

We define the families Seiberg-Witten monopole map $f : \mathbb{V} \to \mathbb{W}$ to be given by
\[
f( \psi , a ) = ( D_{A+ia} \psi , -iF^+_{A+ia} +i \sigma(\psi) + iF^+_A , d^*a)
\]
where $D_{A+ia}$ denotes the spin$^c$ Dirac operator associated to $A+ia$ and $\sigma(\psi)$ is the quadratic spinor term in the Seiberg-Witten equations. The second term of $f$ is usually defined to be $-iF^+_{A+ia} + i\sigma(\psi)$, but we have added a harmless term $iF^+_A$ which merely shifts the level sets of $f$. The reason for including this extra term is twofold: first it makes $f$ satisfy (M5). Secondly it makes it easier to describe the chamber structure of the families Seiberg-Witten invariants associated to $f$. Note that $f$ can be re-written as:
\[
f(\psi , a) = (D_{A+ia} \psi , d^+a +i\sigma(\psi) , d^*a).
\]
Then $f =  l + c$, where
\[
l(\psi , a ) = ( D_A \psi , d^+a , d^*a), \quad c(\psi , a) = (\frac{i}{2} a \cdot \psi , i\sigma(\psi) , 0).
\]
So $l_{\mathbb{C}}(\psi) = D_A \psi$, $l_{\mathbb{R}}(a) = (d^+a , d^*a)$. It follows that $l_{\mathbb{R}}$ is injective since we assumed that $b_1(X) = 0$. So $f$ satisfies (M4). Clearly $c(0,a) = 0$, so $f$ also satisfies (M5). Conditions (M1), (M2) are clearly satisfied as well. Condition $(M3)$ was proven in \cite{bafu} for a single $4$-manifold. The same estimates easily extend to the case of a smoothly varying family over a compact base. Hence the families Seiberg-Witten monopole map satisfies conditions (M1)-(M5).
\end{example}

\begin{definition}
Let $f : \mathbb{V} \to \mathbb{W}$ be a monopole map. A {\em chamber} for $f$ is a homotopy class of section $\eta : B \to \mathbb{W}_{\mathbb{R}} \setminus l_{\mathbb{R}}(\mathbb{V}_{\mathbb{R}})$. We let $\mathcal{CH}(f)$ denote the set of chambers for $f$ (note that $\mathcal{CH}(f)$ could be empty).
\end{definition}

Let $f$ be a monopole map. Recall that $l_{\mathbb{R}}$ is Fredholm and injective by (M4). Hence $W_{0,\mathbb{R}} = Ker( l_{\mathbb{R}}^*)$ is a finite rank subbundle of $\mathbb{W}_{\mathbb{R}}$ which can be identified with $coker(l_{\mathbb{R}})$. Clearly $\mathbb{W}_{\mathbb{R}} \setminus l_{\mathbb{R}}( \mathbb{V}_{\mathbb{R}})$ admits a fibrewise deformation retraction to $W_{0,\mathbb{R}} \setminus \{0\}$, where $\{0\}$ denotes the zero section. Thus a chamber for $f$ is equivalent to a homotopy class of non-vanishing section of $W_{0,\mathbb{R}}$. By rescaling, we have that for any fixed $\delta > 0$, any chamber can be represented by a section $\eta : B \to \mathbb{W}_{\mathbb{R}}$ whose norm satisfies $|| \eta || < \delta$ (pointwise with respect to $B$).

In order to prove our main result we need to make two further assumptions on $f$:
\begin{itemize}
\item[(M6)]{Let $\delta > 0$ be given. Then any chamber of $f$ can be represented by a section $\eta : B \to \mathbb{W}_{\mathbb{R}}$ such that $||\eta || < \delta$ pointwise with respect to $B$ and such that $\eta$ is transverse to $f$. Furthermore any two such $\eta$ representing the same chamber can be joined by a path $\eta(t)$ such that $||\eta(t)|| < \delta$ for all $t$ and $\eta(t) : [0,1] \times B \to \mathbb{W}_{\mathbb{R}} \setminus l_\mathbb{R}( \mathbb{V}_\mathbb{R})$ is transverse to $f \circ pr_{\mathbb{V}} : [0,1] \times \mathbb{V} \to \mathbb{W}$, where $pr_{\mathbb{V}} : [0,1] \times \mathbb{V} \to \mathbb{V}$ is projection to the second factor.}
\item[(M7)]{There exists a smooth $\mathbb{R}$-bilinear map $q : \mathbb{V} \times_B \mathbb{V} \to \mathbb{W}$ such that $c(v) = q(v,v)$.}
\end{itemize}

\begin{example}
In the case that $f$ is the Seiberg-Witten monopole map of a family of $4$-manifolds with $b_1(X) = 0$, then (M6)-(M7) are satisfied. (M6) follows from the existence of regular perturbations of the families Seiberg-Witten equations (this is well known in the case of a single $4$-manifold and easily adapts to the case of families). (M7) is easy to verify by looking at the Seiberg-Witten equations.
\end{example}

\begin{example}\label{ex:b1>0}
We now explain how the construction of the families Seiberg-Witten monopole map extends to the case of a spin$^c$-family of $4$-manifolds $\pi : E \to B$ whose fibres are diffeomorphic to a $4$-manifold $X$ with $b_1(X) > 0$, equipped with a section $x : B \to E$. Let $\mathcal{H}^1(\mathbb{R}) \to B$ denote the vector bundle whose fibre $\mathcal{H}^1_b(\mathbb{R})$ over $b \in B$ is the space of harmonic $1$-forms on the fibre $E_b = \pi^{-1}(b)$. Then on the total space of $\mathcal{H}^1(\mathbb{R})$ we define the following Hilbert bundles:
\begin{equation*}
\begin{aligned}
&\mathbb{V}_{\mathbb{C}} = L^2_k( S^+), &&\quad \mathbb{V}_{\mathbb{R}} = L^2_k(\wedge^1 T^*X)_0, \\
&\mathbb{W}_{\mathbb{C}} = L^2_{k-1}(S^-), &&\quad \mathbb{W}_{\mathbb{R}} = L^2_{k-1}(\wedge^+ T^*X) \oplus L^2_{k-1}(\mathbb{R})_0
\end{aligned}
\end{equation*}
where $L^2_{k-1}(\mathbb{R})_0$ is defined as in Example \ref{ex:b1=0} and $L^2_k(\wedge^1 T^*X)_0$ is the subbundle of $L^2_k(\wedge^1 T^*X)$ consisting of $1$-forms $L^2$-orthogonal to the finite dimensional subbundle of harmonic $1$-forms. Define $f : \mathbb{V}_\mathbb{C} \oplus \mathbb{V}_\mathbb{R} \to \mathbb{W}_\mathbb{C} \oplus \mathbb{W}_\mathbb{R}$ over $\theta \in \mathcal{H}^1(\mathbb{R})$ by:
\[
f(\theta , ( \psi , a)) = (D_{A+ia+i\theta}(\psi) , d^+a +i\sigma(\psi) , d^*a)
\]
where $A$, $\sigma(\psi)$ are as in Example \ref{ex:b1=0}. 

The map $f$ is a combination of the Seiberg-Witten equations and the gauge fixing term $d^*a = 0$. To obtain the (families) Seiberg-Witten moduli space we still need to factor out by harmonic gauge transformations. This is done as follows. Let $\mathcal{H}^1(\mathbb{Z}) \to B$ denote the locally constant bundle of groups whose fibre $\mathcal{H}^1_b(\mathbb{Z})$ over $b \in B$ is the space of harmonic $1$-forms on $E_b$ with integral periods. To each $\omega \in \mathcal{H}^1(\mathbb{Z})$ we can define a unique harmonic map $f_\omega : E_b \to U(1)$ such that $f^{-1}_\omega df_\omega = 2\pi i \omega$ and $f_\omega(x(b)) = 1$, namely $f_\omega$ is given by
\[
f_\omega(y) = {\rm exp}\left( 2\pi i \int_{x(b)}^y \omega \right).
\]
We let $\mathcal{H}^1(\mathbb{Z})$ act fibrewise on $\mathcal{H}^1(\mathbb{R})$ by:
\[
\omega \cdot \theta = \theta + 4\pi \omega.
\]
Let $J = \mathcal{H}^1(\mathbb{R})/\mathcal{H}^1(\mathbb{Z})$ be the quotient with respect to this action. Then $J$ is a torus bundle over $B$ (in fact, $J$ is isomorphic to the bundle of Jacobians associated to $E \to B$, where the Jacobian $J_b$ of the fibre $E_b$ is defined to be the space of isomorphism classes of flat topologically trivial unitary line bundles on $E_b$).

The action of $\mathcal{H}^1(\mathbb{Z})$ on $\mathcal{H}^1(\mathbb{R})$ just defined lifts to fibrewise linear actions on the total spaces of $\mathbb{V} = \mathbb{V}_\mathbb{C} \oplus \mathbb{V}_\mathbb{R}$ and $\mathbb{W} = \mathbb{W}_\mathbb{C} \oplus \mathbb{W}_\mathbb{R}$ as follows:
\begin{equation*}
\begin{aligned}
\omega \cdot (\theta , (\psi , a)) &= (\theta + 4\pi \omega , ( f_\omega^{-1} \psi , a  )), && (\psi , a) \in \mathbb{V}_\theta \\ 
\omega \cdot (\theta , (\phi , \nu , g)) &= (\theta + 4\pi \omega , ( f_\omega^{-1} \phi , \nu , g )), && (\phi , \nu , g) \in \mathbb{W}_\theta.
\end{aligned}
\end{equation*}
This is precisely the action of the group of harmonic gauge transformations (which are normalised to equal the identity at $x(b)$). Note that $\mathcal{H}^1(\mathbb{Z})$ acts freely on $\mathcal{H}^1(\mathbb{R})$ with quotient space $J$ and so the vector bundles $\mathbb{V}, \mathbb{W}$ descend to vector bundles $\mathbb{V}/\mathcal{H}^1(\mathbb{Z}), \mathbb{W}/\mathcal{H}^1(\mathbb{Z})$ on $J$. Moreover $f$ is $\mathcal{H}^1(\mathbb{Z}) \times S^1$-equivariant so descends to an $S^1$-equivariant map
\[
f : \mathbb{V}/\mathcal{H}^1(\mathbb{Z}) \to \mathbb{W}/\mathcal{H}^1(\mathbb{Z})
\]
of Hilbert bundles over $J$. This is our desired families Seiberg-Witten monopole map in the case that $b_1(X)>0$. One can verify that $f$ satisfies (M1)-(M7) in much the same way as was done in the $b_1(X)=0$ case in Example \ref{ex:b1=0}.
\end{example}

\begin{lemma}\label{lem:compact}
Let $f$ satisfy (M1)-(M7). Then for any $v \in \mathbb{V}$, the linear map $\mathbb{V} \to \mathbb{W}$ given by $u \mapsto q(v,u)$ is compact.
\end{lemma}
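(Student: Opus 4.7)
The idea is to express the linear map $u \mapsto q(v,u)$ through the compact nonlinear map $c$ via a polarisation identity, and then invoke (M1). First I would observe that $q$ may be assumed symmetric without loss of generality: if we replace $q$ by its symmetrisation $\tilde q(u_1,u_2) := \tfrac12(q(u_1,u_2)+q(u_2,u_1))$, then $\tilde q$ is still smooth and $\mathbb{R}$-bilinear and satisfies $\tilde q(v,v)=c(v)$, so (M7) continues to hold with $\tilde q$ in place of $q$.

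Once $q$ is symmetric, bilinearity gives
\[
c(v+u) \;=\; q(v+u,v+u) \;=\; c(v) + 2\,q(v,u) + c(u),
\]
so
\[
q(v,u) \;=\; \tfrac12\bigl(c(v+u) - c(v) - c(u)\bigr).
\]
Given any bounded sequence $\{u_n\} \subset \mathbb{V}$, the sequences $\{v+u_n\}$ and $\{u_n\}$ are both bounded, so by (M1), read in the usual Bauer--Furuta sense that $c$ sends bounded sets to relatively compact ones, we may pass to a common subsequence along which $c(v+u_n)$ and $c(u_n)$ both converge in $\mathbb{W}$. Hence $q(v,u_n)$ converges along that subsequence. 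Since $u \mapsto q(v,u)$ is plainly linear and bounded (a continuous bilinear map satisfies $\|q(v,u)\|\le C\|v\|\|u\|$), this proves it is a compact linear operator.

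\textbf{Main obstacle.} The only genuinely delicate point is the symmetrisation step: the relation $c(v)=q(v,v)$ determines only the symmetric part of $q$, and an antisymmetric bounded bilinear form contributes nothing to $c$ and need not be compact. In every application in this paper, and in particular for the Seiberg--Witten monopole maps of Examples~\ref{ex:b1=0} and \ref{ex:b1>0}, one is free to choose $q$ symmetric, so this is a harmless normalisation. After that, the lemma is just the polarisation identity combined with nonlinear compactness of $c$, with no further analytic input required.
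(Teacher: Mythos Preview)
Your argument is correct and matches the paper's proof: both use the polarisation identity $q(v,u)=\tfrac12\bigl(c(v+u)-c(v)-c(u)\bigr)$ and then extract convergent subsequences from $\{c(u_n)\}$ and $\{c(v+u_n)\}$ via the compactness of $c$. The paper simply writes down the polarisation identity without addressing symmetry, implicitly treating $q$ as symmetric; your explicit observation that one should replace $q$ by its symmetrisation (since only the diagonal $q(v,v)=c(v)$ is constrained by (M7), and an antisymmetric part would be invisible to $c$) is a genuine clarification that the paper omits.
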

\begin{proof}
Note that since $c(v) = q(v,v)$, it follows that:
\begin{equation}\label{equ:polarisation}
q(v,u) = \frac{1}{2}\left( c(u+v) - c(u) - c(v) \right).
\end{equation}
Fix $v \in \mathbb{V}_b$ and let $\{ u_i \}$ be a bounded sequence of elements in $\mathbb{V}_b$. Then by compactness of $c$, we can take a subsequence such that $\{ c(u_i) \}$ converges. Taking a further subsequence, we can also assume that $\{ c(v+u_i) \}$ converges. Then by Equation (\ref{equ:polarisation}), the sequence $q(v,u_i)$ also converges.
\end{proof}

Let $f$ be a monopole map satisfying (M1)-(M7). Let $\eta : B \to \mathbb{W}_{\mathbb{R}}$ be a representative of a chamber such that $\eta$ is transverse to $f$. By (M6) such an $\eta$ exists for any chamber and we can take $||\eta||$ to be as small as desired. By transversality $\widetilde{\mathcal{M}}_\eta = f^{-1}(\eta)$ is a smooth manifold equipped with a free $S^1$-action and an $S^1$-invariant projection map $\pi_{\widetilde{\mathcal{M}}_\eta} : \widetilde{\mathcal{M}}_\eta \to B$. We claim that $\widetilde{\mathcal{M}}_\eta$ is finite-dimensional and in fact $dim(\widetilde{\mathcal{M}}_\eta) = dim(B) + ind(l)$. To see this, note that the linearisation of $f(v) = \eta$ around $v$ (in the fibre directions) is given by $df_v(\dot{v}) = l(\dot{v}) + 2q(v , \dot{v})$. By Lemma \ref{lem:compact}, $2q(v , \, \cdot \,  )$ is a compact operator. It follows that the total derivative of $f : \mathbb{V} \to \mathbb{W}$ at any point is Fredholm with index equal to $ind(l)$. Then since $f$ is transverse to $\eta$, $dim(\widetilde{\mathcal{M}}_\eta) = dim( f^{-1}(\eta(B))) = dim(B) + ind(l)$. Moreover, since $\widetilde{\mathcal{M}}_\eta$ is finite-dimensional and contained in a bounded subset of $\mathbb{V}$ (by (M3)), it follows that $\widetilde{\mathcal{M}}_\eta$ is compact.

Since $S^1$ acts freely on $\widetilde{\mathcal{M}}_\eta$, the quotient $\mathcal{M}_\eta = \widetilde{\mathcal{M}}_\eta/S^1$ is a compact smooth manifold of dimension $dim(B) + ind(l)-1$. Moreover the map $\pi_{\widetilde{\mathcal{M}}_\eta}: \widetilde{\mathcal{M}}_\eta \to B$ is $S^1$-invariant and so descends to a map $\pi_{\mathcal{M}_\eta} : \mathcal{M}_\eta \to B$. Let $\mathcal{L} \to \mathcal{M}_\eta$ denote the complex line bundle associated to the principal $S^1$-bundle $\widetilde{\mathcal{M}}_\eta \to \mathcal{M}_\eta$.

\begin{lemma}\label{lem:orientation}
Let $Ind(l_{\mathbb{R}}) \in KO^0(B)$ denote the real $K$-theory class of the family of real Fredholm operators $l_\mathbb{R} : \mathbb{V}_\mathbb{R} \to \mathbb{W}_\mathbb{R}$. Then there is a natural isomorphism $\psi : det( T\mathcal{M}_\eta \oplus \pi^*_{\mathcal{M}_\eta}(TB)  ) \cong \pi^*_{\mathcal{M}_\eta}( det( Ind(l_\mathbb{R})  )$.
\end{lemma}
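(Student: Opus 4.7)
My plan is to derive the isomorphism $\psi$ from a short exact sequence on $\widetilde{\mathcal{M}}_\eta$ coming from transversality, to interpret the resulting determinant lines via the Fredholm determinant formalism, and then to descend to $\mathcal{M}_\eta$ using the free $S^{1}$-action.

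Setting $F(v) := f(v) - \eta(\pi_\mathbb{V}(v))$ so that $\widetilde{\mathcal{M}}_\eta = F^{-1}(0_\mathbb{W})$, I would fix a connection splitting $T\mathbb{V} \cong \pi_\mathbb{V}^{*}TB \oplus \mathbb{V}$ and observe that transversality of $\eta$ to $f$ amounts to surjectivity of the vertical differential $df_v : \mathbb{V}_b \to \mathbb{W}_b$ at every point of $\widetilde{\mathcal{M}}_\eta$. Projecting $dF$ onto the vertical direction then yields an exact sequence of bundles on $\widetilde{\mathcal{M}}_\eta$,
\[
0 \to T\widetilde{\mathcal{M}}_\eta \to \pi^{*}TB \oplus \mathbb{V} \xrightarrow{dF^{\mathrm{vert}}} \mathbb{W} \to 0,
\]
where $\pi : \widetilde{\mathcal{M}}_\eta \to B$ denotes the natural projection. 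Since $df_v = l + 2q(v,\cdot)$ and $2q(v,\cdot)$ is compact by Lemma~\ref{lem:compact}, the straight-line homotopy $l + t\cdot 2q(v,\cdot)$ is a path of Fredholm operators, giving a canonical identification $\det(df_v) \cong \pi^{*}\det(\mathrm{Ind}(l))$ of determinant lines. Applying the Fredholm determinant formalism to the sequence above then produces
\[
\det(T\widetilde{\mathcal{M}}_\eta) \cong \pi^{*}\det(TB) \otimes \pi^{*}\det(\mathrm{Ind}(l)).
\]

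Splitting $l = l_\mathbb{C} \oplus l_\mathbb{R}$, the complex structure on the virtual bundle $\mathrm{Ind}(l_\mathbb{C})$ canonically orients $\det_\mathbb{R}(\mathrm{Ind}(l_\mathbb{C}))$, giving $\det(\mathrm{Ind}(l)) \cong \det(\mathrm{Ind}(l_\mathbb{R}))$ canonically. To descend to $\mathcal{M}_\eta$ I would use that the principal $S^{1}$-bundle $p : \widetilde{\mathcal{M}}_\eta \to \mathcal{M}_\eta$ supplies a splitting $T\widetilde{\mathcal{M}}_\eta \cong \underline{\mathbb{R}} \oplus p^{*}T\mathcal{M}_\eta$ with the trivial factor canonically oriented by the $S^{1}$-orbit direction, so $\det(T\widetilde{\mathcal{M}}_\eta) \cong p^{*}\det(T\mathcal{M}_\eta)$. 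All remaining factors are pulled back from $B$ and every isomorphism above is $S^{1}$-equivariant, hence the identification descends to
\[
\det(T\mathcal{M}_\eta) \cong \pi_{\mathcal{M}_\eta}^{*}\det(TB) \otimes \pi_{\mathcal{M}_\eta}^{*}\det(\mathrm{Ind}(l_\mathbb{R}))
\]
on $\mathcal{M}_\eta$. Tensoring with $\pi_{\mathcal{M}_\eta}^{*}\det(TB)$ and using the canonical identification $\det(TB)^{\otimes 2} \cong \mathbb{R}$ (induced e.g.\ by a Riemannian metric on $B$) yields the stated $\psi$.

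The main technical point I expect to be the key obstacle is the rigorous treatment of the Fredholm determinant in the first step: one needs to verify that the determinant line of a family of Fredholm operators is invariant under compact perturbations and that the homotopy $l + t\cdot 2q(v,\cdot)$ gives a canonical, path-independent isomorphism, so that the infinite-dimensional short exact sequence above produces a well-defined isomorphism of the finite-rank determinant lines on $\widetilde{\mathcal{M}}_\eta$.
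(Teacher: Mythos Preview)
Your proposal is correct and follows essentially the same strategy as the paper: both arguments identify the determinant line of $T\widetilde{\mathcal{M}}_\eta$ with $\pi^*\det(TB)\otimes\pi^*\det(\mathrm{Ind}(l))$ via the compact-perturbation homotopy $l+2tq(v,\cdot)$, trivialise $\det(\mathrm{Ind}(l_\mathbb{C}))$ using its complex structure, and descend along the free $S^1$-action. One small imprecision: transversality of $\eta$ to $f$ in the families sense is surjectivity of the full $dF^{\mathrm{vert}}:\pi^*TB\oplus\mathbb{V}\to\mathbb{W}$ (including the horizontal contribution), not of the fibrewise $df_v:\mathbb{V}_b\to\mathbb{W}_b$ alone; but since your exact sequence uses the full map this does not affect the argument.
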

\begin{proof}
Using the $S^1$-action we have that $det(T\widetilde{\mathcal{M}}_\eta)$ descends to $\widetilde{M}_\eta$ and one easily sees that $det(T\widetilde{\mathcal{M}}_\eta)/S^1 = det(T\mathcal{M}_\eta)$. Therefore it suffices to show that there is an $S^1$-equivariant isomorphism $det(T\widetilde{\mathcal{M}}_\eta \oplus \pi^*_{\widetilde{\mathcal{M}}_\eta}(TB) ) \cong \pi_{\widetilde{\mathcal{M}}_\eta}^*( det( Ind(l_\mathbb{R}) ))$. However we have seen that the linearisation of $f$ around any point of $\widetilde{\mathcal{M}}_\eta$ differs from $l$ by a compact operator. In particular we can write down an $S^1$-equivariant homotopy through Fredholm operators from the family $\{df_v\}_{v \in \widetilde{\mathcal{M}}}$ to the family $\{ l \}_{v \in \widetilde{\mathcal{M}}}$. From this it follows that $det(T\widetilde{\mathcal{M}}_\eta)$ is $S^1$-equivariantly isomorphic to the pullback of $det(Ind(l) \oplus det(TB))$, where $Ind(l) \in KO^0(B)$ is the families index of $l$. But $l = l_\mathbb{R} + l_\mathbb{C}$, so we get a decomposition $Ind(l) = Ind(l_\mathbb{C}) + Ind(l_\mathbb{R})$. Moreover $l_\mathbb{C}$ is a complex Fredholm operator, so $det(Ind(l_\mathbb{C}))$ is trivial and $det(Ind(l)) = det(Ind(l_\mathbb{R}))$.
\end{proof}

Let $w_1 = w_1(Ind(l_\mathbb{R})) \in H^1(B ; \mathbb{Z}_2)$ be the first Stiefel-Whitney class of $Ind(l_\mathbb{R})$. Then from the above lemma, $w_1$ is the relative orientation class of $\pi_{\mathcal{M}_\eta} : \mathcal{M}_\eta \to B$. Let $\mathbb{Z}_{w_1}$ denote the local system with coefficient group $\mathbb{Z}$ determined by $w_1$. 
\begin{definition}
Let $m \ge 0$ be a non-negative integer. Let $[\eta] \in \mathcal{CH}(f)$ be a chamber of $f$ represented by a section $\eta : B \to \mathbb{W}_\mathbb{R} \setminus l_{\mathbb{R}}(\mathbb{V}_\mathbb{R})$ which is transverse to $f$. The $m$-th families Seiberg-Witten invariant of $f$ with respect to $\eta$ is defined as
\[
SW_m(f, \eta) = (\pi_{\mathcal{M}_\eta})_*( c_1(\mathcal{L})^m ) \in H^{2m-(ind(l)-1)}(B ; \mathbb{Z}_{w_1})
\]
where $w_1 = det( Ind(l_\mathbb{R})) \in H^1(B ; \mathbb{Z}_2)$.   
\end{definition}

\begin{remark}
If $Ind(l_{\mathbb{R}})$ is orientable, then we can regard $SW_m(f,\eta)$ as a cohomology class with integral coefficients. However this depends on the choice of orientation of $Ind(l_{\mathbb{R}})$. Reversing orientation on $Ind(l_{\mathbb{R}})$ will change the sign of $SW_m(f,\eta)$.
\end{remark}

\begin{lemma}\label{lem:cobordisminvariance}
Let $f : \mathbb{V} \to \mathbb{W}$ satisfy (M1)-(M7). Then $SW_m(f, \eta)$ depends only on the chamber $[\eta] \in \mathcal{CH}(f)$ in which $\eta$ lies and not on the particular choice of $\eta$.
\end{lemma}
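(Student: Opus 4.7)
The strategy is a standard parametrised cobordism argument, powered by the transversality assumption (M6). Given two transverse representatives $\eta_{0}, \eta_{1}$ of the same chamber, apply (M6) to join them by a path $\tilde\eta : [0,1] \times B \to \mathbb{W}_{\mathbb{R}} \setminus l_{\mathbb{R}}(\mathbb{V}_{\mathbb{R}})$ with $||\tilde\eta|| < \delta$ pointwise and with $\tilde\eta$ transverse to $f \circ pr_{\mathbb{V}}$. Then set
\[
\widetilde{\mathcal{W}} := \{(t,v) \in [0,1] \times \mathbb{V} \mid f(v) = \tilde\eta(t, \pi_{\mathbb{V}}(v))\}.
\]
The Fredholm argument preceding Lemma \ref{lem:orientation} (which uses (M7) and Lemma \ref{lem:compact} to show the total derivative of $f$ is Fredholm of index $ind(l)$ at every point) makes $\widetilde{\mathcal{W}}$ a smooth manifold of dimension $dim(B) + ind(l) + 1$ with boundary $\widetilde{\mathcal{M}}_{\eta_{0}} \sqcup \widetilde{\mathcal{M}}_{\eta_{1}}$, while (M3) applied uniformly over $[0,1] \times B$ forces it to be compact.

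Next I verify freeness of the $S^{1}$-action on $\widetilde{\mathcal{W}}$: the fixed locus in $\mathbb{V}$ is $\mathbb{V}_{\mathbb{R}}$, on which $c$ vanishes by (M5), so $f|_{\mathbb{V}_{\mathbb{R}}} = l_{\mathbb{R}}$ takes values in $l_{\mathbb{R}}(\mathbb{V}_{\mathbb{R}})$, which $\tilde\eta$ avoids by construction. Taking the quotient gives $\mathcal{W} := \widetilde{\mathcal{W}}/S^{1}$, a compact smooth manifold with boundary $\mathcal{M}_{\eta_{0}} \sqcup \mathcal{M}_{\eta_{1}}$, a projection $\pi_{\mathcal{W}} : \mathcal{W} \to B$, and a line bundle $\widehat{\mathcal{L}} \to \mathcal{W}$ associated to the principal $S^{1}$-bundle $\widetilde{\mathcal{W}} \to \mathcal{W}$; by construction $\widehat{\mathcal{L}}$ restricts to the respective copies of $\mathcal{L}$ over the two boundary pieces.

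Finally, the proof of Lemma \ref{lem:orientation} goes through verbatim for the family of Fredholm operators $\{df_{(t,v)}\}_{(t,v) \in \widetilde{\mathcal{W}}}$ and produces a canonical isomorphism $det(T\mathcal{W} \oplus \pi_{\mathcal{W}}^{*}(TB)) \cong \pi_{\mathcal{W}}^{*}(det \, Ind(l_{\mathbb{R}}))$, making $\pi_{\mathcal{W}}$ a relatively oriented families cobordism with twisted coefficients $\mathbb{Z}_{w_{1}}$. Cobordism invariance of the Poincaré-Lefschetz pushforward, applied to $c_{1}(\widehat{\mathcal{L}})^{m}$, then yields $SW_{m}(f, \eta_{0}) = SW_{m}(f, \eta_{1})$. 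The only step requiring any real care is verifying uniform boundedness to obtain compactness of $\widetilde{\mathcal{W}}$; everything else is a formal consequence of (M1)--(M7) and the constructions already assembled in this subsection.
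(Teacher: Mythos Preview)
Your proof is correct and follows essentially the same parametrised cobordism argument as the paper: choose a bound $\delta$ large enough to contain both $\eta_0,\eta_1$, invoke (M6) to produce a transverse path, and use (M3) for compactness of the resulting cobordism. If anything, you are more explicit than the paper in verifying freeness of the $S^1$-action via (M5) and in invoking Lemma~\ref{lem:orientation} to handle the relative orientation on the cobordism.
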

\begin{proof}
Let $\eta_1, \eta_2 : B \to \mathbb{W}_\mathbb{R} \setminus l_\mathbb{R}(\mathbb{V}_\mathbb{R})$ represent the same chamber and assume $\eta_1, \eta_2$ are both transverse to $f$. Let $M > \sup_{b \in B} \{ ||\eta_1(b)|| , ||\eta_2(b)|| \}$. Then by (M6) we can find a generic path $\eta(t)$ joining $\eta_1$ to $\eta_2$ and such that $||\eta(t,b)|| < M$ for all $(t,b) \in [0,1] \times B$. Then $\widetilde{\mathcal{M}} = \coprod_{t\in [0,1]} f^{-1}(\eta(t))$ is a smooth manifold with free $S^1$-action and with boundary $\widetilde{\mathcal{M}_{\eta_2}} \coprod \widetilde{\mathcal{M}_{\eta_1}}$. Moreover $\widetilde{\mathcal{M}}$ is compact, by (M3). Let $\mathcal{M} = \widetilde{\mathcal{M}}/S^1$. Then $\mathcal{M}$ is a smooth compact manifold with boundary $\mathcal{M}_{\eta_2} \coprod \mathcal{M}_{\eta_1}$. Moreover the line bundles on $\mathcal{M}_{\eta_2}, \mathcal{M}_{\eta_1}$ extend to a common line bundle $\mathcal{L} \to \mathcal{M}$ (namely, the line bundle associated to $\widetilde{\mathcal{M}} \to \mathcal{M}$). It follows that
\[
SW_m(f , \eta_1) = (\pi_{\mathcal{M}_{\eta_1}})_*( c_1(\mathcal{L})^m|_{\mathcal{M}_{\eta_1}}) = (\pi_{\mathcal{M}_{\eta_2}})_*( c_1(\mathcal{L})^m|_{\mathcal{M}_{\eta_2}}) = SW_m(f,\eta_2).
\]
\end{proof}

\begin{remark}
In the case that $f$ is the families Seiberg-Witten monopole map of a spin$^c$ family of $4$-manifolds $(E , \mathfrak{s}_E )$ with $b_1(X)=0$, we have that $SW_m(f , \eta) = SW_m(E , \mathfrak{s}_E , [\eta])$ is exactly the $m$-th families Seiberg-Witten invariant of the family $(E , \mathfrak{s}_E)$ with respect to the chamber $[\eta]$, as defined in the introduction.

If $E \to B$ is a spin$^c$ family of $4$-manifolds with $b_1(X) \ge 0$ and which admits a section $x : B \to E$, we have constructed a monopole map defined on the total space of the Jacobian bundle $J \to B$ (Example \ref{ex:b1>0}). Associated to this monopole map are families Seiberg-Witten invariants $SW_m(f,\eta) \in H^{2m-(2d-b^+-1)}( J ; \mathbb{Z}_{w_1})$, where $w_1 = w_1(H^+)$. We interpret these as the families Seiberg-Witten invariants of the quadruple $(E,\mathfrak{s}_E,x,[\eta])$ and set $SW_m(E, \mathfrak{s}_E , x,[\eta]) = SW_m(f , \eta)$. 

When $b_1(X) = 0$ we clearly have that $SW_m(E,\mathfrak{s}_E,x , [\eta])$ agrees with our previous definition of $SW_m(E,\mathfrak{s}_E,[\eta])$ and in particular does not depend on the choice of section $x$ (and can be defined even if no such section exists).

If $b_1(X) > 0$, then in general $SW(E,\mathfrak{s}_E,x,[\eta])$ may depend on the choice of section $x$, but only up to homotopy. To see homotopy invariance, let $x : [0,1] \times B \to E$ be a homotopy of section from $x_0$ to $x_1$. Then we can view $x$ as a section of the pullback $[0,1] \times E$ of $E$ over $[0,1] \times B$ and obtain a families Seiberg-Witten invariant $SW_m([0,1] \times E , \mathfrak{s}_E , x , [\eta]) \in H^{2m-(2d-b^+-1)}([0,1] \times J ; \mathbb{Z}_{w_1})$. Restriction of $SW_m([0,1] \times E , \mathfrak{s}_E , x , [\eta])$ to $t=0,1$ recovers $SW_m(E , \mathfrak{s}_E , x_0 , [\eta])$ and $SW_m( E , \mathfrak{s}_E , x_1 , [\eta])$ respectively, but the homotopy equivalence $[0,1]\times J \cong J$ implies that $SW_m(E , \mathfrak{s}_E , x_0 , \eta) = SW_m(E , \mathfrak{s}_E , x_1 , [\eta])$.
\end{remark}

In order to compare the Seiberg-Witten invariants of $f$ with those of a finite dimensional approximation we would like to restate the definition of the invariants $SW_m(f , \eta)$ in a slightly different way. Fix a real line bundle $U \to B$ and consider triples $(M, \pi_M , \psi_M)$ consisting of:
\begin{itemize}
\item{a smooth compact manifold $M$ with a free $S^1$-action}
\item{an $S^1$-invariant smooth map $\pi_M : M \to B$}
\item{an $S^1$-equivariant isomorphism of real line bundles $\psi_M : det(TM \oplus \pi^*_M(TB)) \cong \pi^*_M(U)$.}
\end{itemize}
Then $M/S^1$ is a smooth compact manifold and $\pi_M$ descends to $\pi_{M/S^1} : M/S^1 \to B$. Let $\mathcal{L}_M \to M$ be the complex line bundle associated to the principal $S^1$-bundle $M \to M/S^1$ and define the $m$-th Seiberg-Witten invariant of $(M, \pi_M , \psi_M)$ to be:
\[
SW(M,\pi,\psi) = (\pi_{M/S^1})_*( c_1(\mathcal{L}_M)^m ) \in H^{2m+dim(B)-dim(M)-1}(B ; \mathbb{Z}_{w_1})
\]
where $w_1 = w_1(U)$ and where we use $\psi_M$ to identify the relative orientation class of $\pi_{M/S^1} : M/S^1 \to B$ with $\pi^*(U)$. 

Now let $f : \mathbb{V} \to \mathbb{W}$ be a families monopole map satisfying (M1)-(M7) and let $[\eta] \in \mathcal{CH}(f)$ be a chamber represented by a map $\eta : B \to \mathbb{W}_{\mathbb{R}} \setminus l_\mathbb{R}(\mathbb{V}_\mathbb{R})$ such that $\eta$ is transverse to $f$. Set $U = det(Ind(l_\mathbb{R}))$. Then we obtain a triple $( M , \pi_M , \psi_M)$, where $M = \widetilde{\mathcal{M}}_\eta = f^{-1}(\eta)$ equipped with the natural $S^1$-action, $\pi_M$ is the natural projection to $B$ and $\psi_M$ is the isomorphism in Lemma \ref{lem:orientation}. Clearly the families Seiberg-Witten invariants of $f$, $SW_m(f,\eta)$ coincide with the Seiberg-Witten invariants of the triple $(M , \pi , \psi)$.

We say that two triples $(M_1, \pi_{M_1} , \psi_{M_1})$, $(M_2, \pi_{M_2} , \psi_{M_2})$ are cobordant if there exists a smooth compact manifold $M$ with boundary the disjoint union of $M_1$ and $M_2$ such that the $S^1$-actions on $M_1, M_2$ extends to a free $S^1$-action on $M$, the maps $\pi_{M_1}, \pi_{M_2}$ are the restrictions to $\partial M$ of a $S^1$-invariant map $\pi_M : M \to B$ and there is an $S^1$-equivariant isomorphism $\psi_M : \det(TM \oplus \pi^*(TB)) \to \pi_M^*(U)$ such that $\psi|_{M_2} = \psi_{M_2}$, $\psi|_{M_1} = -\psi_{M_1}$ (where we use the outward normal first convention to identify $det(TM)|_{M_j}$ with $det(TM_j)$ for $j=1,2$). Repeating the argument used in the proof of Lemma \ref{lem:cobordisminvariance}, we find that the Seiberg-Witten invariants of a triple $(M , \pi , \psi)$ are cobordism invariant. Thus to obtain the families Seiberg-Witten invariants $SW_m(f,\eta)$ of $f$ from a finite dimensional approximation, it suffices to obtain the cobordism class of associated triple $(M , \pi , \psi)$. This will be our goal in the following subsections.

\subsection{Finite dimensional monopole maps and their Seiberg-Witten invariants}\label{sec:fsw}

In this subsection we introduce the notion of a finite dimensional families monopole map and define the Seiberg-Witten invariants of such maps. In the following subsections we will see how to obtain a finite dimensional monopole map as an approximation of an infinite dimensional one and show that the Seiberg-Witten invariants are preserved under this process.

Let $B$ be a compact smooth manifold. Suppose that $V,V' \to B$ are complex vector bundles of ranks $a,a'$ and $U,U' \to B$ are real vector bundles of ranks $b,b'$. We make $V,V'$ into $S^1$-equivariant vector bundles by letting $S^1$ act by scalar multiplication on the fibres. Similarly we make $U,U'$ into $S^1$-equivariant vector bundles by giving them the trivial action.

For any vector bundle $W$, let $S_W$ be the unit sphere bundle in $\mathbb{R} \oplus W$. Thus $S_W$ is obtained from $W$ by one-point compactifying each fibre of $W$. Let $B_W \subset S_W$ denote the section at infinity. If $V$ is a complex vector bundle and $U$ a real vector bundle, then the $S^1$-action on $V \oplus U$ extends to $S_{V \oplus U}$ preserving the section at infinity. We often write $S_{V,U}$ instead of $S_{V \oplus U}$ to remind ourselves that the $S^1$-action on $S_{V,U}$ depends on the pair $(V,U)$.

Suppose that we have an $S^1$-equivariant map $f : S_{V,U} \to S_{V',U'}$ covering the identity on $B$ and which sends infinity to infinity. Our interest in such maps is that they arise as finite dimensional approximations of the Seiberg-Witten monopole map of spin$^c$ families of $4$-manifolds. More precisely, suppose that $X$ is a compact, oriented, smooth $4$-manifold with $b_1(X) = 0$ and suppose that $E \to B$ is a spin$^c$ family with fibres diffeomorphic to $X$. To such data we constructed the families Seiberg-Witten monopole map (Example \ref{ex:b1=0}). As will be shown in Subsection \ref{sec:fda}, we can take a finite dimensional approximation of the Seiberg-Witten monopole map to obtain an $S^1$-equivariant map $f : S_{V,U} \to S_{V', U'}$ where $V,V',U,U'$ satisfy:
\[
V - V' = D \in K^0(B), \quad U' - U = H^+ \in KO^0(B),
\]
where $D \in K^0(B)$ denotes the families index of the families spin$^c$ Dirac operator of $(E , \mathfrak{s}_{E})$ and $H^+$ is the vector bundle over $B$ whose fibres are the space of harmonic self-dual $2$-forms on the corresponding fibres of the family $E \to B$.

\begin{remark}
A similar construction holds in the case that $b_1(X) > 0$. Consider a spin$^c$ family $(E , \mathfrak{s}_E)$ over $B$ which admits a section $x : B \to E$. To such data we constructed a families Seiberg-Witten monopole map (Example \ref{ex:b1>0}). Taking a finite dimensional approximation of this map one again obtains an equivariant map $f : S_{V \oplus U} \to S_{V' \oplus U'}$, except that $V,V',U,U'$ are now vector bundles over the space $J$, where $\pi_J : J \to B$ is the Jacobian bundle of the family $E \to B$ (see Example \ref{ex:b1>0}). Further, we have that $V,V',U,U'$ satisfy:
\[
V - V' = D_J \in K^0(J), \quad U' - U = \pi^*_J(H^+) \in KO^0(J),
\]
where $H^+$ is defined as in the $b_1(X)=0$ case and $D_J$ is defined as follows. Let $D_A$ be the spin$^c$ Dirac operator associated to the reference connection $A$. Each point in $J$ defines a flat line bundle on $X$. Coupling this line bundle to $D_A$ we get a family of spin$^c$ Dirac operators parametrised by $J$. Then $D_J$ is defined as the families index of this family.
\end{remark}

Consider again an $S^1$-equivariant map $f : S_{V , U} \to S_{V' , U'}$, where $V,V',U,U'$ are vector bundles on $B$ and $f$ covers the identity. This will be the setup used henceforth. By analogy with with the case of the Seiberg-Witten monopole map, we let $D$ denote the virtual bundle $D = V - V' \in K^0(B)$ and similarly let $H^+$ denote $H^+ = U' - U \in KO^0(B)$.

\begin{remark}\label{rem:inclusion}
As we will see in Subsection \ref{sec:fda}, if $f$ arises as a finite dimensional approximation of the Seiberg-Witten monopole map, then $U$ can be identified with a subbundle of $U'$ and $f|_U : U \to U'$ with the inclusion $\iota : U \to U'$.
\end{remark}

In light of Remark \ref{rem:inclusion}, we will assume henceforth that:\\

\noindent {\bf Assumption 1}: $U$ can be identified with a subbundle of $U'$ and that $f|_U : U \to U'$ is the inclusion $\iota: U \to U'$.\\

Under Assumption 1 we can assume that $U'$ splits into a direct sum $U' = U \oplus H^+$ and that $H^+$ is a genuine vector bundle rather than a virtual vector bundle. Let us also set:
\[
d = rank_{\mathbb{C}}(D) = a-a', \quad b^+ = rank_{\mathbb{R}}(H^+) = b'-b.
\]
In the case that $f$ is the finite dimensional approximation of the Seiberg-Witten monopole map of a spin$^c$ family $(E_X , \mathfrak{s}_{E})$, we of course have
\[
d = \frac{ c(\mathfrak{s}_X)^2 - \sigma(X) }{8}, \quad b^+ = b^+(X),
\]
where $\mathfrak{s}_X = \mathfrak{s}_{E}|_X$ is the spin$^c$ structure on $X$ induced by restriction of $\mathfrak{s}_{E}$ to a fibre $X$ of $E$.

\begin{definition}
Let $V,V'$ be complex vector bundles and $U,U'$ be real vector bundles over a compact smooth base manifold $B$. An $S^1$-equivariant map $f : S_{V,U} \to S_{V',U'}$ preserving sections at infinity and satisfying Assumption 1 will be called a {\em finite dimensional (families) monopole map}.
\end{definition}

Let $X,Y \to B$ be fibre bundles over $B$ equipped with sections $x_\infty : B \to X$, $y_\infty : B \to Y$. Then we can form the fibrewise smash product $X \wedge_B Y$ by taking the fibre product $X \times_B Y$ and for each $b \in B$ collapsing $x_\infty(b) \times Y_b \cup X_b \times y_\infty(b)$ to a point. If $X,Y$ have $S^1$-actions and $x_\infty,y_\infty$ are $S^1$-invariant then $X \wedge_B Y$ inherits an $S^1$-action. In particular if $V_1,V_2$ are complex vector bundles and $U_1,U_2$ real vector bundles, then one finds
\begin{equation}\label{equ:fibresmash}
S_{V_1,U_1} \wedge_B S_{V_2,U_2} = S_{V_1 \oplus V_2 , U_1 \oplus U_2}.
\end{equation}

In what follows, we will be interested primarily in the stable equivariant homotopy class of $f$. By the stable equivariant homotopy class of $f$, we mean that we consider the homotopy class of $f$ up to stabilisation, where by stabilisation of $f$ we mean taking the fibrewise smash product of $f$ with $id : S_{\mathbb{C}^m , \mathbb{R}^n} \to S_{\mathbb{C}^m , \mathbb{R}^n}$ for any $m,n \ge 0$. Homotopies are taken to be homotopies through finite dimensional monopole maps, that is, through equivariant maps $f_t$ covering the identity on $B$, preserving the section at infinity and such that $f_t|_U : U \to U'$ is a linear inclusion for each $t$. By (\ref{equ:fibresmash}), stabilisation by $\mathbb{C}$ and $\mathbb{R}$ change $V,V',U,U'$ as follows:
\begin{equation*}
\begin{aligned}
V & \mapsto V \oplus \mathbb{C}, \quad V' \mapsto V' \oplus \mathbb{C} \quad (\text{stabilisation by } \mathbb{C}), \\
U & \mapsto U \oplus \mathbb{R}, \quad U' \mapsto U' \oplus \mathbb{R} \quad (\text{stabilisation by } \mathbb{R}).
\end{aligned}
\end{equation*}
Of course, such stabilisations do not change the $K$-theory classes $D = V - V'$ and $H^+ = U' - U$.

More generally, we can consider stabilisation by an arbitrary complex vector bundle $A$ or a real vector bundle $B$:
\begin{equation*}
\begin{aligned}
V & \mapsto V \oplus A, \quad V' \mapsto V' \oplus A \quad (\text{stabilisation by } A), \\
U & \mapsto U \oplus B, \quad U' \mapsto U' \oplus B \quad (\text{stabilisation by } B).
\end{aligned}
\end{equation*}
This corresponds to taking the fibrewise smash product of $f$ with $id : S_{A,0} \to S_{A,0}$ or with $id : S_{0,B} \to S_{0,B}$. Note that for any such vector bundles $A,B$ we can find complementary vector bundles $\hat{A}, \hat{B}$ such that $A \oplus \hat{A} = \mathbb{C}^m$, $B \oplus \hat{B} = \mathbb{R}^n$. 
Hence the more general notion of stabilisation by vector bundles retains the underlying stable homotopy class of $f$.

After stabilising by a suitable choice of vector bundles, we can assume that $V' \cong \mathbb{C}^{a'} \times B$ and $U' \cong \mathbb{R}^{b'} \times B$ are trivial vector bundles over $B$. In this case $f$ can be viewed as a stable equivariant homotopy class $f : S_{V,U} \to ( \mathbb{C}^{a'} \oplus \mathbb{R}^{b'})^+$ from the total space of $S_{V,U}$ into the sphere $( \mathbb{C}^{a'} \oplus \mathbb{R}^{b'})^+$. In this way $f$ defines a stable equivariant cohomotopy class, which we may refer to as the {\em Bauer-Furuta invariant} of $f$, following \cite{bafu}. On the other hand, we will see that it is sometimes more convenient to stabilise such that $V \cong \mathbb{C}^a \times B$ and $U \cong \mathbb{R}^b \times B$ are trivial. We will make use of both types of stabilisations of $f$.

Let $f : S_{V , U} \to S_{V',U'}$ be a finite dimensional monopole map. Thus we can assume that $U' = U \oplus H^+$ and that $f|_U : U \to U'$ is the inclusion map $\iota : U \to U'$.

\begin{definition}\label{def:chamber}
A {\em chamber} for $f$ is a homotopy class of section $\phi : B \to U' - U$. Equivalently, this is the same as a homotopy class of section $B \to S(H^+)$, where $S(H^+)$ is the unit sphere bundle of $H^+$. Denote by $\mathcal{CH}(f)$ the set of chambers.
\end{definition}

\begin{remark}
In general, the set of chambers for $f$ may be empty. Indeed this happens precisely when $H^+$ does not admit a non-vanishing section. An obvious necessary condition for this is that $b^+ > 0$. A sufficient condition for the existence of a chamber is $b^+ > dim(B)$. Using the fact that $S(H^+) \to B$ is a sphere bundle with fibres of dimension $b^+-1$, we see that if $b^+ > 1 + dim(B)$ then there exists a unique chamber.
\end{remark}

\begin{lemma}\label{lem:orientation2}
Let $X,Y$ be smooth finite dimensional vector bundles over $B$. Assume that $X,Y$ are equipped with $S^1$-actions and that the projections $\pi_X : X \to B$, $\pi_Y : Y \to B$ are $S^1$-invariant. Let $f : X \to Y$ be a smooth $S^1$-equivariant map covering the identity on $B$ and suppose that $\eta : B \to Y$ is an $S^1$-invariant smooth map which is transverse to $f$. Then $M = f^{-1}(\eta)$ is a smooth manifold with an $S^1$-action. Let $\pi_M : M \to B$ be the projection to $B$. Then there is a naturally defined $S^1$-equivariant isomorphism
\[
\psi_M : det( TM \oplus \pi_M^*(TB)) \cong \pi^*( U ).
\]
where $U = det(Y \oplus X)$.
\end{lemma}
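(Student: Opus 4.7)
The plan is to mirror the argument of Lemma~\ref{lem:orientation}, which in the finite-dimensional setting becomes a direct construction of $\psi_M$ from the tangent bundle exact sequences.

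First I would verify that $M = f^{-1}(\eta(B))$ is a smooth submanifold of $X$ (by transversality) and is $S^1$-invariant: if $x \in M$ and $s \in S^1$ then
\[
f(s \cdot x) = s \cdot f(x) = s \cdot \eta(\pi_X(x)) = \eta(\pi_X(s \cdot x)),
\]
using $S^1$-equivariance of $f$, $S^1$-invariance of $\eta$, and $S^1$-invariance of $\pi_X$. Hence $s \cdot x \in M$, and the $S^1$-action restricts smoothly to $M$ with $\pi_M$ still $S^1$-invariant.

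The core of the construction is the following $S^1$-equivariant short exact sequence of vector bundles over $M$. Consider $G = f - \eta \circ \pi_X : X \to Y$, so $M = G^{-1}(0_Y)$, where $0_Y : B \to Y$ is the zero section. At $x \in M$ with $b = \pi_X(x)$ we have $G(x) = 0 \in Y_b$, and along the zero section there is a canonical splitting $T_{0_b} Y \cong Y_b \oplus T_b B$. Since $G$ covers the identity on $B$, the $T_bB$-component of $dG_x$ agrees with $d\pi_X|_x$, while the vertical component $\overline{dG}_x : T_x X \to Y_b$ coincides with the vertical derivative $d_v f_x$ (the $\eta \circ \pi_X$ contribution vanishes in the vertical direction since $\pi_X$ is constant along fibres). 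Transversality of $\eta$ to $f$ is exactly the condition that $\overline{dG}_x$ be surjective, and its kernel is $T_x M$. This yields
\[
0 \to TM \to TX|_M \xrightarrow{\overline{dG}} \pi_M^* Y \to 0.
\]

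Combining this with the canonical $S^1$-equivariant vertical/horizontal exact sequence of the vector bundle $X \to B$,
\[
0 \to \pi_X^* X \to TX \to \pi_X^* TB \to 0,
\]
and passing to determinant line bundles gives canonical $S^1$-equivariant isomorphisms
\[
\det(TM) \otimes \pi_M^* \det(Y) \;\cong\; \det(TX)|_M \;\cong\; \pi_M^* \det(X) \otimes \pi_M^* \det(TB).
\]
Tensoring both sides by $\pi_M^* \det(TB)$ and using that for any real line bundle $L$ the tensor square $L^{\otimes 2}$ is canonically oriented (with an explicit trivialization supplied by an $S^1$-invariant fibre metric on $Y$), the rearrangement gives the desired $S^1$-equivariant isomorphism
\[
\psi_M : \det(TM \oplus \pi_M^* TB) \;\cong\; \pi_M^*\bigl(\det(Y) \otimes \det(X)\bigr) \;=\; \pi_M^* U.
\]

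The main subtlety is canonicity: the construction depends on a splitting of the SES for $TX$ (equivalently, a connection on $X \to B$) and on the fibre metric used to identify squared real line bundles with the trivial line bundle. However, both types of choices parametrise contractible, $S^1$-invariantly non-empty spaces, so all such $\psi_M$ are canonically homotopic and yield the same relative orientation class $w_1(U) \in H^1(B;\mathbb{Z}_2)$. The one place where I would need to be a little careful in writing out the proof is the identification $\overline{dG}_x = d_v f_x$ and the surjectivity statement it requires, since this is the only step that actually uses the transversality hypothesis on $\eta$.
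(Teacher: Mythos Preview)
Your proposal is correct and follows essentially the same approach as the paper: both arguments compare the two decompositions of $TX|_M$ (tangent/normal to $M$ versus vertical/horizontal for $\pi_X$), identify the normal bundle $NM$ with $\pi_M^* Y$ via transversality, and then pass to determinant line bundles. Your version is phrased in terms of short exact sequences rather than direct-sum splittings and is more careful about canonicity, but the underlying idea is identical; your caution about the identification $\overline{dG}_x = d_v f_x$ is well placed, since that equality only holds on vertical tangent vectors, though this does not affect the validity of the exact sequence you need.
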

\begin{proof}
$TX|_M$ can be decomposed in two ways: (1) using the vertical and horizontal subspaces with respect to $\pi_X : X \to B$ and (2) using the tangent and normal bundles of $M$. Equating these gives a canonical isomorphism:
\[
TM \oplus NM \cong \pi^*_M(V' \oplus TB),
\]
where $NM$ is the normal bundle of $M$ in $X$. But since $M = f^{-1}(\eta)$ and $\eta$ is transverse to $f$, we get a canonical isomorphism $NM \cong f^*( \pi_Y^*(W'))|_M \cong \pi^*_M(W')$. Hence
\[
TM \oplus \pi^*_M(W') \cong \pi^*_M(V' \oplus TB),
\]
which taking determinants gives $det(TM \oplus \pi_M^*(TB)) \cong \pi_M^*(U)$, where $U = det( Y \oplus X)$.
\end{proof}

Now we turn to the construction of Seiberg-Witten invariants of a finite dimensional monopole map $f : S_{V,U} \to S_{V',U'}$. Let $B_{V,U}$ denote the section at infinity of $S_{V,U}$ and similarly define $B_{V',U'}$. Our assumption that $f$ preserves sections at infinity means that we can regard $f$ as a map of pairs:
\[
f : ( S_{V\oplus U} , B_{V\oplus U} ) \to (S_{V'\oplus U'} , B_{V' \oplus U'})
\]
covering the identity on $B$. This point of view will quite useful in subsequent calculations.

Let $[\phi] \in \mathcal{CH}(f)$ be a chamber and let $\phi : B \to U'$ be a representative section (so in particular the image of $\phi$ is disjoint from $U$). By \cite[Theorem 7.1]{field} there exists an equivariant homotopy of $f$ to a map $f'$ such that $f'$ is $S^1$-transversal to $\phi(B)$ (for the definition of $G$-transversality for a compact Lie group $G$, see \cite{bier} or \cite{field}. The definitions of $G$-transversality given in these papers was shown to be equivalent in \cite{field2}). Note that $f'$ can be chosen arbitrarily close to $f$ in the $\mathcal{C}^0$-topology. Thus we can assume that ${f'}^{-1}( \phi(B) )$ is disjoint from $U$, so $S^1$ acts freely on ${f'}^{-1}(\phi(B))$. In such a case one easily verifies that the definition of $S^1$-equivariant transversality reduces to transversality in the ordinary sense. So $f'$ meets $\phi(B)$ transversally and $\widetilde{\mathcal{M}}_\phi = {f'}^{-1}( \phi(B) )$ is a compact smooth submanifold of $V \oplus U$ of dimension $2d-b^+ + dim(B)$. Moreover $S^1$ acts freely on $\widetilde{\mathcal{M}}_\phi$, so the quotient $\mathcal{M}_\phi = \widetilde{\mathcal{M}}_\phi/S^1$ is a compact smooth manifold of dimension $2d-b^+-1 + dim(B)$. The quotient map $\widetilde{\mathcal{M}}_\phi \to \mathcal{M}_\phi$ is a principal $S^1$-bundle. Let $\mathcal{L} = \widetilde{\mathcal{M}}_\phi \times_{S^1} \mathbb{C}$ be the associated complex line bundle over $\mathcal{M}_\phi$ and let $\pi_{\mathcal{M}_\phi} : \mathcal{M}_\phi \to B$ denote the natural projection map to $B$. By Lemma \ref{lem:orientation2}, we have a naturally defined isomorphism $\psi : det(T\mathcal{M}_\phi \oplus \pi_{\mathcal{M}_\phi}^*(TB)) \cong \pi^*_{\mathcal{M}_\phi}( det(H^+) )$.

\begin{definition}
Let $[\phi] \in \mathcal{CH}(f)$ be represented by $\phi$. Let $m \ge 0$ be a non-negative integer. The $m$-th Seiberg-Witten invariant of $(f , \phi)$ is the cohomology class
\[
SW_m( f , \phi ) = ({\pi_{\mathcal{M}_\phi}})_*( c_1(\mathcal{L})^m ) \in H^{2m-(2d-b^+-1)}(B ; \mathbb{Z}_{w_1})
\]
where $w_1 = w_1(H^+) \in H^1(B ; \mathbb{Z}_2)$ and $\mathbb{Z}_{w_1}$ is the associated local system with coefficient group $\mathbb{Z}$.
\end{definition}

We have defined $SW_m(f , \phi)$ as an invariant of the pair $(f,\phi)$. Later we will see that $SW_m(f,\phi)$ only depends on the stable homotopy class of $f$ and on the chamber to which $\phi$ belongs. However, before we do this we should check that $SW_m(f,\phi)$ does not depend on the choice of homotopy $f'$ of $f$ chosen so that $f'$ is $S^1$-transversal to $\phi(B)$. In fact this will follow easily once we show that $SW_m(f,\phi)$ can be expressed in purely cohomological terms and so we defer the proof until Subsection \ref{sec:swcohomology}.

\begin{proposition}
The invariants $SW_m(f,\phi)$ depend only on the stable homotopy class $[f]$ of $f$ and the chamber $[\phi]$.
\end{proposition}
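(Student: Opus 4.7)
The plan is to reduce both invariance statements to cobordism arguments on the moduli space $\widetilde{\mathcal{M}}_\phi$. First I would verify that $SW_m(f,\phi)$ is independent of the auxiliary $S^1$-transverse approximation $f'$ used to define it. Given two such approximations $f'_0, f'_1$ of the same $f$, apply \cite[Theorem 7.1]{field} to a generic $S^1$-equivariant homotopy $F : [0,1] \times S_{V,U} \to S_{V',U'}$ between them, rel.\ $\partial[0,1]$, to make $F$ $S^1$-transverse to the constant section $\phi$. Since $\phi(B)$ is disjoint from the fixed locus $U$, $S^1$ acts freely on $F^{-1}(\phi(B))$ and the quotient is a smooth compact cobordism between $\mathcal{M}^{f'_0}_\phi$ and $\mathcal{M}^{f'_1}_\phi$. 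The associated line bundle $\mathcal{L}$ and the relative orientation from Lemma~\ref{lem:orientation2} extend canonically across the cobordism, so the pushforward $(\pi_\mathcal{M})_*(c_1(\mathcal{L})^m)$ is the same at both ends.

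The same template handles chamber invariance and homotopy invariance simultaneously. Given two pairs $(f_0, \phi_0)$ and $(f_1, \phi_1)$ with $f_0$ and $f_1$ homotopic through finite dimensional monopole maps $\{f_t\}$ and $\phi_0, \phi_1$ representing the same chamber, a homotopy $\phi_t : B \to U' \setminus U$ between them exists by definition of a chamber. Viewing $\{f_t\}$ as a single equivariant map $F$ over $[0,1] \times B$ and $\{\phi_t\}$ as a section $\Phi$, I would apply $S^1$-transversality rel.\ boundary (using fixed transverse approximations of $f_0, f_1$ at the endpoints) to produce a compact cobordism $\widetilde{\mathcal{M}}_\Phi / S^1$ between the two moduli spaces, equipped with an extension of $(\mathcal{L}, \psi_M)$. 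Cobordism invariance of the pushforward finishes this case.

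Finally, stabilization invariance is a direct identification. For a complex vector bundle $A$, the stabilized map $f \wedge_B \mathrm{id}_{S_{A,0}} : S_{V \oplus A, U} \to S_{V' \oplus A, U'}$ has the same $H^+ = U' - U$ and hence the same chamber set. Using the same transverse $f'$ and the same $\phi$, the equation $(f' \wedge \mathrm{id})(v, u, a) = \phi(b) \in V' \oplus A \oplus U'$ forces $f'(v,u) = \phi(b)$ and $a = 0$, so $\widetilde{\mathcal{M}}_\phi$ is unchanged as an $S^1$-space over $B$ with the same associated line bundle $\mathcal{L}$. The bundle $\det(Y \oplus X)$ of Lemma~\ref{lem:orientation2} acquires an extra factor $\det_{\mathbb{R}}(A)^{\otimes 2}$, which is canonically oriented (either because $A$ is complex, or simply as a tensor square), so $\psi_M$ is preserved. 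Real stabilization by $B'$ is handled identically, since $H^+$ and the chamber set are again unchanged. The main obstacle throughout is arranging $S^1$-equivariant transversality in parametrised families rel.\ boundary; this is precisely the content of Field's theorem \cite{field}, and once it is in hand the cobordism bookkeeping and the extension of the orientation datum across the cobordism are essentially routine.
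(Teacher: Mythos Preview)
Your approach is correct but takes a genuinely different route from the paper. The paper does not prove this proposition directly; instead it defers the proof to Subsection~\ref{sec:swcohomology}, where Theorem~\ref{thm:pdclass} gives the cohomological formula
\[
SW_m(f,\phi) = (\pi_Y)_*( x^m \smallsmile f^*\phi_*(1) ).
\]
From this formula, independence of the transverse approximation $f'$ and invariance under homotopies of $f$ and $\phi$ are immediate, since the right-hand side involves only $f^*$ and $\phi_*$, which depend only on homotopy classes. Stabilization invariance is then handled by a separate cohomological computation (Proposition~\ref{prop:swsuspension}), tracking how the Thom classes and the pushforward $(\pi_Y)_*$ behave under smashing with $S_{A,0}$.

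Your argument, by contrast, stays entirely on the geometric side: parametrised $S^1$-transversality rel.\ boundary yields cobordisms of moduli spaces for the homotopy and chamber invariance, and for stabilization you observe directly that $(f' \wedge \mathrm{id}_A)^{-1}(\phi(B))$ is literally the same $S^1$-space as $(f')^{-1}(\phi(B))$, with the orientation datum $\psi_M$ unchanged up to a canonically trivial factor. This is more elementary in that it avoids the cohomological machinery of Section~\ref{sec:cohom}, and your treatment of stabilization is arguably cleaner than the paper's Euler-class bookkeeping in Proposition~\ref{prop:swsuspension}. The trade-off is that the paper's cohomological formula is doing much more work elsewhere (Steenrod squares, wall crossing, $K$-theory), so once it is in hand the invariance statements come essentially for free; your approach proves exactly what is asked and no more.
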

This will also follow easily once we have obtained a purely cohomological definition of $SW_m(f,\phi)$, so we also defer the proof of this until Subsection \ref{sec:swcohomology}. We will also prove a wall crossing formula, expressing the dependence of $SW_m(f,\phi)$ on the choice of chamber.

\subsection{Finite dimensional approximation}\label{sec:fda}

Let $f : \mathbb{V} \to \mathbb{W}$ be a monopole map satisfying (M1)-(M7). In this subsection we will recall how a finite dimensional approximation of $f$ is constructed, as in \cite{bafu}. 

Recall that $l_{\mathbb{R}}$ is injective and we have set $W_{0,\mathbb{R}} = Ker(l^*_{\mathbb{R}})$, a finite rank subbundle of $\mathbb{W}_{\mathbb{R}}$. Next, consider $l_{\mathbb{C}} : \mathbb{V}_{\mathbb{C}} \to \mathbb{W}_{\mathbb{C}}$. From \cite[Proposition A5]{at}, there exists a finite rank complex subbundle $W_{0,\mathbb{C}} \subset \mathbb{W}_{\mathbb{C}}$ such that $W_{0,\mathbb{C}}$ surjects to the cokernel of $l_\mathbb{C}$ for any $b \in B$. Set
\[
W_0 = W_{0,\mathbb{C}} \oplus W_{0 , \mathbb{R}} \subset \mathbb{W}.
\]
Then $W_0$ is a finite rank subbundle with the property that $W_0$ surjects to the cokernel of $l$ for any $b \in B$. Let
\[
D_1(\mathbb{W}) = \{ w \in \mathbb{W} \; | \; ||w|| \le 1 \}
\]
be the unit disc bundle in $\mathbb{W}$. By (M3), there exists an $R > 0$ such that
\[
f^{-1}(D_1(\mathbb{W})) \subseteq \{ v \in \mathbb{V} \; | \; ||v|| < R \}.
\]
In particular this implies:
\begin{equation}\label{equ:1}
{\rm if } \; ||v || \ge R, \text{ then } ||f(v)|| > 1.
\end{equation}
Now let $C \subset \mathbb{W}$ be the closure of $c( D_{4R}(\mathbb{V}))$. By compactness of $c$, the set $C$ is compact. Let $C'$ be the image of $C$ under the projection $\mathbb{W} \to \mathbb{W}/W_0 \cong W_0^\perp$. Then $C'$ is also compact. Fix a trivialisation $W^\perp_0 \cong H \times B$ for some Hilbert space $H$ (note that since $W_0$ has finite rank, $W_0^\perp$ has infinite rank, so can be trivialised. Moreover the real and complex parts of $W_0^\perp$ both have infinite rank. Trivialising them separately, we see that the trivialisation $W_0^\perp \cong H \times B$ can be chosen $S^1$-equivariantly for some action of $S^1$ on $H$). Under this trivialisation $C'$ is a compact subset of $H \times B$, hence, for any fixed $\epsilon > 0$, can be covered by finitely many open sets of the form
\[
\{ x \in H \; | \; ||x-h_i||< \epsilon \} \times U_i
\]
for some $h_1, \dots , h_n \in H$ and open subsets $U_1, \dots , U_n$ of $B$. Let $H_0$ be the span of $h_1, \dots , h_n, ih_1, \dots, ih_n$ (where $ih_j$ means $h_j$ acted upon by $i \in S^1$). Then $H_0$ is a finite rank $S^1$-equivariant subspace of $H$. Let $W_1$ be the preimage of $H_0 \times B$ under the trivialisation $W_0^\perp \cong H \times B$. Then $W_1$ is a finite rank subbundle of $\mathbb{W}$, orthogonal to $W_0$. Set
\[
W' = W_0 \oplus W_1.
\]
Then $W'$ is a finite rank $S^1$-equivariant subbundle of $\mathbb{W}$. By construction $W'$ satisfies the following two properties:
\begin{itemize}
\item{$W'$ surjects to the cokernel of $l$ for any $b \in B$.}
\item{Every element of $C$ is a distance less than $\epsilon$ from $W'$.}
\end{itemize}
Let $p : \mathbb{W} \to W'$ denote the orthogonal projection to $W'$ and $p^\perp = 1-p$ the orthogonal projection to $W'' = (W')^\perp$. The second property of $W'$ listed above implies the following important estimate:
\begin{equation}\label{equ:2}
{\rm if } \; ||v|| \le 4R, \; {\rm then} \; ||p^\perp c(v) || < \epsilon. 
\end{equation}
We emphasise here that the choice of $W'$ is dependent on the choice of $\epsilon$. We will say that a finite rank subbundle $W' \subset \mathbb{W}$ which contains $W_0$ and which satisfies (\ref{equ:2}) is a {\em subbundle of class $\epsilon$} (this notion depends on the choice of $R>0$ and choice of subbundle $W_0$, but it is the $\epsilon$ dependency that we wish to emphasise, hence the choice of terminology).

Given a subbundle $W' \subset \mathbb{W}$ of class $\epsilon$, let $V' = l^{-1}(W') \subseteq \mathbb{V}$. Then $V'$ is a finite rank subbundle of $\mathbb{V}$ (because of the fact that $W'$ surjects to the cokernel of $l$ for each $b \in B$). We also let $V'' = (V')^\perp$ be the orthogonal complement. Then we have
\[
\mathbb{V} = V' \oplus V'' \quad \mathbb{W} = W' \oplus W''
\]
and $l$ decomposes into $l = l_1 + l_2 + l_3$, where $l_1 = l|_{V'} : V' \to W'$, $l_2 = p^\perp \circ l|_{V''} : V'' \to W''$, $l_3 = p \circ l|_{V''} : V'' \to W'$.

Now we are almost ready to give the construction of a finite dimensional approximation of $f$. Let $H$ be a Hilbert space. By definition the $1$-point completed Hilbert sphere is defined as $S_H = S( \mathbb{R} \oplus H)$, the unit sphere in $\mathbb{R} \oplus H$. Similarly for a Hilbert bundle $\mathbb{V} \to B$ we define $S_{\mathbb{V}}$ to be the unit sphere bundle of $\mathbb{R} \oplus \mathbb{V}$. Thus $S_{\mathbb{V}}$ is obtained from $\mathbb{V}$ by adding a point at infinity to every fibre. The bounded property (M3) ensures that $f : \mathbb{V} \to \mathbb{W}$ extends continuously to a map $f : S_{\mathbb{V}} \to S_{\mathbb{W}}$. Consider the restriction
\[
f|_{S_{V'}} : S_{V'} \to S_{\mathbb{W}}.
\]
\begin{lemma}\label{lem:avoid}
Assume that $\epsilon < 1$. Then the image of $f|_{S_{V'}}$ is disjoint from the image of $S(W'')$ in $S_{\mathbb{W}}$.
\end{lemma}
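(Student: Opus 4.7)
The plan is to check the two obvious cases separately, using respectively the boundedness estimate \eqref{equ:1} and the $\epsilon$-approximation estimate \eqref{equ:2}. A point of $S(W'')$ is, by definition, a vector $w \in W''$ with $\|w\| = 1$, and the image of $S(W'')$ in $S_{\mathbb{W}}$ does not contain the section at infinity. In particular $f(\infty) = \infty$ poses no problem, so it is enough to consider $v \in V'$ with $\|v\| < \infty$, and to show that $f(v)$ is not a unit vector lying in $W''$.

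First I would dispose of the large-norm case: if $\|v\| \ge R$, then \eqref{equ:1} gives $\|f(v)\| > 1$, so $f(v) \notin S(W'')$ regardless of whether it lies in $W''$. For the remaining case $\|v\| < R$ (in particular $\|v\| \le 4R$), the crucial observation is that $l(v) \in W'$ by the very definition $V' = l^{-1}(W')$, hence $p^\perp l(v) = 0$, and therefore
\[
p^\perp f(v) = p^\perp l(v) + p^\perp c(v) = p^\perp c(v).
\]
Applying \eqref{equ:2} gives $\|p^\perp f(v)\| = \|p^\perp c(v)\| < \epsilon < 1$. If one supposes toward a contradiction that $f(v) \in S(W'')$, then $f(v) = p^\perp f(v)$ and $\|f(v)\| = 1$, contradicting $\|p^\perp f(v)\| < 1$. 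Hence $f(v) \notin S(W'')$.

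There is no real obstacle here; the lemma is essentially the statement that \eqref{equ:1} and \eqref{equ:2} were designed to give, and the whole argument is the two-line verification above. The only point worth double-checking is that the extended map $f : S_{\mathbb{V}} \to S_{\mathbb{W}}$ sends the section at infinity to the section at infinity, so that the case $v = \infty$ requires no separate computation.
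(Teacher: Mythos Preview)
Your proof is correct and essentially identical to the paper's argument: both handle the point at infinity trivially, then use \eqref{equ:1} to force $\|v\| < R$ and \eqref{equ:2} together with $p^\perp l(v) = 0$ (since $v \in V' = l^{-1}(W')$) to conclude $\|p^\perp f(v)\| < \epsilon < 1$, contradicting $\|f(v)\| = 1$. The only cosmetic difference is that the paper runs the whole thing as a single proof by contradiction rather than splitting into the two norm cases explicitly.
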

\begin{proof}
Since $f$ sends the point at infinity in $S_{V'}$ to the point at infinity in $S_\mathbb{W}$, we just have to show that the image of $f|_{V'}$ is disjoint from $S(W'') \subset \mathbb{W}$. Suppose on the contrary that there exists some $v \in V'$ with $f(v) \in S(W'')$. Thus $pf(v) = 0$ and $|| p^\perp f(v) || = ||f(v)|| = 1$. By (\ref{equ:1}), we see that $||v|| < R$ and thus by (\ref{equ:2}) it follows that $|| p^\perp c(v)|| \le \epsilon$. But if $v \in V'$, then $p^\perp l(v) = 0$ and thus 
\[
1 = || p^\perp f(v)|| = || p^\perp c(v)|| \le \epsilon < 1,
\]
a contradiction.
\end{proof}
Recall from \cite{bafu} that there is a deformation retraction
\[
\rho : S_{\mathbb{W}} \setminus S(W'') \to S_{W'}.
\]
We will recall the construction of $\rho$ in a moment. Then by Lemma \ref{lem:avoid}, we obtain a map
\[
\hat{f} = \rho \circ f|_{S_{V'}} : S_{V'} \to S_{W'}.
\]
We take $\hat{f}$ as our desired finite dimensional approximation of $f$.

We recall the definition of $\rho$. First of all we need to identify $\mathbb{W}$ as a subset of $S_{\mathbb{W}} = S(\mathbb{R} \oplus \mathbb{W})$. This is done by the stereographic map
\[
\mathbb{W} \ni w \mapsto \frac{1}{1 + ||w||^2} \left( ||w||^2-1 , 2w \right)
\]
and the point at infinity is $\infty = (1,0)$. Now under the decomposition $\mathbb{W} = W' \oplus W''$, we have
\[
S_{\mathbb{W}} = S(\mathbb{R} \oplus W' \oplus W'') = \{ (a,b,c) \in \mathbb{R} \oplus W' \oplus W'' \; | \; a^2 + ||b||^2 + ||c||^2 = 1\}.
\]
Under the stereographic map, $S(W'')$ corresponds to $(a,b,c) \in S_{\mathbb{W}}$ such that $||c|| \neq 1$ or equivalently, such that $a^2 + ||b||^2 \neq 0$. Then we may define $\rho$ by
\[
\rho(a,b,c) = \frac{1}{\sqrt{a^2 + ||b||^2}} (a,b,0).
\]
One can check that $\rho(\infty) = \infty$ and if $w \in \mathbb{W}$, then
\begin{equation}\label{equ:rho}
\rho(w) = \begin{cases} 0, & \text{if } pw=0 \text{ and } ||p^\perp w || < 1, \\ \infty, & \text{if } pw=0 \text{ and } ||p^\perp w || > 1, \\ \lambda(w)pw, & \text{otherwise}, \end{cases}
\end{equation}
where $\lambda(w)$ is a positive real number depending smoothly on $w \in \mathbb{W} \setminus S(W'')$ (the precise form of $\lambda(w)$ is not important. All we will need to know about $\lambda(w)$ is that it is a positive real number). Note that $\rho$ is a retraction to $S_{W'}$, meaning that $\rho(w) = w$ for any $w \in S_{W'}$.

\begin{proposition}
The map $\hat{f} : S_{V'} \to S_{W'}$ is a finite dimensional monopole map.
\end{proposition}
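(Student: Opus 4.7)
The plan is to verify the three requirements of the definition of a finite dimensional monopole map: the domain and codomain must be sphere bundles of the form $S_{V,U}$ with the correct $S^1$-actions, the map must be $S^1$-equivariant and preserve sections at infinity, and Assumption 1 must hold (a real subbundle of the source must be identified with a subbundle of the target via the inclusion).

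First I would unpack the complex/real decompositions. Since $W'$ is $S^1$-invariant by construction, the decomposition $\mathbb{W} = \mathbb{W}_\mathbb{C} \oplus \mathbb{W}_\mathbb{R}$ restricts to $W' = W'_\mathbb{C} \oplus W'_\mathbb{R}$ with $W'_\mathbb{C} = W' \cap \mathbb{W}_\mathbb{C}$ a complex subbundle and $W'_\mathbb{R} = W' \cap \mathbb{W}_\mathbb{R}$ a real subbundle. (Indeed, for any $w \in W'$, writing $w = w_\mathbb{C} + w_\mathbb{R}$, the $S^1$-average of the orbit lies in $W'$ and equals $w_\mathbb{R}$, whence $w_\mathbb{C} = w - w_\mathbb{R} \in W'$.) By (M4) the operator $l = l_\mathbb{C} \oplus l_\mathbb{R}$ preserves the splitting, so $V' = l^{-1}(W') = V'_\mathbb{C} \oplus V'_\mathbb{R}$ where $V'_\mathbb{C} = l_\mathbb{C}^{-1}(W'_\mathbb{C})$ and $V'_\mathbb{R} = l_\mathbb{R}^{-1}(W'_\mathbb{R})$. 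Hence $S_{V'} = S_{V'_\mathbb{C}, V'_\mathbb{R}}$ and $S_{W'} = S_{W'_\mathbb{C}, W'_\mathbb{R}}$, with $S^1$ acting by scalar multiplication on the complex summands and trivially on the real summands, as required. Equivariance of $\hat{f} = \rho \circ f|_{S_{V'}}$ is then immediate: $f$ is $S^1$-equivariant by hypothesis, and $\rho$ is $S^1$-equivariant by inspection of its defining formula, since $p, p^\perp$ are orthogonal projections onto $S^1$-invariant subbundles and norms are preserved. Preservation of sections at infinity follows from $f(\infty) = \infty$, which comes from the continuous extension afforded by (M3), together with $\rho(\infty) = \infty$.

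The crucial point is Assumption 1, and this is where condition (M5) does the work. For any $v \in V'_\mathbb{R}$, we have $c(v) = 0$, hence $f(v) = l(v) = l_\mathbb{R}(v) \in W'_\mathbb{R} \subseteq W'$. Consequently $pf(v) = l_\mathbb{R}(v)$ and $p^\perp f(v) = 0$, so by the explicit formula (\ref{equ:rho}) (or just the retraction property of $\rho$ on $S_{W'}$) we obtain $\hat{f}(v) = l_\mathbb{R}(v)$. Since $l_\mathbb{R}$ is injective by (M4), it identifies $V'_\mathbb{R}$ with the subbundle $l_\mathbb{R}(V'_\mathbb{R}) \subseteq W'_\mathbb{R}$, and under this identification $\hat{f}|_{V'_\mathbb{R}}$ is precisely the inclusion. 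This is Assumption 1, and it completes the verification. There is no substantive obstacle: every required property has been built into $f$ via the axioms (M1)--(M5) and into the construction of $W'$; the proposition is essentially a matter of unwinding the definitions.
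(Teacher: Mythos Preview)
Your proof is correct and follows essentially the same approach as the paper's: both use (M5) to kill the nonlinear term on the real part, then invoke the retraction property of $\rho$ to conclude that $\hat{f}|_{V'_\mathbb{R}} = l_\mathbb{R}|_{V'_\mathbb{R}}$, which is a linear injection by (M4). Your version is more detailed in spelling out the complex/real decomposition of $V'$ and $W'$ and the equivariance of $\rho$, which the paper leaves implicit.
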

\begin{proof}
We just have to check that $\hat{f}$ preserves the section at infinity and that it satisfies Assumption 1. But $f$ and $\rho$ both clearly preserve the sections at infinity hence so does $\hat{f}$. For Assumption 1, note that if $v \in (V')^{S^1}$, then $c(v)=0$ by (M5), so $f(v) = l_1(v) \in W'$. But if $w \in W'$ then $\rho(w) = w$ because $\rho$ is a retraction to $S_{W'}$. Hence $\hat{f}(v) = \rho(l_1(v)) = l_1(v)$. So the restriction of $\hat{f}$ to $(V')^{S^1}$ is just the restriction of $l_\mathbb{R}$ to $(V')^{S^1}$ and is therefore a linear injection.
\end{proof}

In order to identify the Seiberg-Witten invariants of $f$ and its finite dimensional approximation $\hat{f}$, we must first be able to identify the chambers. Let $(W')^{S^1}$, $(V')^{S^1}$ denote the $S^1$-invariant parts of $W',V'$. Then in the notation of Subsection \ref{sec:fsw} we have correspondences $(W')^{S^1} \leftrightarrow U'$, $(V')^{S^1} \leftrightarrow U$ and $W_{0,\mathbb{R}} \leftrightarrow H^+$. In particular a chamber for $\hat{f} : S_{V'} \to S_{W'}$ is a homotopy class of section $\hat{\eta} : B \to (W')^{S^1} \setminus l_1( (V')^{S^1})$.

The following proposition is easily checked using the definitions of $V',W'$.
\begin{proposition}
Let $\eta : B \to \mathbb{W}_{\mathbb{R}} \setminus l_{\mathbb{R}}( \mathbb{V}_{\mathbb{R}})$ represent a chamber for $f$. Then $p \circ \eta : B \to W'$ is valued in $(W')^{S^1} \setminus l_1( (V')^{S^1})$ and hence represents a chamber for $\hat{f}$. The map $\eta \mapsto p \circ \eta$ induces a bijection between chambers of $f$ and $\hat{f}$.
\end{proposition}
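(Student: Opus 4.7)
The plan is to reduce everything to the $S^1$-fixed real subbundles and exploit the orthogonality between the image of $l_\mathbb{R}$ and $W_{0,\mathbb{R}}$. Since $S^1$ acts by scalar multiplication on the complex factors and trivially on the real factors, we have $(V')^{S^1} = V'_\mathbb{R}$ and $(W')^{S^1} = W'_\mathbb{R}$. First I would establish the two orthogonal decompositions
\[
\mathbb{W}_\mathbb{R} = l_\mathbb{R}(\mathbb{V}_\mathbb{R}) \oplus W_{0,\mathbb{R}}, \qquad W'_\mathbb{R} = l_\mathbb{R}(V'_\mathbb{R}) \oplus W_{0,\mathbb{R}}.
\]
The first follows from injectivity of $l_\mathbb{R}$ (condition (M4)) together with the definition $W_{0,\mathbb{R}} = \ker(l_\mathbb{R}^*)$. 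For the second, I would intersect the first decomposition with $W'_\mathbb{R}$; the containment $W_{0,\mathbb{R}} \subseteq W'_\mathbb{R}$ holds by construction of $W'$, and the identity $W'_\mathbb{R} \cap l_\mathbb{R}(\mathbb{V}_\mathbb{R}) = l_\mathbb{R}(V'_\mathbb{R}) = l_1((V')^{S^1})$ is immediate from $V' = l^{-1}(W')$ combined with $l|_{\mathbb{V}_\mathbb{R}} = l_\mathbb{R}$.

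For the first assertion of the proposition, I would write an arbitrary $\eta : B \to \mathbb{W}_\mathbb{R}$ uniquely as $\eta = l_\mathbb{R}(\tilde v) + \eta_0$, with $\tilde v$ a section of $\mathbb{V}_\mathbb{R}$ and $\eta_0$ a section of $W_{0,\mathbb{R}}$. The chamber condition on $\eta$ is exactly that $\eta_0$ is nowhere zero. Because $W_{0,\mathbb{R}} \subseteq W'$, orthogonal projection $p$ fixes $\eta_0$, while $l_\mathbb{R}(\tilde v) \perp W_{0,\mathbb{R}}$ forces $p\, l_\mathbb{R}(\tilde v) \in W'_\mathbb{R} \cap W_{0,\mathbb{R}}^\perp = l_\mathbb{R}(V'_\mathbb{R})$. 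Hence $p \eta = p\, l_\mathbb{R}(\tilde v) + \eta_0$ has nowhere-vanishing $W_{0,\mathbb{R}}$-component, so $p\eta$ avoids $l_1((V')^{S^1})$, as required.

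For the bijection on chambers, I would observe that orthogonal projection onto $W_{0,\mathbb{R}}$ gives fibrewise deformation retractions
\[
\mathbb{W}_\mathbb{R} \setminus l_\mathbb{R}(\mathbb{V}_\mathbb{R}) \;\longrightarrow\; W_{0,\mathbb{R}} \setminus \{0\}, \qquad W'_\mathbb{R} \setminus l_1((V')^{S^1}) \;\longrightarrow\; W_{0,\mathbb{R}} \setminus \{0\},
\]
by linearly sliding along $l_\mathbb{R}(\mathbb{V}_\mathbb{R})$ and $l_\mathbb{R}(V'_\mathbb{R})$ respectively. Under these retractions both chamber sets identify with homotopy classes of nowhere-vanishing sections of $W_{0,\mathbb{R}}$, and since $p$ restricts to the identity on $W_{0,\mathbb{R}}$ the assignment $\eta \mapsto p \circ \eta$ corresponds to the identity on this common set, yielding the bijection. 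The only real point of care---essentially the only obstacle---is the identification $W'_\mathbb{R} \cap l_\mathbb{R}(\mathbb{V}_\mathbb{R}) = l_\mathbb{R}(V'_\mathbb{R})$ used in the orthogonal decomposition of $W'_\mathbb{R}$; once that is in place, everything else is routine linear algebra and a straight-line homotopy.
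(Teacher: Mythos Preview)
Your proof is correct and carries out precisely the verification the paper leaves to the reader (``easily checked using the definitions of $V',W'$''). The key identification $W'_\mathbb{R} \cap l_\mathbb{R}(\mathbb{V}_\mathbb{R}) = l_\mathbb{R}(V'_\mathbb{R})$ and the resulting orthogonal splitting $W'_\mathbb{R} = l_\mathbb{R}(V'_\mathbb{R}) \oplus W_{0,\mathbb{R}}$ are exactly what one needs, and your deformation-retraction argument for the bijection is the natural one.
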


\begin{lemma}
Let $\hat{\eta} : B \to (W')^{S^1} \setminus l_1( (V')^{S^1})$ be a chamber for $\hat{f}$. Then $\hat{f}^{-1}( \hat{\eta} )$ contains no fixed points.
\end{lemma}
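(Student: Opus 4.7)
The plan is to analyze $\hat{f}$ directly on the $S^1$-fixed locus of $S_{V'}$ and show its image is contained in exactly the set that $\hat{\eta}$ is assumed to avoid.

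First I would identify the fixed-point set. Since $S^1$ acts on $V' = V'_{\mathbb{C}} \oplus V'_{\mathbb{R}}$ by scalar multiplication on the complex summand and trivially on the real summand, the fixed points of $V'$ are precisely $(V')^{S^1} = V'_{\mathbb{R}}$. The action extends to $S_{V'}$ fixing the section at infinity, so $(S_{V'})^{S^1} = S_{(V')^{S^1}}$, the one-point compactified fibres of $(V')^{S^1}$.

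Next I would compute $\hat{f}$ on these fixed points. For any $v \in (V')^{S^1}$, hypothesis (M5) gives $c(v) = 0$, so $f(v) = l(v)$; since $v \in V'$, this equals $l_1(v) \in W'$. Because $\rho$ is a retraction onto $S_{W'}$, we have $\rho(l_1(v)) = l_1(v)$, so $\hat{f}(v) = l_1(v)$. At the section at infinity, $\hat{f}$ is the identity by construction. Therefore the restriction of $\hat{f}$ to $(S_{V'})^{S^1}$ lands in $l_1((V')^{S^1}) \cup \{\infty\}$.

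Finally I would conclude the lemma. Suppose $v \in (S_{V'})^{S^1}$ satisfies $\hat{f}(v) = \hat{\eta}(\pi(v))$. Since $\hat{\eta}$ takes values in the (non-compactified) vector bundle $(W')^{S^1} \setminus l_1((V')^{S^1})$, the point $\hat{\eta}(\pi(v))$ is not $\infty$, so $v \ne \infty$ and hence $v \in (V')^{S^1}$. Then $\hat{\eta}(\pi(v)) = \hat{f}(v) = l_1(v) \in l_1((V')^{S^1})$, contradicting the choice of $\hat{\eta}$. There is really no subtle obstacle here; the proof is a direct unwinding of the definitions, with the only ingredients being (M5) (to kill the nonlinear term on the fixed locus) and the retraction property of $\rho$ (to ensure $\hat{f}$ agrees with $l_1$ on the fixed part).
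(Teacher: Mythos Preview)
Your proof is correct and follows the same approach as the paper. The paper's own proof is a single sentence (``This is trivial, since the image of $\hat{f}|_{(V')^{S^1}}$ is precisely $l_1((V')^{S^1})$'') because the computation you spell out---that $\hat{f}(v) = l_1(v)$ for $v \in (V')^{S^1}$ via (M5) and the retraction property of $\rho$---was already carried out in the proof of the immediately preceding proposition verifying Assumption~1 for $\hat{f}$.
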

\begin{proof}
This is trivial, since the image of $\hat{f}|_{(V')^{S^1}}$ is precisely $l_1( (V')^{S^1})$.
\end{proof}

\begin{lemma}\label{lem:finiteperturb}
Any chamber of $\hat{f}$ may be represented by a section $\hat{\eta} : B \to (W')^{S^1} \setminus l_1( (V')^{S^1})$ with $||\hat{\eta}|| < \delta$. Moreover there exists an equivariant homotopy of $\hat{f}$ to a map $\hat{f}' : S_{V'} \to S_{W'}$ such that $\hat{f}'$ is transverse to $\hat{\eta}$. The map $\hat{f}'$ can additionally be chosen arbitrarily close to $\hat{f}$ in the $\mathcal{C}^0$-topology.
\end{lemma}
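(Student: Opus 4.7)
The plan is to treat the three assertions of the lemma in sequence.

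For the existence of a small-norm representative, observe that the target space $(W')^{S^1} \setminus l_1((V')^{S^1})$ is the complement of a linear subbundle in $(W')^{S^1}$, and in particular it is invariant under fibrewise positive scaling. Thus, given any representative $\hat{\eta}_0$ of the chamber, the rescalings $\{t\hat{\eta}_0\}_{t \in (0,1]}$ form a homotopy through representatives of the same chamber. Since $B$ is compact, for $t$ sufficiently small we obtain $\|t\hat{\eta}_0\| < \delta$ pointwise; set $\hat{\eta} = t\hat{\eta}_0$.

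For the transversality assertion, the key geometric input is the previous lemma, which tells us that $\hat{f}^{-1}(\hat{\eta}(B))$ contains no $S^1$-fixed points. Moreover $\hat{f}$ preserves the sections at infinity and $\hat{\eta}(B)$ is bounded in $W'$, so $\hat{f}^{-1}(\hat{\eta}(B))$ is also bounded away from the section at infinity of $S_{V'}$. We may therefore choose a compact $S^1$-invariant subset $K \subset V' \subset S_{V'}$ whose interior contains $\hat{f}^{-1}(\hat{\eta}(B))$, on which $S^1$ acts freely, together with an $S^1$-invariant smooth bump function $\beta$ equal to $1$ on a neighborhood of $\hat{f}^{-1}(\hat{\eta}(B))$ and compactly supported in the interior of $K$.

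With $K$ and $\beta$ fixed, I would consider perturbations of the form $\hat{f}_\sigma = \hat{f} + \beta\sigma$, where $\sigma : V' \to W'$ is a smooth $S^1$-equivariant bundle map of small $\mathcal{C}^0$ norm. Since $\beta$ has compact support in $V'$, each $\hat{f}_\sigma$ extends to a smooth $S^1$-equivariant map $S_{V'} \to S_{W'}$ agreeing with $\hat{f}$ near the section at infinity and outside $K$, and the linear path $\hat{f}_{s\sigma}$, $s \in [0,1]$, provides the required equivariant homotopy through finite dimensional monopole maps. On $K$ the $S^1$-action is free, so $S^1$-equivariant transversality of $\hat{f}_\sigma$ to the $S^1$-invariant submanifold $\hat{\eta}(B)$ (which itself lies in the fixed point set of $W'$) reduces to classical transversality of the induced map on the quotient. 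Hence a standard Sard--Smale argument, applied to a sufficiently rich finite-parameter family of $S^1$-equivariant $\sigma$'s whose evaluation at each point of $K$ surjects onto $W'$, yields an arbitrarily small $\sigma$ for which $\hat{f}' := \hat{f}_\sigma$ is transverse to $\hat{\eta}(B)$ on $K$. Outside $K$ we have $\hat{f}' = \hat{f}$, whose image does not meet $\hat{\eta}(B)$ by construction of $K$, so transversality holds globally. The $\mathcal{C}^0$-closeness assertion is automatic, since $\|\hat{f}' - \hat{f}\|_{\mathcal{C}^0} \le \|\sigma\|_{\mathcal{C}^0}$.

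The main technical point, which I regard as the principal obstacle, is constructing the parameter family of $S^1$-equivariant perturbations with surjective total derivative over $K$. This is handled by working locally on slices for the free $S^1$-action: on a slice, $S^1$-equivariant sections of $\operatorname{Hom}(V', W')$ are identified with arbitrary smooth sections on the slice, and a finite $S^1$-invariant partition of unity together with a standard basis of constant sections on each slice produces the required finite-parameter submersive family. Alternatively, one may simply invoke \cite[Theorem 7.1]{field} on the open subset where the action is free, where $G$-transversality and ordinary transversality coincide.
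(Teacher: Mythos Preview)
Your proof is correct and follows essentially the same route as the paper, which simply says ``This follows by the same type of argument used in Subsection~\ref{sec:fsw}'' (i.e., invoke Field's equivariant transversality theorem and use $\mathcal{C}^0$-closeness to keep the preimage in the free locus). Your version is more explicit---you localize to a compact set $K$ in the free locus first and then either build the perturbation family by hand or invoke \cite[Theorem~7.1]{field} there---but this is just a fleshed-out form of the paper's one-line argument, and your final alternative is exactly what the paper does.
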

\begin{proof}
This follows by the same type of argument used in Subsection \ref{sec:fsw}.
\end{proof}

\subsection{Equality of Seiberg-Witten invariants}\label{sec:equalitysw}
Let $f : \mathbb{V} \to \mathbb{W}$ be a monopole map satisfying (M1)-(M7). Let $R>0$ and $W_0$ be chosen as in Subsection \ref{sec:fda}. Let $\epsilon > 0$ and let $W' \subset \mathbb{W}$ be a finite rank subbundle of class $\epsilon$. Let $W'',V',V''$ be defined as in Subsection \ref{sec:fda} and recall that $l = l_1+l_2+l_3$, where $l_1 : V' \to W'$, $l_2 : V'' \to W''$, $l_3 : V'' \to W'$.

Assume that $\epsilon < 1$ so that the finite dimensional approximation $\hat{f} : S_{V'} \to S_{W'}$ is defined and is a finite dimensional monopole map.

Let $[\eta] \in \mathcal{CH}(f)$ be a chamber represented by $\eta : B \to \mathbb{W}_\mathbb{R} \setminus l_\mathbb{R}(\mathbb{V}_\mathbb{R})$ and assume that $\eta$ is transverse to $f$. Let $\hat{\eta} = p\eta : B \to (W')^{S^1} \setminus l_1( (V')^{S^1})$. Then $\hat{\eta}$ represents the corresponding chamber of $\hat{f}$. Under these conditions we have defined the families Seiberg-Witten invariants of $(f,\eta)$ and also the families Seiberg-Witten invariants of $(\hat{f} , \hat{\eta})$. In this subsection we will establish their equality.

\begin{theorem}\label{thm:swequality}
Let $R$ and $W_0$ be fixed as above. For all sufficiently small $\epsilon > 0$, if $W'$ is a finite rank subbundle of class $\epsilon$ and $\hat{f} : S_{V'} \to S_{W'}$ the correponding finite dimensional approximation, then
\[
SW_m(f,\eta) = SW_m(\hat{f},\hat{\eta})
\]
for all $m \ge 0$.
\end{theorem}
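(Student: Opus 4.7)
The plan is to construct an $S^1$-equivariant compact smooth cobordism, compatible with the orientation and tautological line-bundle data of Lemma~\ref{lem:orientation}, between $\widetilde{\mathcal{M}}_\eta = f^{-1}(\eta)$ and a transverse perturbation of $\hat f^{-1}(\hat\eta)$. Cobordism invariance of $SW_m$, established at the end of Subsection~\ref{sec:familiesmonopole}, will then yield the equality.

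First I would exploit the freedom in choosing $\eta$. Since chambers of $f$ correspond to homotopy classes of non-vanishing sections of the finite rank bundle $W_{0,\mathbb{R}} = Ker(l_\mathbb{R}^*)$, which by construction lies inside $W'$, I may represent $[\eta]$ by a section valued in $W_{0,\mathbb{R}}$; then $p^\perp\eta = 0$ and $\hat\eta = p\eta = \eta$. Using (M6), I further take $\|\eta\|<\delta$ for a small $\delta$ to be fixed and arrange transversality of $\eta$ with $f$. A quick check shows that $l_2 : V'' \to W''$ is a linear isomorphism: injectivity follows from $V' = l^{-1}(W')$, and surjectivity follows from the fact that $W'$ surjects onto $coker(l)$. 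This will force $v'' = 0$ at the finite-dimensional end of the cobordism.

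The central construction is the $S^1$-equivariant interpolation
\[
F_t(v',v'') = \bigl(\,l_1 v' + l_3 v'' + p\,c(v'+v'')\,,\ l_2 v'' + t\, p^\perp c(v'+v'')\,\bigr), \quad t \in [0,1],
\]
so that under $\mathbb{W} = W' \oplus W''$ one has $F_1 = f$, while $F_0(v',v'') = (\eta,0)$ forces $v'' = 0$ and reduces to $\hat f_0(v') := l_1 v' + p\,c(v') = \eta$ on $V'$. The main obstacle is compactness of $\mathcal{C} := \{(t,v)\in[0,1]\times\mathbb{V}:F_t(v)=(\eta,0)\}$: I bootstrap from the approximation estimates. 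On any solution with $\|v\|\le 4R$, estimate (\ref{equ:2}) gives $\|p^\perp c(v)\|<\epsilon$; and from $F_t(v)=(\eta,0)$ one computes $f(v) = (\eta,(1-t)p^\perp c(v))$, whence $\|f(v)\|<\delta+\epsilon<1$ (after fixing $\delta+\epsilon<1$) and (\ref{equ:1}) yields $\|v\|<R$. A continuity argument in $t$, anchored at $t=1$ by the boundedness property (M3) of $f$ itself, rules out solutions with $\|v\|>4R$. Smoothness of $\mathcal{C}$ and Fredholmness of its linearisation follow from Lemma~\ref{lem:compact}, since $dF_t$ differs from $l$ by a compact operator in the fibre directions; transversality is obtained via an auxiliary path $\eta_t$ supplied by a parametric extension of (M6), in the spirit of Lemma~\ref{lem:cobordisminvariance}.

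Finally I would identify the $t = 0$ end $\hat f_0^{-1}(\eta)$ with a transverse representative of $\hat f^{-1}(\hat\eta)$ via two further cobordisms. On the bounded region $\|v'\|<R$, formula (\ref{equ:rho}) gives $\hat f(v') = \lambda(f(v'))\hat f_0(v')$ with $\lambda>0$ a smooth positive function, so the rescaling homotopy $\bigl((1-s) + s\,\lambda(f(v'))\bigr)\hat f_0(v') = \eta$, $s\in[0,1]$, provides a compact $S^1$-cobordism from $\hat f_0^{-1}(\eta)$ to $\hat f^{-1}(\eta)$, and a Lemma~\ref{lem:cobordisminvariance}-type cobordism then replaces $\hat f$ by its transverse perturbation $\hat f'$. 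Throughout, because the linearisations all differ from $l$ by $S^1$-equivariant compact operators, the argument of Lemma~\ref{lem:orientation} identifies the determinant bundle along the cobordism with $\pi^* det(Ind(l_\mathbb{R}))$ and the tautological $S^1$-bundle extends globally. Concatenating the three cobordisms and invoking cobordism invariance of $SW_m$ completes the proof.
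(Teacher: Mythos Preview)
Your overall strategy—interpolate linearly between $f$ and $pf$ in the infinite-dimensional space, then rescale to $\hat f$—is a legitimate alternative to the paper's route, and in fact bypasses the contraction-mapping construction of $\theta$ and the intermediate map $f_2$ entirely: your $\hat f_0$ is exactly the paper's $f_3$, and your rescaling homotopy is the paper's $h_t$. The paper instead first uses the Banach fixed-point theorem (Lemma~\ref{lem:contraction}) to solve the $W''$-equation for $v''=\theta(v')$, obtaining a \emph{diffeomorphism} $M_1\cong M_2$; from that point on every homotopy lives in the finite-dimensional disc $D_R(V')$, where compactness is automatic and one only needs the boundary estimates of Lemma~\ref{lem:notsmall}.

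There is, however, a genuine gap in your compactness step. Your bootstrap correctly shows that any solution of $F_t(v)=(\eta,0)$ with $\|v\|\le 4R$ in fact satisfies $\|v\|<R$. But the sentence ``a continuity argument in $t$, anchored at $t=1$, rules out solutions with $\|v\|>4R$'' does not go through: nothing prevents a family of solutions with $\|v_n\|\to\infty$ at some $t<1$, and there is no evident properness of $F_t$ uniform in $t$ to exclude this (the perturbation $\tilde c_t=(pc,\,tp^\perp c)$ is compact, but $F_t$ need not satisfy (M3)). The set of $t$ for which all solutions lie in $D_R$ is not obviously open or closed without a priori bounds, since a putative limit or nearby-solution argument needs exactly the compactness you are trying to prove. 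The correct fix is not to rule out outer solutions at all but simply to \emph{restrict the cobordism to $[0,1]\times D_{4R}(\mathbb{V})$}: you have shown no solutions lie in the shell $R\le\|v\|\le 4R$, so the restricted solution set stays in $[0,1]\times D_R$, is closed and bounded there, and is then compact by the standard Fredholm-plus-compact argument (extract a subsequence so that $c(v_n)$ and hence $l(v_n)$ converge, then use that $l$ is Fredholm). At $t=1$ this restricted cobordism still has boundary exactly $M_1$, since $f^{-1}(\eta)\subset D_R$ by (M3); at $t=0$ it gives $\hat f_0^{-1}(\eta)\cap D_R(V')$, which is all you need for the next step. A secondary imprecision: invoking (M6) for transversality of $F_t$ is off, since (M6) is stated only for $f$; what you actually need is a Sard--Smale argument for the Fredholm family $(t,v)\mapsto F_t(v)$, together with care that the perturbation of $\eta$ stays within the chamber.
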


The rest of this subsection is concerned with the proof of Theorem \ref{thm:swequality}. The overall strategy of the proof is as follows. Recall that in Subsection \ref{sec:familiesmonopole} we defined the Seiberg-Witten invariants of a triple $(M , \pi_M , \psi_M)$. We also defined the notion of a corbordism of triple and showed that the Seiberg-Witten invariants of a triple are cobordism independent. Associated to the pair $(f,\eta)$ is a triple $(M , \pi_M , \psi_M)$ where $M = f^{-1}(\eta)$, $\pi_M$ is the natural projection and $\psi_M$ is the isomorphism given by Lemma \ref{lem:orientation}. The Seiberg-Witten invariants of $(f,\eta)$ are the Seiberg-Witten invariants of the triple $(M , \pi_M, \psi_M)$.

Similarly, let $\hat{f}'$ and $\hat{\eta}$ be as in Lemma \ref{lem:finiteperturb}. Then $\hat{M} = \widetilde{\mathcal{M}}_{\hat{\eta}} = (\hat{f}')^{-1}(\hat{\eta})$ is a compact smooth manifold with a free $S^1$-action and an $S^1$-invariant map to $B$. By Lemma \ref{lem:orientation2} we obtain an $S^1$-equivariant isomorphism $\psi_{\hat{M}} : det( T\widetilde{\mathcal{M}}_{\hat{\eta}} \oplus \pi^*_{\mathcal{M}_\eta}(TB) ) \cong \pi^*_{\mathcal{M}_\eta}( det( Ind(l_\mathbb{R}) ))$ and thus a triple $(\hat{M} , \pi_{\hat{M}} , \psi_{\hat{M}})$. Then the Seiberg-Witten invariants of $(\hat{f} , \hat{\eta})$ are, essentially by definition, the Seiberg-Witten invariants of the triple $(\hat{M}, \pi_{\hat{M}} , \psi_{\hat{M}})$.

Thus to prove Theorem \ref{thm:swequality}, it will be enough to show that the triples $(M , \pi_M , \psi_M)$ and $(\hat{M} , \pi_{\hat{M}} , \psi_{\hat{M}})$ are cobordant, provided $\epsilon$ is sufficiently small. We will construct such a cobordism through a sequence of steps, but first we need some preliminary results.

\begin{lemma}
For any $\epsilon > 0$ and any $W'$ of class $\epsilon$ we have:
\begin{itemize}
\item{$l_2$ is invertible and $|| l_2^{-1} || \le K$ for some constant $K$ which is independent of $W'$ and $\epsilon$.}
\item{$||l_3|| \le T$ for some constant $T$ which is independent of $W'$ and $\epsilon$.}
\end{itemize}
\end{lemma}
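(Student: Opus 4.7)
The plan is to prove the two bounds separately: the bound on $\|l_3\|$ is essentially immediate from the boundedness of $l$, while the bound on $\|l_2^{-1}\|$ requires a more subtle reduction to the fixed subbundle $W_0$. For $l_3$, since $p$ is an orthogonal projection we have $\|p\| \le 1$, so $\|l_3\| = \|p \circ l|_{V''}\| \le \|l\|$; because $l$ is a smooth Fredholm bundle map over the compact base $B$, the fibrewise operator norm $\|l_b\|$ varies continuously and is uniformly bounded, giving a constant $T := \sup_{b \in B} \|l_b\|$ depending only on $l$.

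For $l_2$, I would first reinterpret it as a map of quotient Hilbert spaces. The orthogonal projections $\mathbb{V} \to V''$ and $\mathbb{W} \to W''$ are isometric identifications $V'' \cong \mathbb{V}/V'$ and $W'' \cong \mathbb{W}/W'$, under which $l_2$ corresponds to the induced quotient map $\bar{l}_2 : \mathbb{V}/V' \to \mathbb{W}/W'$. One then checks that $\bar{l}_2$ is a fibrewise bijection: it is well defined because $l(V') \subseteq W'$; injective because $l(v) \in W'$ forces $v \in l^{-1}(W') = V'$; and surjective because the hypotheses $W_0 \subseteq W'$ and $W_0$ surjects onto $\mathrm{coker}(l)$ yield $\mathbb{W} = l(\mathbb{V}) + W'$.

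The crux is the uniform bound on $\|\bar{l}_2^{-1}\|$, which I would obtain by reducing to the fixed data $W_0$. Consider the analogous induced map $\tilde{l} : \mathbb{V}/l^{-1}(W_0) \to \mathbb{W}/W_0$; the same verification shows $\tilde{l}$ is a fibrewise bijection, so the open mapping theorem combined with continuity in $b \in B$ and compactness of $B$ yields a finite constant $C := \sup_{b \in B} \|\tilde{l}_b^{-1}\|$ depending only on $l$ and $W_0$. Given $w \in W''$, lift $[w] \in \mathbb{W}/W_0$ via $\tilde{l}^{-1}$ to the minimal-norm representative $v_0 \in l^{-1}(W_0)^\perp$, satisfying $\|v_0\| \le C\|w\|$ and $l(v_0) - w \in W_0 \subseteq W'$. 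Denoting by $P_{V''}$ and $P_{V'}$ the orthogonal projections onto $V''$ and $V'$ respectively, set $v := P_{V''}(v_0)$; then $\|v\| \le \|v_0\| \le C\|w\|$, and $l_2(v) = p^\perp l(v_0) = p^\perp w = w$ since $l(P_{V'} v_0) \in W'$ and $l(v_0) - w \in W_0 \subseteq W'$ both vanish under $p^\perp$. This proves $\|l_2^{-1}\| \le C =: K$. The main obstacle is precisely this uniformity in $W'$ and $\epsilon$: a direct application of the open mapping theorem to $\bar{l}_2$ would give a constant potentially depending on $W'$, but enlarging $W_0$ to $W' \supseteq W_0$ only shrinks $\mathbb{W}/W'$ and enlarges $V' = l^{-1}(W')$, so orthogonal projection transfers the uniform estimate at $W_0$ into one at $W'$ without loss.
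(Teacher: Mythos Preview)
Your proof is correct and follows essentially the same approach as the paper. The paper works directly with the orthogonal complements $V_0^\perp = l^{-1}(W_0)^\perp$ and $W_0^\perp$ and the map $\tilde{l}_2 = (p_1+p_2)\circ l|_{V_0^\perp}$, while you phrase the same construction in terms of the quotient map $\tilde{l}:\mathbb{V}/l^{-1}(W_0)\to\mathbb{W}/W_0$; under the isometric identifications of quotients with orthogonal complements these are identical, and your projection step $v = P_{V''}(v_0)$ is exactly the paper's decomposition $v = v_1 + v''$ followed by taking the $V''$-component.
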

\begin{proof}
We first show $l_2$ is an isomorphism. It suffices to show $l_2$ is injective and surjective. 

For injectivity, suppose $l_2(v'') = 0$, where $v'' \in V''$. Then $l(v'') = l_2(v'') + l_3(v'') = l_3(v'') \in W'$. But $V' = l^{-1}(W')$, so $l(v'') = l(v')$ for some $v' \in V'$. Thus $l(v''-v') = 0$ and $v'' - v' \in Ker(l)$. But $V' = l^{-1}(W')$ implies that $Ker(l) \subseteq V'$, and it follows that $v'' = 0$.

For surjectivity, let $w'' \in W''$. Then since $W'$ surjects to $coker(l)$, there exists $w' \in W'$ such that $w'$ and $-w''$ map to the same element of the cokernel. Hence $w'+w'' = l(v)$ for some $v \in \mathbb{V}$. Writing $v$ as $v = v' + v''$ where $v' \in V', v'' \in V''$, we see that $w' + w'' = [ l_1(v') + l_3(v'')] + l_2(v'')$. Equating $W''$ components gives $l_2(v'') = w''$.

Next, let us define $V_0 = l^{-1}(W_0)$. Then $V_0$ is a constant rank subbundle of $V'$ and hence $V' = V_0 \oplus V_1$ where $V_1$ is the orthogonal complement of $V_0$ in $V'$. Similarly define $W_1$ as the orthogonal complement of $W_0$ in $W'$. Then we have orthogonal decompositions:
\[
\mathbb{V} = V_0 \oplus V_1 \oplus V'' \quad \mathbb{W} = W_0 \oplus W_1 \oplus W''.
\]
Let $p_0,p_1,p_2$ denote the orthogonal projections from $\mathbb{W}$ to $W_0,W_1,W''$ respectively. In particular, $p = p_0+p_1$ and $p^\perp = p_2$. Recall that $l_2 : V'' \to W''$ is defined as $l_2 = p_2 \circ l|_{V''}$. Similarly define $\tilde{l}_2 : (V_1 \oplus V'') \to (W_1 \oplus W'')$ by $\tilde{l}_2 = (p_1+p_2) \circ l|_{V_1 \oplus V''}$. Note that $(V_1 \oplus V'') = V_0^\perp$, $(W_1 \oplus W'') = W_0^\perp$ and that $\tilde{l}_2$ depends only on the choice of $W_0$ (and not on the choice of $W'$ or $\epsilon$). Arguing in the same way that we did for $l_2$, we see that $\tilde{l_2}$ is invertible. Let $w \in W'$. Let $v = \tilde{l}_2^{-1}(w)$ so that $\tilde{l}_2(v) = w$. Now decompose $v$ into $v = v_1+v''$, $v_1 \in V_1$, $v'' \in V''$. Then
\[
w = (p_1 + p_2) l(v) = (p_1+p_2)l(v_1) + (p_1+p_2)l(v'').
\]
Extracting $W''$ components gives
\[
w = p_2 l(v_1) + p_2 l(v'').
\]
But $v_1 \in V_1 \subseteq V' = l^{-1}(W')$, so $l(v_1) \in W'$ and $p_2 l(v_1) = 0$. Thus $w = p_2 l(v'') = l_2(v'')$ and $v'' = l_2^{-1}(w)$. Thus
\[
|| l_2^{-1}(w) || = ||v''|| \le ||v|| = ||\tilde{l}_2^{-1}(w)|| \le K ||w||
\]
where $K = \sup_{b \in B} ||\tilde{l}_2^{-1}||$ does not depend on $W'$ or $\epsilon$.

Lastly, we note that if $v'' \in V''$ then $l_3(v'') = p l(v'')$ and hence $|| l_3(v'')|| = ||p l(v'')|| \le ||l(v'')|| \le T ||v''||$, where $T = \sup_{b \in B} || l ||$ does not depend on $W'$ or $\epsilon$.
\end{proof}

\begin{lemma}\label{lem:notzero}
Let $t \in [0,1]$ be a real number and $v \in V'$ with $||v|| = R$. Then:
\begin{itemize}
\item{$pf(v) \neq 0$, hence $\rho f(v) \in S_{W'}$ does not equal $\infty$ and we can regard $\rho f(v)$ as an element of $W'$.}
\item{$|| (1-t)pf(v) + t \rho(f(v)) || > 0$.}
\end{itemize}
\end{lemma}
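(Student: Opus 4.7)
The plan is to exploit the decomposition $f = l + c$ together with the defining property $V' = l^{-1}(W')$ and the $\epsilon$-smallness estimate (\ref{equ:2}) of $p^\perp c$, then combine these with the a priori bound (\ref{equ:1}).

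For any $v \in V'$ we have $l(v) \in W'$ by the definition of $V'$, so $p^\perp l(v) = 0$ and therefore $p^\perp f(v) = p^\perp c(v)$. Since $\|v\| = R \le 4R$, the class-$\epsilon$ property (\ref{equ:2}) of $W'$ gives $\|p^\perp f(v)\| = \|p^\perp c(v)\| < \epsilon$. Choosing $\epsilon < 1$, I first verify the first bullet by contradiction. Suppose $pf(v) = 0$. Then $\|f(v)\| = \|p^\perp f(v)\| < \epsilon < 1$, contradicting (\ref{equ:1}), which gives $\|f(v)\| > 1$ for $\|v\| \ge R$. Hence $pf(v) \neq 0$, and by the explicit formula (\ref{equ:rho}) for $\rho$ we are in the ``otherwise'' case: $\rho(f(v)) = \lambda(f(v)) \, pf(v)$ with $\lambda(f(v)) > 0$. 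In particular $\rho(f(v))$ is a genuine element of $W'$ rather than the point at infinity of $S_{W'}$.

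For the second bullet, using this formula I compute
\[
(1-t) pf(v) + t \rho(f(v)) = \bigl( (1-t) + t \lambda(f(v)) \bigr) \, pf(v).
\]
For $t \in [0,1]$ both $(1-t) \ge 0$ and $t \lambda(f(v)) \ge 0$; simultaneous vanishing would force $t=1$ from the first and $t=0$ (since $\lambda(f(v)) > 0$) from the second, which is impossible. Therefore the scalar coefficient is strictly positive, and multiplying by the nonzero vector $pf(v)$ yields a vector of strictly positive norm.

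The only real content is the separation of the $W'$- and $W''$-components of $f(v)$, and I expect no obstacle: the conclusion is forced once one sees that on $V'$ the linear piece $l$ contributes nothing to $p^\perp f(v)$, so the two competing inequalities (\ref{equ:1}) and (\ref{equ:2}) directly imply $pf(v) \neq 0$, whereupon the positivity statement is a one-line algebra using $\lambda > 0$.
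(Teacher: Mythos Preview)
Your proof is correct and follows essentially the same approach as the paper: both argue by contradiction for the first bullet using $p^\perp f(v) = p^\perp c(v)$ on $V'$ together with (\ref{equ:1}) and (\ref{equ:2}), and both handle the second bullet by writing the convex combination as $\bigl((1-t)+t\lambda(f(v))\bigr)pf(v)$ and observing the scalar coefficient is strictly positive.
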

\begin{proof}
Let $v \in V'$ with $||v|| = R$ and suppose on the contrary that $pf(v) = 0$. By (\ref{equ:1}), we have $||f(v)|| \ge 1$ and by (\ref{equ:2}), we have $|| p^\perp c(v)|| \le \epsilon < 1$. Also $p^\perp f(v) = p^\perp c(v)$, since $v \in V'$. So
\[
1 \le ||f(v)|| = ||p^\perp f(v)|| = || p^\perp c(v)|| < 1
\]
a contradiction.

Let $v \in V'$ with $||v|| = R$. By the above, we can assume that $\rho f(v) = u pf(v)$, where $u = \lambda(f(v))$ is a positive real number and $pf(v) \neq 0$. Thus
\[
(1-t)pf(v) + t\rho(f(v)) = ( 1-t + tu)pf(v)
\]
is non-zero, unless $1-t+tu = 0$. But this would imply that $(1-t) = -tu$. Clearly $(1-t) \ge 0$ and $-tu \le 0$ since $u$ is positive. So the only way we can get equality is if $1-t = 0 = -tu$. But $1-t = 0$ implies $t=1$ and $-tu=0$ implies $t=0$, so this is impossible.
\end{proof}

Fix once and for all a real number $\epsilon > 0$ such that
\[
\epsilon < \inf \left\{ \frac{1}{4} , \frac{R}{6K}   , \frac{1}{16K(T+3QR)} \right\}
\]
where $Q = sup_{b \in B} ||q||$ (recall that $c(v) = q(v,v)$). Fix a corresponding choice of finite rank subbundles $W',V'$, where $W'$ is of class $\epsilon$ and let $\hat{f} : S_{V'} \to S_{W'}$ be the finite dimensional approximation. As previously explained, we can assume that $\eta$ is chosen with $||\eta|| < \delta$ for any given $\delta > 0$. We will assume that $\delta$ is chosen with $\delta < \epsilon$, but the precise value of $\delta$ will not be fixed until later.

The cobordism from $M = f^{-1}(\eta)$ to $\hat{M} = (\hat{f}')^{-1}(\hat{\eta})$ will be carried out in a sequence of cobordisms $M = M_1 \sim M_2 \sim M_4 = \hat{M}$. For this purpose we introduce the following notation: let $f_1 : \mathbb{V} \to \mathbb{W}$ be given by $f_1 = f$, let $\eta_1 = \eta : B \to \mathbb{W}_{\mathbb{R}} \setminus l_{\mathbb{R}}( \mathbb{V}_{\mathbb{R}})$ and recall that we may assume that $|| \eta_1|| < \delta$ and that $\eta_1$ transverse to $f_1$. Write $\eta_1 = \eta_1' + \eta_1''$, where $\eta'_1 = p\eta_1$, $\eta_1'' = p^\perp \eta_1$.

\begin{lemma}\label{lem:pperpq}
For all $v,w \in \mathbb{V}$, we have
\begin{equation*}
|| p^\perp q(v,w) || \le \frac{\epsilon}{2R^2} ||v|| ||w||.
\end{equation*}
\end{lemma}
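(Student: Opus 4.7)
The plan is to exploit the bilinearity of $q$ to rescale, and then apply the estimate (\ref{equ:2}) which says $\|p^\perp c(u)\| < \epsilon$ whenever $\|u\| \le 4R$. The key trick is to use a polarization identity to express $q(v,w)$ in terms of $c$ evaluated on vectors that we can arrange to have norm at most $4R$.

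Assume $v, w \ne 0$ (otherwise the bound is trivial by bilinearity). First, I would rescale: set $\tilde v = R v/\|v\|$ and $\tilde w = R w/\|w\|$, so $\|\tilde v\| = \|\tilde w\| = R$ and $\|\tilde v \pm \tilde w\| \le 2R \le 4R$. By $\mathbb{R}$-bilinearity,
\[
q(\tilde v, \tilde w) = \frac{R^2}{\|v\|\,\|w\|}\, q(v,w).
\]

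Next, since $c(u) = q(u,u)$ and (treating $q$ as its symmetrization, which is what the polarization formula from Lemma~\ref{lem:compact} sees) one has the identity $c(u+u') - c(u-u') = 4q(u,u')$. Apply this with $u = \tilde v$, $u' = \tilde w$ and hit with $p^\perp$:
\[
p^\perp q(\tilde v, \tilde w) = \tfrac{1}{4}\bigl( p^\perp c(\tilde v + \tilde w) - p^\perp c(\tilde v - \tilde w) \bigr).
\]
Because $\|\tilde v \pm \tilde w\| \le 2R \le 4R$, the defining estimate (\ref{equ:2}) of a subbundle of class $\epsilon$ bounds each term on the right by $\epsilon$, giving $\|p^\perp q(\tilde v,\tilde w)\| \le \epsilon/2$.

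Finally, undo the rescaling: by the bilinearity relation above,
\[
\|p^\perp q(v,w)\| = \frac{\|v\|\,\|w\|}{R^2}\, \|p^\perp q(\tilde v,\tilde w)\| \le \frac{\epsilon}{2R^2}\,\|v\|\,\|w\|,
\]
which is the claim. I do not anticipate a real obstacle here; the only slightly delicate point is to use the \emph{two-term} polarization $c(u+u') - c(u-u')$ rather than the three-term one $c(u+u') - c(u) - c(u')$, as the latter would yield the weaker constant $3\epsilon/(2R^2)$ and miss the stated bound.
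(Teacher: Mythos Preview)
Your argument is correct and is essentially the paper's own proof: polarize, apply the class-$\epsilon$ estimate (\ref{equ:2}), then rescale by bilinearity. The only cosmetic difference is that the paper uses the three-term polarization (\ref{equ:polarisation}) but rescales to $\|v\|=\|w\|=2R$ (so that $v$, $w$, and $v+w$ all land in $D_{4R}(\mathbb{V})$), obtaining $\|p^\perp q(v,w)\|\le 2\epsilon$ at that scale and then dividing by $(2R)^2$; so contrary to your closing remark, the three-term identity \emph{does} reach the stated constant $\epsilon/(2R^2)$ once the rescaling is chosen appropriately.
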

\begin{proof}
Equation (\ref{equ:polarisation}) together with (\ref{equ:2}) implies that:
\[
|| p^\perp q(v,w) || \le 2 \epsilon
\]
for all $v,w \in \mathbb{V}$ with $||v||,||w|| = 2R$. But since $q$ is $\mathbb{R}$-bilinear, a simple rescaling argument gives the result.
\end{proof}

\begin{lemma}\label{lem:contraction}
Fix a point $b \in B$. Let $v' \in \mathbb{V}_b$ satisfy $||v'|| \le 3R/2$. Define a map $\phi : V''_b \to V''_b$ by
\[
\phi(v'') = l_2^{-1}(\eta_1'') - l_2^{-1}( p^\perp c(v'+v'')).
\]
Then:
\begin{itemize}
\item[(1)]{$\phi$ sends $D_R(V''_b)$ to itself.}
\item[(2)]{$\phi : D_R(V''_b) \to D_R(V''_b)$ is a contraction. Hence there exists a unique $\theta(v') \in D_R(V''_b)$ such that $\theta(v') = \phi(\theta(v'))$.}
\item[(3)]{We have that $|| \theta(v')|| < \inf \{ R , \frac{1}{4(T+3RQ)} \}$.}
\item[(4)]{Letting $b$ and $v'$ vary, $\theta$ defines a smooth map $\theta : D_R(V') \to D_R(V'')$.}
\item[(5)]{The map $\theta$ is $S^1$-equivariant.}
\end{itemize}
\end{lemma}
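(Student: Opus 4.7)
The plan is to apply the Banach fixed point theorem fiberwise and then derive the remaining properties from uniqueness and standard differential calculus. The key analytic inputs are: the uniform bound $\|l_2^{-1}\| \le K$; the estimate $\|p^\perp q(v,w)\| \le \frac{\epsilon}{2R^2}\|v\|\|w\|$ from Lemma~\ref{lem:pperpq}; the polarisation identity $c(u+v) - c(u) = q(v,u+v) + q(u,v)$ coming from $c(v)=q(v,v)$; the assumption $\|\eta_1''\| \le \|\eta_1\| < \delta < \epsilon$; and the three inequalities built into the choice of $\epsilon$.

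For (1) I would estimate
\[
\|\phi(v'')\| \le K\|\eta_1''\| + K\|p^\perp c(v'+v'')\| \le K\delta + \frac{K\epsilon}{2R^2}(\|v'\|+\|v''\|)^2
\]
for $v''\in D_R(V''_b)$ and $\|v'\|\le 3R/2$; since $\|v'\|+\|v''\|\le 5R/2$, the bound $\epsilon < R/(6K)$ keeps this strictly below $R$. For (2), polarisation combined with Lemma~\ref{lem:pperpq} gives
\[
\|\phi(v_1'') - \phi(v_2'')\| \le \frac{K\epsilon}{2R^2}\bigl(\|v'+v_1''\| + \|v'+v_2''\|\bigr)\|v_1''-v_2''\|,
\]
whose prefactor is a uniform constant times $K\epsilon/R$ and is therefore strictly less than $1$ by the same choice of $\epsilon$. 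Banach's theorem then yields the unique fixed point $\theta(v') \in D_R(V''_b)$.

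For (3) I would substitute $\theta$ into its own defining equation and rerun the estimate from (1), possibly bootstrapping once: the bound $\epsilon < 1/(16K(T+3QR))$ is designed so that the resulting numerical constant multiplying $1/(T+3QR)$ falls below $1/4$. For (4), smoothness of $\theta$ in the pair $(b, v')$ follows from the implicit function theorem applied to $F(v', v'') = v'' - \phi(v'')$: the partial derivative $I - D_{v''}\phi$ is invertible because $D_{v''}\phi$ has operator norm strictly below $1$ by the contraction estimate. For (5), $S^1$-equivariance is automatic: since $\eta_1''$ is $S^1$-invariant (as $S^1$ acts trivially on $\mathbb{W}_\mathbb{R}$) and $p^\perp$, $c$, $l_2^{-1}$ are $S^1$-equivariant, applying $g \in S^1$ to the fixed point equation shows that $g\theta(v')$ is a fixed point of the analogous map built from $gv'$; uniqueness then forces $\theta(gv') = g\theta(v')$.

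The main technical nuisance, rather than a conceptual obstacle, is tracking constants carefully enough that the three inequalities in the definition of $\epsilon$ simultaneously enforce the self-map property on $D_R(V''_b)$, the contraction factor strictly below $1$, and the tighter bound in (3); this is precisely where the specific form of the denominator $16K(T+3QR)$ is used, and will likely require feeding the bound from (1) back into the fixed point equation before the final estimate in order to shave off the last factor needed to dip below $1/(4(T+3QR))$.
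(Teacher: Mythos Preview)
Your proposal is correct and follows essentially the same route as the paper: Banach fixed point via the polarisation identity and Lemma~\ref{lem:pperpq}, then implicit function theorem for smoothness and uniqueness for equivariance. The only cosmetic difference is that the paper first establishes the contraction estimate and then deduces the self-map property from $\|\phi(x)\| \le \|\phi(x)-\phi(0)\| + \|\phi(0)\|$, whereas you bound $\|\phi(v'')\|$ directly; both work with the given constraint $\epsilon < R/(6K)$.
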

\begin{proof}
Let $x,y \in D_R(V''_b)$. Then
\begin{equation*}
\begin{aligned}
|| \phi(x) - \phi(y) || &= || l_2^{-1}( p^\perp c(v'+x) - p^\perp c(v'+y) ) || \\
&\le K || p^\perp c(v'+x) - p^\perp c(v'+y) ||.
\end{aligned}
\end{equation*}
But
\begin{equation*}
\begin{aligned}
p^\perp c(v'+x) - p^\perp c(v'+y) &= p^\perp q(v'+x,v'+x) - p^\perp q(v'+y,v'+y) \\
&= 2p^\perp q(v',x-y) + p^\perp q(x+y,x-y) \\
&= p^\perp q(2v'+x+y, x-y).
\end{aligned}
\end{equation*}
Then using Lemma \ref{lem:pperpq} we find:
\begin{equation}\label{equ:contraction}
\begin{aligned}
|| \phi(x) - \phi(y) || & \le K || p^\perp c(v'+x) - p^\perp c(v'+y) || \\
& \le K || p^\perp q(2v'+x+y,x-y) || \\
& \le \frac{\epsilon K}{2R^2} ||2v' + x+y|| ||x-y|| \\
& \le \frac{\epsilon K}{2R^2} ( 2||v'|| + ||x|| + ||y|| ) ||x-y|| \\
& \le \frac{5 \epsilon K}{2R} ||x-y|| \\
& < \frac{1}{2} ||x-y||
\end{aligned}
\end{equation}
where the last line follows from $\epsilon < \frac{R}{6K}$. Then it will follow that $\phi$ acts as a contraction on $D_R(V''_b)$, provided we can show that $\phi$ sends $D_R(V''_b)$ to itself. Let $x \in D_R(V''_b)$. Then
\begin{equation*}
\begin{aligned}
|| \phi(x) || & \le ||\phi(x) - \phi(0) || + ||\phi(0) || \\
& \le \frac{1}{2} ||x|| + ||\phi(0)|| \\
& \le \frac{1}{2}R + ||\phi(0)||.
\end{aligned}
\end{equation*}
But $\phi(0) = l_2^{-1}(\eta''_1)$, so
\begin{equation}\label{equ:phi0}
|| \phi(0) || = ||l_2^{-1}(\eta''_1) || \le K ||\eta''_1|| \le K ||\eta_1|| \le K\delta \le K\epsilon.
\end{equation}
Then since $\epsilon < \frac{R}{6K}$ we get that $||\phi(0)|| \le R/6$ and thus $||\phi(x)|| \le R/2 + R/6 < R$. This proves that $\phi$ sends $D_R(V''_b)$ to itself and is a contraction. Moreover, setting $x = \theta(v')$, we have
\[
||\theta(v')|| \le ||\phi(\theta(v'))||  < R.
\]
To obtain the estimate $||\theta(v')|| < \frac{1}{4(T+3RQ)}$, we note that the second to last line of (\ref{equ:contraction}) together with (\ref{equ:phi0}) gives:
\begin{equation*}
\begin{aligned}
|| \theta(v')|| = ||\phi(\theta(v'))|| &\le ||\phi(\theta(v')) - \phi(0)|| + ||\phi(0)|| \\
&\le \frac{5 \epsilon K}{2R}||\theta(v')|| + K \epsilon \\
&\le \frac{5\epsilon K}{2} + K\epsilon \\
&\le 4K\epsilon \\
&\le \frac{1}{4(T+3RQ)}
\end{aligned}
\end{equation*}
where we also used that $||\theta(v')|| < R$. The smoothness of $\theta : D_R(V') \to D_R(V'')$ follows easily from the implicit function theorem (note that $\theta$ extends smoothly over the boundary of $D_R(V')$ because $\theta(v')$ is defined for $||v'|| \le 3R/2$).

To prove $S^1$-equivariance of $\theta$, let $z \in S^1$. Recall that $\phi(v'') = l_2^{-1}(\eta_1'') - l_2^{-1}( p^\perp c(v'+v''))$. Since this depends on $v'$ and we wish to examine the dependence, we will write $\phi_{v'}$ instead of $\phi$, so
\[
\phi_{v'}(v'') = l_2^{-1}(\eta_1'') - l_2^{-1}( p^\perp c(v'+v'')).
\]
Now using that $c$ is $S^1$-equivariant, it follows that $\phi_{z \cdot v'}(z \cdot v'') = z \cdot \phi_{v'}(v'')$. It follows that both $\theta(z \cdot v')$ and $z \cdot \theta(v')$ are fixed points of $\phi_{z \cdot v'}$, hence $\theta(z \cdot v') = z \cdot \theta(v')$ by uniqueness of the fixed point.
\end{proof}

We now set $\eta_2 = \eta_1' = p\eta_1$ and let $f_2 : V' \to W'$ be defined by
\[
f_2(v') = l_1(v') + l_3(\theta(v')) + pc(v' + \theta(v')).
\]
\begin{lemma}\label{lem:m1m2}
Let $M_1 = f_1^{-1}(\eta_1) \subseteq \mathbb{V}$ and let 
\[
M_2 = \{ v' \in V' \; | f_2(v') = \eta_2, \; ||v'|| < R \} \subseteq V'.
\]
Then:
\begin{itemize}
\item{We have that $M_1 \subseteq D_R(\mathbb{V})$ and $M_2 \subseteq D_R(V')$.}
\item{The map $\mathbb{V} \to V'$, $v \mapsto v' = pv$ restricts to a bijection $p|_{M_1} : M_1 \to M_2$.}
\item{The map $D_R(V') \to \mathbb{V}$, $v' \mapsto v = v' + \theta(v')$ restricts to a bijection $M_2 \to M_1$ inverse to $p|_{M_1}$.}
\item{The map $p|_{M_1} : M_1 \to M_2$ is an $S^1$-equivariant diffeomorphism.}
\item{$\eta_2$ is transverse to $f_2$.}
\end{itemize}
\end{lemma}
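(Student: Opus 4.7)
The plan is to use the implicit function construction of $\theta$ (Lemma~\ref{lem:contraction}) to split the equation $f_1(v)=\eta_1$ under the orthogonal decomposition $\mathbb{W}=W'\oplus W''$. The $W''$-part will be automatically solved by $v''=\theta(v')$, which cuts out the graph
$\Gamma:=\{v'+\theta(v'):v'\in D_R(V')\}\subseteq\mathbb{V}$
as a smooth $S^1$-invariant sub-bundle, while the residual $W'$-part becomes exactly $f_2(v')=\eta_2$. Let $p:\mathbb{V}\to V'$ denote the orthogonal projection and, for $v\in\mathbb{V}$, write $v=v'+v''$ with $v'=pv$, $v''=(1-p)v$.

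The first bullet is immediate: if $v\in M_1$ then $\|f_1(v)\|=\|\eta_1\|<\delta<1$, so (\ref{equ:1}) gives $\|v\|<R$. The second bullet is built into the definition of $M_2$. For the bijection, use $l=l_1+l_2+l_3$ to decompose $f_1(v)=\eta_1$ into
\begin{align*}
l_1(v')+l_3(v'')+pc(v'+v'') &=\eta_1',\\
l_2(v'')+p^\perp c(v'+v'') &=\eta_1'',
\end{align*}
and observe that the second equation is exactly $v''=\phi_{v'}(v'')$. For $v\in M_1$ we have $\|v'\|,\|v''\|\le\|v\|<R$, so Lemma~\ref{lem:contraction} forces $v''=\theta(v')$ by uniqueness of the fixed point, and the first equation then reduces to $f_2(v')=\eta_2$. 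The converse direction is immediate, and smoothness together with $S^1$-equivariance of $p|_{M_1}$ and of its inverse $v'\mapsto v'+\theta(v')$ follow from the linearity of $p$ and from Lemma~\ref{lem:contraction}(4)--(5).

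The genuine content lies in the transversality of $\eta_2$ to $f_2$ (the final bullet). Let $\iota:D_R(V')\to\Gamma$ be the graph map. By construction $p^\perp(f_1-\eta_1)$ vanishes identically on $\Gamma$, so at any $v\in\Gamma$ the differential $d(f_1-\eta_1)|_v$ sends $T_v\Gamma$ into $W'$ and, pulled back by $\iota$, coincides with $d(f_2-\eta_2)|_{v'}$. Given $w'\in W'_b$ and $v=\iota(v')\in M_1$, transversality of $\eta_1$ to $f_1$ supplies $(\dot v,\dot b)$ with $d(f_1-\eta_1)|_v(\dot v,\dot b)=w'$. Write $\dot v=\dot v_\Gamma+\dot v_\perp$ with $\dot v_\Gamma$ vertically tangent to $\Gamma$ at $v$ and $\dot v_\perp\in V''_b$. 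Projecting the equation onto $W''$, the contribution of $(\dot v_\Gamma,\dot b)$ vanishes by the previous sentence, leaving
\[
l_2(\dot v_\perp)+2p^\perp q(v,\dot v_\perp)=0.
\]
By Lemma~\ref{lem:pperpq} the operator $2p^\perp q(v,\cdot)|_{V''}$ has norm at most $\epsilon/R$, and the standing bound $\epsilon<R/(6K)$ (with $K=\sup\|l_2^{-1}\|$) makes $l_2+2p^\perp q(v,\cdot)|_{V''}$ invertible. Hence $\dot v_\perp=0$, so $(\dot v,\dot b)\in T_v\Gamma$, and $\iota^{-1}(\dot v,\dot b)$ is the desired preimage of $w'$ under $d(f_2-\eta_2)|_{v'}$. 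This simultaneously equips $M_2$ with the smooth structure that upgrades the bijection from the previous paragraph to an $S^1$-equivariant diffeomorphism.

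The main obstacle is the transversality step, which hinges on the ``vertical'' linearization $l_2+2p^\perp q(v,\cdot)|_{V''}$ remaining invertible; this is exactly the small-$\epsilon$ content of the choices made earlier in this subsection. The remaining bullets are formal consequences of the implicit function construction of $\theta$.
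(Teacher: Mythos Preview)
Your argument for the first four bullets is identical to the paper's: use the boundedness estimate (\ref{equ:1}) to get $M_1\subset D_R(\mathbb{V})$, split the equation $f_1(v)=\eta_1$ along $W'\oplus W''$, and invoke the uniqueness of the fixed point in Lemma~\ref{lem:contraction} to force $v''=\theta(v')$. The paper then dismisses the last bullet in one line (``just follows from the fact that $f_1$ is transverse to $\eta_1$''), whereas you actually carry out the linear-algebra argument via the graph $\Gamma$ and the invertibility of $l_2+2p^\perp q(v,\cdot)|_{V''}$.

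One small imprecision in your transversality step: when you decompose $\dot v=\dot v_\Gamma+\dot v_\perp$ with $\dot v_\Gamma$ \emph{vertically} tangent to $\Gamma$, the pair $(\dot v_\Gamma,\dot b)$ need not lie in $T_v\Gamma$, since $\theta$ generally depends on the base point $b$ as well. The clean fix is to decompose the full tangent vector $(\dot b,\dot v)$ using the splitting $T_v\Gamma\oplus(\{0\}\times V''_b)$ of $T_bB\oplus\mathbb{V}_b$ (valid because $T_v\Gamma$ is a graph over $T_bB\oplus V'_b$); the rest of your computation then goes through verbatim. With that adjustment the argument is correct and supplies the detail the paper omits.
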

\begin{proof}
Suppose that $f_1(v) = \eta_1$. Then since $||\eta_1|| \le \delta < \epsilon < 1$, it follows from (\ref{equ:1}) that $||v|| < R$. Hence $M_1 \subseteq D_R(\mathbb{V})$. Also $M_2 \subseteq D_R(V')$ by its definition. Consider again the equation $f_1(v) = \eta_1$. Write $v=  v' + v''$ with $v' = pv \in V'$, $v'' = p^\perp v \in V''$. Then as $f_1 = f = l + c$, the $W'$ and $W''$ components of $f_1(v) = \eta_1$ are:
\begin{equation*}
\begin{aligned}
l_1(v') + l_3(v'') + pc(v'+v'') &= \eta'_1 \\
l_2(v'') + p^\perp c(v'+v'') &= \eta''_1.
\end{aligned}
\end{equation*}
A solution of this pair of equations must have $||v|| < R$, hence $||v'|| < R$, $||v''|| < R$. Since $l_2$ is invertible, the second equation can be rewitten as:
\[
v'' = l_2^{-1}(\eta''_1) - l_2^{-1} p^\perp c(v'+v'').
\]
Then since $||v'|| < R$, Lemma \ref{lem:contraction} implies that this equation has a unique solution $v'' = \theta(v')$. Substituting, we get that
\[
M_1 = \{ v \in \mathbb{V} \; | \; f_1(v) = \eta_1 \} = \{ v'  \in V' \; | \; ||v'||<R, \; l_1(v') + l_3( \theta(v') ) + pc(v'+\theta(v')) = \eta'_1\},
\]
where the bijection is given by $v \mapsto v' = pv$. However we recognise the set on the right as $M_2$. So $p : \mathbb{V} \to \mathbb{V'}$ restricts to a bijection $p|_{M_1} : M_1 \to M_2$. The inverse map is $v' \mapsto v = v' + \theta(v')$. Since both of these maps are smooth and $S^1$-equivariant, we have that $p|_{M_1}$ is an $S^1$-equivariant diffeomorphism. The fact that $f_2$ is transverse to $\eta_2$ just follows from the fact that $f_1$ is transverse to $\eta_1$.
\end{proof}

Note that $M_1$ can be promoted to a triple $(M_1 , \pi_1 , \psi_1)$ in the sense defined in Subsection \ref{sec:familiesmonopole}, where $\pi_1$ is the natural projection to $B$ and $\psi_1$ is the isomorphism of Lemma \ref{lem:orientation}. Similarly $M_2$ comes equipped with a natural map $\pi_2 : M_2 \to B$ and the diffeomorphism $M_1 \cong M_2$ of Lemma \ref{lem:m1m2} sends $\pi_1$ to $\pi_2$. Then we can extend $(M_2,\pi_2)$ to a triple $(M_2,\pi_2,\psi_2)$ where $\psi_2$ is obtained from Lemma \ref{lem:orientation2}. Comparing the proofs of Lemma \ref{lem:orientation} and \ref{lem:orientation2} one easily sees that the diffeomorphism $M_1 \to M_2$ that we have constructed sends $\psi_1$ to $\psi_2$. Hence the triples $(M_1,\pi_1,\psi_1), (M_2,\pi_2,\psi_2)$ are cobordant (indeed they are isomorphic).

For $t \in [0,1]$, define $f_t : D_R(V') \to W'$ by
\[
f_t(v) = l_1(v) + (1-t)l_3(\theta(v)) + pc(v+ (1-t)\theta(v)).
\]
Note that $f_t$ extends smoothly over the boundary of $D_R(V')$ because $\theta$ does. Then $f_t$ defines an equivariant homotopy from $f_2$ to $f_3$, where $f_3 : D_R(V') \to W'$ is defined by
\[
f_3(v) = l_1(v) + pc(v) = p( l(v) + c(v)) = p(f(v)).
\]

\begin{lemma}
Let $v' \in D_R(V')$. Then $\rho(f(v)) \neq \infty$.
\end{lemma}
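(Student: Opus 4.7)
My plan is to prove the claim by a direct application of the class-$\epsilon$ estimate (\ref{equ:2}) together with the characterization of when $\rho$ sends a point to infinity given in (\ref{equ:rho}).

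First I would unpack the definition of $\rho$: for $w \in \mathbb{W}$ (with $w \neq \infty$), $\rho(w) = \infty$ forces $pw = 0$ and $\|p^\perp w\| > 1$. Hence it suffices to rule out this alternative for $w = f(v')$ with $v' \in D_R(V')$. The crucial observation is that for any $v' \in V'$, since by definition $V' = l^{-1}(W')$, one has $l(v') \in W'$ and therefore $p^\perp l(v') = 0$. Consequently
\[
p^\perp f(v') = p^\perp l(v') + p^\perp c(v') = p^\perp c(v').
\]
Now $\|v'\| \le R \le 4R$, so the class-$\epsilon$ estimate (\ref{equ:2}) gives $\|p^\perp c(v')\| < \epsilon$. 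Since we fixed $\epsilon < 1/4 < 1$, this yields $\|p^\perp f(v')\| < 1$, which already violates the second clause in (\ref{equ:rho}) needed for $\rho$ to evaluate to $\infty$. Thus $\rho(f(v')) \neq \infty$, as claimed.

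There is essentially no obstacle in this step; it is a bookkeeping lemma that records why the finite-dimensional approximation $\hat f = \rho \circ f|_{S_{V'}}$ takes values in $W' \subset S_{W'}$ (not at the added point at infinity) on the bounded region $D_R(V')$. The only thing to be careful about is the typo-prone notation $f(v)$ versus $f(v')$: the argument depends in a decisive way on the hypothesis $v' \in V'$, which is what makes $p^\perp l(v')$ vanish and reduces the estimate to the compact, nonlinear piece $p^\perp c$ controlled by (\ref{equ:2}). This bound will be used repeatedly in the subsequent cobordism argument connecting $M_2$ to $M_3 = f_3^{-1}(\eta_2)$ and ultimately to $\hat M$, since it justifies composing with $\rho$ throughout the homotopy.
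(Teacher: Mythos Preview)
Your proof is correct and follows essentially the same approach as the paper: both use the characterization of $\rho^{-1}(\infty)$ from (\ref{equ:rho}), observe that $v' \in V'$ forces $p^\perp f(v') = p^\perp c(v')$, and then invoke the class-$\epsilon$ estimate (\ref{equ:2}) together with $\epsilon < 1$ to rule out $\|p^\perp f(v')\| > 1$. Your observation about the $v$ versus $v'$ typo is also apt.
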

\begin{proof}
Suppose on the contrary that $v \in D_R(V')$ and $\rho(f(v)) = \infty$. By (\ref{equ:rho}) this means that $pf(v) = 0$ and $|| p^\perp f(v) || > 1$. But $v \in V'$ means that $p^\perp f(v) = p^\perp c(v)$. Also $||v|| \le R$ implies that $||p^\perp c(v) || \le \epsilon$, by (\ref{equ:2}). Thus
\[
1 < || p^\perp f(v) || = ||p^\perp c(v) || \le \epsilon < 1,
\]
a contradiction.
\end{proof}
By this lemma, if $v \in D_R(V')$, then we can regard $\rho(f(v))$ as an element of $W'$ and in fact $\rho(f(v)) = \lambda(f(v)) pf(v)$ by (\ref{equ:rho}). For $t \in [0,1]$, we define $h_t : D_R(V') \to W'$ by
\[
h_t(v) = (1-t)f_3(v) + tf_4(v)
\]
where $f_4 : D_R(V') \to W'$ is defined by:
\[
f_4(v) = \rho(f(v)).
\]
In other words $f_4 = \hat{f}|_{D_R(V')}$, where $\hat{f}$ is the finite dimensional approximation of $f$. Clearly $h_t$ is an equivariant homotopy from $f_3$ to $f_4$.

\begin{lemma}\label{lem:notsmall}
Let $S_R(V') = \{ v \in V' \; | \; ||v|| = R\}$. Then for any $t \in [0,1]$ and $v \in S_R(V')$ we have that $||f_t(v)|| > \epsilon$ and $|| h_t(v) || > 0$.
\end{lemma}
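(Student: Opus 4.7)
The $h_t$ portion of the statement should reduce immediately to Lemma~\ref{lem:notzero}. Since $||v|| = R$, that lemma gives $\rho(f(v)) = \lambda(f(v)) pf(v)$ with $pf(v) \neq 0$ and $\lambda(f(v)) > 0$, so
$h_t(v) = \bigl((1-t) + t\lambda(f(v))\bigr)\, pf(v)$,
and the second bullet of Lemma~\ref{lem:notzero} says precisely that this is nonzero.

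For the $f_t$ bound my plan is first to rewrite $f_t$ in a cleaner form. A direct computation, splitting $l(u)$ into its $W'$- and $W''$-components using $V' \perp V''$ together with the definitions of $l_1, l_2, l_3$, should give $f_t(v) = pf(u)$ where $u := v + (1-t)\theta(v)$; the only content is that the piece $(1-t)l_2(\theta(v))$ lies in $W''$ and is killed by $p$, leaving exactly $l_1(v) + (1-t)l_3(\theta(v)) + pc(v + (1-t)\theta(v))$. With this identification in hand, orthogonality of $v$ and $\theta(v)$ yields $||u||^2 = R^2 + (1-t)^2 ||\theta(v)||^2 \ge R^2$, so by (\ref{equ:1}) we have $||f(u)|| > 1$; the desired lower bound on $||f_t(v)|| = ||pf(u)||$ then follows from Pythagoras as soon as we have an upper bound on $||p^\perp f(u)||$ strictly less than $1$.

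To control $||p^\perp f(u)||$ I would combine three inputs that are already in place: (i) the splitting above gives $p^\perp l(u) = (1-t) l_2(\theta(v))$, bounded in norm by $T||\theta(v)||$; (ii) Lemma~\ref{lem:contraction}(3) gives $||\theta(v)|| < 1/(4(T+3RQ)) \le 1/(4T)$, so $T||\theta(v)|| < 1/4$; and (iii) since $||u|| < 2R \le 4R$, the class-$\epsilon$ estimate (\ref{equ:2}) gives $||p^\perp c(u)|| < \epsilon < 1/4$. Adding these yields $||p^\perp f(u)|| < 1/2$, hence $||f_t(v)||^2 > 1 - 1/4 = 3/4$, which is comfortably larger than $\epsilon$. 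The only genuine "obstacle" is to check that the constant hierarchy on $\epsilon$ fixed at the start of this subsection is tight enough for all of these inequalities to run simultaneously; the tally above confirms that it is, and no new analytic input beyond what was used in Lemmas~\ref{lem:contraction} and \ref{lem:notzero} is required.
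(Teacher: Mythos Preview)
Your argument is correct, and the $h_t$ part is indeed immediate from Lemma~\ref{lem:notzero} in both your proof and the paper's.

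For the $f_t$ bound, your approach genuinely differs from the paper's. The paper argues by contradiction, comparing $f_t(v)$ directly with $f(v)$ at the \emph{same} point $v$: it uses $\|f(v)\|>1$ and then bounds $\|f(v)-f_t(v)\|$ by expanding the quadratic term $c(v+(1-t)\theta(v))-c(v)$ via the bilinear map $q$, picking up the constant $Q$ explicitly and arriving at $2\epsilon + (T+3RQ)\|\theta(v)\| \le 2\epsilon + \tfrac14 < 1$. Your route instead recognises the structural identity $f_t(v)=pf(u)$ with $u=v+(1-t)\theta(v)$, applies (\ref{equ:1}) at the \emph{shifted} point $u$ (using $V'\perp V''$ to get $\|u\|\ge R$), and then uses Pythagoras together with a bound on $\|p^\perp f(u)\|$. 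This is cleaner: it avoids the bilinear expansion of $c$ entirely, needing only $\|l_2\|\le T$ and the class-$\epsilon$ estimate (\ref{equ:2}), and it yields a stronger quantitative conclusion ($\|f_t(v)\|>\sqrt{3}/2$ rather than merely $>\epsilon$). The paper's approach, by contrast, is perhaps more pedestrian but makes transparent exactly which of the imposed bounds on $\epsilon$ is doing the work.
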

\begin{proof}
The inequality $||h_t(v)|| > 0$ was shown in Lemma \ref{lem:notzero}. Now suppose there is some $v \in V'$ with $||v|| = R$ and $||f_t(v)|| \le \epsilon$. By (\ref{equ:1}) we have that $||f(v)|| > 1$ and by (\ref{equ:2}) we have $|| p^\perp c(v)|| < \epsilon$. So
\begin{equation*}
\begin{aligned}
1 < ||f(v)|| &\le ||f(v) - f_t(v)|| + ||f_t(v)|| \\
&\le \epsilon + ||f(v) - f_t(v)|| \\
&\le \epsilon + ||pc(v+(1-t)\theta(v)) -pc(v)|| + ||l_3(\theta(v))|| + ||p^\perp c(v)|| \\
&\le 2\epsilon + ||c(v+(1-t)\theta(v)) -c(v)|| + T||\theta(v)|| \\
&\le 2\epsilon + T||\theta(v)|| + ||q(v+(1-t)\theta(v) , v+(1-t)\theta(v) ) - q(v,v)|| \\
&\le 2\epsilon + T||\theta(v)|| + ||2q(v , \theta(v))|| + ||q(\theta(v) , \theta(v))|| \\
&\le 2\epsilon + (T+ 2Q||v|| + Q||\theta(v)||)||\theta(v)|| \\
&\le 2\epsilon + (T + 3QR)||\theta(v)|| \\
&\le 2\epsilon + \frac{1}{4} < 1
\end{aligned}
\end{equation*}
where in the last line we used Lemma \ref{lem:contraction} and $\epsilon < 1/4$. This is a contradiction, hence no such $v$ exists.
\end{proof}

\begin{lemma}\label{lem:notsmall2}
Let $S^+_{V'}$ denote the complement in $S_{V'}$ of $\{ v \in V' \; | \; ||v|| < R \}$. If $v \in S_{V'}^+$ then $\hat{f}(v) \neq 0$.
\end{lemma}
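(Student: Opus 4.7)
The plan is to split $S^+_{V'}$ into two parts: the section at infinity, and the finite part $\{v \in V' : \|v\| \ge R\}$. On the section at infinity the claim is immediate from the fact that both $f$ and $\rho$ preserve sections at infinity (see the construction of $\rho$ and property (M3)), so $\hat{f}(\infty_b) = \rho(f(\infty_b)) = \infty_b \neq 0$.

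For the finite part, I would argue by contradiction. Suppose $v \in V'$ with $\|v\| \ge R$ and $\hat{f}(v) = \rho(f(v)) = 0$. Looking at the three cases in the explicit formula \eqref{equ:rho} for $\rho$, the value $0$ occurs only when $p f(v) = 0$ and $\|p^\perp f(v)\| < 1$. The key observation is that $v \in V' = l^{-1}(W')$ forces $p^\perp l(v) = 0$, and hence
\[
p^\perp f(v) = p^\perp l(v) + p^\perp c(v) = p^\perp c(v).
\]
Combining the two conditions from $\rho(f(v))=0$ with orthogonality of $p$ and $p^\perp$, we would get
\[
\|f(v)\|^2 = \|pf(v)\|^2 + \|p^\perp f(v)\|^2 = \|p^\perp c(v)\|^2 < 1.
\]
On the other hand, \eqref{equ:1} gives $\|f(v)\| > 1$ whenever $\|v\| \ge R$, which is the desired contradiction.

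There is no serious obstacle here; the argument is a one-line consequence of \eqref{equ:rho} together with \eqref{equ:1} and the defining property $V' = l^{-1}(W')$. The only thing to be slightly careful about is to treat the section at infinity separately, since the formula \eqref{equ:rho} describes $\rho$ on the finite part $\mathbb{W} \subset S_{\mathbb{W}}$; but this case is handled by the ``$\rho(\infty) = \infty$'' convention built into the construction of $\rho$. Note in particular that we do not need any of the smallness assumptions on $\epsilon$ or $\delta$ in this lemma beyond the standing hypothesis $\epsilon < 1$ needed to define $\hat{f}$ in the first place.
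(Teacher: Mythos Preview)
Your argument is correct and essentially identical to the paper's proof: both treat the point at infinity separately, then for finite $v$ with $\|v\|\ge R$ use \eqref{equ:rho} to deduce $pf(v)=0$ and $\|p^\perp f(v)\|<1$, hence $\|f(v)\|<1$, contradicting \eqref{equ:1}. The only difference is that you spell out $p^\perp f(v)=p^\perp c(v)$, which the paper omits as unnecessary.
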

\begin{proof}
Suppose that $v \in S_{V'}^+$ and $\hat{f}(v) = 0$. Recall that $\hat{f}(v) = \rho(f(v))$. Since $\hat{f}(\infty) = \infty$, we can assume $v \in V'$ and since $v \in S_{V'}^+$ we can further assume that $||v|| \ge R$. From (\ref{equ:rho}) we see that $\rho(f(v)) = 0$ if and only if $pf(v) = 0$ and $||p^\perp f(v)|| < 1$, hence $||f(v)|| < 1$. But from (\ref{equ:1}), we have that $||f(v)|| \ge 1$, so no such $v$ can exist.
\end{proof}

Let us define
\[
\nu = \inf_{v \in V', ||v|| = R} || h_t(v) ||.
\]
By Lemma \ref{lem:notsmall} and compactness of $\{ v \in V' \; | \; ||v|| = R\}$ (since $V'$ is finite dimensional), we have that $\nu > 0$. Similarly by Lemma \ref{lem:notsmall2} and compactness of $S^+_{V'}$ there exists some $\nu' > 0$ such that $\hat{f}$ sends $S^+_{V'}$ to $S_{W'} \setminus D_{\nu'}(W')$. Now at last we fix a choice of $\delta > 0$ such that $\delta < \inf \{ \epsilon/2 , \nu/2 , \nu' \}$.
\begin{lemma}
Let $t \in [0,1]$ and $v \in S_R(V')$. Then $|| f_t(v) - \eta_2 || > \delta$ and $||h_t(v) - \eta_2 || > \delta$.
\end{lemma}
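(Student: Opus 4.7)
The plan is to prove both inequalities by a straightforward reverse triangle inequality argument, leveraging the positive lower bounds already established in Lemmas \ref{lem:notsmall} and \ref{lem:notzero} together with the smallness of $\eta_2$. I interpret the ``$g_t$'' in the statement as a typographical slip for $h_t$, since these are the two homotopies we must keep disjoint from the section $\eta_2$ on the boundary sphere $S_R(V')$.

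First I would record the elementary estimate $\|\eta_2\| < \delta$. This follows because $\eta_2 = p\eta_1$ is an orthogonal projection of $\eta_1$, so $\|\eta_2\| \le \|\eta_1\| < \delta$ by our choice of $\eta = \eta_1$. Next, for the $f_t$ bound, Lemma \ref{lem:notsmall} gives $\|f_t(v)\| > \epsilon$ for every $v \in S_R(V')$ and $t \in [0,1]$. The reverse triangle inequality then yields
\[
\|f_t(v) - \eta_2\| \;\ge\; \|f_t(v)\| - \|\eta_2\| \;>\; \epsilon - \delta \;>\; \delta,
\]
where the last inequality uses our standing choice $\delta < \epsilon/2$.

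For the $h_t$ bound, recall that $\nu$ was defined as the infimum of $\|h_t(v)\|$ over $(t,v) \in [0,1] \times S_R(V')$, and that Lemma \ref{lem:notzero} together with compactness of $[0,1] \times S_R(V')$ (since $V'$ is finite dimensional) guarantees $\nu > 0$. Since we chose $\delta < \nu/2$, the same reverse triangle inequality gives
\[
\|h_t(v) - \eta_2\| \;\ge\; \|h_t(v)\| - \|\eta_2\| \;>\; \nu - \delta \;>\; \nu/2 \;>\; \delta,
\]
which is the desired estimate.

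There is no real obstacle here: the lemma is essentially bookkeeping. The substantive work was done earlier in constructing the homotopies $f_t, h_t$ and in verifying that they remain bounded away from zero on the sphere of radius $R$ (Lemmas \ref{lem:notsmall}, \ref{lem:notzero}), and in the careful choice of the parameters $\epsilon$ and $\delta$. The purpose of the present lemma is simply to package those estimates into the form $\|\cdot - \eta_2\| > \delta$, which will be used in the next step to argue that the homotopies $f_t$ and $h_t$, restricted to $D_R(V')$, give well-defined cobordisms of the zero sets $f_t^{-1}(\eta_2)$ (respectively $h_t^{-1}(\eta_2)$) without any solutions escaping to the boundary $S_R(V')$.
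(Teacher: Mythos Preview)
Your proof is correct and essentially identical to the paper's own argument: both use the reverse triangle inequality together with the bounds $\|f_t(v)\| > \epsilon$ and $\|h_t(v)\| \ge \nu$ from Lemma~\ref{lem:notsmall}, combined with the choice $\delta < \min\{\epsilon/2, \nu/2\}$. Your interpretation of ``$g_t$'' as a typo for $h_t$ is also correct.
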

\begin{proof}
From Lemma \ref{lem:notsmall} we have that $|| f_t(v) || > \epsilon$, hence 
\[
||f_t(v) - \eta_2|| \ge ||f_t(v)|| - ||\eta_2|| > \epsilon - \delta > \epsilon/2 > \delta.
\]
Similarly, $||h_t(v)|| \ge \nu$ by the definition of $\nu$ and hence 
\[
||h_t(v) - \eta_2|| \ge ||h_t(v)|| - ||\eta_2|| \ge \nu - \delta > \nu/2 > \delta.
\]
\end{proof}

Let $B_\infty \subseteq S_{W'}$ denote the section at infinity. There exists a fibrewise deformation retraction $S_{W'} \setminus D_\delta(W') \to B_\infty$, moreover we can choose the retracting homotopy to be $S^1$-equivariant and smooth. It follows that if $\varphi : D_R(V') \to W'$ is any smooth $S^1$-equivariant map such that $\varphi|_{S_R(V')}$ takes values in $W' \setminus D_\delta(W')$, then $\varphi$ admits a smooth $S^1$-equivariant extension $\tilde{\varphi} : S_{V'} \to S_{W'}$ with the property that if $v \in S_{V'} \setminus D_R(V')$, then $\tilde{\varphi}(v) \in S_{W'} \setminus D_\delta(W')$. Therefore if $\eta : B \to W'$ is any section with $||\eta || < \delta$, it follows that $\tilde{\varphi}^{-1}(\eta) = \varphi^{-1}(\eta)$. We apply this construction to $f_2,f_3,f_4$ and also the homotopies $f_t$, $h_t$ to obtain $\tilde{f}_2, \tilde{f}_3, \tilde{f}_4$ and homotopies $\tilde{f}_t$ from $\tilde{f}_2$ to $\tilde{f}_3$ and $\tilde{h}_t$ from $\tilde{f}_3$ to $\tilde{f}_4$. 

Recall that $\eta_2$ is transverse to $f_2$ and that $M_2 = f_2^{-1}(\eta_2)$ is equivariantly diffeomorphic to the original moduli space $M_1$. From the above remarks we have that $\tilde{f}_2$ is also transverse to $\eta_2$ and that $M_2 = \tilde{f}_2^{-1}(\eta_2)$.

\begin{lemma}\label{lem:cobord1}
There exists an $S^1$-equivariant map $\tilde{f}_4' : S_{V'} \to S_{W'}$ equivariantly homotopic to $\tilde{f}_4$ such that $\tilde{f}_4'$ is transverse to $\eta_2$. Moreover $\tilde{f}_4'$ can be chosen to be arbitrarily close to $\tilde{f}_4$ and such that the preimage $M_4 = (\tilde{f}_4' )^{-1}(\eta_2)$ contains no fixed points of the $S^1$-action. In addition, $M_4$ is equivariantly cobordant to $M_2$ (and hence also to $M_1$).
\end{lemma}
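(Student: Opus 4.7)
The plan is to produce $\tilde{f}_4'$ via equivariant transversality and then realise the cobordism between $M_2$ and $M_4$ as the preimage of $\eta_2$ under an $S^1$-equivariant homotopy from $\tilde{f}_2$ to $\tilde{f}_4'$. First I would apply the transversality argument used in Lemma~\ref{lem:finiteperturb} (relying on \cite{field}) to perturb $\tilde{f}_4$ by an arbitrarily small $S^1$-equivariant homotopy to a smooth equivariant map $\tilde{f}_4'$ transverse to $\eta_2$. To verify that $M_4 = (\tilde{f}_4')^{-1}(\eta_2)$ contains no fixed points, note that on $D_R((V')^{S^1})$ the map $\tilde{f}_4 = \hat{f}$ reduces to $v \mapsto l_1(v)$: by (M5), $c(v) = 0$ for $v \in \mathbb{V}_\mathbb{R}$, so $f(v) = l(v) \in W'$ and $\rho(f(v)) = l_1(v)$ since $\rho$ is a retraction onto $S_{W'}$. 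Outside $D_R((V')^{S^1})$ the extension takes values in a neighbourhood of the section at infinity. Since $\eta_2 = p\eta_1$ represents a chamber of $\hat{f}$, it is a section of $(W')^{S^1} \setminus l_1((V')^{S^1})$ of norm less than $\delta$, hence disjoint from the fixed image of $\tilde{f}_4$; openness of disjointness in the $C^0$-topology ensures the same for $\tilde{f}_4'$, so the $S^1$-action on $M_4$ is free.

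For the cobordism I would concatenate the equivariant homotopies $\tilde{f}_t$ from $\tilde{f}_2$ to $\tilde{f}_3$, $\tilde{h}_t$ from $\tilde{f}_3$ to $\tilde{f}_4$, and the small equivariant homotopy connecting $\tilde{f}_4$ to $\tilde{f}_4'$, yielding an $S^1$-equivariant map $F : I \times S_{V'} \to S_{W'}$ with $F|_{\{0\}\times S_{V'}} = \tilde{f}_2$ and $F|_{\{1\}\times S_{V'}} = \tilde{f}_4'$. Since both endpoints are already transverse to $\eta_2$, relative equivariant transversality allows me to perturb $F$ through equivariant homotopies fixing the boundary so that $F$ becomes transverse to $\eta_2$. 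The bounds of Lemmas~\ref{lem:notsmall} and~\ref{lem:notsmall2} confine the preimage to $I \times D_R(V')$, so $W = F^{-1}(\eta_2)$ is a smooth compact $S^1$-equivariant manifold with $\partial W = M_2 \sqcup M_4$. The isomorphism argument of Lemma~\ref{lem:orientation2} applied to $F$ extends $\psi_{M_2}$ and $\psi_{M_4}$ across $W$, producing the desired equivariant cobordism of triples.

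The hard part is ensuring that the $S^1$-action on $W$ is free throughout the homotopy, i.e.\ that the fixed preimage of $\eta_2$ under $F$ stays empty in the interior and not merely at the endpoints. For the $\tilde{h}_t$ segment this is immediate, as $h_t(v) = (1-t)l_1(v) + t\rho(l_1(v)) = l_1(v)$ for $v \in (V')^{S^1}$ lies in $l_1((V')^{S^1})$. For $\tilde{f}_t$, equivariance and uniqueness of the contraction $\theta$ together with (M5) force $\theta(v) = \theta_0 := l_2^{-1}(\eta_1'')$ to be independent of $v \in (V')^{S^1}$, whence $f_t(v) = l_1(v) + (1-t)l_3(\theta_0)$. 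The crucial observation is that $l_3(\theta_0) \in l_1((V')^{S^1})$: by (M4), $W_{0,\mathbb{R}} = \ker l_\mathbb{R}^*$ is orthogonal to $l_\mathbb{R}(\mathbb{V}_\mathbb{R})$, producing an orthogonal decomposition $(W')^{S^1} = l_1((V')^{S^1}) \oplus W_{0,\mathbb{R}}$ in which $l_3(\theta_0) = p\, l_\mathbb{R}(\theta_0)$ sits in the first summand. Consequently the image of $F$ on $I \times (V')^{S^1}$ remains inside $l_1((V')^{S^1})$, disjoint from $\eta_2(B)$ by the chamber property, so the fixed preimage is already empty before perturbation, and a small equivariant perturbation preserves both transversality and fixed-point avoidance, completing the construction.
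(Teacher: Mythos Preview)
Your argument is correct and follows essentially the same route as the paper: concatenate the extended homotopies $\tilde{f}_t$ and $\tilde{h}_t$ into a single equivariant homotopy from $\tilde{f}_2$ to $\tilde{f}_4$, perturb to achieve equivariant transversality to $\eta_2$, and take the preimage as the cobordism. Your explicit verification that $f_t|_{(V')^{S^1}}$ lands in $l_1((V')^{S^1})$---via the observation that $\theta(v')=l_2^{-1}(\eta_1'')$ is constant on fixed points and that $l_3(\theta_0)=p\,l_\mathbb{R}(\theta_0)$ is orthogonal to $W_{0,\mathbb{R}}$---spells out precisely the point the paper leaves implicit when it asserts that sufficiently small deformations keep $(k')^{-1}(\eta_2)$ fixed-point free.
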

\begin{proof}
Let $\tau : [0,1] \to [0,1]$ be a smooth function such that $\tau(t) = 0$ for $0 \le t \le 1/3$ and $\tau(t) = 1$ for $2/3 \le t \le 1$. Then the homotopies $\tilde{f}_{\tau(t)}$ and $\tilde{h}_{\tau(t)}$ can be joined together smoothly to give a smooth homotopy $k_t$ from $\tilde{f}_2$ to $\tilde{f}_4$ with the property that if $k_t(v) \in D_\delta(W')$, then $||v|| < R$. We can equivariantly deform $\tilde{f}_4$ to $\tilde{f}'_4$ and equivariantly deform $k_t$ to a homotopy $k'_t$ from $\tilde{f}_2$ to $\tilde{f}'_4$ such that $\tilde{f}_2, \tilde{f}'_4$ are transverse to $\eta_2$ and $k' : [0,1] \times S_{V'} \to S_{W'}$ is transverse to the constant path $\eta_2 : [0,1] \times B \to W'$. If we take our deformations to be sufficiently close to the original maps, then $M_4 = (\tilde{f}_4')^{-1}(\eta_2)$ and $(k')^{-1}(\eta_2)$ will be fixed point free. Moreover, $(k')^{-1}(\eta_2)$ defines an equivariant cobordism from $M_2$ to $M_4$.
\end{proof}

\begin{lemma}\label{lem:cobord2}
There exists an equivariant homotopy of $\hat{f}$ to a map $\hat{f}'$ such that $\eta_2$ is transverse to $\hat{f}'$. Also $\hat{f}'$ can be chosen arbitrarily close to $\hat{f}$ and such that there is an equivariant diffeomorphism $M_4 = (\hat{f}')^{-1}(\eta_2)$.
\end{lemma}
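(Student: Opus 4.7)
The plan is to transport the perturbation producing $\tilde{f}_4'$ in Lemma~\ref{lem:cobord1} from $\tilde{f}_4$ over to $\hat{f}$, exploiting the fact that $\hat{f}$ and $\tilde{f}_4$ already coincide on $D_R(V')$. First, I would record the key observation: by construction $\hat{f}|_{D_R(V')} = f_4 = \tilde{f}_4|_{D_R(V')}$, and both $\hat{f}$ and $\tilde{f}_4$ send $S_{V'}^+$ into $S_{W'}\setminus D_\delta(W')$ (for $\hat{f}$ this follows from Lemma~\ref{lem:notsmall2} together with $\delta<\nu'$; for $\tilde{f}_4$ it is built into the extension). Since $\|\eta_2\|\le\|\eta_1\|<\delta$, the preimages $\hat{f}^{-1}(\eta_2)$ and $\tilde{f}_4^{-1}(\eta_2)$ lie in the interior of $D_R(V')$, and in fact (by compactness) in some $D_{R-\epsilon_0}(V')$ for a small $\epsilon_0>0$.

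Next, I would sharpen Lemma~\ref{lem:cobord1} by observing that the perturbation carrying $\tilde{f}_4$ to $\tilde{f}_4'$ can be arranged to be compactly supported in the interior of $D_R(V')$. Indeed, $S^1$-transversality to $\eta_2$ is a local condition near the preimage, and the preimage is already contained in the open subbundle $D_{R-\epsilon_0}(V')$; multiplying any perturbation by an $S^1$-invariant cut-off function equal to $1$ on $D_{R-\epsilon_0/2}(V')$ and $0$ outside $D_R(V')$ achieves this while preserving transversality (by the $S^1$-transversality machinery of Bierstone and Field, \cite{field2}). With this sharpening, $\tilde{f}_4' = \tilde{f}_4 = \hat{f}$ on a neighbourhood of $\partial D_R(V')$, so the rule
\[
\hat{f}' := \begin{cases} \tilde{f}_4' & \text{on } D_R(V'), \\ \hat{f} & \text{on } S_{V'}\setminus D_R(V') \end{cases}
\]
defines a smooth $S^1$-equivariant map $\hat{f}':S_{V'}\to S_{W'}$, $C^0$-close to $\hat{f}$. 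An equivariant homotopy from $\hat{f}$ to $\hat{f}'$ is given by the straight-line homotopy inside $D_R(V')$ (this lies in the finite-rank vector bundle $W'$, so convex combination is well defined, and it stays away from $D_\delta(W')^c$ if we choose $\tilde{f}_4'$ sufficiently close to $\tilde{f}_4$), patched with the constant homotopy outside.

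To finish: since $\hat{f}'=\hat{f}$ outside $D_R(V')$ and the latter sends that region into $S_{W'}\setminus D_\delta(W')\ni\!\!\!\!/\;\,\eta_2$, we have $(\hat{f}')^{-1}(\eta_2)\subset \mathrm{int}\,D_R(V')$; on this interior $\hat{f}' = \tilde{f}_4'$, giving $(\hat{f}')^{-1}(\eta_2) = (\tilde{f}_4')^{-1}(\eta_2) = M_4$ as $S^1$-manifolds, and transversality of $\eta_2$ to $\hat{f}'$ at the preimage is inherited from $\tilde{f}_4'$. The principal technical point is the compact-support improvement of Lemma~\ref{lem:cobord1}; once this is in place, the remainder of the argument is a routine cut-and-paste. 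The rest consists of book-keeping to confirm that the homotopy, and hence $\hat{f}'$ itself, stays arbitrarily close to $\hat{f}$ in the $\mathcal{C}^0$ topology as the perturbation shrinks.
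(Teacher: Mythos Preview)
Your proposal is correct and follows essentially the same strategy as the paper: both arguments glue the perturbed map $\tilde{f}_4'$ on $D_R(V')$ to $\hat{f}$ on $S_{V'}^+$, using that $\hat{f}$ already agrees with $\tilde{f}_4$ on $D_R(V')$ and that both maps send the exterior region away from $D_\delta(W')$, so all solutions of $\hat{f}'(v)=\eta_2$ are forced into the interior where $\hat{f}'=\tilde{f}_4'$.

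The only technical divergence is in how the gluing is made smooth. You go back and sharpen Lemma~\ref{lem:cobord1} so that the perturbation $\tilde{f}_4'\!-\tilde{f}_4$ is compactly supported in the interior of $D_R(V')$, after which the two pieces agree on an open collar and the patching is automatic. The paper instead leaves Lemma~\ref{lem:cobord1} as stated, takes the homotopy $H$ from $\hat{f}|_{D_R(V')}$ to $f_4'$, and reparametrises the time variable by a radial bump function $\tau((R-\|v\|)/\epsilon')$ in an annular shell $R-\epsilon'\le\|v\|\le R$, so that at $t=1$ one obtains $\hat{f}$ on the boundary sphere and $f_4'$ on $D_{R-\epsilon'}(V')$. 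Your compact-support variant is arguably cleaner, at the cost of revisiting the earlier lemma; the paper's radial interpolation avoids that but carries a slightly more elaborate formula. Either mechanism yields the same $\hat{f}'$ up to the size of the transition region, and the identification $(\hat{f}')^{-1}(\eta_2)=M_4$ follows identically in both.
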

\begin{proof}
Let $f_4' = \tilde{f}_4' |_{D_R(V')}$. Then $f'_4$ is a deformation of $f_4 = \hat{f}|_{D_R(V')}$. Let $H : [0,1] \times D_R(V') \to S_{W'}$ be the homotopy from $\hat{f}_{D_R(V')}$ to $f'_4$. Recall that the image of $\tilde{f}_4 |_{S_R(V')} = \hat{f}|_{S_R(V')}$ is disjoint from $D_\delta(W')$. We can therefore choose the deformation $\tilde{f}'_4$ and homotopy $H$ such that the image of $H |_{[0,1] \times S_R(V')}$ is disjoint from $D_\delta(W')$. It follows that there exists some $\epsilon' > 0$ such that $H(t,v) \notin D_\delta(W')$ for all $t \in [0,1]$ and all $v \in V'$ with $R - \epsilon' \le ||v|| \le R$. Now define $K : [0,1] \times S_{V'} \to S_{W'}$ as follows. As in the previous lemma, let $\tau : [0,1] \to [0,1]$ be a smooth function such that $\tau(t) = 0$ for $0 \le t \le 1/3$ and $\tau(t) = 1$ for $2/3 \le t \le 1$. Recall that we defined $S_{V'}^+ = S_{V'} \setminus \{ v \in V' \; | \; ||v|| < R \}$. Then we define
\[
K(t,v) = \begin{cases} \hat{f}(v) & \text{if } v \in S_{V'}^+ \\ H( t \tau( \frac{R - ||v||}{\epsilon'} ) , v ) & \text{if } R-\epsilon' \le ||v|| \le R \\ H(t,v) & \text{if } ||v|| \le R-\epsilon'. \end{cases}
\]
Then $K$ is a smooth equivariant homotopy from $\hat{f}$ to $\hat{f}'$, where we set $\hat{f}'(v) = K(1,v)$. Since we can choose $\tilde{f}_4'$ can be chosen arbitrarily close to $\tilde{f}_4$ and such that the homotopy $H$ also remains arbitrarily close to $\tilde{f}_4$ at all times, it follows that $K(t,v)$ can be made arbitrarily close to $\hat{f}$ for all times, in particular $\hat{f}'$ can be made arbitrarily close to $\hat{f}$. Next we claim that $\hat{f}'$ is transverse to $\eta_2$. Recall that $||\eta_2|| < \delta$. From Lemma \ref{lem:notsmall2}, we see that if $v \in S_{V'}^+$ then $\hat{f}(v) = \hat{f}'(v) \notin D_{\delta}(W')$, so there are no solutions to $\hat{f}'(v) = \eta_2$ with $v \in S_{V'}^+$. Similarly, since $H(1,v) \notin D_\delta(W')$ for all $v \in V'$ with $R - \epsilon' \le ||v|| \le R$, we see that there are no solutions to $\hat{f}'(v) = \eta_2$ with $R - \epsilon' \le ||v|| \le R$ as well. Next we note that $\hat{f}' |_{D_{R-\epsilon'}(V')} = f_4' |_{D_{R-\epsilon'}(V')}$ and that there are no solutions to $f'_4(v) = \eta_2$ with $R - \epsilon' \le ||v|| \le R$, since $H(1,v) = f'_4$. Thus $\hat{f}'$ is transverse to $\eta_2$ because $f'_4$ is and $(\hat{f}')^{-1}(\eta_2) = (f'_4)^{-1}(\eta_2) = M_4$.
\end{proof}

\noindent {\em Completion of proof of Theorem \ref{thm:swequality}}: we have seen that the Seiberg-Witten invariants of the original monopole map are the Seiberg-Witten invariants of the triple $(M_1 , \pi_1 , \psi_1)$ and similarly the Seiberg-Witten invariants of $\hat{f}$ are those of the triple $(M_4 , \pi_4 , \psi_4)$, where $M_4 = \hat{M} = (\hat{f}')^{-1}(\eta_2)$, $\pi_4$ is the  natural projection to $B$ and $\psi_4$ is the isomorphism given as in Lemma \ref{lem:orientation2}. We have previously defined the triple $(M_2,\pi_2,\psi_2)$ and shown an isomorphism $(M_1 , \pi_1 , \psi_1) \cong (M_2, \pi_2 , \psi_2)$. Lemmas \ref{lem:cobord1} and \ref{lem:cobord2} show that there is a cobordism of pairs $(M_2 , \pi_2) \sim (M_4,\pi_4)$. It just remains to check that that $\psi_2, \psi_4$ extend over this cobordism. Recall that $\psi_2$ is the isomorphism obtained by applying Lemma \ref{lem:orientation2} to $f_2$ and similarly $\psi_4$ is the isomorphism obtained by applying Lemma \ref{lem:orientation2} to $f'_4$. Applying Lemma \ref{lem:orientation2} to the homotopy $k'_t|_{D_R(V')}$ (where $k'_t$ is as in Lemma \ref{lem:cobord1}) joining $f_2$ to $f'_4$, we get the desired extension of $\psi_2, \psi_4$ over the cobordism. So at last we have shown that $(M_1, \pi_1 , \psi_1)$ and $(M_4, \pi_4, \psi_4)$ are cobordant and thus have the same Seiberg-Witten invariants.

\section{Cohomological formulation of the families Seiberg-Witten invariants}\label{sec:cohom}

In this section we will show how the families Seiberg-Witten invariants of a finite dimensional monopole map can be recovered from purely cohomological operations. This reformulation of the Seiberg-Witten invariants makes it easy to establish various properties of them, such as the wall crossing formula or the computation of their Steenrod powers.

\subsection{Equivariant cohomology computations}\label{sec:cohomological}

Let $B$ be a compact smooth manifold as before. Let $V \to B$ be a complex vector bundle of rank $a$ and $U \to B$ a real vector bundle of rank $b$. We make $V$ into an $S^1$-equivariant vector bundle where $S^1$ acts by scalar multiplication on the fibres of $V$. We make $U$ into an $S^1$-equivariant vector bundle with the trivial $S^1$-action. Let $S_{V,U}$ denote the unit sphere bundle $S_{V,U} = S(\mathbb{R} \oplus V \oplus U)$. Let $S_U$ denote $S_{0 , U}$, let $q : S_U \to B$ be the projection to $B$. Let $\mathbb{P}(V)$ be the projective bundle associated to $V$ and note that $q^*(\mathbb{P}(V)) = \mathbb{P}(q^*V)$.

We will work with the $S^1$-equivariant cohomology of various spaces. Let $\mathbb{C}$ be equipped with the standard $S^1$-action by scalar multiplication. This defines an equivariant line bundle over a point and thus a class $x = c_1(\mathbb{C}) \in H^2_{S^1}(pt ; \mathbb{Z})$. Then $H^*_{S^1}(pt ; \mathbb{Z})$ is isomorphic to $\mathbb{Z}[x]$, the ring of polynomials in $x$ with integer coefficients. Equip $B$ with the trivial $S^1$-action. Since we will be working with spaces that fibre (equivariantly) over $B$, the cohomology groups of interest will be modules over the equivariant cohomology of $B$. We introduce the following notation:
\[
\mathcal{H}^* = H^*_{S^1}(B ; \mathbb{Z}) = H^* \otimes H^*(B ; \mathbb{Z}) = H^*(B ; \mathbb{Z})[x].
\]
We will also need to work with local systems and with coefficient groups other than $\mathbb{Z}$. Let $A$ be a local system of abelian groups on $B$ (equipped with trivial $S^1$-action) and define:
\[
\mathcal{H}^*(A) = H^*_{S^1}(B ; A) = H^* \otimes H^*(B ; A) = H^*(B ; A)[x].
\]
We are mostly interested in local systems which arise as follows. Suppose $w \in H^1(B , \mathbb{Z}_2)$. Then $w$ corresponds to a principal $\mathbb{Z}_2$ covering $B_w \to B$ and we let $\mathbb{Z}_w$ denote the local system $B_w \times_{\mathbb{Z}_2} \mathbb{Z}$, where $\mathbb{Z}_2$ acts on $\mathbb{Z}$ as multiplication by $\pm 1$.

Recall that $S_{V,U} = S(\mathbb{R} \oplus V \oplus U)$. There is a global section $B \to S_{V,U}$ given by $(1,0,0)$. Let $B_{V,U}$ denote the image of this section.

In the computations that follow we will first assume that $U$ is oriented. The results and proofs can easily be adapted to the case that $U$ is non-orientable, provided we use local coefficients. 

\begin{proposition}\label{prop:equivthom}
Suppose that $U$ is orientable. Then $H^*_{S^1}( S_{V,U} , B_{V,U} ; \mathbb{Z})$ is a free rank $1$ module over $\mathcal{H}^*$ with generator $\tau_{V,U} \in H^{2a+b}_{S^1}( S_{V,U} , B_{V,U} ; \mathbb{Z})$.
\end{proposition}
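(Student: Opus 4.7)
The plan is to identify $H^*_{S^1}(S_{V,U}, B_{V,U}; \mathbb{Z})$ with the reduced equivariant cohomology of the Thom space of the oriented $S^1$-equivariant real vector bundle $V \oplus U \to B$, and then apply the ordinary Thom isomorphism after passing to the Borel construction. The starting point is to observe that $S_{V,U}$ is the fibrewise one-point compactification of $V \oplus U$ and $B_{V,U}$ is the section at infinity, so via the Borel construction one has
\[
H^*_{S^1}(S_{V,U}, B_{V,U}; \mathbb{Z}) = H^*\!\left(S_{V,U} \times_{S^1} ES^1, \; B_{V,U} \times_{S^1} ES^1; \mathbb{Z}\right).
\]

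Next, I would analyse the right-hand side. Since $S^1$ acts trivially on $B$, the subspace $B_{V,U} \times_{S^1} ES^1$ is identified with $B \times BS^1$, while $S_{V,U} \times_{S^1} ES^1$ is the fibrewise one-point compactification of the real vector bundle
\[
\mathcal{E} := (V \otimes_{\mathbb{C}} p_2^* L) \oplus p_1^* U \longrightarrow B \times BS^1,
\]
where $p_1, p_2$ denote the two projections and $L \to BS^1$ is the tautological complex line bundle. Here the first summand arises from the associated-bundle construction for the $S^1$-representation on $V$ by scalar multiplication, which converts the equivariant bundle $V$ into its twist by $L$; the second summand comes from the trivial $S^1$-action on $U$. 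The bundle $\mathcal{E}$ is oriented as a real vector bundle of rank $2a+b$: the first summand is canonically oriented by its complex structure and the second is oriented by the standing hypothesis on $U$.

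Consequently the relative cohomology above equals $\tilde{H}^*(\mathrm{Th}(\mathcal{E}); \mathbb{Z})$, and the ordinary Thom isomorphism for the oriented bundle $\mathcal{E}$ over $B \times BS^1$ produces a Thom class
\[
\tau_{V,U} \in H^{2a+b}(\mathrm{Th}(\mathcal{E}); \mathbb{Z}) = H^{2a+b}_{S^1}(S_{V,U}, B_{V,U}; \mathbb{Z})
\]
whose cup product induces an isomorphism
\[
H^{*-(2a+b)}(B \times BS^1; \mathbb{Z}) \xrightarrow{\; \smallsmile \tau_{V,U}\;} H^*_{S^1}(S_{V,U}, B_{V,U}; \mathbb{Z}).
\]
Using $H^*(B \times BS^1; \mathbb{Z}) = H^*(B; \mathbb{Z})[x] = \mathcal{H}^*$, this exhibits $H^*_{S^1}(S_{V,U}, B_{V,U}; \mathbb{Z})$ as a free rank-one $\mathcal{H}^*$-module generated by $\tau_{V,U}$ in degree $2a+b$.

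The main technical step to verify carefully is the identification of $S_{V,U} \times_{S^1} ES^1$ with the fibrewise one-point compactification of $\mathcal{E}$. This is the standard fact that for a fibrewise-linear action the Borel construction commutes with fibrewise one-point compactification; combined with the description of the associated bundle of the scalar $S^1$-representation as tensor product with $L$, it gives precisely the identification used above. With this established, everything else is a formal consequence of the ordinary oriented Thom isomorphism.
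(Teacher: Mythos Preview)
Your proof is correct and follows exactly the same strategy as the paper's: pass to the Borel construction to obtain an ordinary oriented vector bundle over $B \times BS^1$, then invoke the standard Thom isomorphism. The paper's proof is essentially a one-line version of yours, writing the bundle simply as $(V \oplus U) \times_{S^1} ES^1 \to B \times BS^1$ without decomposing it further; your more explicit description of $\mathcal{E}$ (as a twist of $p_1^*V$ by the universal line bundle, direct-summed with $p_1^*U$) is not needed for the argument but is harmless and correct up to conventions on the tautological line.
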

\begin{proof}
This is the Thom isomorphism in equivariant cohomology. We briefly recall the proof. Since $G = S^1$ acts on $V\oplus U$, we obtain a vector bundle $(V \oplus U)_G$ over $B \times BG$ given by $(V \oplus U) \times_G EG \to B \times_G EG = B \times BG$. Now we just apply the usual Thom isomorphism to $(V \oplus U)_G$. In particular we obtain an {\em equivariant Thom class} $\tau_{V,U}$. This point of view also makes it clear that the choice of generator $\tau_{V,U}$ corresponds to a choice of orientation of $U$.
\end{proof}

In the case that $U$ is not orientable we instead have an equivariant Thom class $\tau_{V,U} \in H^{2a+b}_{S^1}( S_{V,U} , B_{V,U} ; \mathbb{Z}_{w_1(U)})$ and the Thom isomorphism continues to hold as long as we use local coefficients.

Recall that the {\em equivariant Euler class} $e_{V,U} \in \mathcal{H}^{2a+b}(\mathbb{Z}_{w_1(U)})$ of $V\oplus U$ is the pullback of the equivariant Thom class $\tau_{V,U}$ under the zero section $B \to (V \oplus U)$ (viewed as a map of pairs $(B , \emptyset ) \to ( S_{V,U} , B_{V,U})$). Clearly this factors as $e_{V,U} = e_V e_U$, where $e_U$ is just the ordinary Euler class of $U$ and $e_V$ is the equivariant Euler class of $V$. Using the splitting principle, one finds that $e_V$ is given by:
\[
e_V = x^a + c_1(V)x^{a-1} + c_2(V)x^{a-2} + \cdots + c_a(V).
\]

Let $b \ge 1$ and let $U$ be a real oriented orthogonal vector bundle. Suppose that we are given an oriented sub-bundle $i_W : W \to U$ of positive codimension. Assume that there exists a section $\phi : B \to U$ which is disjoint from $W$. Equivalently, assume that the orthogonal complement $W^\perp$ of $W$ in $U$ admits a non-vanishing section. Recall from \cite{atbo} that pushforward maps can be constructed in equivariant cohomology. In particular we have that $\phi$ induces a pushforward map
\[
\phi_* : H^j_{S^1}(B ; \mathbb{Z}) \to H^{j+2a+b}_{S^1}( S_{V,U} , S_W ; \mathbb{Z} ).
\]
The image of $\phi_*(1)$ under the natural map $H^{2a+b}_{S^1}(S_{V,U} , S_W ; \mathbb{Z}) \to H^{2a+b}_{S^1}(S_{V,U} , B_{V,U} ; \mathbb{Z})$ is precisely the equivariant Thom class $\tau_{V,U}$. Thus $\phi_*(1)$ is a lift of $\tau_{V,U}$ and hence we sometimes denote it as $\phi_*(1) = \widetilde{\tau}^\phi_{V,U}$.

\begin{proposition}\label{prop:equivthom2}
Let $U$ be a real oriented vector bundle of rank $b \ge 1$ and suppose that $W \subset U$ is a proper subbundle of rank $w$ which is also oriented. Suppose that $U$ admits a section $\phi : B \to U$ whose image is disjoint from $W$. Let $V$ be a complex vector bundle of rank $a$. Then $H^*_{S^1}( S_{V,U} , S_{W} ; \mathbb{Z})$ is a free $\mathcal{H}^*$-module of rank $2$, generated by $\widetilde{\tau}^\phi_{V,U}$ and $\delta \tau_{0,W}$, where $\widetilde{\tau}^\phi_{V,U} = \phi_*(1)$ and $\delta \tau_{0 , W}$ is the image of $\tau_{0,W}$ under the coboundary map $\delta : H^{w}_{S^1}( S_{W} , B_{W} ; \mathbb{Z}) \to H^{w+1}_{S^1}( S_{V,U} , S_{W} ; \mathbb{Z})$ of the triple $(S_{V,U} , S_{W} , B_{W})$.
\end{proposition}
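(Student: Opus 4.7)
The plan is to run the long exact sequence of the triple $(S_{V,U}, S_W, B_W)$ in $S^1$-equivariant cohomology, namely
\[
\cdots \to H^k_{S^1}(S_{V,U}, S_W) \to H^k_{S^1}(S_{V,U}, B_W) \xrightarrow{i^*} H^k_{S^1}(S_W, B_W) \xrightarrow{\delta} H^{k+1}_{S^1}(S_{V,U}, S_W) \to \cdots
\]
Since $B_W$ and $B_{V,U}$ agree as subspaces of $S_{V,U}$, Proposition~\ref{prop:equivthom} identifies the two outer groups as free $\mathcal{H}^*$-modules of rank one, generated by the equivariant Thom classes $\tau_{V,U}$ and $\tau_{0,W}$ respectively. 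The strategy is to show that $i^*$ vanishes identically, so that the sequence breaks into short exact sequences, and then to exhibit an $\mathcal{H}^*$-linear splitting using the class $\phi_*(1)$.

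The key step is to compute $i^*(\tau_{V,U})$. Fix an $S^1$-invariant metric on $U$ and decompose $U = W \oplus W^\perp$, giving an orthogonal decomposition $V \oplus U = W \oplus (V \oplus W^\perp)$ with $S^1$ acting only on $V$. By the standard formula for pulling back a Thom class along the inclusion of a direct summand, one has
\[
i^*(\tau_{V,U}) = e_V \smallsmile e_{W^\perp} \smallsmile \tau_{0,W}
\]
in $H^*_{S^1}(S_W, B_W)$. The hypothesis that $\phi : B \to U$ is a section disjoint from $W$ implies that its orthogonal projection $\phi_{W^\perp} : B \to W^\perp$ is nowhere zero (a zero of $\phi_{W^\perp}$ at $b$ would force $\phi(b) \in W_b$). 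Since $S^1$ acts trivially on $W^\perp$, the equivariant Euler class $e_{W^\perp}$ agrees with the ordinary Euler class and therefore vanishes. Hence $i^*(\tau_{V,U}) = 0$, and by $\mathcal{H}^*$-linearity $i^* \equiv 0$.

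With $i^*$ zero, the long exact sequence collapses into short exact sequences
\[
0 \to H^{k-1}_{S^1}(S_W, B_W) \xrightarrow{\delta} H^k_{S^1}(S_{V,U}, S_W) \to H^k_{S^1}(S_{V,U}, B_W) \to 0.
\]
To split these as $\mathcal{H}^*$-modules I would use $\widetilde{\tau}^\phi_{V,U} = \phi_*(1) \in H^{2a+b}_{S^1}(S_{V,U}, S_W)$, which is well-defined precisely because $\phi(B)$ avoids $S_W$, and which, as recorded in the paragraph preceding the statement, lifts $\tau_{V,U}$. This supplies a splitting in the generating degree, and $\mathcal{H}^*$-linearity propagates it to every degree. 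Combining the two free rank-one summands then identifies $H^*_{S^1}(S_{V,U}, S_W)$ with the free $\mathcal{H}^*$-module on $\widetilde{\tau}^\phi_{V,U}$ and $\delta\tau_{0,W}$. The main obstacle to overcome is the excess-intersection calculation $i^*(\tau_{V,U}) = e_V e_{W^\perp} \tau_{0,W}$; once combined with the vanishing of $e_{W^\perp}$ forced by $\phi$, the remainder is formal.
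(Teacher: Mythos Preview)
Your proof is correct and follows the same overall structure as the paper: run the long exact sequence of the triple $(S_{V,U},S_W,B_W)$, show the restriction map $H^*_{S^1}(S_{V,U},B_{V,U})\to H^*_{S^1}(S_W,B_W)$ vanishes, and split the resulting short exact sequences using $\widetilde{\tau}^\phi_{V,U}=\phi_*(1)$.

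The only substantive difference is in how you verify that the restriction map vanishes. You compute $i^*(\tau_{V,U})=e_V\,e_{W^\perp}\,\tau_{0,W}$ via an excess-intersection argument and then kill $e_{W^\perp}$ using the nowhere-zero section $\phi_{W^\perp}$. The paper bypasses this entirely: since the paragraph before the proposition already records that $\phi_*(1)$ maps to $\tau_{V,U}$ under $H^*_{S^1}(S_{V,U},S_W)\to H^*_{S^1}(S_{V,U},B_{V,U})$, that map hits the generator and is therefore surjective, whence the next map in the long exact sequence is zero by exactness. So what you call the ``main obstacle'' is in fact unnecessary---the lift $\phi_*(1)$ does double duty, simultaneously providing the splitting and forcing the vanishing you need. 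Your route is perfectly valid but slightly longer.
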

\begin{proof}
Consider the long exact sequence in equivariant cohomology of the triple $(S_{V,U} , S_{W} , B_{V,U})$:
\[
\cdots \to H^i_{S^1}( S_{V,U} , S_{W} ; \mathbb{Z}) \buildrel i_{W}^* \over \longrightarrow H^i_{S^1}(S_{V,U} , B_{V,U} ; \mathbb{Z}) \buildrel \iota^* \over \longrightarrow H^{i}_{S^1}( S_{W} , B_{V,U} ; \mathbb{Z} ) \buildrel \delta \over \longrightarrow \cdots
\]
by our discussion above we have that $i_W^*( \widetilde{\tau}^\phi_{V,U}) = \tau_{V,U}$. It follows by the Thom isomorphism that $i_W^*$ is surjective and hence $\iota^*=0$. Therefore the above long exact sequence splits into short exact sequences:
\[
0 \to H^{i-1}_{S^1}( S_{W} , B_{V,U} ; \mathbb{Z} ) \buildrel \delta \over \longrightarrow H^i_{S^1}( S_{V,U} , S_{W} ; \mathbb{Z}) \buildrel i_{W}^* \over \longrightarrow H^i_{S^1}(S_{V,U} , B_{V,U} ; \mathbb{Z}) \to 0.
\]
But $H^i_{S^1}(S_{V,U} , B_{V,U} ; \mathbb{Z})$ is a free $\mathcal{H}^*$-module generated by $\tau_{V,U}$. So the choice of lift $\widetilde{\tau}^\phi_{V,U}$ of $\tau_{V,U}$ determines a splitting of the above short exact sequences.
\end{proof}

Using local coefficients we can easily extend Proposition \ref{prop:equivthom2} to the case that $U$ and $W$ are not necessarily oriented.

Next we will be interested in the relative equivariant cohomology of the pair $( S_{V,U} , S_U )$. In this case it is useful to replace $S_U$ with an equivariant tubular neighbourhood. Recall that we let $q : S_U \to B$ denote the projection to $B$. Then one sees that the normal bundle to $S_U$ in $S_{V,U}$ is given by $q^*V$, equipped with the action of $S^1$ by fibrewise scalar multiplication. Let $\widetilde{U}$ be the unit open disc bundle in $q^*(V)$, which we can identify with an equivariant tubular neighbourhood of $S_U$. Let $\widetilde{Y} = S_{V,U} \setminus \widetilde{U}$. We observe that $\widetilde{Y}$ is a compact manifold with boundary $\partial \widetilde{Y} = S( q^*V )$, the unit sphere bundle of $q^*V$. We further observe that $S^1$ acts freely on $\widetilde{Y}$. Let $Y = \widetilde{Y}/S^1$ be the quotient, which is a compact manifold with boundary $\partial Y = (\partial \widetilde{Y})/S^1 = S(q^* V)/S^1 = \mathbb{P}(q^*V) = q^*( \mathbb{P}(V))$. Note that $\widetilde{Y} \to Y$ is a principal circle bundle whose Chern class is just the image of $x \in \mathcal{H}^2$ in $H^2(Y ; \mathbb{Z})$ under the natural map $\mathcal{H}^* \to H^*_{S^1}(\widetilde{Y} ; \mathbb{Z}) \cong H^*(Y ; \mathbb{Z})$. Note further that the restriction of $x$ to $\partial Y$ is precisely the Chern class of the bundle $\mathcal{O}_{q^*V}(1) \to \mathbb{P}(q^*(V))$ (the dual of the tautological line bundle over $\mathbb{P}(q^*(V))$).

\begin{proposition}\label{prop:equivthom3}
Let $U$ be an orientable real vector bundle. Then:
\begin{itemize}
\item{There is a naturally defined isomorphism $H^*_{S^1}( S_{V,U} , S_U ; \mathbb{Z}) \cong H^*( Y , \partial Y ; \mathbb{Z})$ of $\mathcal{H}^*$-modules.}
\item{$H^*_{S^1}( S_{V,U} , S_U ; \mathbb{Z})$ is a free module over $H^*(B ; \mathbb{Z})$ with basis $\{ (\delta \tau_{0,U}) x^i\}_{i=0}^{a-1}$, where $\delta : H^{*-1}_{S^1}( S_U , B_{V,U} ; \mathbb{Z}) \to H^*_{S^1}( S_{V,U} , S_U ; \mathbb{Z})$ is the coboundary map in the long exact sequence of the triple $( S_{V,U} , S_U , B_{V,U})$.}
\end{itemize}
\end{proposition}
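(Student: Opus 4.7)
The plan is to establish the first isomorphism by excision combined with the freeness of the $S^{1}$-action on $\widetilde{Y}$, and to deduce the second assertion by running the long exact sequence of the triple $(S_{V,U}, S_{U}, B_{V,U})$ against the equivariant Thom isomorphism of Proposition \ref{prop:equivthom}.

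For the first statement, I would first observe that the $S^{1}$-fixed locus in $S_{V,U}$ is precisely $S_{U}$ (the sub-sphere bundle on which the $V$-component vanishes), so the action on $\widetilde{Y} = S_{V,U} \setminus \widetilde{U}$ is free. Since $\widetilde{U}$ is an open $S^{1}$-invariant tubular neighbourhood of $S_{U}$, the inclusion $S_{U} \hookrightarrow \widetilde{U}$ is an $S^{1}$-equivariant homotopy equivalence, giving
\[
H^{*}_{S^{1}}(S_{V,U}, S_{U}) \;\cong\; H^{*}_{S^{1}}(S_{V,U}, \widetilde{U}).
\]
Excising a slightly smaller $S^{1}$-invariant open sub-disc bundle (whose closure lies in $\widetilde{U}$) then yields $H^{*}_{S^{1}}(S_{V,U}, \widetilde{U}) \cong H^{*}_{S^{1}}(\widetilde{Y}, \partial \widetilde{Y})$. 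Finally, since $\widetilde{Y} \to Y$ and $\partial \widetilde{Y} \to \partial Y$ are principal $S^{1}$-bundles, Borel's theorem identifies $H^{*}_{S^{1}}(\widetilde{Y}, \partial \widetilde{Y}) \cong H^{*}(Y, \partial Y)$. All isomorphisms are $\mathcal{H}^{*}$-linear via the natural map $\mathcal{H}^{*} \to H^{*}_{S^{1}}(\widetilde{Y}) \cong H^{*}(Y)$.

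For the second statement, I would write down the long exact sequence of the triple $(S_{V,U}, S_{U}, B_{V,U})$:
\[
\cdots \to H^{i-1}_{S^{1}}(S_{U}, B_{V,U}) \xrightarrow{\delta} H^{i}_{S^{1}}(S_{V,U}, S_{U}) \to H^{i}_{S^{1}}(S_{V,U}, B_{V,U}) \xrightarrow{r} H^{i}_{S^{1}}(S_{U}, B_{V,U}) \to \cdots.
\]
By Proposition \ref{prop:equivthom} the source of $r$ is the free $\mathcal{H}^{*}$-module on $\tau_{V,U}$ and its target is the free $\mathcal{H}^{*}$-module on $\tau_{0,U}$. The map $r$ is the pullback of the Thom class along the inclusion of the sub-bundle $0 \oplus U \hookrightarrow V \oplus U$, and by the standard multiplicativity formula for Thom classes this restriction equals the equivariant Euler class of the quotient bundle $V$ times the Thom class of $U$; hence $r(\tau_{V,U}) = e_{V} \cdot \tau_{0,U}$.

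The key algebraic observation --- and the main step --- is that the equivariant Euler class
\[
e_{V} = x^{a} + c_{1}(V) x^{a-1} + \cdots + c_{a}(V) \in \mathcal{H}^{2a} = H^{*}(B;\mathbb{Z})[x]
\]
is monic in $x$, hence a non-zero-divisor in $\mathcal{H}^{*}$. Consequently $r$ is injective, the long exact sequence collapses, and $\delta$ exhibits $H^{*+1}_{S^{1}}(S_{V,U}, S_{U})$ as the cokernel of multiplication by $e_{V}$ on $\mathcal{H}^{*} \tau_{0,U}$. Polynomial division by the monic polynomial $e_{V}$ of $x$-degree $a$ displays $\mathcal{H}^{*}/(e_{V}) = H^{*}(B;\mathbb{Z})[x]/(e_{V})$ as a free $H^{*}(B;\mathbb{Z})$-module with basis $\{1, x, \dots, x^{a-1}\}$. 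Applying the $\mathcal{H}^{*}$-linear coboundary $\delta$ transports this to the desired $H^{*}(B;\mathbb{Z})$-basis $\{x^{i}\, \delta \tau_{0,U}\}_{i=0}^{a-1}$ of $H^{*}_{S^{1}}(S_{V,U}, S_{U})$.
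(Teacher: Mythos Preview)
Your proof is correct and follows essentially the same route as the paper: the first part is the same excision/free-action argument, and for the second part the paper also reduces to the long exact sequence of a triple, identifies the restriction map as multiplication by the equivariant Euler class $e_V = x^a + c_1(V)x^{a-1}+\cdots+c_a(V)$, uses injectivity of this map to collapse the sequence, and reads off the basis $\{x^i\}_{i=0}^{a-1}$ of $\mathcal{H}^*/\langle e_V\rangle$. The only cosmetic difference is that the paper rewrites the pair $(S_{V,U},S_U)$ in a disc-bundle model $(D(V\oplus U), S(V\oplus U)\cup D(U))$ before running the triple argument, whereas you work directly with the triple $(S_{V,U},S_U,B_{V,U})$; your phrasing of the key step (monicity of $e_V$ in $x$ implies non-zero-divisor, then polynomial division) is if anything slightly cleaner.
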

\begin{proof}
Using excision, we have isomorphisms:
\[
H^*_{S^1}( S_{V,U} , S_U ; \mathbb{Z}) \cong H^*_{S^1}( S_{V,U} , \tilde{U} ; \mathbb{Z}) \cong H^*_{S^1}( \widetilde{Y} , \partial \widetilde{Y} ; \mathbb{Z}) \cong H^*(Y , \partial Y ; \mathbb{Z}).
\]
Next, we observe that there is an isomorphism 
\[
H^*_{S^1}( S_{V,U} , S_U ; \mathbb{Z}) \cong H^*_{S^1}( D(V\oplus U) , S(V\oplus U) \smallsmile D(U) ; \mathbb{Z}).
\]
The long exact sequence of the triple $(D(V \oplus U) , S(V \oplus U) \smallsmile D(U) , S(V \oplus U))$ has the form:
\begin{equation*}
\begin{aligned}
\cdots && \buildrel \delta \over \longrightarrow H^*_{S^1}( D(V\oplus U) , S(V\oplus U) \smallsmile D(U) ; \mathbb{Z}) \to H^*_{S^1}(D(V\oplus U) , S(V\oplus U) ; \mathbb{Z}) \to \\
&& \to H^*_{S^1}( S(V\oplus U) \smallsmile D(U) , S(V\oplus U) ; \mathbb{Z}) \buildrel \delta \over \longrightarrow \cdots
\end{aligned}
\end{equation*}
But $H^*_{S^1}( S(V\oplus U) \smallsmile D(U) , S(V\oplus U) ; \mathbb{Z}) \cong H^*_{S^1}( S_U , B_U ; \mathbb{Z}) \cong \mathcal{H}^* \tau_{0,U}$ and the map 
\[
\mathcal{H}^* \tau_{V,U} \cong H^*_{S^1}(D(V\oplus U) , S(V\oplus U) ; \mathbb{Z}) \to H^*_{S^1}( S_U , B_U ; \mathbb{Z}) \cong H^*_{S^1}(D(V) ; \mathbb{Z}) \cong \mathcal{H}^*,
\]
which corresponds to pullback under the zero section of $V$, is given by $\tau_{V,U} \mapsto e_V \tau_{0,U}$. In particular, this map is injective and the image is $\mathcal{H}^* e_V \tau_{0,U}$. Hence the coboundary map induces isomorphisms
\[
\delta : \mathcal{H}^{i-1}/\langle e_V \rangle \to H^{i+b}_{S^1}( D(V) , S(V) ; \mathbb{Z}), \quad y \mapsto y \cdot \delta \tau_{0,U}.
\]
Recall that the equivariant Euler class $e_V$ has the form
\[
e_V = x^a + c_1(V)x^{a-1} + \cdots + c_a(V)
\]
and it follows easily that $\mathcal{H}^*/\langle e_V \rangle$ is a free $H^*(B; \mathbb{Z})$ module generated by $\{ x^i \}_{i=0}^{a-1}$. Hence $H^*_{S^1}( D(V) , S(V) ; \mathbb{Z})$ is a free $H^*(B ; \mathbb{Z})$ module generated by $\{ \delta \tau_{0,U} x^i \}_{i=0}^{a-1}$.
\end{proof}

As with other results in this subsection, Proposition \ref{prop:equivthom3} extends to the case that $U$ is not orientable using local coefficients.

Let $\pi_Y : Y \to B$ denote the projection map from $Y$ to $B$. One sees that $Y$ has the structure of a locally trivial fibre bundle over $B$ whose fibres are manifolds with boundary. An orientation of $U$ determines a relative orientation of $Y \to B$, that is, an orientation of $TY \oplus \pi_Y^*(TB)$. From this we obtain a pushforward map
\[
(\pi_Y)_* : H^i(Y , \partial Y ; \mathbb{Z}) \to H^{i-l}(B ; \mathbb{Z}),
\]
where $l = 2a+b-1$ is the fibre dimension. This map is defined by the following commutative diagram:
\begin{equation*}\xymatrix{
H^i(Y , \partial Y ; \mathbb{Z}) \ar[rr]^-{(\pi_Y)_*} \ar[d]^-{\cong} & & H^{i-l}(B ; \mathbb{Z}) \ar[d]^-{\cong} \\
H_{dim(B)+l-i}( Y ; \pi_Y^*\mathbb{Z}_B) \ar[rr]^-{(\pi_Y)_*} & & H_{dim(B)+l-i}(B ; \mathbb{Z}_B)
}
\end{equation*}
where the vertical arrows are given by Poincar\'e-Lefschetz duality and $\mathbb{Z}_B$ denotes the orientation local system on $B$.

\begin{lemma}\label{lem:pushforward}
Let $Y$ be a compact manifold with boundary and let $A$ be a local system of abelian groups on $Y$. Let $\iota : \partial Y \to Y$ denote the inclusion of the boundary. Let $\mathbb{Z}_Y$ denote the orientation local system on $Y$. Then $\delta = \iota_*$, where $\delta$ is the coboundary map $\delta : H^{i-1}(\partial Y ; \iota^*A) \to H^i(Y , \partial Y ; A)$ in the long exact sequence of the pair $(Y , \partial Y)$ and $\iota_* : H^{i-1}(\partial Y ; \iota^*A) \to H^i(Y , \partial Y ; A)$ is the pushforward map defined by the commutative diagram:
\begin{equation*}\xymatrix{
H^{i-1}(\partial Y ; \iota^* A) \ar[r]^-{\iota_*} \ar[d]^-{\cong} & H^{i}(Y , \partial Y ; A) \ar[d]^-{\cong} \\
H_{dim(Y)-i}( \partial Y ; \iota^* (A \otimes \mathbb{Z}_Y) ) \ar[r]^-{\iota_*} & H_{dim(Y)-i}(Y ; A \otimes \mathbb{Z}_Y)
}
\end{equation*}
where the vertical arrows are given by Poincar\'e-Lefschetz duality.
\end{lemma}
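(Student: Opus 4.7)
The plan is to reduce the statement to the standard naturality identity
\[
\delta(\alpha) \frown [Y, \partial Y] = \iota_*\bigl(\alpha \frown [\partial Y]\bigr)
\]
for $\alpha \in H^{i-1}(\partial Y; \iota^* A)$, where $[Y, \partial Y] \in H_{\dim Y}(Y, \partial Y; \mathbb{Z}_Y)$ is the fundamental class and $[\partial Y] = \partial[Y, \partial Y] \in H_{\dim Y - 1}(\partial Y; \iota^* \mathbb{Z}_Y)$ is its boundary (which coincides with the fundamental class of $\partial Y$ in the induced orientation, via the outward-normal-first convention). Once this identity is established, both vertical arrows in the claimed square are by definition given by cap product with the corresponding fundamental class, so commutativity of the square is equivalent to the displayed identity.

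First I would set up the cap product identity at the chain level. The short exact sequence of singular chains $0 \to C_*(\partial Y) \to C_*(Y) \to C_*(Y, \partial Y) \to 0$ induces the coboundary $\delta$ on cohomology, and a direct computation from the formula for the cap product (using an Alexander–Whitney diagonal approximation) yields the identity $\delta(\alpha) \frown c = \iota_*(\alpha \frown \partial c)$ for any relative chain $c \in C_*(Y, \partial Y)$ and any cochain $\alpha$ on $\partial Y$ (the sign is absorbed by fixing consistent conventions throughout). Substituting $c$ a representative of $[Y, \partial Y]$ and using $\partial[Y, \partial Y] = [\partial Y]$ then gives the required identity on the level of homology.

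The main obstacle is keeping track of the local coefficient systems. Because $Y$ need not be orientable, the fundamental class must be taken in the twisted homology group $H_*(Y, \partial Y; \mathbb{Z}_Y)$, and the cap product must be interpreted as a pairing
\[
H^i(Y, \partial Y; A) \otimes H_n(Y, \partial Y; \mathbb{Z}_Y) \longrightarrow H_{n-i}(Y; A \otimes \mathbb{Z}_Y).
\]
One needs to verify that the chain-level naturality formula continues to hold in this twisted setting; this is routine because the Alexander–Whitney diagonal is defined without reference to any coefficient system, so it commutes with the formation of $A$- and $\mathbb{Z}_Y$-twisted chains. The canonical isomorphism $\iota^* \mathbb{Z}_Y \cong \mathbb{Z}_{\partial Y}$ coming from the outward-normal-first convention ensures that the boundary relation $\partial[Y, \partial Y] = [\partial Y]$ holds without a spurious twist, so the identity passes cleanly through Poincaré–Lefschetz duality and delivers the asserted commutative square.
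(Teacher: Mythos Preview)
Your proposal is correct and follows essentially the same approach as the paper: both arguments reduce to the chain-level identity relating the coboundary and cap product via the Leibniz rule (what you phrase as $\delta(\alpha)\frown c = \iota_*(\alpha\frown\partial c)$), specialise to $c$ representing $[Y,\partial Y]$ using $\partial[Y,\partial Y]=[\partial Y]$, and invoke Poincar\'e--Lefschetz duality on each side. The paper spells out the chain-level manipulation slightly more explicitly (extending a cocycle $\omega$ on $\partial Y$ to $\widetilde{\omega}$ on $Y$ and applying the Leibniz formula directly), but the underlying mechanism is identical.
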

\begin{proof}
For a pair of spaces $(U,V)$ and local system $A$, let $C_j( U , V ; A)$ and $C^j(U , V ; A)$ denote the singular chain/cochain complexes. Let $\omega \in C^j( \partial Y ; \iota^* A)$ represent a class $[\omega] \in H^j(\partial Y ; \iota^* A)$. Choose an extension $\widetilde{\omega} \in C^j(Y ; A)$ of $\omega$ to $Y$. Then $\delta [\omega] = [ \delta \widetilde{\omega}]$.

Next let $[Y] \in H_{n}( Y ; \partial Y ; \mathbb{Z}_Y )$ denote the fundamental class of $Y$, where $n = dim(Y)$. Taking cap product with $[Y]$ gives maps:
\begin{equation*}
\begin{aligned}
\! \, [Y] \cap \; &: H^j(Y ; A) \to H_{n-j}( Y , \partial X ; A \otimes \mathbb{Z}_Y), \\
\! \, [Y] \cap \; &: H^j(Y , \partial Y ; A) \to H_{n-j}( Y ; A \otimes \mathbb{Z}_Y),
\end{aligned}
\end{equation*}
which are the isomorphisms defining Poincar\'e-Lefschetz duality. Moreover, the fundamental classes of $Y$ and $\partial Y$ are related by
\[
\partial [Y] = [\partial Y],
\]
where $\partial : H_n(Y , \partial Y ; \mathbb{Z}_Y) \to H_{n-1}(\partial Y ; \iota^* \mathbb{Z}_Y)$ is the boundary map in the long exact sequence of the pair $(Y , \partial Y)$. Let $[\widetilde{Y}], [\widetilde{\partial Y}]$ be representatives of $[Y], [\partial Y]$ at the chain level. Then we have:
\[
\partial [\widetilde{Y}] = [\widetilde{\partial Y}] + \text{exact term}.
\]
The pushforward map $\iota_*$ is characterised at the chain level by
\[
[\widetilde{Y}] \cap \iota_* \omega = \iota_*( [\widetilde{\partial Y}] \cap \omega ) + \text{exact term}.
\]
However, we have:
\begin{equation*}
\begin{aligned}
\iota_*( [\widetilde{\partial Y}] \cap \omega ) &= \iota_*( [\widetilde{\partial Y}] \cap \iota^* \widetilde{\omega} ) \text{ (since $\iota^* \widetilde{\omega} = \omega$)} \\
&= \iota_*[\widetilde{\partial Y}] \cap \widetilde{\omega} \\
&= \partial [\widetilde{Y}] \cap \widetilde{\omega} + \text{exact term} \text{ (since $\partial [Y] = [\partial Y]$)} \\
&= [\widetilde{Y}] \cap \delta \widetilde{\omega} + \text{exact term} \text{ (using the Leibniz rule for cap products)}.
\end{aligned}
\end{equation*}
So at the level of (co)homology classes $[Y] \cap \iota_* \omega = [Y] \cap \delta[\omega]$ and thus $\iota_* \omega = \delta \omega$ as claimed.
\end{proof}

\begin{proposition}\label{prop:pushforwardsegre}
Suppose $U$ is orientable and fix an orientation. The fibre integration map
\[
(\pi_Y)_* : H^i(Y , \partial Y ; \mathbb{Z}) \to H^{i-l}(B ; \mathbb{Z})
\]
is a morphism of $H^*(B ; \mathbb{Z})$-modules and is given by:
\[
(\pi_Y)_*( \delta \tau_{0 , U} x^j) = \begin{cases} 0 & \text{if } j < a-1, \\ s_{j-(a-1)}(V) & \text{if } j \ge a-1, \end{cases}
\]
where $s_j(V) \in H^{2j}(B ; \mathbb{Z})$ is the {\em $j$-th Segre class} of $V$ \cite[\textsection 3.2]{fult}, which is characterised by the property that if we let 
\[
s(V) = s_0(V) + s_1(V) + s_2(V) + \cdots \in H^{ev}(B ; \mathbb{Z})
\]
denote the total Segre class of $V$ and similarly $c(V) = 1 + c_1(V) + c_2(V) + \cdots $ the total Chern class, then
\[
c(V)s(V) = 1.
\]
In the case that $U$ is not orientable, the analogous result holds using local coefficients.
\end{proposition}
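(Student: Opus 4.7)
The $H^*(B;\mathbb{Z})$-linearity is the standard projection formula for fibre integration along the relatively oriented bundle of manifolds with boundary $\pi_Y:Y\to B$, namely $(\pi_Y)_*(\pi_Y^*\beta\smile\gamma)=\beta\smile(\pi_Y)_*\gamma$.

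For the main formula, the plan is to reduce the push-forward to an integration over the boundary $\partial Y=\mathbb{P}(q^*V)$ and then factor that through $S_U$. By Lemma~\ref{lem:pushforward} applied to the pair $(\widetilde{Y},\partial\widetilde{Y})$ and passed through the $S^1$-quotient, the coboundary map $\delta:H^b(\partial Y;\mathbb{Z})\to H^{b+1}(Y,\partial Y;\mathbb{Z})$ agrees with the push-forward $\iota_*$ induced by the boundary inclusion $\iota:\partial Y\hookrightarrow Y$. The main technical step is to identify, under the excision isomorphism $H^{b+1}_{S^1}(S_{V,U},S_U)\cong H^{b+1}(Y,\partial Y)$, the class $\delta\tau_{0,U}$ with $\iota_*\widetilde{\tau}$, where $\widetilde{\tau}\in H^b(\partial Y)$ is the pullback of (the image in $H^b(S_U)$ of) $\tau_{0,U}$ along the projective-bundle projection $\pi_V:\partial Y=\mathbb{P}(q^*V)\to S_U$. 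The point is that when $\tau_{0,U}$ is extended to the tubular neighbourhood $\widetilde{U}$ of $S_U$ (the unit disc bundle of $q^*V$) by pullback along $\widetilde{U}\to S_U$, its restriction to $\partial\widetilde{U}=\partial\widetilde{Y}$ realises $\widetilde{\tau}$, and the excision/deformation retraction then carries the triple coboundary of $(S_{V,U},S_U,B_U)$ to the pair coboundary of $(\widetilde{Y},\partial\widetilde{Y})$.

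Once this identification is in hand, functoriality of push-forward ($(\pi_Y)_*\circ\iota_*=(\pi_{\partial Y})_*$) together with the projection formula yields
\[
(\pi_Y)_*(\delta\tau_{0,U}\smile x^j)=(\pi_{\partial Y})_*(\widetilde{\tau}\smile h^j)=q_*\bigl(\tau_{0,U}\smile(\pi_V)_*(h^j)\bigr),
\]
where $h=c_1(\mathcal{O}_{q^*V}(1))=x|_{\partial Y}$ and $\pi_{\partial Y}=q\circ\pi_V$. The standard Leray--Hirsch/projective bundle integration gives $(\pi_V)_*(h^j)=s_{j-(a-1)}(q^*V)=q^*s_{j-(a-1)}(V)$ when $j\geq a-1$, and $0$ otherwise. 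Since $\tau_{0,U}$ is the Thom class of the sphere bundle $q:S_U\to B$ one has $q_*\tau_{0,U}=1$, so a final application of the projection formula for $q$ reduces the right-hand side to $s_{j-(a-1)}(V)$.

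The non-orientable case proceeds identically after replacing $\mathbb{Z}$ by the local system $\mathbb{Z}_{w_1(U)}$ throughout. The hardest step to make rigorous is the identification of $\delta\tau_{0,U}$ as a boundary push-forward of the concretely described class $\widetilde{\tau}$; the remaining steps are formal manipulations with projection formulas, functoriality of push-forward, and the standard projective-bundle Segre class computation.
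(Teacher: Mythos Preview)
Your proposal is correct and follows essentially the same route as the paper: both invoke Lemma~\ref{lem:pushforward} to identify $\delta$ with the boundary push-forward $\iota_*$, factor $(\pi_Y)_*\circ\iota_*=(\pi_{\partial Y})_*$, and reduce to integrating powers of the hyperplane class over the fibres of $\mathbb{P}(V)\to B$. The only notable differences are that the paper handles your ``hardest step'' (matching the triple coboundary with the pair coboundary under excision) implicitly in the notation, and it proves the identity $(\pi_{\mathbb{P}(V)})_*(x^{a-1+j})=s_j(V)$ by a short induction from the Grothendieck relation $x^a+c_1(V)x^{a-1}+\cdots+c_a(V)=0$ rather than citing it as standard.
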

\begin{proof}
Recall that $\widetilde{U}$ is an equivariant tubular neighbourhood of $S_U$ in $S_{V,U}$. Hence the inclusion of triples $(S_{V,U} , S_U , B_{V,U} ) \to (S_{V,U} , \widetilde{U} , B_{V,U})$ induces a commutative diagram of long exact sequences where the vertical maps are isomorphisms
\[
\xymatrix{
\cdots \ar[r] & H^{*-1}_{S^1}( S_U , B_{V,U} ; \mathbb{Z} ) \ar[r]^-\delta  & H^*_{S^1}(S_{V,U} , S_U ; \mathbb{Z} ) \ar[r] & H^*_{S^1}( S_{V,U} , B_{V,U} ; \mathbb{Z} ) \ar[r] & \cdots \\
\cdots \ar[r] & H^{*-1}_{S^1}( \widetilde{U} , B_{V,U} ; \mathbb{Z} ) \ar[r]^-\delta \ar[u] & H^*_{S^1}(S_{V,U} ,  \widetilde{U} ; \mathbb{Z} ) \ar[r] \ar[u] & H^*_{S^1}( S_{V,U} , B_{V,U} ; \mathbb{Z} ) \ar[r] \ar[u] & \cdots
}
\]
Similarly the inclusion $( \widetilde{Y} , \partial \widetilde{Y} , \emptyset ) \to (S_{V,U} , \widetilde{U} , B_{V,U})$ induces a commutative diagram of long exact sequences
\[
\xymatrix{
\cdots \ar[r] & H^{*-1}_{S^1}( \widetilde{U} , B_{V,U} ; \mathbb{Z} ) \ar[r]^-\delta \ar[d] & H^*_{S^1}(S_{V,U} ,  \widetilde{U} ; \mathbb{Z} ) \ar[r] \ar[d] & H^*_{S^1}( S_{V,U} , B_{V,U} ; \mathbb{Z} ) \ar[r] \ar[d] & \cdots \\
\cdots \ar[r] & H^{*-1}_{S^1}( \partial \widetilde{Y} ; \mathbb{Z} )  \ar[r]^-\delta & H^*_{S^1}( \widetilde{Y} ,  \partial \widetilde{Y} ; \mathbb{Z} ) \ar[r] & H^*_{S^1}( \widetilde{Y} ; \mathbb{Z} ) \ar[r] & \cdots
}
\]
In particular, combining these sequences we have a commutative diagram
\[
\xymatrix{
H^{*-1}_{S^1}( S_U , B_{V,U} ; \mathbb{Z} ) \ar[r]^-\delta \ar[d]^-{i^*} & H^*_{S^1}(S_{V,U} , S_U ; \mathbb{Z} ) \ar[d]^-{i^*} \\
H^{*-1}_{S^1}( \partial \widetilde{Y} ; \mathbb{Z} ) \ar[r]^-\delta & H^*_{S^1}( \widetilde{Y} ,  \partial \widetilde{Y} ; \mathbb{Z} )
}.
\]
Since $S^1$ acts freely on $\widetilde{Y}$ and $\partial \widetilde{Y}$, we may replace the equivariant cohomology groups on the bottow row of this diagram with corresponding non-equivariant cohomology groups of their quotients, giving a commutative diagram
\[
\xymatrix{
H^{*-1}_{S^1}( S_U , B_{V,U} ; \mathbb{Z} ) \ar[r]^-\delta \ar[d]^-{i^*} & H^*_{S^1}(S_{V,U} , S_U ; \mathbb{Z} ) \ar[d]^-{i^*} \\
H^{*-1}( \partial Y ; \mathbb{Z} ) \ar[r]^-\delta & H^*( Y ,  \partial Y ; \mathbb{Z} )
}.
\]
Moreover, the vertical arrow on the right is an isomorphism (it is the isomorphism constructed in the proof of Proposition \ref{prop:equivthom3}). By Proposition \ref{prop:equivthom3}, the classes $i^*( (\delta \tau_{0,U})x^i )$, $i = 0, \dots , a-1$ form a basis for $H^*(Y , \partial Y ; \mathbb{Z} )$ as a $H^*(B ; \mathbb{Z})$-module.

Clearly $(\pi_Y)_*$ is a morphisms of $H^*(B ; \mathbb{Z})$-modules. Let $\iota : \partial Y \to Y$ denote the inclusion and let $\pi_{\partial Y} : \partial Y \to B$ be the projection to $B$. Note that $\pi_{\partial Y} = \pi_Y \circ \iota$. It follows that $(\pi_{\partial Y})_* = (\pi_Y)_* \circ \iota_*$ and from Lemma \ref{lem:pushforward}, we have that $\iota_* = \delta$, where $\delta : H^{i-1}(\partial Y ; \mathbb{Z}) \to H^i(Y , \partial Y ; \mathbb{Z})$ is the coboundary map of the long exact sequence of the pair $(Y , \partial Y)$. Therefore
\[
(\pi_Y)_*( i^*\delta \tau_{0,U} x^j ) = (\pi_Y)_* \circ \delta ( i^*(\tau_{0,U}) x^j ) = (\pi_{\partial Y})_*( i^*(\tau_{0,U}) x^j ).
\]
Next, we recall that $\partial Y = q^*( \mathbb{P}(V))$, where $q$ is the projection $q : S_U \to B$. Note that $i^*(\tau_{0,U})$ is precisely the Thom class of the bundle $U \to B$ and it follows that
\[
(\pi_{\partial Y})_*( i^*(\tau_{0,U}) x^j) = (\pi_{\mathbb{P}(V)})_*( x^j ),
\]
where $\pi_{\mathbb{P}(V)}$ is the projection $\pi_{\mathbb{P}(V)} : \mathbb{P}(V) \to B$. If $j < a-1$ then the degree of $x^j$ is less than the fibre dimension of $\mathbb{P}(V)$, so we obviously have $(\pi_{\mathbb{P}(V)})_*( x^j ) = 0$ in this case. So it remains to prove the identity
\begin{equation}\label{equ:integration}
(\pi_{\mathbb{P}(V)})_*(x^{j+(a-1)}) = s_j(V), \quad \text{for all $j \ge 0$.}
\end{equation}
We prove this by induction on $j$. The case $j=0$ is obvious since $s_0(V) = 1$ and $x^{a-1}$ restricts to a generator of $H^{2a-2}(\mathbb{CP}^{a-1} ; \mathbb{Z})$ on each fibre of $\mathbb{P}(V)$. Now recall that on $\mathbb{P}(V)$ the class $x$ satisfies the equation
\[
x^a + c_1(V)x^{a-1} + \cdots + c_a(V) = 0
\]
(this is the Grothendieck approach to defining the Chern classes of a complex vector bundle $V$). Now let $j \ge 0$ and suppose Equation (\ref{equ:integration}) has been proven for all $j' \le j$. Then (letting $s_k(V) = 0$ if $k<0$) we have:
\begin{equation*}
\begin{aligned}
(\pi_{\mathbb{P}(V)})_*( x^{(j+1)+(a-1)} ) &= (\pi_{\mathbb{P}(V)})_*( x^{j+a} ) \\
&= (\pi_{\mathbb{P}(V)})_*( x^j( -c_1(V)x^{a-1} -c_2(V)_x^{a-2} - \cdots - c_a(V) ) \\
&= -( c_1(V)s_j(V) - c_2(V)s_{j-1}(V) - \cdots - c_a(V)s_{j+1-a} ) \\
&= s_{j+1}(V),
\end{aligned}
\end{equation*}
where we used the relation
\[
s_{j+1}(V)c_0(V) + s_j(V)c_1(V) + \cdots + s_{j+1-a}(V) c_a(V) = 0 \quad \text{for all $j \ge 0$},
\]
which follows by taking the degree $2j+2$ part of the identity $c(V)s(V) = 1$.
\end{proof}

\subsection{Cohomological formula for the families Seiberg-Witten invariants}\label{sec:swcohomology}

Let $f : S_{V , U} \to S_{V' , U'}$ be a finite dimensional monopole map. Then as in Subsection \ref{sec:fsw} we may assume that $U' = U \oplus H^+$ and that $f|_U$ is the inclusion $U \to U'$. For convenience, we will assume throughout this section that $U,U'$ are oriented. As usual, the results easily extend to the general case provided we use local coefficients. As before we assume $f$ preserves sections at infinity:
\[
f : (S_{V,U} , B_{V,U}) \to (S_{V',U'} , B_{V',U'} ).
\]
Let $[\phi] \in \mathcal{CH}(f)$ be a chamber and let $\phi : B \to U' - U$ be a representative section. Since $\phi$ avoids $U$, we obtain a pushforward map
\[
\phi_* : H^j_{S^1}(B ; \mathbb{Z}) \to H^{j+2a'+b'}_{S^1}( S_{V',U'} , S_U ; \mathbb{Z} )
\]
and we also have a pullback map
\[
f^* : H^j_{S^1}( S_{V',U'} , S_U ; \mathbb{Z} ) \to H^j_{S^1}( S_{V,U} , S_U ; \mathbb{Z} ) \cong H^j( Y , \partial Y ; \mathbb{Z})
\]

\begin{theorem}\label{thm:pdclass}
For each $m\ge 0$ we have
\[
SW_m(f,\phi) = (\pi_Y)_*( x^m \smallsmile f^*\phi_*(1) ).
\]
\end{theorem}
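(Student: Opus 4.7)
The plan is to interpret each ingredient of the right-hand side geometrically and reduce the statement to a standard push-pull identity. First, by the discussion preceding the proposition (and Lemma~\ref{lem:finiteperturb} in the finite-dimensional-approximation setting), I would choose an equivariant homotopy representative $f'$ of $f$ that is $S^1$-transverse to $\phi(B)$; since the image of $f|_U$ lies in $U \subset U'$ while $\phi(B)$ avoids $U$, one can arrange that $(f')^{-1}(\phi(B))$ is disjoint from the $S^1$-fixed locus. Hence $\widetilde{\mathcal{M}}_\phi = (f')^{-1}(\phi(B))$ is a closed $S^1$-free submanifold lying in $V\oplus U \setminus U$, and after shrinking the tubular neighbourhood $N_{V,U}$ of $S_U$ it sits inside $\widetilde{Y}$. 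Since $f'$ and $f$ are equivariantly homotopic as maps of pairs $(S_{V,U},S_U)\to(S_{V',U'},S_U)$, they induce the same pullback on equivariant relative cohomology, so it suffices to prove the formula with $f$ replaced by $f'$.

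Next, I would identify $\phi_*(1)\in H^{2a'+b'}_{S^1}(S_{V',U'},S_U;\mathbb{Z})$ as the equivariant Poincar\'e--Lefschetz dual of the oriented section $\phi(B)\subset S_{V',U'}\setminus S_U$: this is the defining property of the pushforward of \cite{atbo} applied to the embedding $\phi$, whose normal bundle is $V'\oplus H^+$ oriented by the complex structure on $V'$ together with the orientation of $H^+$ determined by the chamber. By transversality of $f'$ to $\phi$ together with naturality of pushforwards under transverse pullback,
\[
(f')^*\phi_*(1) \;=\; PD\bigl[\widetilde{\mathcal{M}}_\phi\bigr] \;\in\; H^{2a'+b'}_{S^1}(S_{V,U},S_U;\mathbb{Z}).
\]
Under the excision/free-quotient isomorphism $H^*_{S^1}(S_{V,U},S_U)\cong H^*(Y,\partial Y)$ of Proposition~\ref{prop:equivthom3}, this class becomes the ordinary Poincar\'e--Lefschetz dual $PD_{(Y,\partial Y)}[\mathcal{M}_\phi]\in H^{2a'+b'}(Y,\partial Y;\mathbb{Z})$, since $S^1$ acts freely on $\widetilde{\mathcal{M}}_\phi$ with quotient $\mathcal{M}_\phi$.

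The last geometric input is that $x\in\mathcal{H}^2$ restricted to any locus where $S^1$ acts freely becomes the first Chern class of the associated complex line bundle. Letting $\iota:\mathcal{M}_\phi\hookrightarrow Y$ denote the inclusion, this gives $\iota^*x=c_1(\mathcal{L})$. Combining the three ingredients via the projection formula,
\[
x^m\smallsmile f^*\phi_*(1) \;=\; x^m\smallsmile \iota_*(1) \;=\; \iota_*(\iota^*x^m) \;=\; \iota_*\bigl(c_1(\mathcal{L})^m\bigr),
\]
and applying $(\pi_Y)_*$ with $(\pi_Y)_*\circ\iota_* = (\pi_Y\circ\iota)_* = (\pi_{\mathcal{M}_\phi})_*$ yields $SW_m(f,\phi)$ by definition.

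The main obstacle I expect is twofold: (i) making the transversality/Poincar\'e--Lefschetz dual interpretation of $\phi_*(1)$ rigorous in the $S^1$-equivariant relative setting (noting that $\phi(B)$ need not avoid the fixed locus of $S_{V',U'}$, which is exactly why the pair $(S_{V',U'},S_U)$ rather than a free-quotient model is used), and (ii) bookkeeping of orientations and local coefficients, so that when $H^+$ is non-orientable the local system $\mathbb{Z}_{w_1(H^+)}$ in which $SW_m(f,\phi)$ takes values is matched canonically with the twist inherent in $\phi_*$; once these are handled, the rest is a routine application of the push-pull, excision and Thom-isomorphism formalism developed in Subsection~\ref{sec:cohomological}.
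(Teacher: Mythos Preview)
Your proposal is correct and follows essentially the same approach as the paper: the paper's proof also reduces to showing that $f^*\phi_*(1)$ equals the Poincar\'e--Lefschetz dual $\eta_{\mathcal{M}_\phi}$ of $\mathcal{M}_\phi$ in $H^*(Y,\partial Y)$, and then applies the same projection-formula and pushforward-composition argument you give. The paper simply carries out in detail the step you flag as obstacle~(i), by factoring through equivariant tubular neighbourhoods of $\widetilde{\mathcal{M}}_\phi$ and $B_\phi$, identifying the relevant Thom classes, and verifying commutativity of the resulting diagram of collapsing maps.
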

The proof will be given later in this subsection. But first we make some observations about this formula:
\begin{itemize}
\item{This formula shows that $SW_m(f,\phi)$ depends only of $f$ and $\phi$ and not on a choice of perturbation $f'$ of $f$ meeting $\phi$ transversally.}
\item{This formula also shows that $SW_m(f , \phi)$ depends only on the homotopy classes of $f$ and $\phi$, in particular $SW_m(f,\phi)$ depends only on $\phi$ only through the chamber $[\phi]$ in which $\phi$ lies.}
\item{We will show in Proposition \ref{prop:swsuspension} that $SW_m(f,\phi)$ depends only on the stable homotopy class of $f$. More precisely $SW_m(f,\phi)$ is invariant under stabilisation of $f$ by real or complex vector bundles.}
\end{itemize}

Let $\widetilde{\tau}_{V',U'}^\phi = \phi_*(1) \in H^{2a'+b'}( S_{V',U'} , S_U ; \mathbb{Z})$. Observe that the image of $\widetilde{\tau}_{V',U'}^\phi$ under the map $H^{2a'+b'}(S_{V',U'} , S_U ; \mathbb{Z} ) \to H^{2a'+b'}( S_{V',U'} , B_{V',U'} ; \mathbb{Z})$ is $\tau_{V',U'}$, the equivariant Thom class. This is simply because $\phi$ is a section of $V' \oplus U'$, so the induced pushforward map $\phi_* : H^j_{S^1}( B ; \mathbb{Z}) \to H^{j+2a'+b'}_{S^1}( S_{V',U'} , B_{V',U'} ; \mathbb{Z})$ sends $1$ to the equivariant Thom class.

Next, we note that $f^*(\widetilde{\tau}_{V',U'}^\phi) \in H^{2a'+b'}_{S^1}( S_{V,U} , S_U ; \mathbb{Z})$ can be written uniquely as
\begin{equation}\label{equ:muclasses}
f^*(\widetilde{\tau}_{V',U'}^\phi) = \sum_{j=0}^{a-1} \mu_{j}(f,\phi) \delta \tau_{0,U} x^{(a-1)-j},
\end{equation}
where $\mu_{j}(f,\phi) \in H^{2j-(2d-b^+-1)}(B ; \mathbb{Z})$. Note that since $H^k(B ; \mathbb{Z}) = 0$ for $k>dim(B)$, we have that $\mu_j = 0$ whenever $2j - (2d-b^+-1) > dim(B)$, that is, $\mu_j = 0$ whenever $2j \ge 2d -(b^+ - dim(B))$.

Let $f'$ be a deformation of $f$ such that $f'$ meets $\phi$ transversally. Then $\widetilde{\mathcal{M}}_\phi = {f'}^{-1}( \phi(B))$ is a compact embedded submanifold of $S_{V,U}$ without boundary. Recall that we define $\widetilde{Y}$ to be the complement in $S_{V,U}$ of an equivariant tubular neighbourhood of $S_U$ and that $\widetilde{Y}$ is a compact manifold with boundary. Choosing the neighbourhood sufficiently small, we may assume that $\widetilde{\mathcal{M}}_\phi$ is an embedded submanifold of $\widetilde{Y}$ and that $\widetilde{\mathcal{M}}_\phi \cap \partial \widetilde{Y} = \emptyset$. Noting that $S^1$ acts freely on $\widetilde{\mathcal{M}}_\phi$, we let $\mathcal{M}_\phi = \widetilde{\mathcal{M}}_\phi/S^1$, which is an embedded submanifold of $Y = \widetilde{Y}/S^1$. Let $\mathcal{L} \to \mathcal{M}_\phi$ be the line bundle associated to the principal circle bundle $\widetilde{\mathcal{M}}_\phi \to \mathcal{M}_\phi$. Then clearly $c_1(\mathcal{L}) = x|_{{\mathcal{M}}_\phi}$. Note that the dimension of $\mathcal{M}_\phi$ is given by
\[
dim(\mathcal{M}_\phi) = (2a+b) - (2a'+b') - 1 = 2d-b^+-1.
\]
Recall that a choice of orientation of $U$ defines a relative orientation of $S_{V,U} \to B$, hence also a relative orientation of $\pi_Y : Y \to B$. Moreover a choice of orientation on $U'$ defines an orientation of the normal bundle of $\mathcal{M}_\phi \subseteq Y$. Let $\pi_{{\mathcal{M}}_\phi} : \mathcal{M}_\phi \to B$ be the restriction of $\pi_Y$ to $\mathcal{M}_\phi$. We see that an orientation of $U \oplus U'$ (equivalently, an orientation of $H^+$) determines a relative orientation of $\pi_{{\mathcal{M}}_\phi} : \mathcal{M}_\phi \to B$ and hence a push-forward map $(\pi_{{\mathcal{M}}_\phi})_* : H^j( \mathcal{M}_\phi ; \mathbb{Z}) \to H^{j-(2d-b^+-1)}(B ; \mathbb{Z})$. Recall that for each $m \ge 0$, we have defined the Seiberg-Witten invariants of $(f,\phi)$ to be given by:
\[
SW_m(f,\phi) = (\pi_{{\mathcal{M}}_\phi})_*( c_1(\mathcal{L})^m ) \in H^{2m - (2d-b^+-1)}( B ; \mathbb{Z}).
\]
Just as we saw for the $\mu_j(f,\phi)$, since $H^k(B ; \mathbb{Z}) = 0$ for $k>dim(B)$, we have that $SW_m(f,\phi) = 0$ whenever $2m \ge 2d - (b^+-dim(B))$.

\begin{proposition}
The $SW_m(f,\phi)$ are related to the classes $\mu_j(f,\phi) \in H^{2j - (2d-b^+-1)}(B ; \mathbb{Z})$ by:
\begin{equation}\label{equ:mutosw}
\sum_{m \ge 0} SW_m(f,\phi) t^m = \left( \sum_{j = 0}^{a-1} \mu_j(f,\phi) t^j \right) \left( \sum_{n \ge 0} s_n(V) t^n \right)
\end{equation}
where $t$ is a formal variable and $s_n(V)$ is the $n-$th Segre class of $V$. This relation can be inverted to express the $\mu_j(f,\phi)$ invariants in terms of the $SW_m(f,\phi)$ invariants:
\begin{equation}\label{equ:swtomu}
\sum_{j=0}^{a-1} \mu_j(f,\phi) t^j = \left( \sum_{m \ge 0} SW_m(f,\phi) t^m \right) \left( \sum_{n \ge 0} c_n(V) t^n \right),
\end{equation}
where $c_n(V)$ is the $n$-th Chern class of $V$.
\end{proposition}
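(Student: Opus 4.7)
The proof is essentially a direct computation combining Theorem \ref{thm:pdclass}, the decomposition (\ref{equ:muclasses}), and Proposition \ref{prop:pushforwardsegre}. The plan is to first substitute and reduce to a concrete convolution identity, then recognize it as a product of formal power series, and finally invert using $c(V)s(V)=1$.

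First I would start from Theorem \ref{thm:pdclass} and plug in (\ref{equ:muclasses}):
\[
SW_m(f,\phi) \;=\; (\pi_Y)_*\bigl(x^m \smallsmile f^*\phi_*(1)\bigr) \;=\; \sum_{j=0}^{a-1} \mu_j(f,\phi)\,(\pi_Y)_*\!\left(\delta \tau_{0,U}\, x^{m + (a-1) - j}\right),
\]
using $H^*(B;\mathbb{Z})$-linearity of $(\pi_Y)_*$ and the fact that each $\mu_j(f,\phi)$ is pulled back from $B$.

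Next I would apply Proposition \ref{prop:pushforwardsegre} to each summand. With $k = m+(a-1)-j$, the pushforward is $s_{k-(a-1)}(V) = s_{m-j}(V)$ when $k \ge a-1$, i.e.\ $m \ge j$, and vanishes otherwise. Adopting the convention $s_n(V) = 0$ for $n<0$, this gives the clean convolution identity
\[
SW_m(f,\phi) \;=\; \sum_{j=0}^{a-1} \mu_j(f,\phi)\, s_{m-j}(V).
\]
Multiplying by $t^m$ and summing over $m \ge 0$ immediately yields the first identity (\ref{equ:mutosw}), since convolution of coefficients corresponds to the product of the associated formal series in $t$.

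For the inversion (\ref{equ:swtomu}), I would multiply both sides of (\ref{equ:mutosw}) by $\sum_{n\ge 0} c_n(V) t^n$ and use the defining relation $c(V)\cdot s(V) = 1$ of the Segre classes from Proposition \ref{prop:pushforwardsegre}. This collapses $\bigl(\sum_n s_n(V)t^n\bigr)\bigl(\sum_n c_n(V)t^n\bigr)$ to $1$, leaving $\sum_{j=0}^{a-1} \mu_j(f,\phi) t^j$ on the right-hand side, as claimed.

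There is no real obstacle here; the only point requiring a small remark is that both generating series are finite in the relevant degrees (for a fixed cohomological degree of $B$ only finitely many $\mu_j$ and $SW_m$ are nonzero, as noted in the text), so all manipulations are purely formal and well-defined. In particular one does not need to worry about the apparent asymmetry that the $\mu_j$-sum is truncated at $j \le a-1$ while the $SW_m$-sum runs over all $m\ge 0$: the identity (\ref{equ:swtomu}) automatically forces the coefficients of $t^j$ for $j \ge a$ on the right to vanish.
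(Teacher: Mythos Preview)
Your proof is correct and follows essentially the same approach as the paper: substitute (\ref{equ:muclasses}) into the formula of Theorem \ref{thm:pdclass}, apply Proposition \ref{prop:pushforwardsegre} term by term to obtain the convolution $SW_m = \sum_j \mu_j\, s_{m-j}(V)$, and then invert using $c(V)s(V)=1$. The only cosmetic difference is that the paper reindexes via $j \mapsto a-1-j$ before applying the pushforward formula, whereas you keep the original indexing throughout.
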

\begin{proof}
By Theorem \ref{thm:pdclass} we have
\[
SW_m(f,\phi) = (\pi_Y)_*( x^m f^*\phi_*(1) ).
\]
Recall that we set $\widetilde{\tau}_{V',U'}^\phi = \phi_*(1)$. Then combined with Equation (\ref{equ:muclasses}) and using Proposition \ref{prop:pushforwardsegre}, we get:
\begin{equation*}
\begin{aligned}
SW_m(f,\phi) &= (\pi_Y)_* \left( \sum_{j=0}^{a-1} \mu_{a-1-j}(f,\phi) \delta \tau_{0,U} x^{j+m} \right) \\
& = \sum_{j=0}^{a-1} \mu_{a-1-j}(f,\phi) (\pi_Y)_*( \delta \tau_{0,U} x^{j+m} ) \\
&= \sum_{j=0}^{a-1} \mu_{a-1-j}(f,\phi) s_{j+m-(a-1)}(V) \\
&= \sum_{j+n = m} \mu_j(f,\phi) s_n(V), \\
\end{aligned}
\end{equation*}
which proves Equation (\ref{equ:mutosw}). Equation (\ref{equ:swtomu}) follows from Equation (\ref{equ:mutosw}) and the identity
\[
(c_0(V) + tc_1(V) + \cdots )(s_0(V) + ts_1(V) + ts_2(V) + \cdots ) = 1.
\]
\end{proof}

\begin{proof}[Proof of Theorem \ref{thm:pdclass}]
Let $\mathbb{Z}_B$ denote the orientation local system on $B$. Fix choices of orientations on $U$ and $U'$. This gives an isomorphism of the orientation local system of $Y$ with $(\pi_Y)^* \mathbb{Z}_B$ and an isomorphism of the orientation local system of $\mathcal{M}_\phi$ with $(\pi_{{\mathcal{M}}_\phi})^* \mathbb{Z}_B$. The submanifold $\mathcal{M}_\phi \subset Y$ has a fundamental class $[\mathcal{M}_\phi] \in H_{2d-b^+-1}(Y ; (\pi_Y)^*\mathbb{Z}_B )$. Let $\eta_{{\mathcal{M}}_\phi}$ denote the Poincar\'e dual class $\eta_{{\mathcal{M}}_\phi} \in H^{dim(X) - (2d-b^+-1)}( Y , \partial Y ; \mathbb{Z})$. Let $\iota : \mathcal{M}_\phi \to Y$ denote the inclusion. We obtain a pushforward map $\iota_* : H^j(\mathcal{M}_\phi ; \mathbb{Z}) \to H^{j + (2d-b^+-1)}(X , \partial X ; \mathbb{Z})$. From general properties of the pushforward map we have that if $\theta \in H^j( Y ; \mathbb{Z} )$, then $\iota_*( \iota^* \theta) = \theta \smallsmile \eta_{{\mathcal{M}}_\phi}$. Therefore
\begin{equation}\label{equ:swm}
SW_m(f,\phi) = (\pi_{{\mathcal{M}}_\phi})_* ( c_1(\mathcal{L})^m ) = (\pi_Y)_* \circ \iota_*( \iota^* (x^m)) = (\pi_Y)_*( x^m \smallsmile \eta_{{\mathcal{M}}_\phi} ).
\end{equation}
To complete the proof, we just need to show that $\eta_{{\mathcal{M}}_\phi} = f^*(\phi_*(1))$. We begin by observing that the normal bundle $N_{{\widetilde{\mathcal{M}}}_\phi}$ to $\widetilde{\mathcal{M}}_\phi$ in $\widetilde{Y}$ is equivariantly identified via the derivative of $f'$ with the normal bundle of $B_\phi = \phi(B) \subseteq S_{V',U'}$. But since $\phi$ is a section, the normal bundle of $B_\phi$ can be identified with the restriction of the vertical tangent bundle of $S_{V',U'} \to B$ to $B_\phi$, which is exactly $V' \oplus U'$. Thus
\[
N_{{\widetilde{\mathcal{M}}}_\phi} = {f'}^*(V' \oplus U') \cong {f}^*(V' \oplus U').
\]
Since $S^1$ acts freely on $\mathcal{M}_\phi$ with quotient $\mathcal{M}_\phi$, we have that $N_{{\widetilde{\mathcal{M}}}_\phi}$ descends to a vector bundle $N_{\mathcal{M}_\phi}$ on $\mathcal{M}_\phi$, which is precisely the normal bundle of $\mathcal{M}_\phi$ in $Y$. Let $U$ denote a tubular neighbourhood of $\mathcal{M}_\phi$ in $Y$, which we identify with the open unit disc bundle in $N_{{\mathcal{M}}_\phi}$. The inclusion $\iota : \mathcal{M}_\phi \to Y$ factors as:
\[
\mathcal{M}_\phi \buildrel \zeta \over \longrightarrow U \buildrel j_U \over \longrightarrow Y,
\]
where $\zeta$ is the zero section of $N_{{\mathcal{M}}_\phi}$ and $j_U$ is the inclusion map, which is an open immersion. Passing to cohomology, we obtain a factorisation of $\iota_*$ as:
\[
H^j(\mathcal{M}_\phi ; \mathbb{Z}) \buildrel \zeta_* \over \longrightarrow H^{j +(2d-b^+-1)}_{c}(U ; \mathbb{Z}) \buildrel (j_U)_* \over \longrightarrow H^{j+(2d-b^+-1)}(Y , \partial Y ; \mathbb{Z}),
\]
where $H^{j +(2d-b^+-1)}_{c}(U ; \mathbb{Z})$ denotes compactly supported cohomology of $U$ of degree $j+(2d-b^+-1)$. The pushforward $\zeta_* :  H^j(\mathcal{M}_\phi ; \mathbb{Z}) \to H^{j +(2d-b^+-1)}_{c}(U ; \mathbb{Z})$ is the Thom isomorphism for $N_{{\mathcal{M}}_\phi} \cong U$. Next we note that 
\[
\eta_{{\mathcal{M}}_\phi} = \iota_*(1) = (j_U)_* \circ \zeta_*(1) = (j_U)_*( \tau_{N_{{\mathcal{M}}_\phi}} ),
\]
where $\tau_{N_{{\mathcal{M}}_\phi}} = \zeta_*(1) \in H^{2a'+b'}_c( U ; \mathbb{Z}) \cong H^{2a'+b'}( U^+ , \infty ; \mathbb{Z})$ is the Thom class of $N_{{\mathcal{M}}_\phi}$. Instead of computing $(j_U)_*( \tau_{N_{{\mathcal{M}}_\phi}} )$ directly we will work $S^1$-equivariantly on $\widetilde{Y}$. The inclusion $\widetilde{\iota} : \widetilde{M} \to \widetilde{Y}$ factors as:
\[
\widetilde{\mathcal{M}} \buildrel \widetilde{\zeta} \over \longrightarrow \widetilde{U} \buildrel \widetilde{j_U} \over \longrightarrow \widetilde{Y},
\]
where $\widetilde{U}$ is the open disc bundle in $N_{\widetilde{{\mathcal{M}_\phi}}}$.

Note that $H^{2a'+b'}_{S^1}( \widetilde{U}^+ , \infty ; \mathbb{Z})$ is isomorphic to $H^{2a'+b'}(U^+ , \infty ; \mathbb{Z})$. To see this, let $O$ be an open neighbourhood of $\infty$ in $U^+$ which strongly deformation retracts to $\infty$ and let $\widetilde{O}$ be the pre-image of $O$ in $\widetilde{U}^+$. Note that $S^1$ acts freely on $\widetilde{O} \setminus \infty$. Then we have a sequence of isomorphisms:
\begin{equation*}
\begin{aligned}
H^*_{S^1}( \widetilde{U}^+ , \infty ; \mathbb{Z}) & \cong H^*_{S^1}( \widetilde{U}^+ , \widetilde{O} ; \mathbb{Z}) \text{ (by deformation retraction)} \\
& \cong H^*_{S^1}( \widetilde{U} , \widetilde{O} \setminus \infty ; \mathbb{Z} ) \text{ (excision)} \\
& \cong H^*( U , O \setminus \infty ; \mathbb{Z}) \text{ ($S^1$ acts freely)} \\
& \cong H^*( U^+ , O ; \mathbb{Z}) \text{ (excision)} \\
& \cong H^*( U^+ , \infty ; \mathbb{Z}) \text{ (by deformation retraction)}.
\end{aligned}
\end{equation*}
Henceforth we use this sequence of isomorphisms to identity $H^{2a'+b'}_{S^1}( \widetilde{U}^+ , \infty ; \mathbb{Z})$ with $H^{2a'+b'}(U^+ , \infty ; \mathbb{Z})$. Let $\tau_{N_{\widetilde{{\mathcal{M}_\phi}}}} \in H^{2a'+b'}_{S^1}( \widetilde{U}^+ , \infty ; \mathbb{Z})$ denote the equivariant Thom class of $N_{\widetilde{{\mathcal{M}_\phi}}}$. Then $\tau_{N_{\widetilde{{\mathcal{M}_\phi}}}}$ agrees with $\tau_{N_{{\mathcal{M}_\phi}}}$ under the isomorphism $H^{2a'+b'}_{S^1}( \widetilde{U}^+ , \infty ; \mathbb{Z}) \cong H^{2a'+b'}( U^+ , \infty ; \mathbb{Z})$ constructed above. Let $(\widetilde{j_U})_* : H^*_{S^1}( \widetilde{U}^+ , \infty ; \mathbb{Z}) \to H^*_{S^1}(\widetilde{Y} , \partial \widetilde{Y} ; \mathbb{Z})$ be the pushforward with respect to $\widetilde{j_U}$. Then we have a commutative diagram:
\begin{equation*}\xymatrix{
H^*_{S^1}(\widetilde{U}^+ , \infty ; \mathbb{Z}) \ar[d]^-\cong \ar[r]^-{(\widetilde{j_U})_*} & H^*_{S^1}(\widetilde{Y} , \partial \widetilde{Y} ; \mathbb{Z}) \\
H^*(U^+ , \infty ; \mathbb{Z}) \ar[d]^-\cong & H^*(Y , \partial Y ; \mathbb{Z}) \ar[u]^-\cong \\
H^*_c(U ; \mathbb{Z}) \ar[ur]^-{(j_U)_*} &
}
\end{equation*}
Under the isomorphism $H^*_c(U ; \mathbb{Z})  \cong H^*(U^+ , \infty ; \mathbb{Z})$, the pushforward map $(j_U)_* : H^*(U^+ , \infty ; \mathbb{Z}) \to H^*(Y , \partial Y ; \mathbb{Z})$ may be constructed as follows: let $q: Y \to U^+$ be the map given by $q(x) = x$ if $x\in U$ and $q(x) = \infty$ otherwise. Noting that $q$ sends $\partial Y$ to $\infty$ we have that $q$ gives a map of pairs $q : (Y , \partial Y) \to (U^+ , \infty)$ and we have $(j_U)_* = q^*$. Similarly define $\widetilde{q} : \widetilde{Y} \to \widetilde{U}^+$ by $\widetilde{q}(x) = x$ if $x \in \widetilde{U}$ and $\widetilde{q}(x) = \infty$ otherwise. Tracing through the sequence of isomorphisms defining $(\widetilde{j_U})_*$, one may verify that we likewise have $(\widetilde{j_U})_* = \widetilde{q}^*$.

Our goal now will be to compute $\widetilde{q}^*(\tau_{N_{\widetilde{{\mathcal{M}_\phi}}}}) = (\widetilde{j_U})_*( \tau_{N_{\widetilde{{\mathcal{M}_\phi}}}} ) \in H^{2a'+b'}_{S^1}( \widetilde{Y} , \partial \widetilde{Y} ; \mathbb{Z})$, which of course coincides with $(j_U)_*(\tau_{N_\mathcal{M}}) = \eta_{\mathcal{M}}$ under the isomorphism \linebreak $H^{2a'+b'}_{S^1}( \widetilde{Y} , \partial \widetilde{Y} ; \mathbb{Z}) \cong H^{2a'+b'}( Y , \partial Y ; \mathbb{Z})$. Recall that $f'$ is transverse to $B_\phi$, and the points of $B_\phi$ are fixed points of the $S^1$-action and disjoint from $S_U$. Since $f'$ is transverse to $B_\phi$, we can find an $S^1$-equivariant open tubular neighbourhood $W_\phi \subseteq S_{U',V'}$ of $B_\phi$ such that each point of $W_\phi$ is a regular value of $f'$. Note also that $W_\phi$ can be identified with the open unit disc bundle in the normal bundle of $B_\phi$ in $S_{U',V'}$, which is isomorphic $U'\oplus V'$. Then (possibly after shrinking $W_\phi$ and $\widetilde{U}$), we can assume that $\widetilde{U} = {f'}^{-1}(W_\phi)$ and that $f'|_{\widetilde{U}} \to W_\phi$ is a surjective submersion. By choosing $W_\phi$ small enough, we may assume that $W_\phi$ is disjoint from $S_U$. Let $j_{W_\phi} : W_\phi \to S_{U',V'}$ denote the inclusion map. Consider the following commutative diagram of $S^1$-equivariant maps:
\begin{equation*}\xymatrix{
\widetilde{Y} \ar[rr]^-{f'|_{\widetilde{Y}}} & & S_{U',V'} \\
\widetilde{U} \ar[u]^-{\widetilde{j_U}} \ar[rr]^-{f'|_{\widetilde{U}}} & & W_\phi \ar[u]^-{j_{W\phi}}
}
\end{equation*}
Let $\tau_{U',V'}^\phi \in H^{2a'+b'}_{S^1}( W_\phi^+ , \infty ; \mathbb{Z})$ be the equivariant Thom class. Then $\tau_{N_{\widetilde{{\mathcal{M}_\phi}}}} = (f'|_{\widetilde{U}})^*(\tau_{U',V'}^\phi)$. So under the isomorphism $H^{2a'+b'}( Y  , \partial Y ; \mathbb{Z}) \cong H^{2a'+b'}_{S^1}(\widetilde{Y} , \partial \widetilde{Y} ; \mathbb{Z})$, $\eta_{{\mathcal{M}_\phi}}$ is given by: 
\begin{equation}\label{equ:eta0}
\eta_{{\mathcal{M}_\phi}} = \widetilde{q}^* (f'|_{\widetilde{U}})^*(\tau_{U',V'}^\phi).
\end{equation}

Let $q_{W_\phi} : S_{U',V'} \to W_\phi^+$ be the map given by $q_{W_\phi}(x) = x$ if $x \in W_\phi$ and $q_{W_\phi}(x) = \infty$ otherwise. Then we have a commutative diagram of pairs:
\begin{equation*}\xymatrix{
( \widetilde{Y} , \partial \widetilde{Y} ) \ar[r]^-{f'|_{\widetilde{Y}}} \ar[d]^-{\widetilde{q}} & (S_{U',V'} , S_U) \ar[d]^-{q_{W_\phi}} \\
(\widetilde{U}^+ , \infty ) \ar[r]^-{f'|_{\widetilde{U}}} & (W_\phi^+ , \infty )
}
\end{equation*}
From commutativity of the diagram and Equation (\ref{equ:eta0}), we get:
\[
\eta_{{\mathcal{M}_\phi}} = (f'|_{\widetilde{Y}})^* q_{W_\phi}^*(\tau_{U',V'}^\phi).
\]
But note that $q_W^*(\tau_{U',V'}^\phi)$ is exactly the class $\widetilde{\tau}_{V',U'}^\phi = \phi_*(1)$. Thus
\[
\eta_{{\mathcal{M}_\phi}} = {f'}^*( \phi_*(1)) = f^*(\phi_*(1)).
\]
Substituting into Equation (\ref{equ:swm}), we get the desired equality:
\[
SW_m(f,\phi) = (\pi_Y)_*( x^m \smallsmile \eta_{{\mathcal{M}_\phi}} ) = (\pi_Y)_*( x^m \smallsmile f^*\phi_*(1) ).
\]
\end{proof}

\begin{proposition}\label{prop:swsuspension}
The Seiberg-Witten invariants $SW_m(f,\phi)$ are independent of stabilisation of $f$ by real or complex vector bundles.
\end{proposition}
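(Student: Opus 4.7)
The plan is to prove stabilization invariance directly from the moduli-space definition of $SW_m(f,\phi)$, rather than through the cohomological formula of Theorem~\ref{thm:pdclass}. The key observation is that the chamber representative $\phi:B\to U'\setminus U$ takes values in $U'\subset V'\oplus U'$ and hence has zero component in any stabilizing vector bundle; therefore in the stabilized setting the solution set of $\tilde f(v,a)=\phi(b)$ automatically has vanishing stabilizing component, giving a canonical identification of moduli spaces.

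To spell this out for stabilization by a complex vector bundle $A$, write $\tilde f:=f\wedge_B\mathrm{id}_{S_{A,0}}:S_{V\oplus A,U}\to S_{V'\oplus A,U'}$, so that $\tilde f(v,a)=(f(v),a)$ wherever $f(v)$ is finite. Given an $S^1$-equivariant perturbation $f'$ of $f$ that is $S^1$-transverse to $\phi(B)$, I would first check that $\tilde f':=f'\wedge_B\mathrm{id}_{S_{A,0}}$ is $S^1$-transverse to $\phi(B)$: the vertical derivative splits as $d\tilde f'|_{(v,a)}(\dot v,\dot a)=(df'_v(\dot v),\dot a)$, and since $\phi(B)$ has zero $A$-component and trivial tangent $A$-component, the $A$-direction of the normal is filled by $\dot a$ while transversality in $V'\oplus U'$ is exactly the hypothesis on $f'$. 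Because $\phi(B)$ lies in the zero section of $A$, any solution of $\tilde f'(v,a)=\phi(b)$ must have $a=0$, so the assignment $v\mapsto(v,0_A)$ defines a canonical $S^1$-equivariant diffeomorphism $\widetilde{\mathcal M}_\phi(f')\to\widetilde{\mathcal M}_\phi(\tilde f')$ commuting with the projections to $B$ and identifying the associated principal $S^1$-bundles (hence the line bundles $\mathcal L$).

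Finally I would verify that the relative orientation of Lemma~\ref{lem:orientation2} is preserved. Stabilization by $A$ introduces into the orientation line $\det_{\mathbb R}(\text{source}\oplus\text{target})$ an extra factor $\det_{\mathbb R}(A\oplus A)$, which is canonically trivialized because the underlying real bundle of a complex bundle is canonically oriented. Real stabilization by a real vector bundle $B'$ is entirely analogous: the transversality argument and moduli-space identification go through unchanged, and the extra orientation factor $\det(B'\oplus B')=\det(B')^{\otimes 2}$ has vanishing first Stiefel--Whitney class and is canonically oriented as a tensor square, so the local system $\mathbb Z_{w_1}$ is unchanged. The main point requiring care is the orientation bookkeeping, especially in the non-orientable case handled via local coefficients, but no serious obstacle arises once the canonical trivializations above are in hand.
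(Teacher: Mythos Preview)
Your approach is correct and genuinely different from the paper's. The paper argues cohomologically via Theorem~\ref{thm:pdclass}: it shows $f_A^*(\widetilde\tau^\phi_{V'_A,U'}) = f^*(\widetilde\tau^\phi_{V',U'}) \smallsmile \tau_{A,0}$, identifies the external cup product $\smallsmile\tau_{A,0}$ with multiplication by the equivariant Euler class $e_A$ under the isomorphisms of Proposition~\ref{prop:equivthom3}, and then verifies $(\pi_{Y_A})_*(u \smallsmile e_A) = (\pi_Y)_*(u)$ directly from the Segre-class formula of Proposition~\ref{prop:pushforwardsegre}. Your moduli-space identification is more elementary and bypasses this calculation entirely. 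The one point worth flagging is that you implicitly use independence of $SW_m(\tilde f,\phi)$ from the choice of transverse perturbation (since you compute it with the particular perturbation $\tilde f'=f'\wedge_B\mathrm{id}$), and in the paper's logical order that independence is itself deduced from Theorem~\ref{thm:pdclass}; so if you want an argument genuinely free of the cohomological formula you should remark that a standard cobordism argument (perturbing a homotopy between two transverse representatives) supplies this directly. Your orientation analysis is sound: the extra factor contributed to the line of Lemma~\ref{lem:orientation2} is canonically oriented in both the complex and real cases, and tracing through the proof of that lemma shows that under your diffeomorphism $v\mapsto(v,0)$ the specific isomorphism $\psi_M$ is carried to $\psi_{\widetilde M}$, since the normal bundle of the stabilized moduli space splits as $N\oplus A$ (resp.\ $N\oplus B'$) with $d\tilde f'$ acting as the identity on the new summand.
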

\begin{proof}
We will focus on the case of stabilisation by a complex vector bundle, the real case being considerably simpler. Let $f : S_{V , U} \to S_{V' , U'}$ and a chamber $\phi : B \to (U' \setminus U)$ be given. Let $A$ be a complex vector bundle of rank $\alpha$, set $V_A = V \oplus A$, $V' = V' \oplus A$ and let $f_A : V_A \oplus U \to V'_A \oplus U$ denote the suspension of $f$. Observe that $S_{V'_A , U'}$ is the fibrewise smash product $S_{V'_A , U'} = S_{V',U'} \wedge_B S_{A,0}$ (equivariantly) and hence we have an external cup product
\[
\smallsmile \; : H^i_{S^1}( S_{V',U'} , S_U ; \mathbb{Z} ) \times H^j_{S^1}( S_{A,0} , B_{A,0} ; \mathbb{Z}) \to H^{i+j}_{S^1}( S_{V'_A,U'} , S_U ; \mathbb{Z}).
\]
Using this cup product we see that $\widetilde{\tau}^\phi_{V'_A,U'} = \widetilde{\tau}^\phi_{V',U'} \smallsmile \tau_{A,0}$ and hence 
\begin{equation}\label{equ:taususpend}
f_A^*(\widetilde{\tau}^\phi_{V'_A,U'}) = f^*(\widetilde{\tau}^\phi_{V',U'}) \smallsmile \tau_{A,0}.
\end{equation}
Recall from Proposition \ref{prop:equivthom3} that $H^*_{S^1}( S_{V,U} , S_U ; \mathbb{Z}) \cong H^*( Y , \partial Y ; \mathbb{Z})$ is a free module over $H^*(B ; \mathbb{Z})$ with basis $\{ (\delta \tau_{0,U}) x^i\}_{i=0}^{a-1}$. Moreover, the proof of Proposition \ref{prop:equivthom3} shows that as $H^*_{S^1}(B ; \mathbb{Z})$-modules we have an isomorphism
\[
H^*_{S^1}( S_{V,U} , S_U ; \mathbb{Z}) \cong \left( \frac{ \mathbb{Z}[x] }{\langle e_V \rangle }\right) \delta \tau_{0,U}
\]
where $e_V \in H^a_{S^1}(B ; \mathbb{Z})$ is the equivariant Euler class of $V$. Similarly we have an isomorphism
\[
H^*_{S^1}( S_{V_A,U} , S_U ; \mathbb{Z}) \cong \left( \frac{ \mathbb{Z}[x] }{\langle e_V e_A \rangle }\right) \delta \tau_{0,U}
\]
where $e_A \in H^a_{S^1}(B ; \mathbb{Z})$ is the equivariant Euler class of $A$. Noting that $S_{V_A,U} = S_{V,U} \wedge S_{A,0}$ we see that the external cup product with $\tau_{A,0}$ defines a morphism of $H^*_{S^1}(B ; \mathbb{Z})$-modules:
\[
\smallsmile \tau_{A,0} : H^*_{S^1}( S_{V,U} , S_U ; \mathbb{Z} ) \to H^{*+\alpha}_{S^1}( S_{V_A,U} , S_U ; \mathbb{Z}).
\]
Tracing through the isomorphisms used in the proof of Proposition \ref{prop:equivthom3}, we see that $\smallsmile \tau_{A,0}$ corresponds to the morphism
\[
\left( \frac{ \mathbb{Z}[x] }{\langle e_V \rangle }\right) \delta \tau_{0,U} \to \left( \frac{ \mathbb{Z}[x] }{\langle e_V e_A \rangle }\right) \delta \tau_{0,U}, \quad u \mapsto u \cdot e_A
\]
given by multiplication by $e_A$. Let $\eta = f^*(\widetilde{\tau}^\phi_{V',U'}) \in H^{2a'+b'}_{S^1}( S_{V,U} , S_U ; \mathbb{Z} )$ so that $SW_m(f,\phi) = (\pi_Y)_*( x^m \smallsmile \eta)$, where $Y$ is defined as in Subsection \ref{sec:cohomological}. Similarly define $\eta_A = f^*_A(\widetilde{\tau}^\phi_{V'_A,U'}) \in H^{2a'+b'}_{S^1}(S_{V_A,U} , S_U ; \mathbb{Z})$ so that $SW_m(f_A , \phi) = (\pi_{Y_A})_*( x^m \smallsmile \eta_A)$, where $Y_A$ is defined in the same way as $Y$, but with $V_A$ replacing $V$. Then from Equation (\ref{equ:taususpend}), we have that:
\[
SW_m(f_A , \phi) = (\pi_{Y_A})_*( x^m \smallsmile \eta \smallsmile e_A).
\]
Therefore, to show that $SW_m(f_A , \phi) = SW_m(f,\phi)$ it suffices to show that:
\begin{equation}\label{equ:pushforwardidentity} 
(\pi_{Y_A})_*( u \smallsmile e_A) = (\pi_Y)_*( u )
\end{equation}
for any $u \in H^{*}_{S^1}( S_{V,U} , S_U ; \mathbb{Z} )$. But since $H^{*}_{S^1}( S_{V,U} , S_U ; \mathbb{Z} ) \cong \left( \frac{ \mathbb{Z}[x] }{\langle e_V \rangle }\right) \delta \tau_{0,U}$, it suffices to check Equation (\ref{equ:pushforwardidentity}) for $u = x^i \delta \tau_{0,U}$, $i = 0,1, \dots , a-1$. From Proposition \ref{prop:pushforwardsegre}, we see that $(\pi_Y)_*( x^i \delta \tau_{0,U} )$ is zero for $0 \le i \le a-2$ and is $1$ for $i = a-1$. On the other hand, since the equivariant Euler class of $A$ has the form
\[
e_A = x^\alpha + x^{\alpha-1}c_1(A) + \cdots + c_\alpha(A),
\]
we see again from Proposition \ref{prop:pushforwardsegre} that
\[
(\pi_{Y_A})_*( x^i e_A \delta \tau_{0,U} ) = (\pi_{Y_A})_*( x^{i+\alpha} + x^{i+\alpha-1}c_1(A) + \cdots + x^i c_\alpha(A) )
\]
is zero for $0 \le i \le a-2$ and is $1$ for $i = a-1$. This completes the proof.
\end{proof}

\section{Steenrod operations on the families Seiberg-Witten invariants}\label{sec:steenrod}

The Steenrod operations are stable cohomology operations. The stability of these operations makes them convenient to use in the study of families Seiberg-Witten invariants, which are extracted from a stable homotopy class. For simplicity, let us consider the Steenrod squares. The Steenrod reduced power operations for odd primes can also be considered, however the algebra turns out to be considerably more difficult and so we will not purse the problem of calculating them here. 

\subsection{Steenrod squares computation}

Our calculations will be carried out using $S^1$-equivariant cohomology with $\mathbb{Z}_2$-coefficients. The Borel model for equivariant cohomology allows us to extend the Steenrod operations to the equivariant setting. We have that $H^*_{S^1}(pt ; \mathbb{Z}_2) \cong \mathbb{Z}_2[x]$, where $x$ has degree $2$. It follows easily that $Sq^i(x^m)=0$ whenever $i$ is odd and $Sq^{2j}(x^m) = \binom{m}{j}x^{m+j}$.

Let $f : (S_{V,U} , B_{V,U} ) \to (S_{V',U'} , B_{V',U'})$ be a finite dimensional monopole map. It turns out that for the purposes of computing Steenrod operations it is most convenient to stabilise $f$ such that the bundles $V,U$ are trivial, say $V = \mathbb{C}^{a}$, $U = \mathbb{R}^{b}$. Recall that $U' = U \oplus H^+$, hence in this case we have $U' = \mathbb{R}^{b} \oplus H^+$. In particular, this gives an equality of Stiefel-Whitney classes:
\[
w_j(U') = w_j(H^+), \text{ for all } j \ge 0.
\]
Next recall that $V - V' = D$, hence $V' = V - D = \mathbb{C}^a - D$. From this we obtain:
\begin{lemma}\label{lem:equivariantchern}
Let $c_{S^1 , j}(V') \in \mathcal{H}^{2j}$ denote the $j$-th equivariant Chern class of $V'$. Then
\begin{equation}\label{equ:equivariantchern}
c_{S^1,j}(V') = \sum_{l = 0}^{j} x^{j-l} \binom{a'-l}{j-l} s_l(D).
\end{equation}
\end{lemma}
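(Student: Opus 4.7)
After the stabilization $V \cong \mathbb{C}^{a}$, the K-theoretic identity $V - V' = D$ becomes $V' + D = \mathbb{C}^{a}$ in $K^{0}_{S^{1}}(B)$, where $S^{1}$ acts on every bundle by fibrewise scalar multiplication. My plan is to extract the formula from the multiplicativity of the total equivariant Chern class
\[
c_{S^{1}}(V') \smile c_{S^{1}}(D) = c_{S^{1}}(\mathbb{C}^{a}) = (1+x)^{a},
\]
using the splitting principle to compute $c_{S^{1}}(D)$.

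First I would apply the splitting principle, pulling back to a flag bundle over $B$ on which $D$ splits as a formal sum of complex line bundles with Chern roots $\alpha_{1},\dots,\alpha_{d}$ (so $c(D) = \prod_{i}(1+\alpha_{i})$ and $s(D) = \prod_{i}(1+\alpha_{i})^{-1}$). Because $S^{1}$ acts on each summand by scalar multiplication, the equivariant first Chern class of the $i$-th summand is $x + \alpha_{i}$, so
\[
c_{S^{1}}(D) \;=\; \prod_{i=1}^{d}(1+x+\alpha_{i}).
\]
Setting $y = 1+x$, I would rewrite this as $\prod_{i}(y+\alpha_{i}) = y^{d}\prod_{i}(1+\alpha_{i}/y)$ and invert, recognizing the Segre generating series
\[
\prod_{i}\frac{1}{1+\alpha_{i}/y} \;=\; \sum_{l\ge 0} s_{l}(D)\, y^{-l}.
\]
Substituting into $c_{S^{1}}(V') = (1+x)^{a}/c_{S^{1}}(D)$ yields
\[
c_{S^{1}}(V') \;=\; y^{a-d}\sum_{l\ge 0}s_{l}(D)\,y^{-l} \;=\; \sum_{l\ge 0} s_{l}(D)\,(1+x)^{a'-l},
\]
using $a' = a - d$.

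The last step is purely algebraic: expand $(1+x)^{a'-l} = \sum_{m\ge 0}\binom{a'-l}{m}x^{m}$ by the generalized binomial theorem (extended to possibly negative upper index in the usual way), and read off the component of cohomological degree $2j$ by imposing $l + m = j$. This gives
\[
c_{S^{1},j}(V') = \sum_{l=0}^{j} \binom{a'-l}{j-l} x^{j-l}\, s_{l}(D),
\]
as required. Only finitely many terms contribute in each degree, so there is no convergence issue; the whole argument takes place in the polynomial ring $\mathcal{H}^{*} = H^{*}(B;\mathbb{Z}_{2})[x]$. The one mild subtlety is handling the case $l > a'$, where $\binom{a'-l}{j-l}$ must be interpreted via the generalized binomial coefficient, but the identity with $y = 1+x$ makes this automatic since all manipulations are valid in $\mathbb{Z}_{2}[[y^{-1}]][y]$ before specializing back to $x$. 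There is no serious obstacle; the argument is essentially a generating-function bookkeeping exercise, with the splitting principle doing the real work.
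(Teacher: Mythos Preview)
Your generating-function approach via the substitution $y = 1+x$ is valid and arrives at the correct formula, but there is one step you should tighten: you cannot literally apply the splitting principle to $D$. The class $D = V - V'$ is only virtual, so there is no flag bundle on which it decomposes into $d$ honest line summands (indeed $d$ need not even be positive). The sentence ``pulling back to a flag bundle on which $D$ splits'' has no content as written.

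The fix is immediate and actually shortens your argument: apply the splitting principle to the genuine bundle $V'$ instead. With Chern roots $y_1,\dots,y_{a'}$ of $V'$ one has
\[
c_{S^1}(V') \;=\; \prod_{i=1}^{a'}(1+x+y_i) \;=\; \prod_{i=1}^{a'}(y+y_i) \;=\; \sum_{l=0}^{a'} c_l(V')\,y^{a'-l} \;=\; \sum_{l=0}^{a'} s_l(D)\,(1+x)^{a'-l},
\]
using $c_l(V') = s_l(D)$ (which follows from $c(V')=c(\mathbb{C}^a)c(D)^{-1}=s(D)$). Now your binomial expansion step finishes the proof, and no inversion in any completed ring is needed. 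Note also that since $s_l(D)=c_l(V')=0$ for $l>a'$, all binomial exponents $a'-l$ occurring are non-negative, so the ``generalized binomial'' subtlety you flag does not actually arise.

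By comparison, the paper splits $V'$ as well but expands $c_{S^1,j}(V')=e_j(x+y_1,\dots,x+y_{a'})$ directly as a double sum over subsets, counting in how many $j$-element sets a given $l$-element subset sits. Your substitution $y=1+x$ packages the same combinatorics into the single identity $\prod(y+y_i)=\sum c_l(V')y^{a'-l}$, which is arguably cleaner. One small correction: the lemma is stated and proved over $\mathbb{Z}$, not $\mathbb{Z}_2$; the mod $2$ reduction only enters later when Steenrod squares are applied.
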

\begin{remark}\label{rem:binomnonneg}
Note that since $V' = \mathbb{C}^a - D$, it follows that the Chern classes of $V'$ are the Segre classes of the virtual bundle $D$. So $s_l(D) = 0$ whenever $l > a'$, since $V'$ has rank $a'$. Hence the non-zero terms in Equation (\ref{equ:equivariantchern}) only involve binomial coefficients whose upper index is non-negative.
\end{remark}
\begin{proof}
We use the splitting principal to compute $c_{S^1,j}(V')$. Suppose that (non-equivariantly) the Chern roots of $V'$ are $y_1, \dots , y_{a'}$. Then the equivariant Chern roots of $V'$ are $(x+y_1) , \dots , (x+y_{a'})$ and hence
\begin{equation*}
\begin{aligned}
c_{S^1,j}(V') &= \sum_{i_1 < \cdots < i_{j}} (x+y_{i_1}) \cdots (x+y_{i_j}) \\
& = \sum_{ I \subseteq \{1, \dots , a'\} \atop |I| = j} \sum_{l=0}^j x^{j-l} \sum_{J \subseteq I \atop |J| = l } y_J \quad (\text{where $y_J = y_{j_1} \cdots y_{j_l}$ for $J = \{j_1 , \dots , j_l\}$}) \\
& = \sum_{l=0}^j x^{j-l} \sum_{ J \subseteq \{1, \dots , a' \} \atop |J| = l } y_J \sum_{ I \supseteq J \atop |I| = j } 1 \\
& = \sum_{l=0}^{min(j,a')} x^{j-l} s_l(D) \binom{a'-l}{j-l} \\
& = \sum_{l=0}^{j} x^{j-l} s_l(D) \binom{a'-l}{j-l} \quad (\text{by Remark }\ref{rem:binomnonneg}).
\end{aligned}
\end{equation*}

\end{proof}

Recall that the families Seiberg-Witten invariants $SW_m(f,\phi)$ of $f$ are related to the $\mu_j(f,\phi)$ classes by Equations (\ref{equ:mutosw}) and (\ref{equ:swtomu}). But we have assumed that $V$ is a trivial bundle, hence in this case we have:
\[
SW_m(f,\phi) = \mu_m(f,\phi).
\]

Recall that the $\mu_j(f,\phi)$ invariants are defined by:
\[
f^*(\widetilde{\tau}_{V',U'}^\phi) = \sum_{j=0}^{a-1} \mu_{j}(f,\phi) \delta \tau_{0,U} x^{(a-1)-j}.
\]
Let us define $\eta \in H^*( \mathbb{P}(V) ; \mathbb{Z}_2)$ by
\begin{equation}\label{equ:eta}
\eta = \sum_{j=0}^{a-1} \mu_{j}(f,\phi) x^{(a-1)-j}
\end{equation}
so that $\eta \cdot \delta \tau_{0,U}$ is Poincar\'e dual to the Seiberg-Witten moduli space. It follows that
\[
SW_m(f,\phi) = \pi_*( x^m \eta),
\]
where $\pi$ is the projection $\pi : \mathbb{P}(V) \to B$.

In the calculations that follow we will compute Steenrod squares. All cohomology classes are taken with $\mathbb{Z}_2$ coefficients. In particular any integral cohomology classes which appear in the following calculations should be understood as their mod $2$ reductions.

\begin{lemma}\label{lem:steenrodeta}
Let $\eta \in H^*( \mathbb{P}(V) ; \mathbb{Z}_2)$ be defined as in Equation (\ref{equ:eta}). Then for all $j \ge 0$, we have:
\begin{equation*}
\begin{aligned}
Sq^{2j}(\eta) &= \sum_{l=0}^j c_{S^1,l}(V') w_{2j-2l}(H^+) \eta, \\
Sq^{2j+1}(\eta) &= \sum_{l=0}^j c_{S^1,l}(V') w_{2j-2l+1}(H^+) \eta.
\end{aligned}
\end{equation*}
\end{lemma}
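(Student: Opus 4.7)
My plan is to derive the Steenrod square formula by first computing the squares on the Thom-like class $\widetilde{\tau}^{\phi}_{V',U'} = \phi_*(1)$, then pulling back via $f^*$, and finally extracting the formula for $\eta$ by dividing out $\delta\tau_{0,U}$.

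First, I would invoke the mod $2$ Wu formula in its pushforward form: since $\phi : B \to U' \setminus U$ is a section whose normal bundle in $S_{V',U'}$ is $V' \oplus U'$ (viewed $S^1$-equivariantly, with $S^1$ acting by scalar multiplication on $V'$ and trivially on $U'$), naturality of Steenrod squares together with the projection formula give
\[
Sq^{i}\bigl(\widetilde{\tau}^{\phi}_{V',U'}\bigr) \;=\; \phi_*\bigl(w^{S^1}_i(V' \oplus U')\bigr) \;=\; w^{S^1}_i(V' \oplus U')\,\widetilde{\tau}^{\phi}_{V',U'}.
\]
Here $w^{S^1}_i(V' \oplus U')$ is the total $S^1$-equivariant Stiefel--Whitney class. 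Since $V'$ is a complex vector bundle, the splitting principle gives $w^{S^1}_{2l}(V') = c_{S^1,l}(V') \pmod{2}$ and $w^{S^1}_{2l+1}(V') = 0$. Because we have stabilised so that $U \cong \mathbb{R}^b$ is trivial, $U' = U \oplus H^+$ yields $w_i(U') = w_i(H^+)$. Combining and separating by parity,
\begin{align*}
Sq^{2j}\bigl(\widetilde{\tau}^{\phi}_{V',U'}\bigr) &= \sum_{l=0}^{j} c_{S^1,l}(V')\, w_{2j-2l}(H^+)\,\widetilde{\tau}^{\phi}_{V',U'}, \\
Sq^{2j+1}\bigl(\widetilde{\tau}^{\phi}_{V',U'}\bigr) &= \sum_{l=0}^{j} c_{S^1,l}(V')\, w_{2j-2l+1}(H^+)\,\widetilde{\tau}^{\phi}_{V',U'}.
\end{align*}

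Next I would apply $f^*$. Since $f^*$ commutes with Steenrod squares, and both $c_{S^1,l}(V')$ and $w_i(H^+)$ are pulled back from $B$ (hence are unaffected by $f^*$), the same identities hold with $\widetilde{\tau}^{\phi}_{V',U'}$ replaced by $f^*\widetilde{\tau}^{\phi}_{V',U'} = \eta \smallsmile \delta\tau_{0,U}$. Now I apply the Cartan formula to this product. Because $U$ is trivial, its Thom class $\tau_{0,U}$ satisfies $Sq^{i}(\tau_{0,U}) = w_i(U)\tau_{0,U} = 0$ for $i>0$; and the connecting homomorphism $\delta$ commutes with Steenrod squares, so $Sq^{i}(\delta\tau_{0,U}) = 0$ for $i>0$. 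The Cartan formula therefore collapses to $Sq^{i}(\eta \smallsmile \delta\tau_{0,U}) = Sq^{i}(\eta) \smallsmile \delta\tau_{0,U}$.

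Combining these observations, we obtain in $H^*_{S^1}(S_{V,U}, S_U ; \mathbb{Z}_2)$
\[
Sq^{2j}(\eta) \smallsmile \delta\tau_{0,U} \;=\; \left(\sum_{l=0}^{j} c_{S^1,l}(V')\, w_{2j-2l}(H^+)\, \eta\right)\smallsmile \delta\tau_{0,U},
\]
and analogously for $Sq^{2j+1}$. Finally, by Proposition~\ref{prop:equivthom3}, multiplication by $\delta\tau_{0,U}$ identifies $\mathbb{Z}_2[x]/\langle e_V\rangle \otimes H^*(B;\mathbb{Z}_2) \cong H^*(\mathbb{P}(V);\mathbb{Z}_2)$ with $H^{*+b}_{S^1}(S_{V,U}, S_U ; \mathbb{Z}_2)$, so this map is injective and the claimed identities for $Sq^{2j}(\eta), Sq^{2j+1}(\eta)$ follow.

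The only step that requires care is the Wu formula for $\widetilde{\tau}^{\phi}_{V',U'}$: strictly speaking $\widetilde{\tau}^{\phi}_{V',U'}$ is a lift of the equivariant Thom class rather than the Thom class itself, so I would justify it by noting that $\phi_*$ is the pushforward along an embedding with normal bundle $V' \oplus U'$ and applying the Wu formula to this embedding (equivalently, applying the standard Wu formula to the Thom class of the normal bundle and then to $\phi_*$ via the tubular neighbourhood model). Everything else in the plan is routine naturality together with the collapse of $Sq^i(\delta\tau_{0,U})$ coming from the stabilisation that made $U$ trivial.
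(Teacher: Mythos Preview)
Your proposal is correct and follows essentially the same route as the paper: compute $Sq^i$ on $\widetilde{\tau}^{\phi}_{V',U'}$ via the Thom/Wu formula for the normal bundle $V'\oplus U'$ (justified, as you note, by the tubular-neighbourhood factorisation of $\phi_*$), pull back by $f^*$, use the Cartan formula together with the triviality of $U$ to kill $Sq^i(\delta\tau_{0,U})$ for $i>0$, and then cancel $\delta\tau_{0,U}$ using the freeness statement of Proposition~\ref{prop:equivthom3}. The only cosmetic difference is that the paper packages the computation with the total square $Sq_t$ and only separates into even and odd degrees at the end.
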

\begin{proof}
Recall that if $\tau_E$ is the Thom class of a vector bundle $E$, then $Sq^j(\tau_E) = w_j(E) \tau_E$. Let $t$ be a formal variable and set:
\[
Sq_t = Sq^0 + t Sq^1 + \cdots \quad w_t(E) = w_0(E) + tw_1(E) + \cdots
\]
so that $Sq_t(\tau_E) = w_t(E) \tau_E$. Applying this to our setting (and using the fact that Steenrod operations commute with the coboundary map in the long exact sequence of a pair), we obtain:
\[
Sq_t( \delta \tau_{0,U}) = w_t(U) \delta \tau_{0,U}, \quad Sq_t( \tau_{V',U'}) = w_{S^1,t}(V' \oplus U') \tau_{V',U'}
\]
where $w_{S^1,t} = w_{S^1,0} + t w_{S^1,1} + \cdots $ and $w_{S^1,j}$ is the $j$-th equivariant Stiefel-Whitney class. 

Recall that $\widetilde{\tau}_{V',U'}^\phi = \phi_*(1)$. To understand the action of $Sq_t$ on $\widetilde{\tau}_{V',U'}^\phi$ we factor $\phi_*$ as follows. Let $B_\phi = \phi(B)$ denote the image of $B$ in $S_{V',U'}$. Let $W_\phi$ denote an equivariant tubular neighbourhood of $B_\phi$, which we can identify with the normal bundle of $B_\phi \subseteq S_{V',U'}$. Let $q : (S_{V',U'} , S_U) \to (W_\phi^+ , \infty)$ be the collapsing map which acts as the identity on $W_\phi$ and collapses the complement of $W_\phi$ to $\infty$. Let $\Phi : H^*_{S^1}(B ; \mathbb{Z}) \to H^{*+2a'+b'}_{S^1}( W^+_\phi , \infty ; \mathbb{Z} )$ be the Thom isomorphism. Then $\phi_* = q^* \circ \Phi$, essentially by definition of the pushforward map of a submanifold. It follows that $\widetilde{\tau}_{V',U'}^\phi = \phi_*(1) = q^*( \Phi(1) ) = q^*( \tau_{W_\phi})$, where $\tau_{W_\phi}$ is the equivariant Thom class of the normal bundle of $B_\phi$. But since $\phi$ is a section, the normal bundle of $B_\phi$ can be identified with the restriction of the vertical tangent bundle of $S_{V',U'} \to B$ to $B_\phi$, which is exactly $V' \oplus U'$. Thus $Sq_t( \tau_{W_\phi} ) = w_{S^1, t}(V' \oplus U') \tau_{W_\phi}$ and using $\widetilde{\tau}_{V',U'}^\phi = q^*( \tau_{W_\phi} )$, we get:
\[
Sq_t( \widetilde{\tau}_{V',U'}^\phi) = w_{S^1 , t}(V' \oplus U')\widetilde{\tau}_{V',U'}^\phi.
\]
Now by the definition of $\eta$, we have $\eta \cdot \delta \tau_{0,U} = f^*( \widetilde{\tau}_{V',U'}^\phi )$ and applying $Sq_t$ we have:
\begin{equation*}
\begin{aligned}
Sq_t(\eta) w_t(U) \cdot \delta \tau_{0,U} &= Sq_t(\eta) Sq_t( \delta \tau_{0,U} ) \\
&= Sq_t( \eta \cdot \delta \tau_{0,U} ) \\
&= Sq_t( f^*( \widetilde{\tau}_{V',U'}^\phi )) \\
&= f^*( Sq_t( \widetilde{\tau}_{V',U'}^\phi ) ) \\
& = w_{S^1 , t}(V' \oplus U') f^*(\widetilde{\tau}_{V',U'}^\phi) \\
& = w_{S^1,t}(V' \oplus U') \eta \cdot \delta \tau_{0,U}.
\end{aligned}
\end{equation*}
Recalling that $U$ is taken to be trivial, we have $w_t(U) = 1$ and hence
\[
Sq_t(\eta) = w_{S^1,t}(V' \oplus U') \eta.
\]
In particular for any $i \ge 0$, we have:
\[
Sq^i(\eta) = w_{S^1,i}(V'\oplus U') \eta.
\]
The proof of the lemma is completed by noting that:
\begin{equation*}
\begin{aligned}
w_{S^1,2j}(V' \oplus U') &= \sum_{l=0}^j c_{S^1,l}(V')w_{2j-2l}(H^+), \\
w_{S^1,2j+1}(V' \oplus U') &= \sum_{l=0}^j c_{S^1,l}(V')w_{2j-2l+1}(H^+).
\end{aligned}
\end{equation*}

\end{proof}
Combining Lemmas \ref{lem:equivariantchern} and \ref{lem:steenrodeta}, we obtain:
\begin{equation*}
\begin{aligned}
Sq^{2j}(\eta) &= \sum_{l=0}^j c_{S^1,l}(V') w_{2j-2l}(H^+) \eta \\
&= \sum_{l=0}^j \sum_{k=0}^l x^{l-k} \binom{a'-k}{l-k} s_k(D) w_{2j-2l}(H^+) \eta \\
&= \sum_{l=0}^j x^l \sum_{k=0}^{j-l} \binom{a'-k}{l} s_k(D) w_{2j-2l-2k}(H^+) \eta.
\end{aligned}
\end{equation*}
Similarly,
\[
Sq^{2j+1}(\eta) = \sum_{l=0}^j x^l \sum_{k=0}^{j-l} \binom{a'-k}{l} s_k(D) w_{2j-2l-2k+1}(H^+) \eta.
\]
Then using Equation (\ref{equ:eta}) to substitute for $\eta$, we find:
\begin{equation*}
\begin{aligned}
Sq^{2j}(\eta) &= \sum_{m=0}^{a-1} \sum_{l=0}^j \sum_{k=0}^{j-l} \binom{a'-k}{l} s_k(D) w_{2j-2k-2l}(H^+) SW_m(f,\phi) x^{a-1-m+l} \\
&= \sum_{l=0}^j \sum_{m=-l}^{a-1-l} \sum_{k=0}^{j-l} \binom{a'-k}{l}s_k(D)w_{2j-2k-2l}(H^+) SW_{m+l}(f,\phi) x^{a-1-m} \\
&= \sum_{l=0}^{j} \sum_{m=0}^{a-1} \sum_{k=0}^{j-l} \binom{a'-k}{l}s_k(D)w_{2j-2k-2l}(H^+) SW_{m+l}(f,\phi) x^{a-1-m},
\end{aligned}
\end{equation*}
where the last line is obtained by using the fact that $V$ is a trivial bundle of rank $a$ to deduce that $x^i = 0$ if $i \ge a$ and also the fact that $SW_i(f,\phi) = 0$ if $i \ge a$, so that the sum over $m$ in the range $-l \le m \le a-1-l$ gives the same result as summing over $m$ in the range $0 \le m \le a-1$. We can repeat the same calculation for the odd Steenrod squares $Sq^{2j+1}$. In summary, we have obtained:
\begin{equation}\label{equ:sqeta1}
\begin{aligned}
Sq^{2j}(\eta) &= \sum_{m=0}^{a-1} \sum_{l=0}^{j} \sum_{k=0}^{j-l} \binom{a'-k}{l}s_k(D)w_{2j-2k-2l}(H^+) SW_{m+l}(f,\phi) x^{a-1-m}, \\
Sq^{2j+1}(\eta) &= \sum_{m=0}^{a-1} \sum_{l=0}^{j} \sum_{k=0}^{j-l} \binom{a'-k}{l}s_k(D)w_{2j-2k-2l+1}(H^+) SW_{m+l}(f,\phi) x^{a-1-m}.
\end{aligned}
\end{equation}

On the other hand applying $Sq^{2j}$ or $Sq^{2j+1}$ directly to Equation (\ref{equ:eta}) and using the Cartan formula, we find:
\begin{equation}\label{equ:sqeta2}
\begin{aligned}
Sq^{2j}(\eta) &= \sum_{m=0}^{a-1} \sum_{l=0}^j Sq^{2j-2l}(SW_{m+l}(f,\phi)) \binom{a-1-m-l}{l} x^{a-1-m}, \\
Sq^{2j+1}(\eta) &= \sum_{m=0}^{a-1} \sum_{l=0}^j Sq^{2j-2l+1}(SW_{m+l}(f,\phi)) \binom{a-1-m-l}{l} x^{a-1-m}.
\end{aligned}
\end{equation}

Equating powers of $x$ in Equations (\ref{equ:sqeta1}) and (\ref{equ:sqeta2}), we immediately find:

\begin{lemma}\label{lem:steenrodrecursive}
The Steenrod squares of the (mod $2$ reductions of the) Seiberg-Witten classes $SW_m(f,\phi)$ satisfy the following recursive equations:
\begin{equation*}
\begin{aligned}
\sum_{l=0}^j Sq^{2j-2l}(SW_{m+l}(f,\phi)) \binom{a-1-m-l}{l} &= \sum_{l=0}^j \sum_{k=0}^{j-l} \binom{a'-k}{l} s_k(D) w_{2j-2k-2l}(H^+) SW_{m+l}(f,\phi), \\
\sum_{l=0}^j Sq^{2j-2l+1}(SW_{m+l}(f,\phi)) \binom{a-1-m-l}{l} &= \sum_{l=0}^j \sum_{k=0}^{j-l} \binom{a'-k}{l} s_k(D) w_{2j-2k-2l+1}(H^+) SW_{m+l}(f,\phi).
\end{aligned}
\end{equation*}

\end{lemma}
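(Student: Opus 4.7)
The plan is to compute the classes $Sq^{2j}(\eta)$ and $Sq^{2j+1}(\eta)$ in two different ways and equate the results. Here $\eta \in H^*(\mathbb{P}(V);\mathbb{Z}_2)$ is the class from Equation (\ref{equ:eta}), which after stabilizing so that $V = \mathbb{C}^a$ and $U = \mathbb{R}^b$ are trivial specializes to
\[
\eta = \sum_{m=0}^{a-1} SW_m(f,\phi)\, x^{a-1-m},
\]
because in this stabilization $\mu_m(f,\phi) = SW_m(f,\phi)$.

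The first computation is the ``geometric'' one and has already been assembled above: combining Lemma \ref{lem:steenrodeta}, which identifies the total Steenrod square $Sq_t(\eta)$ with $w_{S^1,t}(V'\oplus U')\eta$ via the Wu formula applied to $f^*\phi_*(1) = \eta \cdot \delta\tau_{0,U}$, with the formula from Lemma \ref{lem:equivariantchern} expressing each equivariant Chern class $c_{S^1,l}(V')$ in terms of the Segre classes of $D$ (and using $w_*(U')=w_*(H^+)$ and $w_*(U)=1$), yields Equation (\ref{equ:sqeta1}) after reindexing the double sum and padding by zeros where $x^i = 0$ or $SW_i = 0$ for $i \ge a$.

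The second computation is ``formal'': I would apply the Cartan formula termwise to the polynomial expression $\eta = \sum SW_m(f,\phi)\, x^{a-1-m}$, using the well-known evaluations $Sq^{2j}(x^m) = \binom{m}{j} x^{m+j}$ and $Sq^{2j+1}(x^m) = 0$ in $H^*_{S^1}(pt;\mathbb{Z}_2) = \mathbb{Z}_2[x]$. This produces Equation (\ref{equ:sqeta2}) directly.

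To conclude, observe that since $V$ is trivial of rank $a$ the Grothendieck relation becomes $x^a = 0$, so $H^*(\mathbb{P}(V);\mathbb{Z}_2)$ is a free $H^*(B;\mathbb{Z}_2)$-module with basis $\{1, x, \ldots, x^{a-1}\}$. Extracting the coefficient of $x^{a-1-m}$ from each of the two expansions of $Sq^{2j}(\eta)$ and $Sq^{2j+1}(\eta)$, for $0 \le m \le a-1$, gives the two recursive identities claimed by the lemma. There is essentially no obstacle: both main inputs are already in hand, and the only work is careful bookkeeping in the reindexing of nested sums (which is the sole reason the computation is presented in two stages rather than one).
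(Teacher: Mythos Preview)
Your proposal is correct and follows essentially the same approach as the paper: compute $Sq^{2j}(\eta)$ and $Sq^{2j+1}(\eta)$ geometrically via Lemmas \ref{lem:equivariantchern} and \ref{lem:steenrodeta} to obtain (\ref{equ:sqeta1}), formally via the Cartan formula to obtain (\ref{equ:sqeta2}), and then equate coefficients of $x^{a-1-m}$ using the free $H^*(B;\mathbb{Z}_2)$-module structure of $H^*(\mathbb{P}(V);\mathbb{Z}_2)$.
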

\begin{remark}
Before we proceed to solve this recursion, let us make some observations:
\begin{itemize}
\item{
For $j=0$, the above equations reduce to the trivial identites:
\[
Sq^0( SW_m(f,\phi) ) = SW_m(f,\phi), \quad Sq^1( SW_m(f,\phi) ) = w_1(H^+) SW_m(f,\phi),
\]
the latter being a simple consequence of the fact that $Sq^1$ is the Bockstein homomorphism and that the classes $SW_m(f,\phi)$ lift to integral cohomology classes but with values in the local coefficient system $\mathbb{Z}_{w_1(H^+)}$.}
\item{Recall that $SW_i(f,\phi) = 0$ if $i \ge a$ and $s_k(D) = 0$ if $k > a'$. Thus all non-zero terms in the above formula involve only binomial coefficients whose upper index is non-negative.}
\end{itemize}
\end{remark}

\begin{lemma}\label{lem:recur0}
Suppose there are classes $\theta^j_m \in \mathcal{H}^{2m+2j-(2d-b^+-1)}(\mathbb{Z}_2)$ for all $j,m \ge 0$ satisfying the recursive relation
\begin{equation}\label{equ:recur1}
\sum_{l=0}^j \theta^{j-l}_{m+l}\binom{a-1-m-l}{l} = \sum_{l=0}^j \sum_{k=0}^{j-l} \binom{a'-k}{l} s_k(D) w_{2j-2k-2l}(H^+) SW_{m+l}(f,\phi) \quad \text{for all } j,m \ge 0.
\end{equation}
Then $Sq^{2j}(SW_m(f,\phi)) = \theta^j_m$.

Similarly, if there are classes $\pi^j_m \in \mathcal{H}^{2m+2j+1-(2d-b^+-1)}(\mathbb{Z}_2)$ for all $j,m \ge 0$ satisfying
\begin{equation}\label{equ:recur2}
\sum_{l=0}^j \pi^{j-l}_{m+l}\binom{a-1-m-l}{l} = \sum_{l=0}^j \sum_{k=0}^{j-l} \binom{a'-k}{l} s_k(D) w_{2j-2k-2l+1}(H^+) SW_{m+l}(f,\phi), \quad \text{for all } j,m \ge 0.
\end{equation}
Then $Sq^{2j+1}(SW_m(f,\phi)) = \pi^j_m$.
\end{lemma}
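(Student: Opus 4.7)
Lemma \ref{lem:steenrodrecursive} has already established that the classes $Sq^{2j}(SW_m(f,\phi))$ and $Sq^{2j+1}(SW_m(f,\phi))$ satisfy precisely the recursions (\ref{equ:recur1}) and (\ref{equ:recur2}), respectively, with the same right-hand sides hypothesized in Lemma \ref{lem:recur0}. Thus the proof of Lemma \ref{lem:recur0} reduces to the purely formal statement that these recursions determine their solutions uniquely once the right-hand side is fixed, and I would carry this out by strong induction on $j$.

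For the base case $j=0$, only the term with $l=0$ (and $k=0$ on the right) appears on either side of (\ref{equ:recur1}), and using $\binom{n}{0}=1$, $s_0(D)=1$, $w_0(H^+)=1$ the recursion collapses to $\theta^0_m = SW_m(f,\phi) = Sq^0(SW_m(f,\phi))$. The analogous calculation for (\ref{equ:recur2}) yields $\pi^0_m = w_1(H^+)SW_m(f,\phi) = Sq^1(SW_m(f,\phi))$, which is the Bockstein identity recorded after Lemma \ref{lem:steenrodrecursive}. For the inductive step, assume that $\theta^{j'}_{m'} = Sq^{2j'}(SW_{m'}(f,\phi))$ for all $j' < j$ and all $m' \ge 0$. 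Subtracting the recursion of Lemma \ref{lem:steenrodrecursive} from the hypothesis (\ref{equ:recur1}) on $\theta^{\bullet}_{\bullet}$, the common right-hand sides cancel, yielding
\[
\sum_{l=0}^{j} \bigl(\theta^{j-l}_{m+l} - Sq^{2(j-l)}(SW_{m+l}(f,\phi))\bigr)\binom{a-1-m-l}{l} = 0.
\]
By the inductive hypothesis every term with $l \ge 1$ vanishes, leaving $\bigl(\theta^j_m - Sq^{2j}(SW_m(f,\phi))\bigr)\binom{a-1-m}{0} = 0$, and since $\binom{a-1-m}{0} = 1$ holds for every integer value of $a-1-m$, we conclude $\theta^j_m = Sq^{2j}(SW_m(f,\phi))$. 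The same argument, applied verbatim to (\ref{equ:recur2}) and the second identity of Lemma \ref{lem:steenrodrecursive}, handles the odd Steenrod squares and gives $\pi^j_m = Sq^{2j+1}(SW_m(f,\phi))$.

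There is no genuine obstacle here: the recursion is triangular in $j$, its diagonal coefficient is the unit $\binom{a-1-m}{0}=1$, and the inductive hypothesis dispatches every higher-$l$ term. All of the substantive work was absorbed into Lemma \ref{lem:steenrodrecursive}, where the class $\eta$ was linked to the Seiberg-Witten invariants and the Steenrod action on Thom classes was exploited. Lemma \ref{lem:recur0} is simply the bookkeeping step that extracts $Sq^{2j}(SW_m(f,\phi))$ and $Sq^{2j+1}(SW_m(f,\phi))$ individually from the joint recursion coupling the different values of $m$ and $j$.
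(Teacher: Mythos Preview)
Your proposal is correct and follows exactly the approach the paper takes: the paper's proof is the single line ``Follows easily from Lemma~\ref{lem:steenrodrecursive} by induction on $j$,'' and you have simply spelled out that induction in full detail, exploiting the triangular structure of the recursion with unit diagonal coefficient $\binom{a-1-m}{0}=1$.
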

\begin{proof}
Follows easily from Lemma \ref{lem:steenrodrecursive} by induction on $j$.
\end{proof}

We now look for solutions of the recursive equations (\ref{equ:recur1}), (\ref{equ:recur2}). Experimenting with small values of $j$, one is naturally lead to look for solutions of the form:
\begin{equation}\label{equ:thetapi}
\begin{aligned}
\theta^j_m = \sum_{l=0}^j \sum_{k=0}^{j-l} f^{m}_{k,l} s_k(D) w_{2j-2l-2k}(H^+) SW_{m+l}(f,\phi), \\
\pi^j_m = \sum_{l=0}^j \sum_{k=0}^{j-l} f^{m}_{k,l} s_k(D) w_{2j-2l-2k+1}(H^+) SW_{m+l}(f,\phi)
\end{aligned}
\end{equation}
for some coefficients $f^{m}_{k,l} \in \mathbb{Z}_2$, where $k,l,m \ge 0$.

\begin{lemma}\label{lem:recurf}
Let $\theta^j_m$, $\pi^j_m$ be defined as in Equation (\ref{equ:thetapi}) for some coefficients $f^{m}_{k,l}$. Suppose that $f^{m}_{k,l}$ satisfies the following recursion:
\begin{equation}\label{equ:recur3}
\sum_{l' = 0}^{l} f^{m+l'}_{k,l-l'} \binom{a'+d-1-m-l'}{l'} = \binom{a'-k}{l} \; ({\rm mod} \; 2)
\end{equation}
for all $k,l,m \ge 0$ and all $a' \ge max( k , -d+1+m+l)$. Then $\theta^j_m, \pi^j_m$ satisfy Equations (\ref{equ:recur1}) and (\ref{equ:recur2}) for all $j,m \ge 0$.
\end{lemma}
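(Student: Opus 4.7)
The plan is to prove Lemma \ref{lem:recurf} by a direct substitution and reindexing argument, using the fact that the only nontrivial content is the recursion (\ref{equ:recur3}) itself. The starting point is the observation that $a = a' + d$ (since $d = \operatorname{rank}_{\mathbb{C}}(D) = a - a'$), so the binomial coefficients $\binom{a-1-m-l}{l}$ appearing on the left of (\ref{equ:recur1}) and (\ref{equ:recur2}) are exactly $\binom{a'+d-1-m-l}{l}$, matching the form in (\ref{equ:recur3}).

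The main step is to substitute the ansatz (\ref{equ:thetapi}) for $\theta^{j-l}_{m+l}$ into the left-hand side of (\ref{equ:recur1}) and swap the order of summation. Writing out
\[
\theta^{j-l}_{m+l} = \sum_{l''=0}^{j-l}\sum_{k=0}^{j-l-l''} f^{m+l}_{k,l''}\, s_k(D)\, w_{2(j-l)-2l''-2k}(H^+)\, SW_{m+l+l''}(f,\phi),
\]
and setting $L = l + l''$, the left-hand side becomes
\[
\sum_{L=0}^{j}\sum_{k=0}^{j-L} s_k(D)\, w_{2j-2L-2k}(H^+)\, SW_{m+L}(f,\phi)\,\Bigl[\sum_{l=0}^{L} f^{m+l}_{k,L-l}\binom{a'+d-1-m-l}{l}\Bigr].
\]
The bracketed sum is precisely the left-hand side of (\ref{equ:recur3}) with $l$ replaced by $L$, and therefore equals $\binom{a'-k}{L} \pmod 2$ by hypothesis. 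Substituting this value recovers exactly the right-hand side of (\ref{equ:recur1}). The argument for $\pi^j_m$ and (\ref{equ:recur2}) is identical, with $w_{2j-2L-2k}(H^+)$ replaced throughout by $w_{2j-2L-2k+1}(H^+)$, since (\ref{equ:thetapi}) differs only in the parity of the Stiefel--Whitney class.

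I would close with a short remark addressing the side condition $a' \ge \max(k, -d+1+m+l)$: in the range of summation we have $k \le j-L \le a'$ (so $s_k(D)\neq 0$ forces $k\le a'$), and the stable invariance of the $SW_m$, $s_k(D)$, and $w_*(H^+)$ under suspension by trivial bundles (Proposition \ref{prop:swsuspension}) lets us assume $a'$ as large as desired before applying (\ref{equ:recur3}). The only genuine obstacle is bookkeeping of the summation variables; there is no new geometric or cohomological input, and the lemma reduces to a combinatorial restatement of the recursion once the substitution has been carried out.
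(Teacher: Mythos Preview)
Your proof is correct and follows exactly the same approach as the paper's: direct substitution of (\ref{equ:thetapi}) into the left-hand side of (\ref{equ:recur1}) and (\ref{equ:recur2}), followed by reindexing. The paper's own proof is a single sentence (``Follows by direct substitution of (\ref{equ:thetapi}) into (\ref{equ:recur1}) and (\ref{equ:recur2})''), so you have simply written out explicitly what the authors left implicit, including the change of variable $L = l + l''$ and the verification that the inner sum matches the left-hand side of (\ref{equ:recur3}). Your remark on the side condition $a' \ge \max(k, -d+1+m+l)$ is also apt: the paper does not comment on it in this proof, but your observation that $s_k(D)=0$ for $k>a'$ (Remark \ref{rem:binomnonneg}) and that stabilisation lets one enlarge $a'$ freely is the correct justification.
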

\begin{proof}
Follows by direct substitution of (\ref{equ:thetapi}) into (\ref{equ:recur1}) and (\ref{equ:recur2}).
\end{proof}

\begin{lemma}\label{lem:vzero}
For all integers $u$ and all $j \ge 0$, we have:
\[
\sum_{l=0}^j \binom{u+l}{j-l} \binom{2l-1}{l} = \binom{u+1}{j} \; ({\rm mod} \; 2).
\]
\end{lemma}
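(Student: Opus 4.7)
The plan is to convert the claim into an equality of formal power series in $\mathbb{F}_2[[z]]$. Multiplying the desired identity by $z^j$, summing over $j \geq 0$, and reindexing by $k = j-l$, the right-hand side becomes $(1+z)^{u+1}$ while the left-hand side rearranges to
\[
\sum_{l \geq 0} \binom{2l-1}{l} z^l (1+z)^{u+l} = (1+z)^u \sum_{l \geq 0} \binom{2l-1}{l} \bigl(z(1+z)\bigr)^l,
\]
using the formal binomial theorem $\sum_{k \geq 0} \binom{u+l}{k} z^k = (1+z)^{u+l}$, which is valid for any integer $u+l$. Since $(1+z)^u$ is a unit in $\mathbb{F}_2[[z]]$ for every $u \in \mathbb{Z}$, the original claim reduces to proving
\[
\sum_{l \geq 0} \binom{2l-1}{l} w^l \equiv 1 + z \pmod{2}, \qquad w = z(1+z).
\]

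Next I would identify the odd coefficients. Writing $2l-1 = l + (l-1)$ and applying Kummer's theorem, $\binom{2l-1}{l}$ is odd precisely when the binary addition of $l$ and $l-1$ has no carries, i.e., when their binary supports are disjoint. For $l \geq 1$, if the lowest set bit of $l$ is at position $a$, then $l-1$ has $1$s exactly at positions $0,\ldots,a-1$ and agrees with $l$ strictly above position $a$; disjointness therefore forces $l$ to have no set bits above $a$, whence $l = 2^a$. Together with the trivial case $l = 0$, where $\binom{-1}{0}=1$, this yields
\[
\sum_{l \geq 0} \binom{2l-1}{l} z^l \equiv 1 + \sum_{k \geq 0} z^{2^k} \pmod{2}.
\]

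The final step is a Frobenius telescoping. Specializing to $w = z + z^2$ and iterating the Frobenius identity $(z+z^2)^{2^k} = z^{2^k} + z^{2^{k+1}}$, valid in characteristic $2$, I obtain
\[
\sum_{k \geq 0} w^{2^k} = \sum_{k \geq 0} \bigl( z^{2^k} + z^{2^{k+1}} \bigr) = z,
\]
because for each $j \geq 1$ the monomial $z^{2^j}$ receives two contributions (from $k = j-1$ and $k = j$) that cancel in $\mathbb{F}_2$, while $z = z^{2^0}$ is produced only by $k = 0$. Adding back the constant term gives $1+z$, finishing the proof. The main conceptual point is this infinite telescoping, which is legitimate in $\mathbb{F}_2[[z]]$ because each coefficient receives only finitely many contributions; everything else reduces to formal manipulation of generating series.
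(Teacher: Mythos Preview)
Your proof is correct and takes a genuinely different route from the paper's. Both arguments share the same key observation --- that $\binom{2l-1}{l}$ is odd precisely when $l=0$ or $l$ is a power of $2$ --- but they package the remaining computation differently. The paper works coefficient by coefficient: after isolating the terms $l=2^k$, it expands each $\binom{u+2^k}{j-2^k}$ via Vandermonde and Lucas' theorem as $\binom{u}{j-2^k} + \binom{u}{j-2^{k+1}} \pmod 2$, obtaining a finite telescoping sum in the index $k$ for each fixed $j$. Your approach instead lifts the whole identity to $\mathbb{F}_2[[z]]$, factors out $(1+z)^u$ uniformly in $u$, and reduces to the single power-series identity $\sum_{k\ge 0}(z+z^2)^{2^k}=z$, where Frobenius does the telescoping in one stroke. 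Your argument is cleaner and treats all integers $u$ on the same footing without separate base cases; the paper's proof is more elementary in that it never leaves the realm of binomial identities, but it requires handling $j\le 2$ separately and carrying out the Lucas-theorem bookkeeping by hand.
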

\begin{proof}
The identity can be easily verified for $j=0,1,2$, so we will assume that $j \ge 3$. If $j \ge 3$, then we need to show that
\[
\sum_{l=2}^j \binom{u+l}{j-l} \binom{2l-1}{l} = \binom{u+1}{j} + \binom{u}{j} + \binom{u+1}{j-1} \; ({\rm mod} \; 2) \; \; \text{for all $j \ge 3$}.
\]
Using Pascal's formula the right hand side simplifies mod $2$ to $\binom{u}{j-2}$. So we need to show that:
\begin{equation}\label{equ:ident1}
\sum_{l=2}^j \binom{u+l}{j-l} \binom{2l-1}{l} = \binom{u}{j-2} \; ({\rm mod} \; 2) \; \; \text{for all $j \ge 3$}.
\end{equation}
Assume $l \ge 2$. Using Pascal's formula, we find
\[
\binom{2l}{l} = \binom{2l-1}{l} + \binom{2l-1}{l-1} = 2 \binom{2l-1}{l}.
\]
Hence $\binom{2l-1}{l}$ is even unless $\binom{2l}{l} = 2 \; ({\rm mod} \; 4)$. However it is known that the number of factors of $2$ in $\binom{2l}{l}$ is equal to the number of $1$s in the binary expansion of $l$. We give a proof of this fact. For a positive integer $n$, let $\nu_2(n)$ denote the number of times $2$ divides $n$. Then $\nu_2(n!) = \sum_{k \ge 1} \lfloor n/2^k \rfloor$ \cite[Chapter 2, Exercise 6]{ir}. Let $n = \sum_{j=0}^r a_j 2^j$, $a_j \in \{0,1\}$ be the binary expansion of $n$. Then $\lfloor n/2^k \rfloor = \sum_{j=k}^{r} a_j 2^{j-k}$ and it follows that
\[
\lfloor 2n/2^k \rfloor - 2 \lfloor n/2^k \rfloor = a_{k-1}
\]
for any $k \ge 1$. Hence
\[
\nu_2 \left( \binom{2n}{n} \right) = \nu_2( (2n)!) - 2\nu_2( n! ) = \sum_{k \ge 1} \left( \lfloor 2n/2^k \rfloor - 2 \lfloor n/2^k \rfloor \right) = \sum_{k \ge 0} a_k
\]
is the number of $1$s in the binary expansion of $n$.

Thus $\binom{2l-1}{l}$ is odd if and only if $l$ is a power of $2$, say $l = 2^k$. So (\ref{equ:ident1}) is equivalent to showing that:
\begin{equation}\label{equ:binom1}
\sum_{k \ge 1 \atop 2^k \le j} \binom{u+2^k}{j-2^k} = \binom{u}{j-2} \; ({\rm mod} \; 2) \; \; \text{for all $j \ge 3$}.
\end{equation}
Recall that if positive integers $a,b$ have binary expansions $a = a_0 + 2a_1 + \cdots + 2^u a_u$, $b = b_0 + 2b_1 + \cdots + 2^u b_u$, then Lucas's theorem \cite{fine} says that:
\begin{equation}\label{equ:binom2}
\binom{a}{b} = \prod_{i=0}^u \binom{a_i}{b_i} \; ({\rm mod} \; 2).
\end{equation}
Now suppose that $j = j_0 + 2j_1 + \cdots + 2^u j_u$, where $j_u = 1$. Then the sum on the left hand side of (\ref{equ:binom1}) runs from $k = 1, \dots , u$. The Vandermonde identity gives:
\[
\binom{u+2^k}{j-2^k} = \sum_{r,s \ge 0 \atop r+s = j-2^k} \binom{u}{r} \binom{2^k}{s}.
\]
But using (\ref{equ:binom2}), we see that $\binom{2^k}{s}$ is even, unless $s = 0$ or $2^k$. Hence
\[
\binom{u+2^k}{j-2^k} = \begin{cases} \binom{u}{j-2^k} + \binom{u}{j-2^{k+1}} \; ({\rm mod} \; 2) & \text{for }1 \le k \le u-1, \\ \binom{u}{j-2^k} \; ({\rm mod} \; 2) & \text{for } k=u. \end{cases}
\]
Substituting this into the left hand side of (\ref{equ:binom1}) gives:
\[
\sum_{k=1}^u \binom{u}{j-2^k} + \sum_{k=1}^{u-1} \binom{u}{j-2^{k+1}}.
\]
The terms in these sums cancel in pairs (mod $2$), except for the $k=1$ term in the first sum. Hence the above equals $\binom{u}{j-2}$ mod $2$. This proves (\ref{equ:binom1}) and the proof of the lemma is complete.
\end{proof}

\begin{lemma}\label{lem:recurf2}
For all $k,l,m \ge 0$, let
\[
f^{m}_{k,l} = \binom{d-1-m+l+k}{l}.
\]
Then $f^m_{k,l}$ satisfies Equation (\ref{equ:recur3}) for all $k,l,m,a' \ge 0$.
\end{lemma}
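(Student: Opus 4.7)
My plan is to reduce Equation~(\ref{equ:recur3}) to a generating-function identity in $\mathbb{F}_{2}[[y]]$, then prove it via a second-order linear recurrence whose base cases come from Lemma~\ref{lem:vzero}.

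First, substituting $f^{m+l'}_{k,l-l'}=\binom{d-1-m+l+k-2l'}{l-l'}$ into (\ref{equ:recur3}) and setting $N:=d-1-m+k$ and $M:=a'+d-1-m$ (so that $M-N=a'-k$), the desired identity becomes the purely combinatorial statement
\[
\sum_{l'=0}^{l}\binom{N+l-2l'}{l-l'}\binom{M-l'}{l'}\equiv\binom{M-N}{l}\pmod 2\qquad(\star)
\]
for arbitrary integers $M,N$ and all $l\geq 0$. Multiplying $(\star)$ by $y^{l}$, summing over $l\geq 0$, and using the formal identity $\sum_{m\geq 0}\binom{N-l'+m}{m}y^{m}=(1-y)^{-(N-l'+1)}$, the two generating series become $(1-y)^{-N-1}\sum_{l'\geq 0}\binom{M-l'}{l'}(y(1-y))^{l'}$ and $(1+y)^{M-N}$. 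Since $1-y=1+y$ in characteristic $2$, the statement $(\star)$ for all $l$ is equivalent to the single identity in $\mathbb{F}_{2}[[y]]$
\[
Q(M)\equiv(1+y)^{M+1}\pmod 2,\qquad Q(M):=\sum_{l'\geq 0}\binom{M-l'}{l'}(y(1+y))^{l'},\qquad(\star\star)
\]
to be verified for every integer $M$.

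To prove $(\star\star)$, I plan to show that both $Q(M)$ and $R(M):=(1+y)^{M+1}$ satisfy the same second-order linear recurrence $X(M)=X(M-1)+y(1+y)\,X(M-2)$ in $\mathbb{F}_{2}[[y]]$. For $Q(M)$, this follows from Pascal's identity $\binom{M-l'}{l'}=\binom{(M-1)-l'}{l'}+\binom{(M-1)-l'}{l'-1}$ together with the reindexing $l'\mapsto l'+1$ on the second summand. For $R(M)$, it reduces to the characteristic-$2$ identity $(1+y)^{2}=1+y^{2}$. Since $y(1+y)\neq 0$ in the integral domain $\mathbb{F}_{2}[[y]]$, multiplication by $y(1+y)$ is injective, so agreement of $Q$ and $R$ at any two consecutive values of $M$ propagates to all $M\in\mathbb{Z}$ in both directions.

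It then suffices to verify the base cases $M=-1$ and $M=0$. At $M=-1$: the identity $\binom{-1-l'}{l'}=(-1)^{l'}\binom{2l'}{l'}=(-1)^{l'}\cdot 2\binom{2l'-1}{l'}$ shows $\binom{-1-l'}{l'}$ is even for $l'\geq 1$, hence $Q(-1)\equiv 1=R(-1)\pmod 2$. The case $M=0$ is where Lemma~\ref{lem:vzero} does the real work: from $\binom{-l'}{l'}=(-1)^{l'}\binom{2l'-1}{l'}$ one gets $Q(0)\equiv\sum_{l'\geq 0}\binom{2l'-1}{l'}(y(1+y))^{l'}\pmod 2$, and the generating-function form of Lemma~\ref{lem:vzero} (multiply both sides by $y^{j}$, sum over $j\geq 0$, and divide by $(1+y)^{u}$) is precisely the statement $\sum_{l'\geq 0}\binom{2l'-1}{l'}(y(1+y))^{l'}\equiv 1+y\pmod 2$, so $Q(0)\equiv 1+y=R(0)$. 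The only mild obstacle is bookkeeping around negative upper binomial indices, handled throughout by treating $\binom{n}{k}$ as a polynomial in $n$ and by noting that $y(1+y)$ has positive $y$-adic valuation so all formal sums converge in $\mathbb{F}_{2}[[y]]$.
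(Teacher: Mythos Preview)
Your proof is correct and takes a genuinely different route from the paper's. Both arguments reduce Equation~(\ref{equ:recur3}) to the same core identity, which in your variables reads $Q(M)\equiv(1+y)^{M+1}$ in $\mathbb{F}_2[[y]]$; in the paper's variables this is the statement $\sum_{l=0}^j\binom{u+l}{j-l}\binom{v-l}{l}\equiv\binom{v+u+1}{j}\pmod 2$ (their Equation~(\ref{equ:recur5})), and both use the same two base cases ($M=-1$ via the evenness of $\binom{2l}{l}$, and $M=0$ via Lemma~\ref{lem:vzero}). The difference lies in how the general case is reached from these base cases. The paper works coefficient-by-coefficient and derives three separate relations for $\delta(u,v,j)$: two Pascal-type recursions (one in $u$, one in $v$) and a symmetry $\delta(u,v,j)=\delta(v-j,u+j,j)$. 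It then runs a two-stage induction: first forward in $v$ from $v=-1,0$ to all $v\ge -1$, then uses the symmetry and the $u$-recursion to handle the remaining cases. Your generating-function packaging collapses this to a single second-order linear recurrence in $M$, and the key observation that $y(1+y)$ is a non-zero-divisor in the integral domain $\mathbb{F}_2[[y]]$ lets you propagate both forward and backward from the two base cases, eliminating the need for the auxiliary symmetry and the second induction. Your argument is thus shorter and more conceptual; the paper's has the minor advantage of staying entirely at the level of binomial coefficients without invoking formal power series.
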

\begin{proof}
Note that $f^m_{k,l}$ is a binomial coefficient whose upper index is an integer which may be negative. Set $v = a'+(d-1-m)$, $u = k + (d-1-m)$. Thus it suffices to show that:
\begin{equation}\label{equ:recur4}
\sum_{l=0}^j \binom{u+j-2l}{j-l} \binom{v-l}{l} = \binom{v-u}{j} \; ({\rm mod} \; 2)
\end{equation}
for all $u,v,j$ with $j \ge 0$ and $u,v$ any integers. Note that
\begin{equation}\label{equ:binomidentity}
\begin{aligned}
\binom{a}{b} &= \frac{a(a-1) \cdots (a-b+1)}{b!} \\
&= \pm \frac{(b-a-1)(b-a-2) \cdots a}{b!} \\
& = \pm \binom{b-a-1}{b} = \binom{b-a-1}{b} \; ({\rm mod} \; 2).
\end{aligned}
\end{equation}
Hence (\ref{equ:recur4}) can be re-written as:
\[
\sum_{l=0}^j \binom{-u+l-1}{j-l} \binom{v-l}{l} = \binom{v-u}{j} \; ({\rm mod} \; 2),
\]
or replacing $u$ with $-(u+1)$, this becomes
\begin{equation}\label{equ:recur5}
\sum_{l=0}^j \binom{u+l}{j-l} \binom{v-l}{l} = \binom{v+u+1}{j} \; ({\rm mod} \; 2)
\end{equation}
for all $j \ge 0$ and all integers $u,v$. We will prove that (\ref{equ:recur5}) holds by an inductive argument. For any integers $u,v$ and any $j \ge 0$, let us set
\[
\delta(u,v,j) = \sum_{l=0}^j \binom{u+l}{j-l} \binom{v-l}{l} + \binom{v+u+1}{j} \in \mathbb{Z}_2.
\]
So we aim to show that $\delta(u,v,j)$ for all $u,v$ and all $j \ge 0$. For any $u,v$ and any $j \ge 0$, let $P(u,v,j)$ be the proposition that Equation (\ref{equ:recur5}) holds, or equivalently, that $\delta(u,v,j) = 0$. Recall that Pascal's formula $\binom{a}{b} = \binom{a-1}{b} + \binom{a-1}{b-1}$ holds for any non-negative integer $b$ and any number $a$. Hence we find:
\begin{equation*}
\begin{aligned}
&\delta(u,v,j) = \sum_{l=0}^j \binom{u+l}{j-l} \binom{v-l}{l} + \binom{v+u+1}{j} \\
& \; \; \; \; = \sum_{l=0}^j \binom{u-1+l}{j-l}\binom{v-l}{l} + \sum_{l=0}^{j-1} \binom{u-1+l}{j-1-l}\binom{v-l}{l} + \binom{v+u}{j} + \binom{v+u}{j-1} \\
& \; \; \; \; = \delta(u-1,v,j) + \delta(u-1,v,j-1).
\end{aligned}
\end{equation*}
Hence if any two of $P(u,v,j), P(u-1,v,j)$ and $P(u-1,v,j-1)$ hold, then so does the third. Applying Pascal's formula instead to the second binomial factor in $\delta(u,v,j)$ gives:
\begin{equation*}
\begin{aligned}
&\delta(u,v,j) = \sum_{l=0}^j \binom{u+l}{j-l} \binom{v-l}{l} + \binom{v+u+1}{j} \\
& \; \; \; \; = \sum_{l=0}^j \binom{u+l}{j-l} \binom{v-1-l}{l} + \sum_{l=1}^j \binom{u+l}{j-l}\binom{v-1-l}{l-1} + \binom{v+u}{j} + \binom{v+u}{j-1} \\
& \; \; \; \; = \delta(u,v-1,j) + \sum_{l=1}^{j} \binom{(u+1)+(l-1)}{(j-1)-(l-1)}\binom{(v-2)-(l-1)}{l-1} + \binom{(v-2)+(u+1)+1}{j-1} \\
& \; \; \; \; = \delta(u,v-1,j) + \delta(u+1,v-2,j-1).
\end{aligned}
\end{equation*}
Hence if any two of $P(u,v,j), P(u,v-1,j)$ and $P(u+1,v-2,j-1)$ hold, then so does the third. Next, using a change of variables $l \mapsto j - l$, we find:
\begin{equation*}
\begin{aligned}
&\delta(u,v,j) = \sum_{l=0}^j \binom{u+l}{j-l} \binom{v-l}{l} + \binom{v+u+1}{j} \\
& \; \; \; \; = \sum_{l=0}^j \binom{u+j-l}{l} \binom{v-j+l}{j-l} + \binom{v+u+1}{j} \\
& \; \; \; \; = \delta(v-j , u+j , j).
\end{aligned}
\end{equation*}
Hence $P(u,v,j)$ holds if and only if $P(v-j,u+j,j)$ holds.

Setting $j=0$ in (\ref{equ:recur5}), we see that $P(u,v,0)$ holds trivially for all $u,v$. Next, we consider the case $v=-1$. Using (\ref{equ:binomidentity}), we find:
\[
\sum_{l=0}^j \binom{u+l}{j-l} \binom{-1-l}{l} = \sum_{l=0}^j \binom{u+l}{j-l} \binom{2l}{l}.
\]
From Equation (\ref{equ:binom2}) it follows that $\binom{2l}{l}$ is even for all $l > 0$, hence the above sum equals $\binom{u}{j}$ mod $2$. This shows that $P(u,-1,j)$ holds for all $j \ge 0$ and all integers $u$. Similarly, we consider the case $v=0$. Using (\ref{equ:binomidentity}), we find
\[
\sum_{l=0}^j \binom{u+l}{j-l} \binom{-l}{l} = \sum_{l=0}^j \binom{u+l}{j-l} \binom{2l-1}{l}.
\]
So $P(u,0,j)$ is equivalent to
\[
\sum_{l=0}^j \binom{u+l}{j-l} \binom{2l-1}{l} = \binom{u+1}{j} \; ({\rm mod} \; 2).
\]
But this is precisely what was shown in Lemma \ref{lem:vzero}. Hence we have verified $P(u,0,j)$ for all $u$ and all $j \ge 0$.

As an intermediate step towards proving $P(u,v,j)$ for all $u,v$ and all $j \ge 0$, we will now prove it for all $u$, all $v \ge -1$ and all $j \ge 0$. For $v \ge -1$ and $j \ge 0$, let $Q(v,j)$ be the Proposition that $P(u,v,j)$ holds for all values of $u$. We have shown that $Q(v,0)$ holds for all $v \ge -1$, that $Q(-1,j)$ holds for all $j \ge 0$ and that $Q(0,j)$ holds for all $j \ge 0$. Now suppose that $j \ge 1$ and $v \ge 1$. Recall that we have shown that if any two of $P(u,v,j), P(u,v-1,j)$ and $P(u+1,v-2,j-1)$ hold, then so does the third. In particular $P(u,v,j)$ is implied by $P(u,v-1,j)$ and $P(u+1,v-2,j-1)$. It follows that $Q(v,j)$ holds for all $v \ge -1$ and all $j \ge 0$, or equivalently that $P(u,v,j)$ holds for all $u$, for all $v \ge -1$ and all $j \ge 0$.

Recall that $P(u,v,j)$ holds if and only if $P(v-j,u+j,j)$ holds. Thus we have shown that $P(u,v,j)$ holds for all $u,v,j$ such that $j \ge 0$ and $u+j \ge -1$. Let $\lambda = -u-j$. So we have shown $P(u,v,j)$ holds whenever $j \ge 0$ and $\lambda \le 1$. Now we proceed by induction on $\lambda$ (with $\lambda \le 1$ serving as the starting point of the induction). Suppose now that $P(u',v',j')$ holds whenever $j' \ge 0$ and $-u'-j' \le \lambda$ for some $\lambda \ge 1$. Now let $u,v,j$ be such that $j \ge 0$ and $-u-j = \lambda+1$. Recall that we have shown that if any two of $P(u',v',j'), P(u'-1,v',j')$ and $P(u'-1,v',j'-1)$ hold, then so does the third. In particular $P(u+1,v,j+1)$ and $P(u,v,j+1)$ imply $P(u,v,j)$. But note that $P(u+1,v,j+1)$ holds by induction because $-(u+1)-(j+1) = -u-j-2 = \lambda - 1$ and similarly $P(u,v,j+1)$ holds by induction because $-u-(j+1) = -u-j+1 = \lambda$. This shows that $P(u,v,j)$ also holds and so the inductive step is complete. This completes the proof of the result.
\end{proof}

\begin{theorem}\label{thm:steenrod}
The Steenrod squares of the families Seiberg-Witten invariants are given by:
\begin{equation*}
\begin{aligned}
Sq^{2j}( SW_m(f,\phi) ) &= \sum_{l=0}^j \sum_{k=0}^{j-l} \binom{d-1-m+l+k}{l} s_k(D) w_{2j-2l-2k}(H^+) SW_{m+l}(f , \phi), \\
Sq^{2j+1}( SW_m(f,\phi) ) &= \sum_{l=0}^j \sum_{k=0}^{j-l} \binom{d-1-m+l+k}{l} s_k(D) w_{2j-2l-2k+1}(H^+) SW_{m+l}(f , \phi).
\end{aligned}
\end{equation*}
for all $m,j \ge 0$.
\end{theorem}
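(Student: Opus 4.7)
The plan is to assemble the preparatory lemmas that have already been proved, so the proof of the theorem itself is essentially a matter of citation. First, by Proposition \ref{prop:swsuspension} the Seiberg-Witten invariants are invariant under stabilisation, so without loss of generality I may stabilise $f$ by a complex vector bundle so that $V$ is trivial of rank $a$; all the earlier calculations in this section were performed under exactly that assumption.

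Once $V$ is trivial, the class $\eta = \sum_{j=0}^{a-1} \mu_j(f,\phi) x^{a-1-j}$ is Poincar\'e dual (up to a factor of $\delta\tau_{0,U}$) to the Seiberg-Witten moduli space, and $\mu_j = SW_j$. I would then apply Lemma \ref{lem:recur0}, which reduces the computation of $Sq^{2j}(SW_m(f,\phi))$ and $Sq^{2j+1}(SW_m(f,\phi))$ to producing classes $\theta^j_m$ and $\pi^j_m$ satisfying the recursions (\ref{equ:recur1}) and (\ref{equ:recur2}) obtained in Lemma \ref{lem:steenrodrecursive} by equating the two expressions (\ref{equ:sqeta1}) and (\ref{equ:sqeta2}) for $Sq^{2j}(\eta)$ and $Sq^{2j+1}(\eta)$.

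Motivated by the shape of the right-hand sides of (\ref{equ:recur1}) and (\ref{equ:recur2}), I take the ansatz (\ref{equ:thetapi}), with coefficients $f^{m}_{k,l} \in \mathbb{Z}_2$ to be determined. Lemma \ref{lem:recurf} then transforms the requirement that the ansatz satisfies the target recursions into the purely combinatorial condition (\ref{equ:recur3}) on the $f^{m}_{k,l}$. By Lemma \ref{lem:recurf2}, the explicit choice
\[
f^{m}_{k,l} = \binom{d-1-m+l+k}{l}
\]
satisfies (\ref{equ:recur3}) in all the required ranges. Substituting this formula into (\ref{equ:thetapi}) and invoking Lemma \ref{lem:recur0} yields exactly the asserted formulas for $Sq^{2j}(SW_m(f,\phi))$ and $Sq^{2j+1}(SW_m(f,\phi))$.

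The genuinely hard step is Lemma \ref{lem:recurf2}, which after the substitution $u = k + d - 1 - m$ and $v = a' + d - 1 - m$ and the mod-$2$ reflection identity $\binom{a}{b} \equiv \binom{b-a-1}{b}$ reduces to proving the mod-$2$ identity $\sum_l \binom{u+l}{j-l}\binom{v-l}{l} \equiv \binom{v+u+1}{j}$ for \emph{all} integers $u,v$. The base cases $v = -1, 0$ use Kummer-type facts about the $2$-adic valuation of $\binom{2l}{l}$ (reproved in Lemma \ref{lem:vzero}), and the propagation to negative $u$ requires the induction on $-u-j$ carried out in Lemma \ref{lem:recurf2}. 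Everything else is formal bookkeeping, so once that combinatorial identity is in hand the theorem follows by direct substitution.
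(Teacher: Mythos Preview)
Your proposal is correct and matches the paper's own proof essentially step for step: the paper's argument is precisely to invoke Lemma \ref{lem:recur0}, plug in the ansatz (\ref{equ:thetapi}) with $f^m_{k,l} = \binom{d-1-m+l+k}{l}$, and cite Lemmas \ref{lem:recurf} and \ref{lem:recurf2}. Your additional remarks about stabilising so that $V$ is trivial and about the combinatorial content of Lemma \ref{lem:recurf2} are accurate context but not part of the proof of the theorem itself, which in the paper is a three-line citation of the preceding lemmas.
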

\begin{proof}
By Lemma \ref{lem:recur0}, it suffices to show that $\theta^j_m, \pi^j_m$ satisfy Equations (\ref{equ:recur1}), (\ref{equ:recur2}), where $\theta^j_m$, $\pi^j_m$ are given by Equation (\ref{equ:thetapi}) with $f^m_{k,l} = \binom{d-1-m+l+k}{l}$. But Lemmas \ref{lem:recurf} and \ref{lem:recurf2} show that equations (\ref{equ:recur1}), (\ref{equ:recur2}) are indeed satisfied.
\end{proof}

For convenience, we give the explicit formulas for the first few even Steenrod squares:
\begin{equation*}
\begin{aligned}
& Sq^2(SW_m(f,\phi)) = \\ 
& \; \; \; \; (d+m)SW_{m+1}(f,\phi) + (s_1(D) + w_2(H^+))SW_m(f,\phi). \\
& Sq^4(SW_m(f,\phi)) = \\
& \binom{d-m+1}{2}SW_{m+2}(f,\phi) + \left( (d+m+1)s_1(D) + (d+m)w_2(H^+) \right)SW_{m+1}(f,\phi) \\
& \quad\quad\quad
+\left(s_2(D) + s_1(D)w_2(H^+) + w_4(H^+)\right)SW_m(f,\phi). \\
& Sq^6(SW_m(f,\phi)) = \\
& \binom{d-m+2}{3}SW_{m+3}(f,\phi) + \left( \binom{d-m+2}{2}s_1(D) + \binom{d-m+1}{2}w_2(H^+) \right) SW_{m+2}(f,\phi) \\
& \quad\quad\quad
+\left( (d+m)s_2(D) + (d+m+1)s_1(D)w_2(H^+) + (d+m)w_4(H^+) \right) SW_{m+1}(f,\phi) \\
& \quad\quad\quad\quad\quad\quad
+\left( s_3(D) + s_2(D)w_2(H^+) + s_1(D)w_4(H^+) + w_6(H^+) \right) SW_m(f,\phi).
\end{aligned}
\end{equation*}
Similar formulas hold for the odd Steenrod squares (to go from $Sq^{2j}(SW_m(f,\phi))$ to $Sq^{2j+1}(SW_m(f,\phi))$ just replace each occurrence of $w_{2i}(H^+)$ with $w_{2i+1}(H^+)$).

\begin{corollary}\label{cor:steenrod1}
Suppose that $2j > 2m -(2d-b^+-1)$. Then:
\begin{equation*}
\begin{aligned}
\sum_{l=0}^j \sum_{k=0}^{j-l} \binom{d-1-m+l+k}{l} s_k(D) w_{2j-2l-2k}(H^+) SW_{m+l}(f , \phi) &= 0 \; ({\rm mod} \; 2), \\
\sum_{l=0}^j \sum_{k=0}^{j-l} \binom{d-1-m+l+k}{l} s_k(D) w_{2j-2l-2k+1}(H^+) SW_{m+l}(f , \phi) &=0 \; ({\rm mod} \; 2).
\end{aligned}
\end{equation*}
\end{corollary}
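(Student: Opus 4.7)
The plan is to deduce this immediately from Theorem~\ref{thm:steenrod} combined with the most basic vanishing property of Steenrod squares. Recall that on a mod $2$ cohomology class $\alpha \in H^n(B;\mathbb{Z}_2)$, one has $Sq^i(\alpha) = 0$ whenever $i > n$. The mod $2$ reduction of $SW_m(f,\phi)$ is a class (possibly with local coefficients $\mathbb{Z}_{w_1(H^+)}$, but after reducing mod $2$ the sign action becomes trivial and we may treat it as an ordinary mod $2$ class) of degree $2m - (2d-b^+-1)$.

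Under the hypothesis $2j > 2m - (2d-b^+-1)$, we therefore have
\[
2j > \deg(SW_m(f,\phi)) \quad\text{and}\quad 2j+1 > \deg(SW_m(f,\phi)),
\]
so both $Sq^{2j}(SW_m(f,\phi)) = 0$ and $Sq^{2j+1}(SW_m(f,\phi)) = 0$. Now I would simply substitute these vanishings into the explicit formulas of Theorem~\ref{thm:steenrod} for $Sq^{2j}(SW_m(f,\phi))$ and $Sq^{2j+1}(SW_m(f,\phi))$, and the two displayed identities of the corollary drop out.

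There is essentially no obstacle: the only thing to verify is that the mod $2$ degree of $SW_m(f,\phi)$ is indeed $2m-(2d-b^+-1)$, which is built into the definition, and that the local coefficient subtlety causes no issue (which it does not, since mod $2$ the orientation local system $\mathbb{Z}_{w_1(H^+)}$ becomes the constant sheaf $\mathbb{Z}_2$ and Steenrod operations are defined on $H^*(B;\mathbb{Z}_2)$ in the standard way). All the work has already been done in establishing Theorem~\ref{thm:steenrod}; this corollary is simply the observation that the left-hand sides of those Steenrod formulas vanish for degree reasons as soon as $2j$ exceeds the degree of $SW_m(f,\phi)$.
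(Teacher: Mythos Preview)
Your proposal is correct and matches the paper's own proof essentially verbatim: invoke Theorem~\ref{thm:steenrod} and use the basic fact that $Sq^i(\alpha)=0$ whenever $i>\deg(\alpha)$. Your extra remark that the local system $\mathbb{Z}_{w_1(H^+)}$ reduces to the constant sheaf mod $2$ is a helpful clarification, but the argument is otherwise identical.
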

\begin{proof}
Start with Theorem \ref{thm:steenrod} and recall that $Sq^j(x) = 0$ whenever $j > deg(x)$.
\end{proof}

For instance if $2m < 2d-b^+ + 1$, then:
\[
(d+m)SW_{m+1}(f,\phi) + (s_1(D) + w_2(H^+))SW_m(f,\phi) = 0 \; ({\rm mod} \; 2).
\]
and if $2m < 2d-b^+ + 3$, then:
\begin{equation*}
\begin{aligned}
& \binom{d-m+1}{2}SW_{m+2}(f,\phi) + \left( (d+m+1)s_1(D) + (d+m)w_2(H^+) \right)SW_{m+1}(f,\phi) \\
& \quad \quad \quad\quad\quad\quad\quad\quad\quad
+\left(s_2(D) + s_1(D)w_2(H^+) + w_4(H^+)\right)SW_m(f,\phi) = 0 \; ({\rm mod} \; 2).
\end{aligned}
\end{equation*}

\begin{corollary}
Suppose that $b^+ = 2p+1$ is odd and suppose that for some $j > 0$ we have that $H^{2l}(B , \mathbb{Z}_2) = 0$ for $0 < l < j$. Then
\[
\binom{p+j}{j} SW_{m+j}(f,\phi) + \left( w_{2j}(H^+) + s_j(D) \right) SW_m(f,\phi) = 0 \; ({\rm mod} \; 2).
\]
where $m = d-p-1$.
\end{corollary}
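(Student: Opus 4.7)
The plan is to apply Corollary~\ref{cor:steenrod1} with $m = d-p-1$ and then use the cohomological vanishing hypothesis to collapse the double sum to exactly two surviving terms.

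First, a degree check: with $m = d-p-1$ and $b^+ = 2p+1$, the families Seiberg-Witten class $SW_m(f,\phi)$ has degree $2m - (2d-b^+-1) = 2(d-p-1) - (2d-2p-2) = 0$. Since $j > 0$, the inequality $2j > 2m-(2d-b^+-1)$ needed for Corollary~\ref{cor:steenrod1} is satisfied, and one gets
\[
\sum_{l=0}^j \sum_{k=0}^{j-l} \binom{d-1-m+l+k}{l} s_k(D)\, w_{2j-2l-2k}(H^+)\, SW_{m+l}(f,\phi) \equiv 0 \pmod{2}.
\]
A direct calculation gives $d-1-m = p$, so the binomial coefficient simplifies to $\binom{p+l+k}{l}$.

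Next I would discard all terms with $0 < l < j$. For such $l$, the class $SW_{m+l}(f,\phi)$ lies in $H^{2l}(B;\mathbb{Z}_{w_1(H^+)})$, and its mod $2$ reduction sits in $H^{2l}(B;\mathbb{Z}_2)$ (reducing $\mathbb{Z}_{w_1}$ mod $2$ kills the twist). By hypothesis this group is zero, so every such term vanishes.

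It remains to analyze the layers $l=0$ and $l=j$. For $l=j$ only $k=0$ occurs, yielding $\binom{p+j}{j}\, SW_{m+j}(f,\phi)$. For $l=0$ the surviving inner sum is $\sum_{k=0}^{j} s_k(D) w_{2j-2k}(H^+)\, SW_m(f,\phi)$; for $0<k<j$ the factor $s_k(D)\in H^{2k}(B;\mathbb{Z}_2)$ (and equally the factor $w_{2j-2k}(H^+)\in H^{2j-2k}(B;\mathbb{Z}_2)$) vanishes by the hypothesis, so only $k=0$ and $k=j$ contribute, giving $(w_{2j}(H^+) + s_j(D))\, SW_m(f,\phi)$. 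Adding the two surviving contributions produces the claimed identity. There is no real obstacle here beyond bookkeeping: once Corollary~\ref{cor:steenrod1} is in hand, the vanishing assumption prunes the sum to exactly the two terms displayed in the statement.
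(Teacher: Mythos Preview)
Your proof is correct and follows the same approach as the paper: apply Corollary~\ref{cor:steenrod1} with $m=d-p-1$, then use the vanishing hypothesis to kill all terms except $(l,k)\in\{(0,0),(0,j),(j,0)\}$. You in fact spell out more detail than the paper does, correctly noting that for $l=0$ the intermediate terms $0<k<j$ also vanish because $s_k(D)$ (or equivalently $w_{2j-2k}(H^+)$) lies in a trivial cohomology group.
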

\begin{remark}
Suppose that $f$ is a finite dimensional approximation of the Seiberg-Witten monopole map of a spin$^c$ family of $4$-manifolds with fibres $(X , \mathfrak{s}_X)$ with $b^+(X) = 2p+1>1$. Then for $m = d-p-1$, we see that $SW_m(f,\phi) \in H^0(B,\mathbb{Z}_2) \cong \mathbb{Z}_2$ is the mod $2$ reduction of the ordinary Seiberg-Witten invariant $SW(X,\mathfrak{s}_X)$ of $X$.
\end{remark}
\begin{proof}
Note that $2d-b^+-1 = 2(d-p-1)$ and hence $2j > 0 = 2m - (2d-b^+-1)$. Hence Corollary \ref{cor:steenrod1} holds with $m = d-p-1$. So
\[
\sum_{l=0}^j \sum_{k=0}^{j-l} \binom{d-1-m+l+k}{l} s_k(D) w_{2j-2l-2k}(H^+) SW_{m+l}(f , \phi) = 0 \; ({\rm mod} \; 2).
\]
But $H^{2l}(B , \mathbb{Z}_2) = 0$ for $0 < l < j$, so all terms in the above sum are zero except for $(l,k) = (0,0), (0,j)$ or $(j,0)$. The result now follows easily.
\end{proof}

\begin{theorem}
Suppose that $b^+ = 2p+1$ is odd and that the Stiefel-Whitney classes of $H^+$ are trivial. Write $p$ as $p = 2^a p'$, where $a \ge 0$ and $p'$ is odd. Let $m = d-p-1$ and note that $SW_m(f,\phi) \in H^0(B ; \mathbb{Z}) \cong \mathbb{Z}$. Then for $0 \le b < a$ we have
\[
SW_{m + 2^b}(f,\phi) = s_{2^b}(D)SW_m(f,\phi) \; ({\rm mod} \; 2)
\]
for all $b$ such that $0 \le b < a$. Furthermore, we have
\[
s_{2^a}(D)SW_m(f,\phi) = 0 \; ({\rm mod} \; 2).
\]
\end{theorem}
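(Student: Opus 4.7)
The plan is to specialize the even Steenrod square formula of Theorem~\ref{thm:steenrod} to the hypotheses at hand. Because all positive-degree Stiefel-Whitney classes of $H^+$ vanish, only the terms with $2j-2l-2k=0$, i.e. $k=j-l$, contribute. Substituting $m=d-p-1$ yields $d-1-m+l+k = p+j$ for all such surviving terms, so the formula collapses to
\[
Sq^{2j}(SW_m(f,\phi)) \equiv \sum_{l=0}^j \binom{p+j}{l}\, s_{j-l}(D)\, SW_{m+l}(f,\phi) \pmod 2.
\]
Since $2m-(2d-b^+-1)=0$, the class $SW_m(f,\phi)$ lives in $H^0(B;\mathbb{Z}_2)$, so $Sq^{2j}$ annihilates it for every $j\ge 1$. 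I would then record, for each $j\ge 1$, the resulting identity
\[
\sum_{l=0}^j \binom{p+j}{l}\, s_{j-l}(D)\, SW_{m+l}(f,\phi) \equiv 0 \pmod 2.
\]

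Next, I would analyze the parity of $\binom{p+j}{l}$ for $j=2^b$ and $l\in\{0,1,\dots,2^b\}$ using Lucas's theorem, writing $p = 2^a p'$ with $p'$ odd. For $0\le b<a$, the integer $p+2^b = 2^a p'+2^b$ has in its binary expansion exactly one $1$-bit in the positions $\le b$, namely at position $b$ itself; hence Lucas's submask criterion singles out $l=0$ and $l=2^b$, while every intermediate binomial is even. The $j=2^b$ identity thus degenerates to a two-term equation which rearranges to
\[
SW_{m+2^b}(f,\phi) \equiv s_{2^b}(D)\, SW_m(f,\phi) \pmod 2,
\]
establishing the first assertion. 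For $b=a$, one has $p+2^a = 2^a(p'+1)$ with $p'+1$ even, so the $2$-adic valuation of $p+2^a$ is at least $a+1$. No positive $l\le 2^a$ can then form a submask of $p+2^a$ in binary, so every $\binom{p+2^a}{l}$ with $1\le l\le 2^a$ is even. Only the $l=0$ term survives, giving $s_{2^a}(D)\, SW_m(f,\phi) \equiv 0 \pmod 2$.

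The computation is conceptually straightforward once Theorem~\ref{thm:steenrod} is in hand; the only genuinely delicate step is the $2$-adic combinatorial analysis, which reduces via Lucas's theorem to locating the low-order $1$-bits of $p+2^b$. Since $p+2^b$ admits the clean factorization $2^b\cdot(2^{a-b}p'+1)$ when $b<a$ (with the second factor odd) and $2^{a+1}\cdot \tfrac{p'+1}{2}$ when $b=a$, the parity pattern of the binomial coefficients is read off immediately, and I do not anticipate any further obstacle.
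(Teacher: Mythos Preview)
Your proof is correct and follows essentially the same approach as the paper: specialize Theorem~\ref{thm:steenrod} under the hypothesis that the Stiefel-Whitney classes of $H^+$ vanish, use that $Sq^{2j}$ annihilates the degree-zero class $SW_m(f,\phi)$ for $j\ge 1$, and then analyze the parity of $\binom{p+2^b}{l}$ for $l\in\{0,\dots,2^b\}$. The only difference is that you make the Lucas/Kummer argument explicit, whereas the paper simply asserts the parity pattern of the binomial coefficients.
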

\begin{proof}
Since the Stiefel-Whitney classes of $H^+$ are trivial, Theorem \ref{thm:steenrod} gives
\begin{equation}\label{equ:steenrodreduced}
\sum_{l=0}^j \binom{p + j}{l} s_{j-l}(D) SW_{m+l}(f,\phi) = 0 \; ({\rm mod} \; 2).
\end{equation}
Next we note that if $p = 2^a p'$, where $p'$ is odd and if $0 \le b < a$, then $\binom{p+2^b}{l}$ is even for $l$ in the range $0 \le l \le 2^b$, except when $l = 0$ or $2^b$ in which case it is odd. Thus Equation (\ref{equ:steenrodreduced}) for $j=2^b$ simplifies to:
\[
s_{2^b}(D) SW_m(f,\phi) + SW_{m+2^b}(f,\phi) = 0 \; ({\rm mod} \; 2).
\]
Similarly $\binom{p+2^a}{l}$ is even for $l$ in the range $0 \le l \le 2^a$, except when $l=0$ in which case it is odd. So Equation (\ref{equ:steenrodreduced}) for $j=2^a$ simplifies to:
\[
s_{2^a}(D)SW_m(f,\phi) = 0 \; ({\rm mod} \; 2).
\]
\end{proof}

\begin{theorem}
Suppose that $H^+$ is orientable and that $2d-b^+-1 = 0$. Suppose also that $SW_0(f,\phi) \in H^0(B ; \mathbb{Z}) \cong \mathbb{Z}$ is odd.
\begin{itemize}
\item{If $SW_j(f,\phi) = 0 \; ({\rm mod} \; 2)$ for all $j > 0$, then $w(H^+) = c(D) \; ({\rm mod} \; 2)$.}
\item{Conversely if $w(H^+) \neq c(D) \; ({\rm mod} \; 2)$, then $SW_j(f,\phi) \neq 0 \; ({\rm mod} \; 2)$ for some $j > 0$.}
\end{itemize}
\end{theorem}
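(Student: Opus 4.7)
The two bullet points are contrapositives, so it suffices to establish the first. The plan is to extract the identity $w(H^+) \equiv c(D) \pmod 2$ by applying the Steenrod square formulas of Theorem \ref{thm:steenrod} to the mod $2$ reduction of $SW_0(f,\phi)$, exploiting the fact that $SW_0(f,\phi)$ sits in degree zero and so is annihilated by all positive Steenrod squares.

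Since $2d-b^+-1 = 0$, the invariant $SW_0(f,\phi)$ lies in $H^0(B;\mathbb{Z}) \cong \mathbb{Z}$; by hypothesis it is odd, so its mod $2$ reduction equals the generator $1 \in H^0(B;\mathbb{Z}_2)$. In particular $Sq^i SW_0(f,\phi) = 0$ for every $i \ge 1$. Setting $m = 0$ in Theorem \ref{thm:steenrod}, I would then rewrite these vanishings as, for each $j \ge 1$,
$$0 \equiv \sum_{l=0}^j \sum_{k=0}^{j-l} \binom{d-1+l+k}{l} s_k(D) w_{2j-2l-2k}(H^+) SW_l(f,\phi) \pmod 2,$$
together with the analogous identity with $w_{2j-2l-2k+1}(H^+)$ coming from $Sq^{2j+1}$. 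The hypothesis $SW_l(f,\phi) \equiv 0 \pmod 2$ for $l > 0$ kills every contribution with $l > 0$, and the surviving $l = 0$ terms carry the coefficient $\binom{d-1+k}{0} = 1$. Dividing out the odd factor $SW_0(f,\phi)$ leaves
$$\sum_{k=0}^j s_k(D) w_{i-2k}(H^+) \equiv 0 \pmod 2$$
for every $i > 0$, where $i$ ranges over both $2j$ and $2j+1$.

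Combining the even and odd cases, every positive-degree homogeneous component of $s(D) \smallsmile w(H^+) \in H^*(B;\mathbb{Z}_2)$ vanishes; its degree-zero component equals $s_0(D) w_0(H^+) = 1$, so altogether $s(D) w(H^+) \equiv 1 \pmod 2$. The defining identity $c(D) s(D) = 1$ then yields $w(H^+) \equiv c(D) \pmod 2$, as desired. There is no serious obstacle here: the argument is a direct unwinding of Theorem~\ref{thm:steenrod} in the special case $m = 0$, the only minor check being the evaluation of the $l = 0$ binomial coefficients, and the passage from positive-degree vanishing of $s(D) w(H^+)$ to the total-class identity via multiplication by $c(D)$.
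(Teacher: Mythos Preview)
Your proof is correct and follows essentially the same route as the paper: apply Theorem~\ref{thm:steenrod} with $m=0$, use that $SW_0(f,\phi)$ lies in degree zero so all positive Steenrod squares vanish, kill the $l>0$ terms via the hypothesis, and conclude $s(D)w(H^+)\equiv 1$ hence $w(H^+)\equiv c(D)\pmod 2$. The paper's write-up is terser (it jumps straight to the resulting identities without spelling out the vanishing of $Sq^i$ on degree-zero classes or the binomial evaluation), but the argument is the same.
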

\begin{proof}
Assume that $SW_j(f,\phi)$ vanishes mod $2$ for all $j > 0$ and that $SW_0(f,\phi)$ is odd. We will show that $w(H^+) = c(D)$. The second result follows by taking the contrapositive. From the $Sq^{2j}$ and $Sq^{2j+1}$ cases of Theorem \ref{thm:steenrod} with $m=0$ we obtain:
\begin{equation*}
\begin{aligned}
\sum_{k=0}^j s_k(D)w_{2j-2k}(H^+) &= 0 \; ({\rm mod} \; 2), \\
\sum_{k=0}^j s_k(D)w_{2j-2k+1}(H^+) &= 0 \; ({\rm mod} \; 2).
\end{aligned}
\end{equation*}
this says that the $2j$ and $2j+1$-th Stiefel-Whitney classes of the virtual bundle $H^+-D$ vanish. Thus $w(H^+-D) = 1$. But $w(H^+-D) = w(H^+)c(-D) = w(H^+)s(D) \; ({\rm mod} \; 2)$. Multiplying both sides of $w(H^+)s(D) = 1 \; ({\rm mod} \; 2)$ by $c(D)$, we get $w(H^+) = c(D) \; ({\rm mod} \; 2)$.
\end{proof}

\subsection{Application to $K3$ surfaces}\label{sec:k3}
In this Subsection we give an application of the Steenrod squares computation to $K3$ surfaces. For this purpose we first need to review the notion of topological spin structures.

Let $B$ be a topological manifold. An {\em $n$-microbundle} $\xi$ on $B$ consists of \cite{mil}:
\[
\xi = \{  B \buildrel i \over \longrightarrow E \buildrel p \over \longrightarrow B \}
\]
where:
\begin{itemize}
\item{$E$ is a topological space}
\item{$i : B \to E$ is called the zero-section}
\item{$p : E \to B$ is called the projection}
\item{$p \circ i = id$}
\item{$E$ is locally trivial around the zero section. That is, for each $b\in B$ there exists a neighbourhood $U$ of $b$ in $B$, a neighbourhood $V$ of $i(b)$ in $E$ and a homeomorphism $V \to U \times \mathbb{R}^n$ such that the restriction of $p$ to $V$ coincides with the projection $U \times \mathbb{R}^n \to U$.}
\end{itemize}

Two microbundles $\xi,\xi'$ are considered isomorphic if there exists a homeomorphism between neighbourhoods of the zero sections of $\xi,\xi'$ respecting the zero sections and projection maps.

Every topological manifold $X$ has a tangent microbundle $\tau X$, given by
\[
\tau X = \{ X \buildrel \Delta \over \longrightarrow X \times X \buildrel pr_1 \over \longrightarrow X\}
\]
where $\Delta$ is the diagonal and $pr_1$ is the projection to the first factor. If $X$ is smooth then $\tau X$ is isomorphic to the tangent bundle:
\[
TX = \{ X \buildrel i \over \longrightarrow TX \buildrel p \over \longrightarrow X \}.
\]

It is known as the Kister-Mazur theorem \cite{kis} that a microbundle over a locally-finite complex can be represented by a fibre bundle, and it was generalised by Holm~\cite{holm} to a microbundle over a paracompact base space:

\begin{theorem}[\cite{kis,holm}]
Let $\xi = \{  B \buildrel i \over \longrightarrow E \buildrel p \over \longrightarrow B \}$ be an $n$-microbundle over $B$. Then there exists an open neighbourhood $U \subseteq E$ of the zero section such that $p|_U : U \to B$ is a locally trivial fibre bundle with fibre $\mathbb{R}^n$. The fibre bundle is unique up to isomorphism.
\end{theorem}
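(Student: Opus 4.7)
The plan is to reduce the theorem to Kister's key lemma on the homotopy type of embedding spaces, namely that the inclusion of the group $TOP(n)$ of origin-preserving self-homeomorphisms of $\mathbb{R}^n$ into the topological semigroup $Emb_0(n)$ of origin-preserving topological embeddings $\mathbb{R}^n \hookrightarrow \mathbb{R}^n$ is a homotopy equivalence, and moreover admits a canonical deformation retraction that is natural in the embedding.

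First I would set up the local cocycle structure of the microbundle. By definition, $B$ is covered by open sets $U_\alpha$ over which there exist trivializations $\varphi_\alpha : V_\alpha \to U_\alpha \times \mathbb{R}^n$ of a neighbourhood $V_\alpha$ of $i(U_\alpha)$ in $E$, respecting the zero section and projection. On overlaps $U_{\alpha\beta}$, the composition $\varphi_\beta \circ \varphi_\alpha^{-1}$ is defined on some neighbourhood of the zero section of $U_{\alpha\beta}\times\mathbb{R}^n$; by shrinking the $U_\alpha$'s and the fibre radius (using paracompactness of $B$) we may replace each such germ by an actual continuous family $\psi_{\alpha\beta}: U_{\alpha\beta} \to Emb_0(n)$. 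These satisfy the cocycle condition only up to germ, but after further shrinking we arrange an honest cocycle with values in $Emb_0(n)$.

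Next, apply Kister's lemma to straighten this $Emb_0(n)$-cocycle to a $TOP(n)$-cocycle. The content of the lemma is a continuous, natural construction assigning to each $f \in Emb_0(n)$ a homeomorphism $K(f) \in TOP(n)$ with $K(f)=f$ on some neighbourhood of $0$ and $K(f) = \mathrm{id}$ whenever $f$ is already the identity near $0$. Composing the local trivializations with $K \circ \psi_{\alpha\beta}$ produces refined trivializations on (possibly smaller) neighbourhoods whose transition functions lie in $TOP(n)$; these patch together to yield a locally trivial $\mathbb{R}^n$-bundle $p|_U : U \to B$ sitting inside $E$ and containing a neighbourhood of the zero section. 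For uniqueness, given two such bundle neighbourhoods $U_1, U_2$, one applies the same straightening to the cocycle comparing their trivializations: Kister's naturality ensures the resulting bundle isomorphism extends the identity near the zero section, giving a well-defined isomorphism of the germs of fibre bundles.

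The main obstacle is Kister's lemma itself. The explicit construction of $K(f)$ proceeds by studying the minimum-radius function $\rho_f(r) = \min_{|x|=r}|f(x)|$, using it to build a canonical isotopy from the identity of $\mathbb{R}^n$ through embeddings that expand $f(D^n_r)$ to all of $\mathbb{R}^n$ as $r \to \infty$, then extracting $K(f)$ as the time-one map. The delicate point is to verify that this assignment is continuous in $f$ in the compact-open topology, is the identity on $TOP(n)$, and is compatible with composition, so that applying it to a cocycle yields a cocycle. This is where the real work of the Kister--Mazur theorem lies; once it is in hand, the bundle-theoretic steps above are essentially formal patching arguments.
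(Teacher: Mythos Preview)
The paper does not give its own proof of this theorem: it is stated as the Kister--Mazur theorem with a citation to \cite{kis} and used as a black box. So there is no in-paper proof to compare against; your proposal is an attempt to sketch Kister's original argument.

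Your outline captures the correct reduction --- everything does hinge on the fact that $TOP(n) \hookrightarrow Emb_0(n)$ is a deformation retract --- but the way you propose to use it has a genuine gap. You write that one applies the retraction $K$ to each transition map $\psi_{\alpha\beta}$ and that the result is again a cocycle because $K$ is ``compatible with composition''. This is the problem: there is no reason for $K$ to satisfy $K(g\circ f)=K(g)\circ K(f)$, and in fact no such multiplicative retraction exists in general. Applying $K$ independently to each $\psi_{\alpha\beta}$ will therefore not produce a $TOP(n)$-valued cocycle.

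Kister's actual argument avoids this by working inductively over a locally finite cover rather than straightening all transition functions simultaneously. One first takes the bundle neighbourhood given by a single chart, then uses the isotopy version of the lemma (every origin-preserving embedding $\mathbb{R}^n\hookrightarrow\mathbb{R}^n$ is isotopic through such embeddings to a homeomorphism, continuously in parameters) to extend the bundle structure across one more chart at a time; the isotopy is what allows the new chart to be glued compatibly to what has already been built. The uniqueness statement is proved by the same inductive mechanism. Alternatively one can phrase the deformation retract statement as a weak equivalence $BTOP(n)\simeq BEmb_0(n)$ of classifying spaces and deduce the bundle statement from that, which also sidesteps the multiplicativity issue. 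Either route is fine, but the direct ``apply $K$ to the cocycle'' step as you wrote it does not go through.
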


Thus, by restriction every $n$-microbundle $\xi$ can be represented by an honest fibre bundle $p : E \to B$ with fibres homeomorphic to $\mathbb{R}^n$ and equipped with a section $i : B \to E$. This fibre bundle is unique up to fibre bundle isomorphism. Let $Top(n)$ denote the group of homeomorphisms of $\mathbb{R}^n$ preserving the origin. Then the fibre bundle $p : E \to B$ has structure group $Top(n)$ and thus determines a principal $Top(n)$ bundle $\mathcal{F}(\xi) \to B$. This principal bundle is well-defined up to principal bundle isomorphism and we call it the {\em frame bundle of $\xi$}.

There is an obvious notion of an oriented microbundle. We can define the {\em oriented frame bundle} $\mathcal{F}^+(\xi)$ of an oriented $n$-microbundle $\xi$, which is a principal $STop(n)$-bundle where $STop(n)$ is the subgroup of $Top(n)$ preserving the orientation of $\mathbb{R}^n$.

It is known that the natural inclusion $SO(n) \to STop(n)$ induces an isomorphism of fundamental groups, and both groups are connected (for $n \neq 4$, this follows from \cite[V, \textsection 5]{ks}. For $n=4$, this follows from \cite[Theorem 8.7A]{fq} together with \cite[V , \textsection 5]{ks}). Hence there is a unique connected double covering $\phi : SpinTop(n) \to STop(n)$. A {\em spin structure} on an oriented $n$-microbundle $\xi$ is a double cover $\tilde{\mathcal{F}}^+(\xi) \to \mathcal{F}^+(\xi)$ whose restriction to each fibre is isomorphic to $\phi$ as covering spaces. If $\xi$ is represented by an oriented vector bundle $V$, this is equivalent to the usual notion of a spin structure on $V$.

Let $X$ be a topological manifold. We define a {\em topological spin structure} on $X$ to be a spin structure on $\tau X$.

Let $E \to B$ be a continuous locally trivial fibre bundle with fibres homeomorphic to $X$. We define the {\em vertical tangent microbundle} of $E$ to be the microbundle
\[
\tau(E/B) = \{ E \buildrel \Delta \over \longrightarrow E \times_B E \buildrel pr_1 \over \longrightarrow E \}.
\]
If $E,B$ are smooth manifolds and $E \to B$ a smooth fibre bundle, then $\tau(E/B)$ is isomorphic to the vertical tangent bundle $T(E/B)$.

Suppose that $E \to B$ is fibrewise oriented, or equivalently, the transition functions are valued in $Homeo^+(X)$, the group of orientation preserving homeomorphisms of $X$. Then associated to $\tau(E/B)$ is its principal $STop(n)$-oriented frame bundle $\mathcal{F}^+(\tau(E/B)) \to E$. We define a {\em families topological spin structure} for $E \to B$ to be a spin structure on $\tau(E/B)$, that is, a double covering $\tilde{\mathcal{F}}^+(\tau(E/B)) \to \mathcal{F}^+(\tau(E/B))$ which restricts to $\phi$ on each fibre.

Let $Y_0$ be an oriented topological $4$-manifold and let $Y_1, \dots, Y_k$ be smooth, oriented, $4$-manifolds. Assume that $Y_0 , \dots , Y_k$ are spin and let $\mathfrak{s}_i$ be a spin structure on $Y_i$ for $0 \le i \le k$. Fix a choice of open neighbourhood $U_0$ of the diagonal in $Y_0 \times Y_0$ such that the restriction of $pr_1 : Y_0 \times Y_0 \to Y_0$ to $U_0$ defines a locally trivial $\mathbb{R}^4$-bundle. We think of $U_0$ as playing the role of the tangent bundle of $Y_0$. Let $\mathcal{F}^+(Y_0)$ denote the principal $STop(4)$-frame bundle associated to $U_0$. The spin structure $\mathfrak{s}_0$ determines a double cover $\tilde{\mathcal{F}}^+(Y_0)$ of $\mathcal{F}^+(Y_0)$ which is a principal $SpinTop(4)$-bundle.

For each $j \in \{1, \dots , k\}$ suppose that $f_j : Y_j \to Y_j$ is an orientation preserving diffeomorphism which acts as the identity in some neighbourhood $B_j$ of a point $p_j \in Y_j$. Suppose also that $f_j$ preserves the spin structure $\mathfrak{s}_j$. Form the connected sum $Y_+ = Y_1 \# \cdots \# Y_k$ in the following manner. Start with $S^4$ and choose $k$ disjoint open balls $C_1, \dots , C_k$ in $S^4$. For each $j \in \{1 , \dots , k\}$, attach $Y_j$ to $S^4$ by removing $C_j$ from $S^4$, removing an open ball $B'_j$ from $Y_j$ such that the closure of $B'_j$ is contained in $B_j$ and then attaching $Y_j \setminus B'_j$ to $S^4 \setminus ( C_1 \cup \cdots C_k )$ along the $j$-th boundary component. Let $\mathfrak{s}_+ = \mathfrak{s}_1 \# \cdots \# \mathfrak{s}_k$ be the resulting spin structure on $Y_+$. Since $f_j$ acts as the indentity on $B_j$ it determines a diffeomorphism $h_j : Y_+ \to Y_+$ which acts as the identity outside of $Y_j \setminus B'_j$. Clearly $h_1, \dots , h_k$ commute and preserve the isomorphism class of $\mathfrak{s}_+$. Let $C_+$ denote an open ball in $S^4$ disjoint from $C_1, \dots , C_k$. We can regard $C_+$ as an open ball in $Y_+$ and we observe that $h_1, \dots , h_k$ act as the identity on $C_+$.

Let $Fr^+(Y_+)$ denote the usual $SO(4)$-frame bundle of $Y_+$ and let $\mathcal{F}^+(Y_+) = Fr^+(Y_+) \times_{SO(4)} STop(4)$ be the associated $STop(4)$-frame bundle. The spin structure $\mathfrak{s}_+$ determines a double cover $\widetilde{Fr}^+(Y_+)$ of $Fr^+(Y_+)$ which is a principal $Spin(4)$-bundle and we set $\tilde{\mathcal{F}}^+(Y_+) = \widetilde{Fr}^+(Y_+) \times_{Spin(4)} SpinTop(4)$, which is the double cover of $\mathcal{F}^+(Y_+)$ corresponding to the spin structure $\mathfrak{s}_+$.

By differentiation, $h_i$ lifts to an automrphism $(h_i)_* : Fr^+(Y_+)$ of the $SO(4)$-frame bundle. Let $\tilde{h}_i$ denote the unique lift of $(h_i)_*$ to $\widetilde{Fr}^+(Y_+)$ such that $\tilde{h}_i$ is the identity over $C_+$. Then by extension of structure group we can also regard $\tilde{h}_i$ as an automorphism of $\tilde{\mathcal{F}}^+(Y_+)$.

We claim that $\tilde{h}_1, \dots , \tilde{h}_k$ commute. To see this, note that since $h_1, \dots , h_j$ commute (and hence so do their lifts to the $SO(4)$-frame bundle), it follows that the commutator $[ \tilde{h}_i , \tilde{h}_j ]$ projects to the identity on $\mathcal{F}^+(Y_+)$, hence  $[ \tilde{h}_i , \tilde{h}_j ] = \pm 1$. But $\tilde{h}_j$ is the identity over $C_+$ and hence $[ \tilde{h}_i , \tilde{h}_j ] = 1$.

Let $Y = Y_0 \# Y_+ = Y_0 \# Y_1 \# \cdots \# Y_k$ be the connected sum, where we attach $Y_0$ to $Y_+$ as follows. Choose an open ball $C_0$ in $Y_0$ and identify $Y_0 \setminus C_0$ and $Y_+ \setminus C_+$ along their boundary $\partial C_0 \cong \partial C_+ \cong S^3$.

The $STop(4)$-frame bundle of $Y$ is obtained by identifying $\mathcal{F}^+(Y_0)|_{\partial C_0}$ and $\mathcal{F}^+(Y_+)|_{\partial C_+}$ by some isomorphism $\psi : \mathcal{F}^+(Y_0)|_{\partial C_0} \to \mathcal{F}^+(Y_+)|_{\partial C_+}$. Note that $\mathcal{F}^+(Y_0)|_{\partial C_0}$ and $\mathcal{F}^+(Y_+)|_{\partial C_+}$ are both trivial bundles and upon choosing trivialisations $\mathcal{F}^+(Y_0)|_{\partial C_0} \cong S^3 \times STop(4)$, $\mathcal{F}^+(Y_+)|_{\partial C_+} \cong S^3 \times STop(4)$, we have that $\psi$ is given by a map $S^3 \to STop(4)$. Since $S^3$ is simply-connected such a map lifts to $S^3 \to SpinSTop(4)$ and thus $\psi$ lifts to an isomorphism $\tilde{\psi} : \tilde{\mathcal{F}}^+(Y_0)|_{\partial C_0} \to \tilde{\mathcal{F}}^+(Y_+)|_{\partial C_+}$ which is unique up to $\tilde{\psi} \mapsto -\tilde{\psi}$. Let $\mathfrak{s} = \mathfrak{s}_0 \# \mathfrak{s}_+$ denote the resulting topological spin structure on $Y$ (the isomorphism class of $\mathfrak{s}$ is easily seen to be independent of the choice of lift of $\psi$).

Since the $h_i$ act as the identity over $C_+$, they extend to commuting diffeomorphisms of $Y$. Likewise, since the $\tilde{h}_i$ act as the identity over $C_+$, they extend to commuting automorphisms of the $SpinTop(4)$-frame bundle of $Y$. Consider now the mapping torus $E = Y \times_{h_1, \dots , h_k} \mathbb{R}^k \to T^k$ associated to the commuting homeomorphisms $h_1, \dots , h_k$. Since the $h_i$ lift to commuting automorphisms of the $SpinTop(4)$-frame bundle of $Y$, it follows that the vertical tangent microbundle $\mathcal{F}^+(\tau(E/T^k))$ admits a spin structure. We have thus proven the following result.

\begin{proposition}\label{prop:spin}
Let $Y_0, Y_1, \dots , Y_k$, $f_1, \dots , f_k$, $\mathfrak{s}_1 , \dots , \mathfrak{s}_k$ be as above. Let $h_1, \dots , h_k$ be the diffeomorphisms on the connected sum $Y = Y_0 \# \cdots \# Y_k$ determined by $f_1, \dots , f_k$. Then the vertical tangent microbundle of the mapping torus of $h_1, \dots , h_k$ admits a spin structure.
\end{proposition}

Let $Homeo^+(X)$ denote the group of orientation preserving homeomorphisms of $X$ with the $\mathcal{C}^0$-topology and let $Diff^+(X)$ denote the group of orientation preserving diffeomorphisms of $X$ with the $\mathcal{C}^\infty$-topology.
Note that, if $X$ is a closed oriented $4$-manifold with non-zero signature, we have $Homeo^+(X)=Homeo(X)$ and $Diff^+(X)=Diff(X)$.

\begin{remark}
\label{rem: continuous and smooth isotopies}
Let $f_{0}, f_{1} \in Diff(X)$.
If there is a continuous map $F_{\bullet} : X \times[0,1] \to X$ satisfying that $F_{0}=f_{0}, F_{1}=f_{1}$, and $F_{t} \in Diff(X)$ for each $t \in [0,1]$,
by approximating $F_{\bullet}$ by a smooth family, one can show that there exists a smooth map $F'_{\bullet} : X \times [0,1] \to X$ satisfying that $F'_{0}=f_{0}, F'_{1}=f_{1}$, and $F'_{t} \in Diff(X)$ for each $t \in [0,1]$.
\end{remark}

\begin{definition}
\label{defi: smoothable as families}
Let $B$ be a smooth manifold. 
\begin{itemize}
\item {For an oriented topological manifold $X$, a {\em continuous family} over $B$ with fibres homeomorphic to $X$ is a fibre bundle $E \to B$ whose fibres are homeomorphic to $X$ and whose transitions functions are valued in $Homeo^+(X)$ (so $E \to B$ is fibrewise oriented).}
\item{For an oriented smooth manifold $X$, a {\em smooth family} over $B$ with fibres diffeomorphic to $X$ is a fibre bundle $E \to B$ whose fibres are diffeomorphic to $X$ and whose transition functions are valued in $Diff^+(X)$.}
\item{For an oriented topological manifold $X$ admitting a smooth structure, a continuous family $E \to B$ over $B$ with fibres homeomorphic to $X$ is called {\em smoothable} if there exists a smooth family $E' \to B$ with fibres diffeomorphic to $X$, equipped with a smooth structure, such that $E$ is isomorphic to the underlying continuous family associated to $E'$.}
\end{itemize}
\end{definition}

\begin{remark}
If $E \to B$ is a smooth family with fibres diffeomorphic to $X$ in the above sense, by replacing $E$ with an isomorphic bundle, we may assume that the total space of $E$ is a smooth manifold and the projection $E \to B$ is a surjective submersion.
This is based on M\"{u}ller--Wockel~\cite{mw} as follows.
A priori, the transition functions of $E$ are continuous maps from double-indexed open sets $\{U_{ij}\}_{ij}$ of $B$ to $Diff^{+}(M)$, regarded as a Fr\'{e}chet manifold.
However, the main theorem of \cite{mw} implies that there exists a bundle $E' \to B$ such that $E'$ is isomorphic to $E$ and that the transition functions $U_{ij} \to Diff(X)$ of $E'$ are smooth for some choice of local trivializations of $E'$.
Recall that the evaluation map $X \times Diff(X) \to X$ is a smooth map.
This follows from the fact that $Diff(X)$ is an open set of the smooth mapping space $Map(X,X)$ with the $\mathcal{C}^\infty$-topology, and the evaluation map $Map(X,X) \times X \to X$ is smooth (see, for example, 42.13. Theorem in p.444 of \cite{km}).
Therefore we can deduce that $E'$ admits a smooth manifold structure and the projection $E' \to B$ is a surjective submersion.
\end{remark}

For any continuous family $E \to B$ with fibres $X$, the bundle $H^+ \to B$ is well-defined up to isomorphism (we can define it to be a maximal positive definite sub-bundle of the bundle with fibres $H^2(X ; \mathbb{R})$).

For any smooth family with fibres diffeomorphic to $X$, equipped with a families spin structure, we may define the virtual index bundle $D \in K^0(B)$ of the Dirac operator.

Let $X$ be the underlying topological $4$-manifold of a $K3$ surface.
Then:
\begin{itemize}
\item{$\pi_0(Homeo^+(X))$ is isomorphic to the isometry group of $H^2(X ; \mathbb{Z})$ by results of Freedman \cite{fre} and Quinn \cite{qui}.}
\item{For the standard smooth structure of $X$, the image of $\pi_0(Diff^+(X))$ in $\pi_0(Homeo^+(X)) \cong Aut( H^2(X ; \mathbb{Z}))$ is precisely the index $2$ subgroup of elements which preserve the orientation of $H^+(X)$ \cite{mat,don}.}
\end{itemize}

\begin{proposition}\label{prop:obstruction}
Let $E \to B$ be a continuous family with fibres $X$ and suppose that $E$ admits a families topological spin structure. If $E$ is smoothable for some smooth structure on $X$, then $w_2(H^+) = 0$.
\end{proposition}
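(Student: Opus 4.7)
The plan is to combine three ingredients: (i) smoothability of $E$ together with the families topological spin structure produces a smooth family $E' \to B$ equipped with a smooth families spin structure $\mathfrak{s}_{E'}$; (ii) in dimension $4$ the spin Dirac operator is quaternionic, which forces the virtual index bundle $D \in K^0(B)$ of the associated family of Dirac operators to satisfy $c_1(D) = 0$; (iii) the $K3$ surface has $b_1 = 0$, $b^+ = 3 \equiv 3 \pmod 4$, and ordinary Seiberg-Witten invariant equal to $\pm 1$ in its canonical spin$^c$ structure. Corollary~\ref{cor:w2} will then give $c_1(D) \equiv w_2(H^+) \pmod 2$, and combining with $c_1(D) = 0$ will yield $w_2(H^+) = 0$.

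First I would carry out step (i). By hypothesis there is a smooth family $E' \to B$ whose underlying continuous family is isomorphic to $E$. Under this isomorphism the microbundle $\tau(E/B)$ corresponds to the microbundle underlying the smooth vertical tangent vector bundle $T(E'/B)$, so admission of a families topological spin structure on $E$ forces $w_2(T(E'/B)) = 0$ and hence the existence of a smooth families spin structure $\mathfrak{s}_{E'}$ on $T(E'/B)$. Fix any such choice; its restriction to the fibre $K3$ is the unique spin structure on $K3$ (unique since $H^1(K3;\mathbb{Z}_2) = 0$). Step (ii) is then the standard fact that on any smooth spin $4$-manifold the spinor bundles $S^\pm$ carry canonical quaternionic structures with respect to which the Dirac operator is quaternionic linear; consequently the families index $D \in K^0(B)$ lies in the image of the forgetful map $KSp^0(B) \to K^0(B)$, and any such class has trivial first Chern class, so $c_1(D) = 0$.

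Finally, step (iii) is a direct application of Corollary~\ref{cor:w2} to the smooth spin$^c$ family $(E', \mathfrak{s}_{E'})$. The fibre $K3$ satisfies $b_1 = 0$, $b^+ = 3 \equiv 3 \pmod 4$, and $SW(K3, \mathfrak{s}_{K3}) = \pm 1$ is odd, so the second bullet of the corollary yields $c_1(D) \equiv w_2(H^+) \pmod 2$. Combined with $c_1(D) \equiv 0 \pmod 2$ this gives $w_2(H^+) = 0 \in H^2(B;\mathbb{Z}_2)$. Since $H^+$ is well-defined up to isomorphism from the underlying continuous family alone, this completes the argument.

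The main obstacle will be the passage from a families topological spin structure on the microbundle $\tau(E/B)$ to a genuine smooth spin structure on the smooth vertical tangent bundle $T(E'/B)$ of a smoothing. This ultimately rests on the fact that $w_2$ is a characteristic class of $STop$-microbundles which agrees, under the forgetful map $BSO(4) \to BSTop(4)$, with $w_2$ of the underlying vector bundle; once this identification is in place, the quaternionic argument and Corollary~\ref{cor:w2} finish the proof essentially for free.
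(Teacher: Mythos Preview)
Your proposal is correct and follows essentially the same approach as the paper's proof: both use the quaternionic structure on the spin Dirac operator to deduce $c_1(D)=0$ (the paper phrases this as $D$ lying in the image of $KO^{-4}(B)\to K^0(B)$, which is the same statement as your $KSp^0(B)\to K^0(B)$), both invoke oddness of the Seiberg--Witten invariant of $K3$ with its spin$^c$ structure of trivial characteristic, and both conclude via Corollary~\ref{cor:w2}. One minor point: the proposition allows for an arbitrary smooth structure on the topological $K3$, so rather than asserting $SW(K3,\mathfrak{s}_{K3})=\pm 1$ you should only claim oddness, which is what Morgan--Szab\'o \cite{ms} proves for homotopy $K3$ surfaces and is all that Corollary~\ref{cor:w2} requires; the paper's proof cites \cite{ms} for exactly this reason. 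Your more careful treatment of the passage from the topological spin structure on $\tau(E/B)$ to a smooth spin structure on $T(E'/B)$ is welcome, since the paper simply asserts this step.
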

\begin{proof}
Suppose $E$ is smoothable for some smooth structure on $X$. Then $E$ admits a families spin structure. 
Let $\mathfrak{s}$ be the unique spin$^c$ structure on $X$ of zero characteristic. Then $SW(X,\mathfrak{s})$ is odd by \cite{ms}.
By Corollary \ref{cor:w2}, we have
\[
c_1(D) + w_2(H^+) = 0 \in H^2(B ; \mathbb{Z}_2).
\]
To prove the proposition, it suffices to show that $c_1(D)=0$. But $D$ is the families index of a spin structure on a family of $4$-manifolds, so $D$ lies in the image of $KO^{-4}(B) \to K^0(B)$. The group $KO^{-4}(B)$ can be identified with the Grothendieck group of quaternionic vector bundles on $B$. In particular, all such bundles have trivial first Chern class.
\end{proof}

Up to homeomorphism, we may identify $X$ with the connected sum 
\[
X = 2(-E_8) \# 3(S^2 \times S^2).
\]
We label the three copies of $S^2 \times S^2$ as $(S^2 \times S^2)_i$, $i=1,2,3$.

Fix an orientation preserving diffeomorphism $f$ on $S^2 \times S^2$ which acts as $-1$ on $H^2(S^2 \times S^2 ; \mathbb{Z})$ and such that $f$ acts as the identity on some disc $U$. To be specific, we start with the diffeomorphism which acts as a reflection about the equator on each factor of $S^2$ and after performing an isotopy, we can assume the diffeomorphism acts as the identity in a neighbourhood of a fixed point. Now in the connected sum decomposition $X = 2(-E_8) \# 3(S^2 \times S^2)$, we assume that each copy of $(S^2 \times S^2)$ is attached to $2(-E_8)$ via a handle that ends in the fixed disc. Then we set
\[
f_1 = id_{2(-E_8)} \# f \# f \# id, \quad f_2 = id_{2(-E_8)} \# id \# f \# f.
\]
Then $f_1, f_2 : X \to X$ are commuting, orientation preserving homeomorphisms. Let $E_{f_1,f_2} \to T^2$ be the mapping cylinder.

\begin{lemma}\label{lem:topspin}
The family $E \to T^2$ admits a families topological spin structure.
\end{lemma}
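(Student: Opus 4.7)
The plan is to directly construct commuting spin lifts of $f_1, f_2$ on the total space of the spin frame bundle of $X$, following the strategy (though not the exact statement) of Proposition~\ref{prop:spin}. The main observation that makes this work is that, although $f_1$ and $f_2$ each act non-trivially on two of the three $S^2\times S^2$ summands, both of them act as the identity on the entire $2(-E_8)$ summand.

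First I would set up the microbundle data. Equip each of the three smooth summands $(S^2\times S^2)_j$ with a product round metric, for which the chosen $f$ is an isometry fixing a point $p_j$. Applying Lemma~\ref{lem:preserve}, isotope $f$ to a diffeomorphism acting as the identity in a neighborhood of $p_j$, and perform each connect sum along a small disc inside the fixed region. Following the construction in the paragraphs just before Proposition~\ref{prop:spin}, assemble a representative microbundle $U\subset X\times X$ of $\tau X$ from the exponential-map representatives on the smooth pieces and an arbitrary representative on $Y_0=2(-E_8)$; by construction $f_i\times f_i$ preserves $U$ for $i=1,2$, and both $f_1, f_2$ restrict to the identity on an open neighborhood $V\subset 2(-E_8)$ of a chosen basepoint $p\in Y_0$.

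Next, because $X$ is simply connected, $H^1(X;\mathbb{Z}/2)=0$ and the topological spin structure $\mathfrak{s}$ on $\tau X$ is unique. Therefore both $f_1$ and $f_2$ preserve $\mathfrak{s}$ and admit lifts $\tilde f_1,\tilde f_2$ to principal $SpinTop(4)$-bundle automorphisms of the spin double cover $\widetilde{\mathcal{F}}^+(\tau X)\to\mathcal{F}^+(\tau X)$. Each such lift is unique up to composition with the deck transformation, so we may choose $\tilde f_1,\tilde f_2$ to both restrict to the identity on the restriction of $\widetilde{\mathcal{F}}^+(\tau X)$ to the common identity-region $V$ (where the restricted double cover is connected and the space of automorphisms covering the identity is just the $\mathbb{Z}/2$ of deck transformations).

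The commutator $[\tilde f_1,\tilde f_2]$ then covers $[f_1,f_2]=\mathrm{id}_X$, hence is a deck transformation; but by our choice it is the identity over $V$, and a deck transformation of a connected double cover that is trivial on any fiber must be globally trivial. Thus $\tilde f_1$ and $\tilde f_2$ commute, and the mapping torus $\widetilde{\mathcal{F}}^+(\tau X)\times_{\tilde f_1,\tilde f_2}\mathbb{R}^2\to T^2$ is a well-defined principal $SpinTop(4)$-bundle, covering the mapping torus of $\mathcal{F}^+(\tau X)$ under $(f_1,f_2)$, which represents $\mathcal{F}^+(\tau(E/T^2))$ as explained before Proposition~\ref{prop:spin}. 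This is the desired families topological spin structure on $E$. The only subtle point is the microbundle bookkeeping in the presence of the topological (non-smooth) summand $2(-E_8)$, but this is precisely what the framework preceding Proposition~\ref{prop:spin} is designed to handle; the rest is a direct reuse of the deck-transformation argument used in its proof.
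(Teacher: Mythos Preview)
Your proof is correct and is essentially the argument in the proof of Proposition~\ref{prop:spin}; the paper's own proof simply cites that proposition as a special case. You are right to note that the statement of Proposition~\ref{prop:spin} does not literally cover this example, since each $f_i$ acts nontrivially on two of the $S^2\times S^2$ summands rather than on a single $Y_i$; but the proof of that proposition only uses that each $h_j$ preserves the spin structure and that all $h_j$ act trivially near a common basepoint in $Y_0$, both of which you verify here (the first via simple connectivity of $X$, which is a clean shortcut).
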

\begin{proof}
Define $h_1,h_2,h_3$ by
\[
h_1 = id_{2(-E_8)} \# f \# id \# id, \quad h_2 = id_{2(-E_8)} \# id \# f \# id, \quad h_3 = id_{2(-E_8)} \# id \# id \# f.
\]
Then $f_1 = h_1 h_2$ and $f_2 = h_2 h_3$. It follows that the family $E \to T^2$ is the pullback of the mapping cylinder of $h_1,h_2,h_3$ under a map $T^2 \to T^3$. Hence the result follows from Proposition \ref{prop:spin}.
\end{proof}

\begin{theorem}\label{thm:nonsmoothable}
The family $E \to T^2$ is not smoothable, but its restriction to any $1$-dimensional embedded submanifold $S^1 \subset T^2$ is smoothable.
\end{theorem}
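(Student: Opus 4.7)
The proof splits naturally into two parts: showing $E$ is not smoothable over $T^2$, and showing that its restriction to any embedded circle $S^1 \subset T^2$ is smoothable. Both parts hinge on understanding the monodromy action of $f_1, f_2$ on $H^+(X) \cong \mathbb{R}^3$. Since $f$ acts as $-1$ on $H^2(S^2 \times S^2;\mathbb{R})$, it preserves the $1$-dimensional positive part (spanned by $a_i + b_i$ for a standard basis $a_i, b_i$ of $H^2((S^2 \times S^2)_i)$) and acts as $-1$ there. Thus, relative to the basis obtained from the three $S^2 \times S^2$ summands, $f_1^*$ acts on $H^+(X)$ as $\mathrm{diag}(-1,-1,+1)$ and $f_2^*$ as $\mathrm{diag}(+1,-1,-1)$; the $2(-E_8)$ summand contributes nothing since its intersection form is negative definite.

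For non-smoothability, I will show $w_2(H^+) \neq 0 \in H^2(T^2;\mathbb{Z}_2)$ and apply Proposition~\ref{prop:obstruction} together with Lemma~\ref{lem:topspin}. The bundle $H^+ \to T^2$ is the direct sum of three real line bundles $L_1 \oplus L_2 \oplus L_3$ coming from the three diagonal characters of the monodromy. Letting $\alpha, \beta \in H^1(T^2;\mathbb{Z}_2)$ be the standard generators dual to the two circle factors, we get $w_1(L_1) = \alpha$, $w_1(L_2) = \alpha + \beta$, $w_1(L_3) = \beta$, and a short computation gives
\[
w(H^+) = (1+\alpha)(1+\alpha+\beta)(1+\beta) = 1 + \alpha\beta \pmod{2},
\]
so $w_2(H^+) = \alpha\beta \neq 0$. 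By Lemma~\ref{lem:topspin} the family $E$ carries a families topological spin structure, so if $E$ were smoothable Proposition~\ref{prop:obstruction} would force $w_2(H^+)=0$, a contradiction.

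For smoothability of the restrictions, let $\iota : S^1 \hookrightarrow T^2$ be an embedded $1$-submanifold. If $\iota$ is null-homotopic then $\iota^* E$ is the trivial family of smooth $K3$ surfaces and there is nothing to prove, so I may assume $[\iota] = (p,q)$ is a primitive class in $H_1(T^2;\mathbb{Z}) \cong \mathbb{Z}^2$. Then $\iota^* E$ is isomorphic (as a continuous family) to the mapping torus of the homeomorphism $h := f_1^p f_2^q$. From the diagonal form above, $h^*$ acts on $H^+(X)$ with eigenvalues $((-1)^p, (-1)^{p+q}, (-1)^q)$, and hence determinant $(-1)^{2(p+q)} = +1$, so $h$ preserves the orientation of $H^+(X)$. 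By Matsumoto's theorem \cite{mat}, $h$ is therefore topologically isotopic to some $g \in Diff^+(X)$; by Remark~\ref{rem: continuous and smooth isotopies}, we can take the isotopy $H_t$ smooth at each parameter value, so the mapping torus of $h$ is isomorphic as a continuous family to the mapping torus of $g$, which carries a natural structure of a smooth fibre bundle. Hence $\iota^*E$ is smoothable as a family.

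The main content of the argument is the mod $2$ computation identifying $w_2(H^+) = \alpha\beta$; once this is in hand, Corollary~\ref{cor:w2}, applied to the spin$^c$-structure $\mathfrak{s}$ on $K3$ of trivial characteristic with $SW(K3,\mathfrak{s})$ odd (and using that the index $D$ of the spin Dirac operator is quaternionic so $c_1(D)=0$), produces the Seiberg--Witten obstruction to smoothability of the total family. The smoothability of every one-dimensional restriction is the straightforward consequence of Matsumoto's characterization of the image $\pi_0(Diff^+(X)) \to \pi_0(Homeo^+(X))$, made possible by the fact that our monodromy representation is constructed so that each monodromy $f_1^p f_2^q$ lies in that image even though the global $\mathbb{Z}^2$-representation does not lift compatibly to diffeomorphisms.
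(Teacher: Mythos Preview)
Your proof is correct and follows essentially the same approach as the paper: compute the monodromy on $H^+(X)$ as a sum of three sign characters, read off $w_2(H^+)=\alpha\beta\neq 0$, and combine Lemma~\ref{lem:topspin} with Proposition~\ref{prop:obstruction} for non-smoothability; then use that each $f_1^p f_2^q$ preserves the orientation of $H^+$ together with Matsumoto's result for smoothability over circles. One small remark: your appeal to Remark~\ref{rem: continuous and smooth isotopies} is unnecessary and slightly misplaced, since that remark concerns smoothing a path already lying in $Diff(X)$, whereas here the isotopy from $h$ to $g$ is through homeomorphisms; all you need is that a topological isotopy of homeomorphisms gives an isomorphism of the mapping tori as continuous bundles, and the mapping torus of the diffeomorphism $g$ is then manifestly a smooth family.
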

\begin{proof}
Let $E \in H^2( S^2 \times S^2 ; \mathbb{Z})$ be the class Poincar\'e dual to the diagonal $S^2 \to S^2 \times S^2$. Then $f(E) = -E$. For $i=1,2,3$, let $E_i$ denote the corresponding class in $H^2( (S^2 \times S^2)_i ; \mathbb{Z})$. Then $E_1,E_2,E_3$ form a basis for $H^+(X)$ and
\begin{equation*}
\begin{aligned}
f_1(E_1) &= -E_1, && \quad f_1(E_2) &= -E_2, && \quad f_1(E_3) &= E_3, \\ 
f_2(E_1) &=  E_1,  && \quad f_2(E_2) &= -E_2, && \quad f_2(E_3) &= -E_3.
\end{aligned}
\end{equation*}
Clearly $f_1,f_2$ preserve orientation on $H^+$. Let $x,y \in H^1(T^2 ; \mathbb{Z}_2)$ be a basis corresponding to the two $S^1$-factors of $T^2$. Using the splitting of $H^+$ into the real line bundles spanned by $E_1,E_2,E_3$ we find:
\begin{equation*}
\begin{aligned}
w(H^+) &= (1+x)(1+x+y)(1+y) \\
&= (1 + y + xy)(1+y) \\
&= 1 + xy.
\end{aligned}
\end{equation*}
Hence $w_2(H^+) = xy \neq 0$. This shows that $E \to B$ is not smoothable by Proposition~\ref{prop:obstruction}. On the other hand, consider the restriction of $E \to B$ to some embedded circle $j : S^1 \to T^2$. Then $j^*(E) \to S^1$ is the mapping cylinder of the homeomorphism $g = f_1^a f_2^b$, where the underlying homology class of $j(S^1)$ is $(a,b)$. But since $f_1,f_2$ preserve orientation of $H^+(X)$, so does $g$. As remarked earlier, this means that $g$ is continuously isotopic to a diffeomorphism and hence $j^*(E)$ is smoothable with respect to the standard smooth structure on $X=K3$.
\end{proof}

\begin{proposition}
The total space of the family $E$ is smoothable as a manifold.
\end{proposition}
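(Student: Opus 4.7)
My plan is to show that the Kirby--Siebenmann invariant $\mathrm{ks}(E)\in H^4(E;\mathbb{Z}_2)$ vanishes. Since $E$ is a closed topological $6$-manifold and $\pi_i(TOP/O)=0$ for $0\le i\le 6$ except $\pi_3(TOP/O)=\mathbb{Z}_2$, this single invariant is the only obstruction to the existence of a smooth structure on $E$.

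The starting point is the observation that, by construction, both $f_1$ and $f_2$ act as the identity in a neighbourhood of the disc through which each $(S^2\times S^2)$-summand is attached to $2(-E_8)$. Consequently the fibrewise decomposition of $X$ is preserved, yielding a global decomposition of the total space
\[
E \;=\; E_0 \;\cup_{N}\; E',\qquad N\cong S^3\times T^2,
\]
in which $E_0 = Y_0\times T^2$ is a trivial $T^2$-bundle for $Y_0 := 2(-E_8)\setminus\mathring{D}^4$, and $E'$ is a $T^2$-bundle whose fibre is $Y' := 3(S^2\times S^2)\setminus\mathring{D}^4$.

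Next I would verify that each piece is smoothable. On $Y'$ the restrictions of $f_1$ and $f_2$ are commuting smooth diffeomorphisms: on each $S^2\times S^2$-summand each $f_i$ is either the identity or the smooth diffeomorphism $f$, and on the middle summand both equal $f$. Hence $E'$ is a genuinely smooth mapping torus, with $\partial E'$ inheriting the standard product smooth structure on $S^3\times T^2$. For $E_0 = Y_0\times T^2$, naturality of $\mathrm{ks}$ under products with a smooth factor gives $\mathrm{ks}(E_0) = \pi_{Y_0}^{*}\mathrm{ks}(Y_0)$, and the Freedman--Kirby refinement of Rokhlin's theorem applied to the topologically spin $4$-manifold $Y_0$ of signature $-16$ yields
\[
\mathrm{ks}(Y_0) \;=\; \sigma(Y_0)/8 \bmod 2 \;=\; -2 \bmod 2 \;=\; 0.
\]
Thus $E_0$ is smoothable as an absolute $6$-manifold.

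The main obstacle is to make the smooth structures on $E_0$ and $E'$ agree along a collar of $N$. The relative smoothing obstruction lies in $H^4(E_0,\partial E_0;\mathbb{Z}_2)$; via the long exact sequence of the pair and the Künneth computation $H^3(Y_0\times T^2;\mathbb{Z}_2) = H^2(Y_0;\mathbb{Z}_2)\otimes H^1(T^2;\mathbb{Z}_2)$, one sees that the restriction map $H^3(E_0;\mathbb{Z}_2) \to H^3(N;\mathbb{Z}_2)\cong\mathbb{Z}_2$ is zero, so that any two smoothings of $E_0$ induce the same concordance class of smoothing on $N$; the identification of this induced class with the product smoothing is the technical heart of the argument, and it is carried out by the handle-theoretic construction of \cite{kkn}, which applies verbatim in the present setting. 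Once the smoothings are matched on a collar of $N$, they glue to a global smooth structure on $E$.
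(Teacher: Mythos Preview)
Your approach is essentially the same as the paper's, since the paper's entire proof is the one-line deferral ``The proof is totally the same with Section~6 of \cite{kkn}'', and your outline also ultimately rests on that reference for the decisive step. Your write-up is more explicit about the scaffolding, which is helpful, but a few points need tightening.

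First, the decomposition. In the paper's construction each $(S^2\times S^2)_i$ is attached to $2(-E_8)$ \emph{separately}, so the natural neck is $N\cong 3(S^3)\times T^2$ and $Y_0=2(-E_8)\setminus 3\mathring{D}^4$, not a single $S^3\times T^2$. With three boundary spheres one finds $H^3(Y_0;\mathbb{Z}_2)\cong\mathbb{Z}_2^{2}$ and the restriction $H^3(Y_0;\mathbb{Z}_2)\to H^3(\partial Y_0;\mathbb{Z}_2)\cong\mathbb{Z}_2^{3}$ is injective, so your claim that $H^3(E_0;\mathbb{Z}_2)\to H^3(N;\mathbb{Z}_2)$ is zero fails as stated. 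You can either rearrange the connected sum so that a single neck separates $2(-E_8)$ from $3(S^2\times S^2)$ (this is possible but requires care that $f_1,f_2$ still act trivially on that neck), or---more cleanly---bypass this step entirely: the \emph{relative} Kirby--Siebenmann obstruction in $H^4(E_0,\partial E_0;\mathbb{Z}_2)$ to extending the product smoothing of $N$ over $E_0$ equals $\mathrm{ks}(Y_0,\partial Y_0)\times 1_{T^2}$, and by excision $H^4(Y_0,\partial Y_0;\mathbb{Z}_2)\cong H^4(2(-E_8);\mathbb{Z}_2)$ with the relative class corresponding to $\mathrm{ks}(2(-E_8))=\sigma/8=0\bmod 2$. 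This directly gives the required extension without analysing restriction maps of smoothings.

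Second, the Freedman--Kirby formula you invoke is for \emph{closed} spin $4$-manifolds; for $Y_0$ with boundary, $H^4(Y_0;\mathbb{Z}_2)=0$ so $\mathrm{ks}(Y_0)=0$ trivially, and the number you want is the relative class just described. These are minor points; the shape of your argument matches what the reference \cite{kkn} carries out.
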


\begin{proof}
The proof is totally the same with Section~6 of \cite{kkn}.
\end{proof}

\section{Wall crossing formula for families Seiberg-Witten invariants}\label{sec:wallcrossing}

In this section we use the monopole map to give a new derivation of the wall crossing formula for families Seiberg-Witten invariants. The families wall crossing formula was originally proven in \cite{liliu} using parametrised Kuranishi models and obstruction bundles. In contrast our approach is purely cohomological and considerably simpler.

\subsection{Wall crossing formula}\label{sec:wcf1}
As usual our starting point is a finite dimensional monopole map $f : (S_{V,U} , B_{V,U} ) \to (S_{V',U'} , B_{V',U'})$. For the wall crossing formula it is most convenient to stabilise $f$ such that the bundles $V',U'$ are trivial, say $V' = \mathbb{C}^{a'}$, $U' = \mathbb{R}^{b'}$. According to Assumption 1, we may assume that $U' = U \oplus H^+$ and that $f|_U : U \to U'$ is the inclusion $U \to U'$. Recall that the set $\mathcal{CH}(f)$ of chambers for $f$ is the set of homotopy class of sections of $U' \setminus U$, or equivalently homotopy classes of sections of $S(H^+)$, the unit sphere bundle in $H^+$ with respect to some choice of metric on $H^+$. Let $[\phi] \in \mathcal{CH}(f)$ be a chamber represented by a section $\phi : B \to S(H^+)$. Since $\phi$ avoids $U$, we obtain a pushforward map:
\[
\phi_* : H^j_{S^1}(B ; \mathbb{Z}) \to H^{j+2a'+b'}_{S^1}( S_{V',U'} , S_U ; \mathbb{Z} ).
\]
Recall that for $m \ge 0$ the $m$-th Seiberg-Witten invariant of $(f , \phi)$ is a cohomology class $SW_m(f , \phi) \in H^{2m-(2d-b^+-1)}(B , \mathbb{Z}_{w_1})$ where $w_1 = w_1(H^+)$. By Theorem \ref{thm:pdclass}, $SW_m(f,\phi)$ is given by:
\[
SW_m(f,\phi) = (\pi_Y)_*( x^m \smallsmile f^*\phi_*(1) ).
\]
Let $[\psi] \in \mathcal{CH}(f)$ be a second chamber for $f$ represented by a section $\psi : B \to S(H^+)$. Then similarly
\[
SW_m(f,\psi) = (\pi_Y)_*( x^m \smallsmile f^*\psi_*(1) ).
\]
It follows that the difference $SW_m(f,\phi) - SW_m(f,\psi)$ can be computed provided we know the difference $\psi_*(1) - \psi_*(1) \in H^{2a'+b'}_{S^1}( S_{V',U'} , S_U ; \mathbb{Z})$. As before, we use the following notation: $\widetilde{\tau}_{V',U'}^\phi = \phi_*(1)$, $\widetilde{\tau}_{V',U'}^\psi = \psi_*(1)$. Since $\phi$ and $\psi$ are homotopic within the total space of $U$, we have that the images of $\widetilde{\tau}_{V',U'}^\phi$ and $\widetilde{\tau}_{V',U'}^\psi$ under the natural map $H^{2a'+b'}_{S^1}( S_{V',U'} , S_U ; \mathbb{Z} ) \to H^{2a'+b'}_{S^1}( S_{V',U'} , B_{V',U'} ; \mathbb{Z} )$ coincide. Then from Proposition \ref{prop:equivthom2} we have that
\[
\widetilde{\tau}_{V',U'}^\phi - \widetilde{\tau}_{V',U'}^\psi = \widetilde{\theta}(\phi,\psi) \smallsmile \delta \tau_{0,U}
\]
for some $\widetilde{\theta}(\phi,\psi) \in H^{2a'+b^+-1}_{S^1}(B,\mathbb{Z}_{w_1})$.\\

Next we observe that $\phi : B \to S_{V',U'}$ factors as:
\[
B \buildrel \phi^0 \over \longrightarrow S(H^+) \buildrel \iota \over \longrightarrow S_{V',U'}.
\]
A similar factorisation holds for $\psi$ and therefore we have:
\begin{equation}\label{equ:taudifference}
\widetilde{\tau}_{V',U'}^\phi - \widetilde{\tau}_{V',U'}^\psi = \phi_*(1) - \psi_*(1) = \iota_*( \phi^0_*(1) - \psi^0_*(1)).
\end{equation}

\begin{lemma}\label{lem:iota1}
We have $\iota_*(1) = x^{a'} \delta \tau_{0,U}$.
\end{lemma}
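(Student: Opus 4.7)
The plan is to factor $\iota$ through the inclusion of the $S^1$-fixed sub-sphere bundle, writing $\iota = \iota' \circ \iota_0$ where $\iota_0 : S(H^+) \hookrightarrow S_{U'}$ and $\iota' : S_{U'} \hookrightarrow S_{V',U'}$. The $S^1$-equivariant normal bundle of $\iota'$ (as a map of pairs $(S_{U'}, S_U) \hookrightarrow (S_{V',U'}, S_U)$) is the pullback of $V' = \mathbb{C}^{a'}$, on which $S^1$ acts by scalar multiplication. Since $V'$ is trivial, its $S^1$-equivariant Euler class equals $x^{a'}$, so the projection formula yields
\[
\iota'_* \circ \iota'^* = x^{a'} \smallsmile (-) \quad \text{on} \quad H^*_{S^1}(S_{V',U'}, S_U; \mathbb{Z}).
\]
Moreover, by naturality of the coboundary under the inclusion of triples $(S_{U'}, S_U, B_\infty) \hookrightarrow (S_{V',U'}, S_U, B_\infty)$, one has $\iota'^*(\delta \tau_{0,U}) = \delta' \tau_{0,U}$, where $\delta'$ denotes the coboundary associated to the smaller triple.

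The essential step is therefore the auxiliary claim $(\iota_0)_*(1) = \delta' \tau_{0, U}$ in $H^{b+1}_{S^1}(S_{U'}, S_U; \mathbb{Z})$. Note that $S^1$ acts trivially on $S_{U'}$, so this reduces to an ordinary cohomology statement. I would prove it by exploiting the fibrewise join decomposition $S_{U'} = S_U \star S(H^+)$: removing a tubular neighbourhood of $S_U$ from $S_{U'}$ yields a manifold $Y$ whose boundary $\partial Y = S(q^*H^+)$ is the unit sphere bundle of the normal bundle of $S_U$, and $Y$ itself is a $D(\mathbb{R} \oplus U)$-disc bundle over $S(H^+)$ with the inclusion $S(H^+) \hookrightarrow Y$ a deformation retract. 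Via the excision isomorphism $H^*(S_{U'}, S_U) \cong H^*(Y, \partial Y)$, the class $(\iota_0)_*(1)$ is the Thom class of the trivial rank-$(b+1)$ normal bundle of $S(H^+) \subset Y$. On the other side, Lemma~\ref{lem:pushforward} identifies $\delta'$ with the pushforward along $\partial Y \hookrightarrow Y$ once the class $\tau_{0,U}$ on $(S_U, B_\infty)$ is transported to a class on $\partial Y$ via the Thom isomorphism for the normal bundle $q^*H^+$ of $S_U \subset S_{U'}$; tracing through these identifications shows the two classes agree.

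Combining everything:
\[
\iota_*(1) \;=\; \iota'_*\bigl((\iota_0)_*(1)\bigr) \;=\; \iota'_*(\delta' \tau_{0,U}) \;=\; \iota'_* \iota'^*(\delta \tau_{0,U}) \;=\; x^{a'} \smallsmile \delta \tau_{0,U},
\]
which is the desired formula. The main obstacle is the auxiliary claim $(\iota_0)_*(1) = \delta' \tau_{0,U}$: the factorization and projection-formula steps are essentially formal, but verifying this identity requires carefully matching the two different constructions (pushforward via tubular neighbourhood versus coboundary of a Thom class) through the join decomposition and excision, and keeping track of the orientation conventions coming from the complex structure on $V'$ and the chosen orientation on $U$.
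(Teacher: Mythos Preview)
Your factorisation $\iota = \iota' \circ \iota_0$ is exactly the one the paper uses (written there as $\iota_2 \circ \iota_1$), and the final chain of equalities is the same.  For the auxiliary claim $(\iota_0)_*(1) = \delta'\tau_{0,U}$, your join-and-excision argument is a valid alternative, though more elaborate than necessary: the paper simply notes that $S^1$ acts trivially on $S_{U'}$, so by Proposition~\ref{prop:equivthom2} the non-equivariant group $H^{b+1}(S_{U'},S_U;\mathbb{Z}_{w_1})$ is spanned by $\delta'\tau_{0,U}$ and $\widetilde{\tau}^\phi_{0,U'}$; the second coefficient vanishes because $S(H^+)$ bounds the disc $D(H^+)$ away from $B_\infty$, and the remaining integer coefficient is checked to be $1$ by restriction to a single fibre.

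There is, however, a genuine gap in your treatment of $\iota'$.  The identity $\iota'_*\circ\iota'^* = x^{a'}\smallsmile(-)$ does \emph{not} follow from the projection formula together with the computation of the equivariant Euler class of the normal bundle.  The projection formula gives $\iota'_*\iota'^*(\beta) = \beta\smallsmile\iota'_*(1)$, so you would still need to show that the equivariant Poincar\'e dual class $\iota'_*(1)\in H^{2a'}_{S^1}(S_{V',U'})$ equals $x^{a'}$, which is not immediate.  The Euler class of the normal bundle appears in the self-intersection formula $\iota'^*\iota'_*(\alpha)=e_{V'}\smallsmile\alpha$, which is the composition in the \emph{opposite} order.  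The paper circumvents this: it first uses Proposition~\ref{prop:equivthom2} and the fact that $S(H^+)$ bounds in $S_{V',U'}\setminus B_\infty$ to write $(\iota')_*(\delta'\tau_{0,U})=\gamma\,\delta\tau_{0,U}$ for some $\gamma\in\mathcal{H}^{2a'}$, and only \emph{then} applies $(\iota')^*$ together with the self-intersection formula to pin down $\gamma=e_{V'}=x^{a'}$.  Your conclusion is correct, but the middle step of your chain requires either this argument or a separate verification that $\iota'_*(1)=x^{a'}$.
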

\begin{proof}
We factor  $S(H^+) \buildrel \iota \over \longrightarrow S_{V',U'}$ as follows:
\[
S(H^+) \buildrel \iota_1 \over \longrightarrow S_{0,U'} \buildrel \iota_2 \over \longrightarrow S_{V',U'}.
\]
Then it suffices to show that $(\iota_1)_*(1) = \delta \tau_{0,U}$ and $(\iota_2)_*(\delta \tau_{0,U}) = x^{a'} \delta \tau_{0,U}$. First consider $\iota_1$. Since $S^1$ acts trivially on $S(H^+)$ and $S_{0,U'}$, it suffices to compute $(\iota_1)_*(1)$ in non-equivariant cohomlogy. We view $(\iota_1)_*$ as a homomorphism
\[
(\iota_1)_* : H^*( S(H^+) ; \mathbb{Z}) \to H^{*+b+1}( S_{0,U'} , S_U ; \mathbb{Z}_{w_1}).
\]
But any element of $H^*( S_{0,U'} , S_U ; \mathbb{Z}_{w_1})$ has the form $\alpha \smallsmile \delta \tau_{0,U} + \beta \smallsmile \widetilde{\tau}^\phi_{0,U'}$ for some $\alpha \in H^*(B ; \mathbb{Z})$ and some $\beta \in H^*(B ; \mathbb{Z}_{w_1})$. In particular, we have $(\iota_1)_*(1) = \alpha \smallsmile \delta \tau_{0,U} + \beta \smallsmile \widetilde{\tau}^\phi_{0,U'}$ for some $\alpha \in H^0(B ; \mathbb{Z}) = \mathbb{Z}$ and some $\beta \in H^{1-b^+}(B ; \mathbb{Z}_{w_1})$. We claim that the image of $(\iota_1)_*(1)$ under the natural map $H^*( S_{0,U'} , S_U ; \mathbb{Z}_{w_1}) \to H^*( S_{0,U'} , B_{0,U'} ; \mathbb{Z}_{w_1})$ is zero. To see this note that $(\iota_1)_*(1)$ is Poincar\'e dual to $S(H^+) \subset S_{0,U'}$. But $S(H^+)$ is the boundary of the unit disc bundle $D(H^+)$ and $D(H^+)$ is disjoint from $B_{0,U'}$. This proves the claim. Thus $(\iota_1)_*(1) = \alpha \delta \tau_{0,U}$ for some $\alpha \in \mathbb{Z}$. To compute $\alpha$, it suffices to restrict the the fibres of $S(H^+)$ and $S_{0,U'}$ over a point in $B$. Upon restriction, $\iota_1$ is the just the inclusion map of spheres:
\[
S( \mathbb{R}^{b^+}) \to S(\mathbb{R} \oplus \mathbb{R}^{b^+} \oplus \mathbb{R}^b).
\]
Then it is straightforward to verify in this situation that $\alpha = 1$.\\

Next, we verify that $(\iota_2)_*(\delta \tau_{0,U}) = x^{a'} \delta \tau_{0,U}$. But $(\iota_2)_*(\delta \tau_{0,U}) = \iota_*(1)$ is Poincar\'e dual to $S(H^+)$, so arguing as we did for $\iota_1$, this implies that the image of $(\iota_2)_*(\delta \tau_{0,U})$ under the map $H^*_{S^1}(S_{V',U'} , S_U ; \mathbb{Z}_{w_1}) \to H^*_{S^1}( S_{V',U'} , B_{V',U'} ; \mathbb{Z}_{w_1})$ is zero. Thus $(\iota_2)_*(\delta \tau_{0,U}) = \gamma \smallsmile \delta \tau_{0,U}$ for some $\gamma \in H^{2a'}_{S^1}(B ; \mathbb{Z})$. But $(\iota_2)^* (\iota_2)_* (\delta \tau_{0,U}) = e_{V'} \smallsmile \delta \tau_{0,U}$, where $e_{V'}$ denotes the equivariant Euler class of $V'$. It follows that $\gamma = e_{V'}$. Then since we are assuming $V'$ is a trivial bundle, we have $e_{V'} = x^{a'}$ and the result follows.
\end{proof}

Let $\pi : S(H^+) \to B$ denote the projection to $B$. From $\pi \circ \phi^0 = id$, we see that $\pi_*( \phi^0_*(1) ) = 1$ and similarly $\pi_*( \psi^0_*(1) ) = 1$. It follows that $\pi_*( \phi^0_*(1) - \psi^0_*(1)) = 0$. The Gysin sequence for $S(H^+) \to B$ then implies that 
\begin{equation}\label{equ:theta0}
\phi^0_*(1) - \psi^0_*(1) = \pi^*(\theta(\phi,\psi))
\end{equation}
for some $\theta(\phi,\psi) \in H^{b^+-1}(B ; \mathbb{Z}_{w_1})$. Moreover since $S(H^+)$ admits sections, the Gysin sequence splits and the pullback map $\pi^* : H^*(B ; \mathbb{Z}_{w_1}) \to H^*( S(H^+) ; \mathbb{Z}_{w_1})$ is injective, so that $\theta(\phi,\psi)$ is uniquely characterised by Equation (\ref{equ:theta0}).

\begin{theorem}[Wall crossing formula for families Seiberg-Witten invariants]\label{thm:wcf}
Given $\phi, \psi \in \mathcal{CH}(f)$, let $\theta(\phi,\psi) \in H^{b^+-1}(B ; \mathbb{Z}_{w_1})$ be defined as in Equation (\ref{equ:theta0}). Then:
\[
SW_m(f,\phi)-SW_m(f,\psi) = \begin{cases} 0 & \text{if} \; \; m < d-1, \\ \theta(\phi,\psi)s_{m-(d-1)}(D) & \text{if} \; \; m \ge d-1. \end{cases}
\]
\end{theorem}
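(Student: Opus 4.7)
The plan is to apply the cohomological formula of Theorem \ref{thm:pdclass} to both $SW_m(f,\phi)$ and $SW_m(f,\psi)$ and take the difference. Since the pushforward $(\pi_Y)_*$ and the cup product with $x^m$ are $H^*(B;\mathbb{Z})$-linear, this reduces matters to computing
\[
SW_m(f,\phi)-SW_m(f,\psi) = (\pi_Y)_*\bigl(x^m \smallsmile f^*(\widetilde{\tau}^\phi_{V',U'}-\widetilde{\tau}^\psi_{V',U'})\bigr),
\]
so the task is to evaluate the difference $\widetilde{\tau}^\phi_{V',U'}-\widetilde{\tau}^\psi_{V',U'} \in H^{2a'+b'}_{S^1}(S_{V',U'}, S_U; \mathbb{Z}_{w_1})$ in terms of the Segre class basis of Proposition \ref{prop:equivthom3}.

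Next, I would use the factorisation $\phi = \iota\circ\phi^0$ and $\psi = \iota\circ\psi^0$ through $S(H^+)$, which is exactly the content of equation \eqref{equ:taudifference}, to rewrite the difference as $\iota_*(\phi^0_*(1)-\psi^0_*(1))$. By the defining equation \eqref{equ:theta0}, this equals $\iota_*(\pi^*\theta(\phi,\psi))$ with $\pi:S(H^+)\to B$ the projection. Invoking the projection formula and Lemma \ref{lem:iota1} (which computes $\iota_*(1) = x^{a'}\delta\tau_{0,U}$) then yields
\[
\widetilde{\tau}^\phi_{V',U'}-\widetilde{\tau}^\psi_{V',U'} = \theta(\phi,\psi) \smallsmile x^{a'}\delta\tau_{0,U},
\]
where $\theta(\phi,\psi)$ is implicitly pulled back from $B$ to $S_{V',U'}$.

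Pulling back via $f$, Assumption 1 guarantees that $f$ restricts to the inclusion $U\to U'$, so $f^*(\delta\tau_{0,U})$ agrees with $\delta\tau_{0,U}$ in $H^{*}_{S^1}(S_{V,U},S_U;\mathbb{Z}_{w_1})$; combined with naturality of the pullback of $\theta(\phi,\psi)$ from $B$, we obtain
\[
SW_m(f,\phi)-SW_m(f,\psi) = \theta(\phi,\psi)\smallsmile (\pi_Y)_*\bigl(x^{m+a'}\smallsmile \delta\tau_{0,U}\bigr).
\]
Applying Proposition \ref{prop:pushforwardsegre} with $j=m+a'$ gives
\[
(\pi_Y)_*\bigl(x^{m+a'}\delta\tau_{0,U}\bigr) = \begin{cases} 0 & \text{if } m+a' < a-1, \\ s_{m+a'-(a-1)}(V) & \text{if } m+a' \ge a-1, \end{cases}
\]
which collapses to the cases $m<d-1$ and $m\ge d-1$ via $d = a-a'$. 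Since we stabilised so that $V'$ is trivial, $V = D + V'$ in $K^0(B)$ has the same total Segre class as $D$, so $s_{m-(d-1)}(V) = s_{m-(d-1)}(D)$, recovering the stated formula. The only mild subtleties are checking (i) that the projection formula for $\iota_*$ survives in equivariant cohomology with the local system $\mathbb{Z}_{w_1}$ and (ii) that each step respects the identifications of $\delta\tau_{0,U}$ across the long exact sequences, both of which are routine given Assumption 1; the computation itself is essentially formal once the cohomological formula of Theorem \ref{thm:pdclass} is in hand.
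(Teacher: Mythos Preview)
Your proposal is correct and follows essentially the same route as the paper's proof: both combine Theorem \ref{thm:pdclass} with the factorisation \eqref{equ:taudifference}, the definition \eqref{equ:theta0}, Lemma \ref{lem:iota1}, and Proposition \ref{prop:pushforwardsegre} in the same order. You supply a bit more detail than the paper at two points (the identification $f^*(\delta\tau_{0,U}) = \delta\tau_{0,U}$ via Assumption 1, and the equality $s_j(V)=s_j(D)$ from the triviality of $V'$), but these are implicit in the paper's setup and there is no substantive difference.
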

\begin{proof}
From Equations (\ref{equ:taudifference}) and (\ref{equ:theta0}) we have:
\begin{equation*}
\begin{aligned}
\widetilde{\tau}_{V',U'}^\phi - \widetilde{\tau}_{V',U'}^\psi &= \iota_*( \phi^0_*(1) - \psi^0_*(1)) \\
&= \iota_*( \pi^*(\theta(\phi,\psi))) \\
&= \theta(\phi,\psi) \cdot \iota_*(1) \\
&= \theta(\phi,\psi) \cdot x^{a'} \delta \tau_{0,U},
\end{aligned}
\end{equation*}
where the last equality follows from Lemma \ref{lem:iota1}. Combined with Theorem \ref{thm:pdclass}, we now have:
\begin{equation*}
\begin{aligned}
SW_m(f,\phi) - SW_m(f,\psi) &= (\pi_Y)_*( x^m \smallsmile f^*( \widetilde{\tau}_{V',U'}^\phi - \widetilde{\tau}_{V',U'}^\psi ) ) \\
&= (\pi_Y)_*( x^{m+a'} \smallsmile f^*( \theta(\phi,\psi)) \cdot \delta \tau_{0,U} ) ) \\
&= \theta(\phi,\psi) \cdot (\pi_Y)_*( x^{m+a'} \smallsmile \delta \tau_{0,U}).
\end{aligned}
\end{equation*}
The result now follows by applying Proposition \ref{prop:pushforwardsegre}.
\end{proof}

\subsection{Relative obstruction class}\label{sec:relativeobstruction}

In the previous subsection we gave a wall crossing formula for families Seiberg-Witten invariants in terms of a class $\theta(\phi , \psi)$. In this subsection we examine the class $\theta(\phi,\psi)$ more carefully and identify with the primary obstruction class for the existence of a homotopy between $\phi$ and $\psi$.

Let $[\phi], [\psi] \in \mathcal{CH}(f)$ be two chambers, which we represent by sections $\phi , \psi : B \to S(H^+)$ of the unit sphere bundle $S(H^+)$ associated to $H^+$. By definition, $\phi$ and $\psi$ represent the same chamber if and only if there is a homotopy between them. While the general problem of computing homotopy classes of sections of sphere bundles is complicated, we can use obstruction theory to gain some insights. Since the fibres of $S(H^+) \to B$ are spheres of dimension $b^+-1$, we see there is no obstruction to finding a homotopy between $\phi$ and $\psi$ on the $(b^+-2)$-skeleton of $B$. Extending such a homotopy to the $(b^+-1)$-skeleton is in general obstructed. The obstruction to extending to the $(b^+-1)$-skeleton is a cohomology class called the {\em primary difference} in \cite[\textsection 36]{ste}:
\[
Obs(\phi,\psi) \in H^{b^+-1}(B ; \mathbb{Z}_{w_1}).
\]
One may define $Obs(\phi,\psi)$ as follows. Consider the pullback of $H^+$ to $B \times I$, where $I = [0,1]$ is the unit interval. The pair $(\phi,\psi)$ define a non-vanishing section of $H^+$ over $B \times {0,1}$. In such a situation we have a relative Euler class \cite{ker}
\[
e_{rel}( H^+, \phi,\psi) \in H^{b^+}( B \times I , B \times \{0,1\} ; \mathbb{Z}_{w_1}).
\]
Let $\iota : B \to B \times I$ be given by $\iota(b) = (b , 1/2)$. Then the pushforward
\[
\iota_* : H^{*-1}( B ; \mathbb{Z}_{w_1} ) \to H^*( B \times I , B \times \{0,1\} ; \mathbb{Z}_{w_1})
\]
is easily seen to be an isomorphism. The class can $Obs(\phi,\psi)$ may be defined by the relation (c.f. \cite[Lemma 3.8]{liliu}):
\[
\iota_*( Obs(\phi , \psi) ) = e_{rel}(H^+ , \phi , \psi).
\]
Note that $\iota_*$ coincides with the coboundary map $\delta : H^{*-1}( B ; \mathbb{Z}_{w_1} ) \to H^*( B \times I , B \times \{0,1\} ; \mathbb{Z}_{w_1})$ in the long exact sequence of the pair $(B \times I , B \times \{0,1\})$.

A more geometric definition of $e_{rel}(H^+ , \phi , \psi)$ is as follows: choose a section $\tilde{s}$ of the pullback of $H^+$ to $B \times I$ such that $\tilde{s}|_{B \times \{0\}} = \phi$, $\tilde{s}|_{B \times \{1\}} = \psi$ and such that $\tilde{s}$ is transverse to the zero section. Then the zero locus $\tilde{s}^{-1}(0)$ is Poincar\'e dual to $e_{rel}(H^+ , \phi , \psi)$.

\begin{lemma}
Let $-\psi$ denote the composition of $\psi$ with the antipodal map. Suppose that $\phi$ and $-\psi$ meet transversally. Then $Obs(\phi,\psi)$ is Poincar\'e dual to the intersection locus $S = \{ b \in B \; | \; \phi(b) = -\psi(b) \}$.
\end{lemma}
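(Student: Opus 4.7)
The plan is to use the geometric description of $e_{rel}(H^+,\phi,\psi)$ as Poincaré dual to the zero locus of a generic interpolating section. Define $\tilde{s} : B \times I \to H^+$ (pulled back to $B \times I$) by
\[
\tilde{s}(b,t) = (1-t)\phi(b) + t\psi(b).
\]
This agrees with $\phi$ on $B\times\{0\}$ and with $\psi$ on $B\times\{1\}$, and both restrictions are non-vanishing because $\phi,\psi$ take values in the unit sphere bundle. If $\tilde{s}(b,t) = 0$, then $\|(1-t)\phi(b)\| = \|t\psi(b)\|$, so $|1-t|=|t|$, forcing $t=1/2$ and $\phi(b) = -\psi(b)$. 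Hence $\tilde{s}^{-1}(0) = S \times \{1/2\}$, identified with $S$ via $\iota$.

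Next I would verify that $\tilde{s}$ is transverse to the zero section at every point $(b_0,1/2)$ with $b_0 \in S$. The partial derivative in the $t$-direction is $\psi(b_0)-\phi(b_0) = -2\phi(b_0)$, a non-zero vector which spans the line $\mathbb{R}\cdot\phi(b_0) \subset H^+_{b_0}$. Differentiating $\|\phi\|^2 = \|\psi\|^2 = 1$ shows that $d\phi_{b_0}(v)$ and $d\psi_{b_0}(v)$ both lie in $\phi(b_0)^\perp = \psi(b_0)^\perp$ for every $v \in T_{b_0}B$, so the partial derivative of $\tilde{s}$ along $B$ at $t=1/2$ is $v \mapsto \tfrac{1}{2}\bigl(d\phi_{b_0}(v) + d\psi_{b_0}(v)\bigr) \in \phi(b_0)^\perp$. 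The transversality hypothesis on $\phi$ and $-\psi$ in the sphere bundle $S(H^+)$ says precisely that the vertical components of $d\phi_{b_0}$ and $d(-\psi)_{b_0}$ span the tangent space $T_{\phi(b_0)} S(H^+_{b_0}) \cong \phi(b_0)^\perp$; equivalently, $v \mapsto d\phi_{b_0}(v) + d\psi_{b_0}(v)$ surjects onto $\phi(b_0)^\perp$. Combined with the $t$-derivative spanning $\mathbb{R}\cdot\phi(b_0)$, this gives surjectivity of $d\tilde{s}_{(b_0,1/2)}$ onto $H^+_{b_0}$.

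Given transversality, $\tilde{s}^{-1}(0) = S\times\{1/2\}$ is a smooth submanifold of $B\times I$ of codimension $b^+$, disjoint from $B\times\partial I$, whose normal bundle is canonically (via $d\tilde{s}$) identified with $\tilde{s}^* H^+|_{S\times\{1/2\}}$. It is therefore Poincaré-Lefschetz dual to the relative Euler class $e_{rel}(H^+,\phi,\psi) \in H^{b^+}(B\times I,\, B\times\partial I;\, \mathbb{Z}_{w_1})$. Applying naturality of Poincaré duality to the proper embedding $\iota : B \hookrightarrow B\times I$, $b\mapsto (b,1/2)$, and using the defining relation $\iota_*(Obs(\phi,\psi)) = e_{rel}(H^+,\phi,\psi)$ together with the fact that $\iota_*$ is an isomorphism, we conclude $Obs(\phi,\psi) = PD_B([S])$ in $H^{b^+-1}(B;\mathbb{Z}_{w_1})$.

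The main bookkeeping point, rather than an obstacle, is matching the local orientations: the isomorphism between the normal bundle of $S\times\{1/2\}$ in $B\times I$ and $H^+|_S$ induced by $d\tilde{s}$ must be compared with the standard $H^+|_S$ orientation data used to define $e_{rel}$, and one must check that the sign convention is consistent with the choice of isomorphism $\iota_*$ between $H^{b^+-1}(B;\mathbb{Z}_{w_1})$ and $H^{b^+}(B\times I,B\times\partial I;\mathbb{Z}_{w_1})$. This is a standard verification and introduces no genuine difficulty.
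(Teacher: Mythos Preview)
Your proof is correct and follows essentially the same approach as the paper: you construct the same interpolating section $\tilde{s}(b,t) = (1-t)\phi(b) + t\psi(b)$, identify its zero locus as $S \times \{1/2\}$, and conclude via the relation $\iota_*(Obs(\phi,\psi)) = e_{rel}(H^+,\phi,\psi)$. The paper's proof is briefer, simply asserting that transversality of $\phi$ and $-\psi$ implies transversality of $\tilde{s}$ to the zero section, whereas you supply the explicit verification (correctly using that transversality of two sections is equivalent to surjectivity of the difference of their vertical derivatives).
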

\begin{proof}
Let us define a section $\tilde{s}$ of the pullback of $H^+$ to $B \times I$ by:
\[
\tilde{s}(b,t) = (1-t)\phi + t\psi.
\]
Then $\tilde{s}|_{B \times \{0\}} = \phi$, $\tilde{s}|_{B \times \{1\}} = \psi$. Moreover, since $\phi$ and $\psi$ are both sections of the unit sphere bundle $S(H^+)$, we see that $\tilde{s}(b,t) = 0$ only when $t = 1/2$. Moreover $\tilde{s}(b,1/2) = \frac{1}{2}\left( \phi + \psi \right)$, so $\tilde{s}(b,1/2) = 0$ if and only if $\phi(b) = -\psi(b)$. So the zero locus of $\tilde{s}$ is precisely $S \times \{1/2\} = \iota(S)$. If $\phi$ and $-\psi$ intersect transversally then it is easy to see that $\tilde{s}$ meets the zero section transversally. Thus $\iota(S)$ is Poincar\'e dual to $e_{rel}(H^+ , \phi , \psi)$. Then since $\iota_*( Obs(\phi,\psi)) = e_{rel}(H^+,\phi,\psi)$ we have that $S$ is Poincar\'e dual to $Obs(\phi,\psi)$.
\end{proof}

The above lemma translates into the following cohomological formula for $Obs(\phi,\psi)$:
\begin{equation}\label{equ:obs}
Obs(\phi , \psi) = \pi_*( \phi_*(1) \smallsmile (-\psi)_*(1) ).
\end{equation}
Indeed $\phi_*(1) \smallsmile (-\psi)_*(1)$ is Poincar\'e dual to the intersection locus of $\phi(B)$ and $-\psi(B)$ in $S(H^+)$ and thus $\pi_*( \phi_*(1) \smallsmile (-\psi)_*(1) )$ is Poincar\'e dual to $S = \{ b \in B \; | \; \phi(b) = -\psi(b) \}$.

\begin{proposition}
We have an equality $Obs(\phi,\psi) = \theta(\phi,\psi)$.
\end{proposition}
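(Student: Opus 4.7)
The plan is to derive the equality directly from the defining identity
\[
\pi^{*}\theta(\phi,\psi) \;=\; \phi_{*}(1) - \psi_{*}(1)
\]
by cupping with $(-\psi)_{*}(1)$ and then pushing forward along $\pi : S(H^{+}) \to B$. Two elementary geometric facts drive the argument. First, because $\psi(b)$ is a unit vector and $-\psi(b)$ its antipode for every $b \in B$, the sections $\psi$ and $-\psi$ of $S(H^{+})$ have disjoint images; the submanifolds $\psi(B)$ and $(-\psi)(B)$ thus meet emptily (hence transversely), so by Poincar\'e duality
\[
\psi_{*}(1) \smallsmile (-\psi)_{*}(1) \;=\; 0.
\]
Second, $-\psi$ is itself a section, so $\pi \circ (-\psi) = \mathrm{id}_{B}$ and consequently $\pi_{*}((-\psi)_{*}(1)) = (\mathrm{id}_{B})_{*}(1) = 1$.

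With these observations in hand, multiply the defining relation by $(-\psi)_{*}(1)$. The $\psi_{*}(1)$-term on the right vanishes by the disjointness identity, leaving
\[
(-\psi)_{*}(1) \smallsmile \pi^{*}\theta(\phi,\psi) \;=\; \phi_{*}(1) \smallsmile (-\psi)_{*}(1).
\]
Now apply $\pi_{*}$ to both sides. The projection formula $\pi_{*}(\alpha \smallsmile \pi^{*}\beta) = \pi_{*}(\alpha) \smallsmile \beta$ transforms the left-hand side into $\pi_{*}((-\psi)_{*}(1)) \smallsmile \theta(\phi,\psi) = 1 \smallsmile \theta(\phi,\psi) = \theta(\phi,\psi)$, while the right-hand side is exactly $Obs(\phi,\psi)$ by equation (\ref{equ:obs}). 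Hence $\theta(\phi,\psi) = Obs(\phi,\psi)$.

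There is no real obstacle: the whole proof is a half-line application of the projection formula, once one recognizes that passing from $\psi$ to $-\psi$ converts the obstruction-class expression $\pi_{*}(\phi_{*}(1) \smallsmile (-\psi)_{*}(1))$ into something amenable to the Gysin-style identity defining $\theta$. The only minor care is in coefficients (the two classes being cupped live in the local system $\mathbb{Z}_{w_{1}}$, and $\mathbb{Z}_{w_{1}} \otimes \mathbb{Z}_{w_{1}} \cong \mathbb{Z}$, consistent with $Obs(\phi,\psi) \in H^{b^{+}-1}(B;\mathbb{Z}_{w_{1}})$) and in verifying that the splitting of the Gysin sequence makes $\pi^{*}$ injective so that $\theta(\phi,\psi)$ is determined by cupping-and-pushing, but both are entirely routine.
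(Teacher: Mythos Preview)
Your argument is correct and considerably more efficient than the paper's. The only quibble is a minor inconsistency in the order of the cup products: you write $(-\psi)_*(1)\smallsmile\pi^*\theta(\phi,\psi)$ on the left but $\phi_*(1)\smallsmile(-\psi)_*(1)$ on the right, which as stated differ by a sign $(-1)^{b^+-1}$. Cupping consistently on the right and using the form $\pi_*(\pi^*\beta\smallsmile\alpha)=\beta\smallsmile\pi_*(\alpha)$ of the projection formula (the convention the paper itself uses in its computations) gives
\[
\theta(\phi,\psi)=\theta(\phi,\psi)\smallsmile\pi_*\bigl((-\psi)_*(1)\bigr)=\pi_*\bigl(\pi^*\theta(\phi,\psi)\smallsmile(-\psi)_*(1)\bigr)=\pi_*\bigl(\phi_*(1)\smallsmile(-\psi)_*(1)\bigr)=Obs(\phi,\psi),
\]
exactly as you intended.

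By contrast, the paper's proof proceeds by building an explicit additive basis for $H^*(S(H^+);\mathbb{Z}_{w_1})$: it introduces classes $\tau_\phi$ with $\pi_*(\tau_\phi)=1$ and $\phi^*(\tau_\phi)=0$, computes $\phi_*(1)=\tau_\phi+\pi^*(e_\phi)$, the square $\tau_\phi^2$, and the effect of the antipodal map on $\tau_\phi$, then expands both $Obs(\phi,\psi)$ and $\theta(\phi,\psi)$ in these terms to see they agree. This is substantially longer, but it has the side benefit of producing the auxiliary formula $Obs(\phi,\psi)=\lambda(\phi,\psi)+e_\phi-e_\psi$ and the relation $2\,Obs(\phi,\psi)=e_\phi-e_\psi$ for $b^+$ odd, which the paper uses in the propositions immediately following. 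Your approach bypasses all of that structure and gets the bare equality in one line; if one only needs the proposition itself, yours is clearly preferable.
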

\begin{proof}
Recall that $\theta(\phi,\psi)$ is characterised by the identity $\pi^*(\theta(\phi,\psi)) = \phi_*(1) - \psi_*(1)$ while on the other hand $Obs(\phi,\psi)$ is given by Equation (\ref{equ:obs}). To relate these two expressions we will need to carry out some cohomological computations on $S(H^+)$. Consider the Gysin sequence for $S(H^+) \to B$:
\[
\cdots \to H^*(B ; \mathbb{Z}_{w_1}) \buildrel \pi^* \over \longrightarrow H^*(S(H^+) ; \mathbb{Z}_{w_1}) \buildrel \pi_* \over \longrightarrow H^{*-(b^+-1)}(B ; \mathbb{Z}) \to \cdots
\]
This sequence may be split using a section of $S(H^+)$. For instance if we take the section $\phi$ then $\phi_* : H^{*-(b^+-1)}(B ; \mathbb{Z}) \to H^*(S(H^+) ; \mathbb{Z}_{w_1})$ is a right inverse to $\pi_*$ and $\phi^* : H^*(S(H^+) ; \mathbb{Z}_{w_1}) \to H^*(B ; \mathbb{Z}_{w_1})$ is a left inverse of $\pi^*$. Thus $\phi$ induces an isomorphism
\[
H^*(S(H^+) ; \mathbb{Z}_{w_1}) \cong H^*(B ; \mathbb{Z}_{w_1}) \oplus H^{*-(b^+-1)}(B ; \mathbb{Z}),
\]
which can moreover be thought of as an isomorphism of $H^*(B ; \mathbb{Z})$-modules. It follows in particular that to each section $\phi$ there is a uniquely determined class $\tau_\phi \in H^{b^+-1}(S(H^+) ; \mathbb{Z})$ satisfying $\pi_*(\tau_\phi) = 1$ and $\phi^*(\tau_\phi)=0$. By the above splitting we also have that
\[
\phi_*(1) = \pi^*(\alpha_\phi) \tau_\phi + \pi^*(\beta_\phi)
\]
for uniquely determined classes $\alpha_\phi \in H^0(B ; \mathbb{Z}) \cong \mathbb{Z}$ and $\beta_\phi \in H^{b^+-1}(B ; \mathbb{Z}_{w_1})$. Then since $\pi \circ \phi = id$, we have
\[
1 = \pi_* \phi_*(1) = \alpha_\phi \pi_*(\tau_\phi) + \pi_*( \pi^*(\beta_\phi)) = \alpha_\phi.
\]
So $\alpha_\phi = 1$. Next, let $e_\phi \in H^{b^+-1}(B ; \mathbb{Z}_{w_1})$ denote the Euler class of $\langle \phi \rangle^\perp$, the orthogonal complement of $\langle \phi \rangle$ in $H^+$. Then since $\langle \phi \rangle^\perp$ can be identified with the normal bundle of $\phi(B)$, we have
\[
e_\phi = \phi^*(\phi_*(1)) = \phi^*( \tau_\phi) + \phi^* \pi^*(\beta_\phi) = \beta_\phi.
\]
So $\beta_\phi = e_\phi$ and we have shown that
\begin{equation}\label{equ:phitau}
\phi_*(1) = \tau_\phi + \pi^*(e_\phi).
\end{equation}
Now if $\psi$ is another section we have that $\pi_*( \tau_\phi - \tau_\psi) = 0$ and hence
\begin{equation}\label{equ:tauphipsi}
\tau_\phi - \tau_\psi = \pi^*(\lambda(\phi,\psi))
\end{equation}
for a uniquely determined class $\lambda(\phi,\psi) \in H^{b^+-1}(B ; \mathbb{Z}_{w_1})$. Combining equations (\ref{equ:phitau}) and (\ref{equ:tauphipsi}) with the definition of $\theta(\phi,\psi)$ we see that:
\begin{equation}\label{equ:thetalambda}
\theta(\phi,\psi) = \lambda(\phi,\psi) + e_\phi - e_\psi.
\end{equation}
Next, consider $\tau^2_\phi$. By the Gysin sequence we have
\[
\tau^2_\phi = \pi^*(a_\phi) \smallsmile \tau_\phi + \pi^*(b_\phi)
\]
for uniquely determined classes $a_\phi \in H^{b^+-1}(B ; \mathbb{Z}_{w_1})$ and $b_\phi \in H^{2(b^+-1)}(B ; \mathbb{Z})$. From $\phi^*(\tau_\phi) = 0$ we see that $b_\phi = 0$. To compute $a_\phi$ we first note that $\tau^2_\phi$ is Poincar\'e dual to the self-intersection of $\phi(B)$. But the normal bundle to $\phi(B)$ is isomorphic to $\langle \phi \rangle^\perp$, which implies that
\[
\pi_*( \phi_*(1) \smallsmile \phi_*(1) ) = e_\phi.
\]
On the other hand, using (\ref{equ:phitau}), we find
\begin{equation*}
\begin{aligned}
e_\phi &= \pi_*( \phi_*(1) \smallsmile \phi_*(1) ) \\
&= \pi_* \left( (\tau_\phi + \pi^*(e_\phi) ) \smallsmile (\tau_\phi + \pi^*(e_\phi) ) \right) \\
&= \pi_* \left( \tau_\phi^2 + \pi^*(e_\phi) \smallsmile \tau_\phi + (-1)^{b^+-1} \pi^*(e_\phi) \smallsmile \tau_\phi + \pi^*( e_\phi^2) \right) \\
&= a_\phi + e_\phi + (-1)^{b^+-1}e_\phi.
\end{aligned}
\end{equation*}
Therefore $a_\phi = (-1)^{b^+} e_\phi$ and we have shown that:
\[
\tau_\phi^2 = (-1)^{b^+} \pi^*(e_\phi) \smallsmile \tau_\phi.
\]
Let $(-1) : S(H^+) \to S(H^+)$ denote the antipodal map. Note first of all that $(-1)$ preserves or reverses orientation according to the parity of $b^+$. Thus
\[
(-1)_* \circ \pi^*(x) = (-1)_*( \pi^*(x) \smallsmile 1) = \pi^*(x) \smallsmile (-1)_*(1) = (-1)^{b^+} \pi^*(x)
\]
for any $x \in H^*(B ; \mathbb{Z}_{w_1})$. Using the splitting of the Gysin sequence we have that
\[
(-1)_* \tau_\phi = c_\phi \tau_\phi + \pi^*(\gamma_\phi)
\]
for some $c_\phi \in \mathbb{Z}$ and some $\gamma_\phi \in H^{b^+-1}(B ; \mathbb{Z}_{w_1})$. Since $\pi \circ (-1) = \pi$, we have that $\pi_* \circ (-1)_* \tau_\phi = \pi_* \tau_\phi = 1$ and hence $c_\phi = 1$. To compute $\gamma_\phi$, we note that $\phi(B)$ and $-\phi(B)$ are disjoint so $\phi_*(1) \smallsmile -\phi_*(1) = 0$. Therefore,
\begin{equation*}
\begin{aligned}
0 &= \pi_*( \phi_*(1) \smallsmile -\phi_*(1) ) \\
&= \pi_*\left( (\tau_\phi + \pi^*(e_\phi)) \smallsmile (-1)_* (\tau_\phi + \pi^*(e_\phi)) \right) \\
&= \pi_*\left( (\tau_\phi + \pi^*(e_\phi)) \smallsmile ( \tau_\phi + \pi^*(\gamma_\phi + (-1)^{b^+}e_\phi) ) \right) \\
&= \pi_*\left( \tau_\phi^2 + \pi^*(e_\phi) \smallsmile \tau_\phi + (-1)^{b^+-1} \pi^*(\gamma_\phi + (-1)^{b^+}e_\phi) \smallsmile \tau_\phi \right) \\
&= (-1)^{b^+} e_\phi + e_\phi + (-1)^{b^+-1}(\gamma_\phi + (-1)^{b^+} e_\phi) \\
&= (-1)^{b^+}e_\phi + (-1)^{b^+-1}\gamma_\phi.
\end{aligned}
\end{equation*}
Hence $\gamma_\phi = e_\phi$ and we have shown that
\begin{equation}\label{equ:antipode}
(-1)_* \tau_\phi = \tau_\phi + \pi^*(e_\phi).
\end{equation}
Putting all of these calculations together, we have:
\begin{equation*}
\begin{aligned}
Obs(\phi,\psi) &= \pi_*( \phi_*(1) \smallsmile -\psi_*(1)) \\
&= \pi_* \left( (\tau_\phi + \pi^*(e_\phi)) \smallsmile (-1)_* ( \tau_\psi + \pi^*(e_\psi)) \right) \\
&= \pi_* \left( (\tau_\phi + \pi^*(e_\phi) ) \smallsmile ( \tau_\psi + \pi^*( e_\psi + (-1)^{b^+}e_\psi)) \right) \\
&= \pi_* \left( \tau_\phi \tau_\psi + \pi^*(e_\phi) \smallsmile \tau_\psi + (-1)^{b^+-1} \pi^*(e_\psi)\smallsmile \tau_\phi -\pi^*(e_\psi) \smallsmile \tau_\phi \right)\\
&= \pi_* \left( (\tau_\psi + \pi^*(\lambda(\phi,\psi) ) \smallsmile \tau_\psi \right) + e_\phi + (-1)^{b^+-1} e_\psi - e_\psi \\
&= \pi_*( \tau_\psi^2 ) + \lambda(\phi,\psi) + e_\phi + (-1)^{b^+-1}e_\psi - e_\psi \\
&= (-1)^{b^+}e_\psi + \lambda(\phi,\psi) + e_\phi + (-1)^{b^+-1}e_\psi - e_\psi \\
&= \lambda(\phi,\psi) + e_\phi - e_\psi.
\end{aligned}
\end{equation*}
Comparing with Equation (\ref{equ:thetalambda}), we see that $Obs(\phi,\psi) = \theta(\phi,\psi)$.
\end{proof}

Using this result, the wall crossing formula for the families Seiberg-Witten invariants (Theorem \ref{thm:wcf}) can be re-written in terms of the obstruction class $Obs(\phi,\psi)$:

\begin{corollary}
Given $\phi, \psi \in \mathcal{CH}(f)$, we have:
\[
SW_m(f,\phi)-SW_m(f,\psi) = \begin{cases} 0 & \text{if} \; \; m < d-1, \\ Obs(\phi,\psi)s_{m-(d-1)}(D) & \text{if} \; \; m \ge d-1. \end{cases}
\]
\end{corollary}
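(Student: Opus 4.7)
The plan is to obtain this corollary as an immediate substitution combining the two results that immediately precede it: Theorem \ref{thm:wcf} (the wall crossing formula expressed in terms of the class $\theta(\phi,\psi)$) and the Proposition identifying $\theta(\phi,\psi) = Obs(\phi,\psi)$. Concretely, Theorem \ref{thm:wcf} gives
\[
SW_m(f,\phi) - SW_m(f,\psi) = \begin{cases} 0 & \text{if } m < d-1, \\ \theta(\phi,\psi) \smallsmile s_{m-(d-1)}(D) & \text{if } m \ge d-1, \end{cases}
\]
and the preceding Proposition asserts $\theta(\phi,\psi) = Obs(\phi,\psi)$, so the statement follows by substitution.

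First I would verify that the cohomological degrees and coefficient systems on both sides agree after the substitution. Both $\theta(\phi,\psi)$ and $Obs(\phi,\psi)$ live in $H^{b^+-1}(B; \mathbb{Z}_{w_1(H^+)})$, so the substitution is legitimate. The target degree of the right hand side is $(b^+-1) + 2(m-(d-1)) = 2m-(2d-b^+-1)$, which matches the degree of $SW_m(f,\phi) - SW_m(f,\psi)$. The local coefficient system $\mathbb{Z}_{w_1(H^+)}$ appearing in $Obs(\phi,\psi)$ matches the one carried by the Seiberg-Witten invariants via $w_1 = w_1(\mathrm{Ind}(l_\mathbb{R})) = w_1(H^+)$ (as $D$ is complex and contributes trivially to $w_1$).

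Since all of the substantive work has already been done upstream — Theorem \ref{thm:pdclass} reduced each $SW_m(f,\phi)$ to the pushforward of $x^m \smallsmile f^* \phi_*(1)$; Theorem \ref{thm:wcf} then extracted the difference $\phi_*(1) - \psi_*(1)$ as a multiple of $\delta\tau_{0,U}$ via Proposition \ref{prop:equivthom2} and computed the resulting pushforward through Proposition \ref{prop:pushforwardsegre}; and the identification $\theta(\phi,\psi) = Obs(\phi,\psi)$ was carried out by explicit Gysin-sequence bookkeeping on $S(H^+) \to B$, including the computation of $(-1)_*\tau_\phi$ — the corollary itself requires nothing beyond quoting these results. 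There is therefore no genuine obstacle; the only thing to be careful about is ensuring that the sign conventions and orientation choices used in defining $Obs(\phi,\psi)$ (via the relative Euler class $e_{\mathrm{rel}}(H^+,\phi,\psi)$ and $\iota_* = \delta$) agree with those inherited from $\theta(\phi,\psi)$, which is precisely what the preceding Proposition establishes. I would close the proof with a single sentence invoking both results.
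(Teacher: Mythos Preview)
Your proposal is correct and matches the paper's approach exactly: the paper presents this corollary without proof, simply noting that Theorem \ref{thm:wcf} can be rewritten using the preceding Proposition's identification $Obs(\phi,\psi) = \theta(\phi,\psi)$. Your additional sanity checks on degrees and coefficient systems are sound but not strictly necessary.
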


The following proposition gives some further properties of the obstruction class $Obs(\phi,\psi)$ depending on the parity of $b^+$:
\begin{proposition}
We have the following:
\begin{itemize}
\item{Suppose $b^+$ is even. Then $e_\phi = e_\psi$ for any two sections $\phi,\psi$ and hence $Obs(\phi,\psi) = \lambda(\phi,\psi)$.}
\item{Suppose $b^+$ is odd. Then $2\lambda(\phi,\psi) = -e_\phi + e_\psi$ and $2 \cdot Obs(\phi , \psi) = e_\phi - e_\psi$. Thus if $H^{b^+-1}(B ; \mathbb{Z}_{w_1})$ has no $2$-torsion we may write $Obs(\phi,\psi) = \frac{1}{2}( e_\phi - e_\psi)$.}
\end{itemize}
\end{proposition}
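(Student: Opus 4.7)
The plan is to exploit the antipodal involution $(-1): S(H^+) \to S(H^+)$ together with the cohomological data already assembled in Equations~(\ref{equ:phitau})--(\ref{equ:antipode}). First, applying $(-1)_*$ twice to Equation~(\ref{equ:antipode}) and using $(-1)_* \pi^*(x) = (-1)^{b^+} \pi^*(x)$ yields $(1+(-1)^{b^+})\pi^*(e_\phi) = 0$, so by injectivity of $\pi^*$ we obtain $(1+(-1)^{b^+})e_\phi = 0$; in particular $2e_\phi = 0$ (and $2 e_\psi = 0$) when $b^+$ is even. Applying $(-1)_*$ to $\phi_*(1) = \tau_\phi + \pi^*(e_\phi)$ gives
\[
(-\phi)_*(1) = \tau_\phi + (1+(-1)^{b^+})\pi^*(e_\phi),
\]
and likewise for $\psi$.

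The core computation is to expand the difference $\phi_*(1)\smallsmile(-\psi)_*(1) - (-\phi)_*(1)\smallsmile\psi_*(1)$. Substituting the formulas above, the terms $\tau_\phi\tau_\psi$ and $\pi^*(e_\phi e_\psi)$ cancel and the remaining cross-terms collapse to
\[
(-1)^{b^+}\bigl(\tau_\phi \smallsmile \pi^*(e_\psi) - \pi^*(e_\phi) \smallsmile \tau_\psi\bigr).
\]
Pushing forward by $\pi_*$ and applying the projection formula together with graded commutativity (which contributes a sign $(-1)^{(b^+-1)^2} = (-1)^{b^+-1}$ when swapping $\pi^*(e_\phi)$ past $\tau_\psi$), I will obtain $\pi_*(\tau_\phi\smallsmile\pi^*(e_\psi)) = e_\psi$ and $\pi_*(\pi^*(e_\phi)\smallsmile\tau_\psi) = (-1)^{b^+-1}e_\phi$, giving
\[
Obs(\phi,\psi) - Obs(-\phi,-\psi) = e_\phi + (-1)^{b^+} e_\psi.
\]

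Independently, graded commutativity applied to $Obs(-\phi,-\psi) = \pi_*((-\phi)_*(1)\smallsmile\psi_*(1))$ combined with the skew-symmetry $Obs(\psi,\phi) = -Obs(\phi,\psi)$ (immediate from Equation~(\ref{equ:thetalambda})) gives $Obs(-\phi,-\psi) = (-1)^{b^+}Obs(\phi,\psi)$. Combining the two displayed identities yields
\[
(1-(-1)^{b^+})Obs(\phi,\psi) = e_\phi + (-1)^{b^+} e_\psi.
\]

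To finish, I split by parity. When $b^+$ is even the displayed identity reads $e_\phi + e_\psi = 0$, which together with $2e_\psi = 0$ (from Step 1) forces $e_\phi = -e_\psi = e_\psi$, and Equation~(\ref{equ:thetalambda}) then collapses to $Obs(\phi,\psi) = \lambda(\phi,\psi)$. When $b^+$ is odd the identity reads $2\,Obs(\phi,\psi) = e_\phi - e_\psi$, and substituting into Equation~(\ref{equ:thetalambda}) gives $2\lambda(\phi,\psi) = 2\,Obs(\phi,\psi) - 2(e_\phi - e_\psi) = -(e_\phi - e_\psi) = -e_\phi + e_\psi$; the final assertion about no $2$-torsion is then immediate. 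The main obstacle in writing this out cleanly is careful bookkeeping of signs: the factor $(-1)^{b^+}$ from pushing $(-1)_*$ past $\pi^*$, the graded commutativity factor $(-1)^{(b^+-1)^2}$, and the verification that the canonical swap $\mathbb{Z}_{w_1}\otimes\mathbb{Z}_{w_1} \to \mathbb{Z}_{w_1}\otimes\mathbb{Z}_{w_1}$ is the identity under the identification with $\mathbb{Z}$, so that graded commutativity applies in its usual form to these local-coefficient cup products.
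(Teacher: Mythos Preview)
Your argument is correct, but it is considerably more elaborate than the paper's. The paper obtains everything in one line: applying $(-1)_*$ to the relation $\tau_\phi - \tau_\psi = \pi^*(\lambda(\phi,\psi))$ and using Equation~(\ref{equ:antipode}) on both sides immediately gives
\[
(-1 + (-1)^{b^+})\,\lambda(\phi,\psi) = e_\phi - e_\psi,
\]
from which both parity cases follow directly via Equation~(\ref{equ:thetalambda}). In particular, for $b^+$ even the paper gets $e_\phi = e_\psi$ straight away, without the detour through $e_\phi = -e_\psi$ and the auxiliary fact $2e_\psi = 0$. Your approach, computing $Obs(\phi,\psi) - Obs(-\phi,-\psi)$ in two different ways, is a legitimate alternative and does yield the identity $(1-(-1)^{b^+})\,Obs(\phi,\psi) = e_\phi + (-1)^{b^+}e_\psi$, but it requires substantially more sign bookkeeping for no real gain.

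One small point of friction: your intermediate formulas $\pi_*(\tau_\phi \smallsmile \pi^*(e_\psi)) = e_\psi$ and $\pi_*(\pi^*(e_\phi) \smallsmile \tau_\psi) = (-1)^{b^+-1}e_\phi$ use the projection-formula convention $\pi_*(b \smallsmile \pi^*(a)) = \pi_*(b)\cdot a$, which is the opposite of the convention the paper adopts in the proof of the preceding proposition (where $\pi_*(\pi^*(e_\phi)\smallsmile\tau_\phi) = e_\phi$ is used without sign). This does not invalidate your argument, since the parity-split conclusions turn out to be convention-independent, but it would create a visible inconsistency if the proof were dropped into the paper as written.
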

\begin{proof}
Recall that $\tau_\phi = \tau_\psi + \pi^*(\lambda(\phi,\psi))$. Applying $(-1)_*$ and using (\ref{equ:antipode}) we find
\begin{equation*}
\begin{aligned}
\tau_\phi + \pi^*(e_\phi) &= \tau_\psi + \pi^*(e_\psi) + (-1)^{b^+} \pi^*(\lambda(\phi,\psi) \\
&= \tau_\phi + \pi^*(e_\psi + (-1)^{b^+}\lambda(\phi,\psi) -\lambda(\phi,\psi) )
\end{aligned}
\end{equation*}
and hence
\[
-\lambda(\phi,\psi) + (-1)^{b^+}\lambda(\phi,\psi) = e_\phi - e_\psi.
\]
If $b^+$is even, this gives $e_\phi = e_\psi$ and thus $Obs(\phi,\psi) = \lambda(\phi,\psi) + e_\phi - e_\psi = \lambda(\phi,\psi)$. If $b^+$ is odd, then we get $2\lambda(\phi,\psi) = -e_\phi + e_\psi$ and hence
\[
2 \cdot Obs(\phi,\psi) = 2\lambda(\phi,\psi) + 2(e_\phi - e_\psi) = e_\phi - e_\psi
\]
as claimed.
\end{proof}

\begin{proposition}
Suppose $H^+$ is trivialisable. Choose a trivialisation and identify sections $\phi,\psi$ with maps $\phi, \psi : B \to S^{b^+-1}$. Choose an orientation on $H^+$ and hence on $S^{b^+-1}$ so that $Obs(\phi,\psi)$ may be considered as a class in $H^{b^+-1}(B ; \mathbb{Z})$. Then
\[
Obs(\phi,\psi) = (-1)^{b^+-1}(\phi^*(\nu) - \psi^*(\nu))
\]
where $\nu \in H^{b^+-1}( S^{b^+-1} ; \mathbb{Z})$ is the generator compatible with the chosen orientation.
\end{proposition}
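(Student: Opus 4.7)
The plan is to bootstrap off the previous proposition, which identifies $Obs(\phi,\psi)$ with the class $\theta(\phi,\psi)$ characterised by
\[
\pi^{*}\theta(\phi,\psi) = \phi_{*}(1) - \psi_{*}(1) \in H^{b^{+}-1}(S(H^{+});\mathbb{Z}).
\]
So it suffices to compute each pushforward $\phi_{*}(1)$ explicitly in the trivialised sphere bundle. Under the chosen trivialisation we have $S(H^{+}) \cong B \times S^{b^{+}-1}$ with $\pi = \pi_{1}$, and each section is of the form $b \mapsto (b,\phi(b))$ for some $\phi\colon B \to S^{b^{+}-1}$.

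By K\"unneth, every element of $H^{b^{+}-1}(B \times S^{b^{+}-1};\mathbb{Z})$ has the form $\pi_{1}^{*}\tilde{\alpha} + n\cdot (1\times \nu)$ with $\tilde{\alpha}\in H^{b^{+}-1}(B;\mathbb{Z})$ and $n\in\mathbb{Z}$. First I would apply $(\pi_{1})_{*}$ to $\phi_{*}(1)$: since $\pi_{1}\circ\phi = \mathrm{id}_{B}$ we get $(\pi_{1})_{*}\phi_{*}(1) = 1$, and from $(\pi_{1})_{*}(\pi_{1}^{*}\tilde{\alpha}) = 0$ (for degree reasons) and $(\pi_{1})_{*}(1\times\nu) = 1$ this forces $n = 1$. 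Next I would apply $\phi^{*}$: using $\phi^{*}(1\times \nu) = \phi^{*}\nu$ and $\phi^{*}\phi_{*}(1) = e_{\phi}$ (the self-intersection identity derived in the proof of the previous proposition, Equation (\ref{equ:phitau})), I get
\[
e_{\phi} = \tilde{\alpha}_{\phi} + \phi^{*}\nu, \qquad \text{so} \qquad \tilde{\alpha}_{\phi} = e_{\phi} - \phi^{*}\nu.
\]

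The key geometric input is then identifying $e_{\phi}$ explicitly. The bundle $\langle\phi\rangle^{\perp}$ has fibre over $b\in B$ equal to the hyperplane in $\mathbb{R}^{b^{+}}$ orthogonal to $\phi(b)\in S^{b^{+}-1}$, which is nothing other than $T_{\phi(b)}S^{b^{+}-1}$. Hence $\langle\phi\rangle^{\perp} \cong \phi^{*}TS^{b^{+}-1}$, and by naturality and Poincar\'e--Hopf,
\[
e_{\phi} = \phi^{*}e(TS^{b^{+}-1}) = \chi(S^{b^{+}-1})\,\phi^{*}\nu = \bigl(1+(-1)^{b^{+}-1}\bigr)\phi^{*}\nu.
\]

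The last step just assembles everything. From $\pi_{1}^{*}\theta(\phi,\psi) = \phi_{*}(1) - \psi_{*}(1)$ and the injectivity of $\pi_{1}^{*}$ I read off
\[
Obs(\phi,\psi) = \theta(\phi,\psi) = \tilde{\alpha}_{\phi} - \tilde{\alpha}_{\psi} = (e_{\phi}-e_{\psi}) - (\phi^{*}\nu - \psi^{*}\nu),
\]
and substituting $e_{\phi}-e_{\psi} = (1+(-1)^{b^{+}-1})(\phi^{*}\nu - \psi^{*}\nu)$ gives exactly $(-1)^{b^{+}-1}(\phi^{*}\nu - \psi^{*}\nu)$. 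There is no real obstacle: the only point to pin down carefully is the canonical isomorphism $\langle\phi\rangle^{\perp}\cong \phi^{*}TS^{b^{+}-1}$ and the sign conventions, after which the computation is essentially forced by the K\"unneth decomposition and the identities already proved for $\tau_{\phi}$ and $e_{\phi}$.
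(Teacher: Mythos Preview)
Your proof is correct and follows essentially the same approach as the paper. The paper routes the computation through the auxiliary classes $\tau_\phi = \pi_2^*\nu - \pi_1^*(\phi^*\nu)$ and $\lambda(\phi,\psi)$, then splits into the cases $b^+$ even/odd, whereas you work directly with the K\"unneth decomposition of $\phi_*(1)$ and handle both parities at once via $e_\phi = (1+(-1)^{b^+-1})\phi^*\nu$; the key ingredients (the identification $\langle\phi\rangle^\perp \cong \phi^*TS^{b^+-1}$ and the characterisation of $\theta(\phi,\psi)$) are identical.
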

\begin{proof}
Using $S(H^+) = B \times S^{b^+-1}$ one easily checks that $\tau_\phi = \nu - \phi^*(\nu)$. Therefore $\lambda(\phi,\psi) = -\phi^*(\nu) + \psi^*(\nu)$. For any section $\phi$, the orthogonal complement $\langle \phi \rangle^\perp$ of $\phi$ in $H^+$ is isomorphic to the pullback of the tangent bundle of $S^{b^+-1}$ under $\phi : B \to S^{b^+-1}$. Thus $e_\phi = \phi^*( e( TS^{b^+-1} ) )$, which is zero if $b^+$ is even and $2 \phi^*(\nu)$ is $b^+$ is odd. Thus is $b^+$ is odd we find
\[
Obs(\phi,\psi) = \lambda(\phi,\psi) + e_\phi - e_\psi = -\phi^*(\nu) + \psi^*(\nu) + 2\phi^*(\nu) - 2\psi^*(\nu) = \phi^*(\nu) - \psi^*(\nu).
\]
On the other hand if $b^+$ is even, then $Obs(\phi,\psi) = \lambda(\phi,\psi) = -\phi^*(\nu) + \psi^*(\nu)$ and the result has been proven.
\end{proof}

\subsection{Unparametrised wall crossing formula}\label{sec:uwcf}

In this subsection we will see how to recover the general wall crossing formula for the unparametrised Seiberg-Witten invariants of a $4$-manifold with $b_1$ even, as in \cite{liliu0}.

Let $X$ be a compact, oriented smooth $4$-manifold with $b^+(X) = 1$ and $b_1(X)$ even. Let $\mathfrak{s}$ denote a spin$^c$-structure on $X$. Fix an orientation of $H^+$ and let $\omega \in H^+$ denote the unique class for which a reducible solution exists. Having fixed such an orientation, the set $\mathcal{CH}(f)$ consist of two chambers which we denote by $+,-$ and define as follows: we say that a class $\eta \in H^+$ is in the $+$ chamber if $\eta - \omega$ agrees with the given orientation on $H^+$ and similarly $\eta$ is in the $-$ chamber if $\eta - \omega$ gives the opposite orientation. Choose a metric $g$ and a generic perturbation $\eta$ representing the $+$ chamber. Let $\mathcal{M}_+$ denote the moduli space of solutions to the Seiberg-Witten equations with respect to $(X , \mathfrak{s} , g , \eta)$ and let $\mathcal{L}$ denote the natural line bundle over $\mathcal{M}_+$. Then $\mathcal{M}_+$ is a smooth compact manifold of dimension $2d-b^+(X)-1+b_1(X) = 2(d-1+b_1(X)/2)$, where $d = \frac{c(\mathfrak{s})^2 - \sigma(X)}{8}$. Assume that this dimension is non-negative, i.e. that $(d-1)+b_1(X)/2 \ge 0$. Let $\mathbb{T}$ denote the torus of gauge equivalence classes of $U(1)$-connections on the determinant line bundle of $\mathfrak{s}$ with harmonic curvature. This is a torsor for the Jacobian $Jac(X) = H^1(X ; \mathbb{R}/\mathbb{Z})$, so in particular $\mathbb{T}$ is a torus of dimension $b_1(X)$. The inclusion of $\mathcal{M}_+$ into the configuration space defines a natural map $\pi : \mathcal{M}_+ \to \mathbb{T}$ and this map corresponds to the fact that when we take the finite dimensional approximation of the monopole map $f$, the base space is taken as $B = \mathbb{T}$.

The ordinary Seiberg-Witten invariant $SW^+(X,\mathfrak{s})$ of $(X,\mathfrak{s})$ with respect to the chamber $+$ is given by:
\begin{equation*}
\begin{aligned}
SW^+(X,\mathfrak{s}) &= \int_{\mathcal{M}_+} c_1(\mathcal{L})^{(d-1)+b_1(X)/2} \\
&= \int_{\mathbb{T}} \pi_*( c_1(\mathcal{L})^{(d-1)+b_1(X)/2} ) \\
&= \int_{\mathbb{T}} SW_{(d-1)+b_1(X)/2}(f,+),
\end{aligned}
\end{equation*}
where $f$ denotes a finite dimensional approximation of the monopole map. A similar equality holds for the $-$ chamber.

A short calculation shows that $Obs(+,-) = 1$. Now the wall-crossing formula (Theorem \ref{thm:wcf}) implies that:
\begin{equation*}
\begin{aligned}
SW^+(X,\mathfrak{s}) - SW^-(X,\mathfrak{s}) = \int_{\mathbb{T}} s_{ b_1(X)/2}(D),
\end{aligned}
\end{equation*}
where $s_{b_1(X)/2}(D)$ is the $b_1(X)/2$-th Segre class of the virtual index class $D \in K^0( \mathbb{T} )$ of the natural family of Dirac operators on the fibres of $X \times \mathbb{T} \to \mathbb{T}$.

Let $\{ y_i \}$ denote a basis of $H^1(X ; \mathbb{Z})$ and let $\{ x_i \}$ be the corresponding dual basis of $H^1( \mathbb{T} ; \mathbb{Z}) \cong Hom( H^1(X,\mathbb{Z}) ; \mathbb{Z})$. Let $\Omega = \sum_i x_i \smallsmile y_i \in H^2( X \times \mathbb{T} ; \mathbb{Z})$ denote the first Chern class of the Poincar\'e line bundle. Then the families index theorem gives:
\[
Ch(D) = \int_{X} e^{\frac{1}{2}( c(\mathfrak{s})) + \Omega } \wedge \left( 1 - \frac{\sigma(X)}{8} vol_X \right),
\]
where $\int_{X}$ is understood to mean integration over the fibres of $X \times \mathbb{T} \to \mathbb{T}$. Next we recall that if $X$ is a compact oriented $4$-manifold with $b^+(X)=1$ then $y_1 \smallsmile y_2 \smallsmile y_3 \smallsmile y_4 = 0$ for any $y_1,y_2,y_3,y_4 \in H^1(X ; \mathbb{Z})$ (see \cite[Lemma 2.4]{liliu0}). It follows that $\Omega^4 = 0$ and thus
\begin{equation*}
\begin{aligned}
Ch(D) &= \int_{X} \left( 1 + \frac{c(\mathfrak{s})}{2} + \frac{c(\mathfrak{s})^2}{8} \right) \left( 1 + \Omega + \frac{1}{2}\Omega^2 + \frac{1}{6}\Omega^3 \right)\left( 1 - \frac{\sigma(X)}{8}vol_X \right)\\
&= \int_{X} \left( 1 + \frac{c(\mathfrak{s})}{2} + \frac{c(\mathfrak{s})^2}{8} \right) \left( 1 + \frac{1}{2}\Omega^2 \right)\left( 1 - \frac{\sigma(X)}{8}vol_X \right)\\
&= \frac{c(\mathfrak{s})^2-\sigma(X)}{8} + \frac{1}{4} \int_X c \cdot \Omega^2 \\
&= d + \frac{1}{4} \int_X c \cdot \Omega^2.
\end{aligned}
\end{equation*}
Hence $c_0(D) = d$, $c_1(D) = \frac{1}{4} \int_X c(\mathfrak{s}) \cdot \Omega^2$ and $Ch_i(D) = 0$ for $i > 1$ (where $Ch_i(D)$ denotes the degree $2i$ part of $Ch(D)$). Using the splitting principle one can verify that if
\[
Ch(V) = \sum_{n \ge 0} \frac{1}{n!} p_n, \; \text{where } p_n \in H^{2n}(\mathbb{T} ; \mathbb{Z})
\]
for a virtual bundle $V$, then
\[
s(V) = exp\left( \sum_{n \ge 1} \frac{(-1)^n}{n} p_n \right).
\]
In particular in the case $V = D$, we immediately get
\[
s_j(D) = \frac{(-1)^j}{j!} \alpha^j,
\]
where $\alpha = \int_X \frac{c(\mathfrak{s})}{2} \cdot \frac{\Omega^2}{2}$. Putting $j = b_1(X)/2$ we then get:
\[
SW^+(X,\mathfrak{s}) - SW^-(X,\mathfrak{s}) = \frac{1}{(b_1(X)/2)!} \int_{\mathbb{T}} (-\alpha)^{b_1(X)/2},
\]
which is precisely the general wall crossing formula \cite[Theorem 1.2]{liliu0}.

\section{Divisibility conditions on families Seiberg-Witten invariants from $K$-theory}\label{sec:ktheory}

We have seen how mod $2$ conditions on the families Seiberg-Witten invariants arise via computation of their Steenrod squares. In this section we consider a different approach using $K$-theory and the Chern character, which yields various divisibility conditions on the families Seiberg-Witten invariants modulo torsion. This approach was used by Bauer and Furuta in \cite{bafu} to obtain divisibility conditions of the Seiberg-Witten invariants in the unparametrised setting.

\subsection{$K$-theoretic Seiberg-Witten invariants and divisibility conditions}\label{sec:divcond}

We will work with the $S^1$-equivariant K-theory and cohomology of various spaces. Recall that $H^*_{S^1}(pt ; \mathbb{Z})$ is isomorphic to $\mathbb{Z}[x]$, the ring of polynomials in $x$ with integer coefficients. Let $\widehat{H}^*_{S^1}(pt ; \mathbb{Z})$ denote the completion of $H^*_{S^1}(pt ; \mathbb{Z})$ with respect to the filtration by degree. Then $\widehat{H}^*_{S^1}(pt ; \mathbb{Z}) = \widehat{\mathbb{Z}}[x]$, the ring of formal power series in $x$ with integer coefficients. Let $R[S^1] = K^*_{S^1}(pt)$ denote the $S^1$-equivariant $K$-theory of a point, which is also the representation ring of $S^1$. We let $S^1$ act on $\mathbb{C}$ by scalar multiplication. This defines a representation of $S^1$ and we let $\xi = [\mathbb{C}] \in R[S^1]$. Then $R[S^1] = \mathbb{Z}[\xi , \xi^{-1}]$ is the ring of Laurent polynomials in $\xi$.

For any space $X$ and abelian group $A$, we let $\widehat{H}^*(X ; A)$ denote the completion of $H^*(X ; A)$ with respect to the filtration by degree. We use similar notation for the completion of the equivariant cohomology groups of a $G$-space. The Chern character defines a ring homomorphism $Ch : K^*(X) \to \widehat{H}^{ev/odd}(X ; \mathbb{Q})$. We use this to define the equivariant Chern character as follows. For any $G$-space $X$, let $X_G = X \times_G EG$ denote the homotopy quotient. We define the equivariant Chern character as the composition:
\[
K^*_G( X ) \buildrel \pi_1^* \over \longrightarrow K^*_G( X \times EG ) = K^*(X_G) \buildrel Ch \over \longrightarrow \widehat{H}^{ev/odd}(X_G ; \mathbb{Q}) = \widehat{H}^{ev/odd}_G( X ; \mathbb{Q})
\]
where $\pi_1 : X \times EG \to X$ is the projection to $X$.

\begin{example}
For $G = S^1$ and $X = pt$, we obtain a Chern character homomorphism $Ch : R[S^1] \to \widehat{H}^*_{S^1}(pt ; \mathbb{Q})$. Using $BS^1 = \mathbb{CP}^\infty$, one easily checks that the Chern character is given by:
\[
Ch : \mathbb{Z}[\xi , \xi^{-1}] \to \widehat{\mathbb{Q}}[x], \quad \quad Ch( \xi^k ) = e^{kx}.
\]
\end{example}

Let $S^1$ act trivially on $B$ and recall that we defined $\mathcal{H}^* = H^*_{S^1}(B ; \mathbb{Z}) = H^*(B ; \mathbb{Z})[x]$. Similarly we define
\[
\mathcal{K}^* = K^*_{S^1}(B) = R[S^1] \otimes K^*(B) = K^*(B)[\xi , \xi^{-1}].
\]

The equivariant Chern character for $B$ takes the form:
\[
Ch : \mathcal{K}^* \to \widehat{ \mathcal{H}}^{ev/odd}_\mathbb{Q} \quad Ch( \alpha \otimes \xi^k ) = Ch(\alpha) e^{kx}, \; \; \alpha \in K^*(B),
\]
where $\widehat{\mathcal{H}}^{ev/odd}_\mathbb{Q} = \widehat{\mathcal{H}}^{ev/odd} \otimes \mathbb{Q}$.

Recall the cohomological description of the families Seiberg-Witten invariants: we have a finite dimensional monopole map $f : (S_{V,U} , B_{V,U} ) \to (S_{V',U'} , B_{V',U'})$ satisfying Assumption $1$. Let $[\phi] \in \mathcal{CH}(f)$ be a chamber represented by a section $\phi : B \to U'$ whose image is disjoint from $U$. Assume also that $H^+$ is orientable and fix an orientation. Since $\phi$ avoids $U$, we obtain a pushforward map:
\[
\phi_* : H^j_{S^1}(B ; \mathbb{Z}) \to H^{j+2a'+b'}_{S^1}( S_{V',U'} , S_U ; \mathbb{Z} ).
\]
Then for $m \ge 0$, the $m$-th Seiberg-Witten invariant of $(f,\phi)$ is given by:
\[
SW_m(f,\phi) = (\pi_Y)_*( x^m \smallsmile f^*\phi_*(1) ) \in H^{2m-(2d-b^+-1)}(B , \mathbb{Z}).
\]
Imitating this definition in $K$-theory, we will obtain $K$-theoretic Seiberg-Witten invariants. To do this, we will assume that $H^+$ admits a spin$^c$-structure. Then we can also assume that the bundles $U$ and $U'$ admit spin$^c$-structures such that $U' = H^+ \oplus U$ holds at the level of spin$^c$ vector bundles. Then since the normal bundle of $\phi(B) \subset S_{V',U'}$ is equivariantly $K$-oriented, we obtain a pushforward map in equivariant $K$-theory (see \cite[Section 3]{atsi}):
\[
\phi_* : K^j_{S^1}(B ) \to K^{j+b'}_{S^1}( S_{V',U'} , S_U ).
\]
Similarly, the vertical tangent bundle of $\pi_Y : Y \to B$ has a spin$^c$-structure and we obtain a pushforward map
\[
(\pi_Y)_* : K^*(Y , \partial Y) \to K^{*+b-1}(B).
\]
\begin{definition}
Suppose that $H^+$ is equipped with a spin$^c$-structure. Then for each $m \ge 0$, we define the {\em $m$-th $K$-theoretic Seiberg-Witten invariant of $(f,\phi)$} by:
\[
SW^K_m(f,\phi) = (\pi_Y)_*( \xi^m \smallsmile f^*\phi_*(1) ) \in K^{b^+-1}(B).
\]
\end{definition}

\begin{remark}\label{rem:kty}
We make some remarks on this definition:
\begin{itemize}
\item{It is possible to define $SW^K_m(f,\phi)$ even when $H^+$ is not spin$^c$ provided one works with {\em twisted $K$-theory}. For simplicity we will not discuss this generalisation.}
\item{One can check that $SW^K_m(f,\phi)$ depends only on the stable homotopy class of $f$. The proof is analogous to that of Proposition \ref{prop:swsuspension}.}
\item{One can give a more geometric interpretation of $SM^K_m(f,\phi)$ analogous to the cohomological Seiberg-Witten invariants. Let $\mathcal{M}_\phi$ be the families Seiberg-Witten moduli space (after perturbing $f$ to obtain a smooth moduli space) and let $\pi_{\mathcal{M}_\phi} :\mathcal{M}_\phi \to B$ be the projection. Then arguing in much the same way as we did for the cohomological invariants, we see that:
\[
SW^K_m(f,\phi) = (\pi_{\mathcal{M}_\phi})_*( \mathcal{L}^m )
\]
where $\mathcal{L} \to \mathcal{M}_\phi$ is the natural line bundle over $\mathcal{M}_\phi$. Note that a spin$^c$-structure on $H^+$ induces a relative spin$^c$-structure on $\mathcal{M}_\phi$, that is, a spin$^c$-structure on $T \mathcal{M}_\phi \oplus \pi_{\mathcal{M}_\phi}^*(TB)$. This is exactly the data requried to define the $K$-theoretic push-forward with respect to $\pi_{\mathcal{M}_\phi}$.}
\item{In the case that $B = \{pt\}$ is a point and $b^+$ is odd, it follows that $SW^K_m(f,\phi) \in K^0(pt) = \mathbb{Z}$ is the index of the spin$^c$ Dirac operator on $\mathcal{M}_\phi$ coupled to the line bundle $\mathcal{L}^m$.}
\end{itemize}

\end{remark}

For a complex vector bundle $V$ over $B$, let $Td^{S^1}(V) \in \widehat{\mathcal{H}}^{ev}_\mathbb{Q}$ denote the $S^1$-equivariant Todd class of $V$, where as usual $S^1$ acts on $V$ by scalar multiplication. This can be expanded as a formal power series in $x$:
\[
Td^{S^1}(V) = \sum_{j \ge 0} Td_j(V) x^j
\]
for some characteristic classes $Td_j(V) \in H^{ev}(B ; \mathbb{Q})$.

Recall that $Spin^c(n) = Spin(n) \times_{\pm} U(1)$ and therefore we obtain a homomorphism $\varphi : Spin^c(n) \to U(1)$ which sends $(g,z) \in Spin(n) \times U(1)$ to $z^2$. Suppose that $E$ is a rank $n$ vector bundle with a spin$^c$ structure. The homomorphism $\varphi$ applied to the principal $Spin^c(n)$-bundle of $E$ determines a line bundle. We call this the {\em spin$^c$ line bundle of $E$}.

\begin{theorem}\label{thm:swk}
Let $\kappa \in H^2(B ; \mathbb{Z})$ be the first Chern class of the spin$^c$ line bundle associated to $H^+$. Then the $K$-theoretic and cohomological Seiberg-Witten invariants are related by:
\[
Ch( SW_m^K(f,\phi)) = e^{-\kappa/2}\hat{A}(H^+)^{-1} \sum_{j \ge 0} Td_j(D) \sum_{k \ge 0} \frac{m^k}{k!}SW_{j+k}(f,\phi) \in H^*(B ; \mathbb{Q}).
\]
\end{theorem}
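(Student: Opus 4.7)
The strategy is to apply Grothendieck--Riemann--Roch (GRR) to the pushforward description
$$SW_m^K(f,\phi) = (\pi_Y)_*^K(\xi^m \smallsmile f^* \phi_*^K(1))$$
established in the preceding theorem, and then systematically rewrite each Chern character in terms of the cohomological pushforwards defining $SW_k(f,\phi)$ in Theorem~\ref{thm:pdclass}.

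First, I would apply GRR to the Gysin map $\phi_*^K$ for the closed embedding $\phi: B \hookrightarrow S_{V',U'}$, whose spin$^c$ normal bundle is $V'\oplus U'$. This yields
$$Ch(\phi_*^K(1)) = \phi_*^H\!\left(\bigl[Td^{S^1}(V') \cdot \hat A(U') \cdot e^{c_1^{spin^c}(U')/2}\bigr]^{-1}\right),$$
a class in the equivariant cohomology of $(S_{V',U'}, S_U)$. Next, I would apply GRR to $(\pi_Y)_*^K$; its $K$-orientation is built from the complex structure on $V$ and the spin$^c$ structure on $U$, contributing a compensating Todd factor $Td^{S^1}(V) \cdot \hat A(U) \cdot e^{c_1^{spin^c}(U)/2}$ inside the cohomological pushforward.

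Combining both formulas and using $V - V' = D$ in $K^0(B)$ together with the spin$^c$ splitting $U' = U \oplus H^+$, the factors coming from $V,V'$ collapse to $Td^{S^1}(D)$ and the $U,U'$ factors collapse to $\hat A(H^+)^{-1} e^{-\kappa/2}$. Using the projection formula to pull $\hat A(H^+)^{-1} e^{-\kappa/2}$ out of the pushforward (since it lies on $B$), one arrives at
$$Ch(SW_m^K(f,\phi)) = e^{-\kappa/2}\hat A(H^+)^{-1}\cdot (\pi_Y)_*\!\left(e^{mx}\cdot \pi_Y^* Td^{S^1}(D)\cdot f^*\phi_*^H(1)\right),$$
where $e^{mx} = Ch(\xi^m)$.

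The final step is to expand $e^{mx} = \sum_{k\ge 0} \tfrac{m^k}{k!}\, x^k$ and $Td^{S^1}(D) = \sum_{j\ge 0} Td_j(D)\, x^j$, combine monomials in $x$, and invoke Theorem~\ref{thm:pdclass} to identify $(\pi_Y)_*\bigl(x^{j+k}\cdot f^*\phi_*^H(1)\bigr) = SW_{j+k}(f,\phi)$. Reassembling the double sum yields the claimed formula. The main technical obstacle will be the GRR identity for $(\pi_Y)_*^K$, since $Y$ is a manifold with boundary arising as a free $S^1$-quotient of a codimension-$0$ submanifold of a sphere bundle. The cleanest way to handle this is to work $S^1$-equivariantly on $\widetilde Y$ and then descend using the freeness of the $S^1$-action, or equivalently to verify GRR directly on the $K$-theoretic analogue of the basis constructed in Proposition~\ref{prop:equivthom3}.
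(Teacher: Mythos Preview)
Your proposal is correct and follows essentially the same approach as the paper: apply Grothendieck--Riemann--Roch to both $(\pi_Y)_*^K$ and $\phi_*^K$, use $V-V'=D$ and $U'=U\oplus H^+$ to collapse the Todd/$\hat A$ factors to $Td^{S^1}(D)$ and $e^{-\kappa/2}\hat A(H^+)^{-1}$, then expand $e^{mx}$ and $Td^{S^1}(D)$ in powers of $x$ and invoke Theorem~\ref{thm:pdclass}. The paper handles the GRR for $(\pi_Y)_*^K$ exactly as you anticipate, by computing the relevant $\hat A$-class $S^1$-equivariantly on $\widetilde Y$ (where the vertical tangent bundle is stably $V\oplus U$) and then descending to $Y$ via the free $S^1$-action.
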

\begin{proof}
Recall from Subsection \ref{sec:cohomological} the definition of the spaces $Y$ and $\widetilde{Y}$. Let $\pi_Y : Y \to B$ and $\pi_{\widetilde{Y}} : \widetilde{Y} \to B$ be the projections to $B$. Let $\mathbb{V} = Ker( (\pi_Y)_* )$ denote the kernel of $(\pi_Y)$ and similarly let $\widetilde{\mathbb{V}} = Ker( (\pi_{\widetilde{Y}})_* )$. By definition $\widetilde{Y}$ is the complement of a tubular neighbourhood of $S_U$ in $S_{V,U}$. Let $\hat{A}^{S^1}( E )$ denote the $S^1$-equivariant $\hat{A}$-genus of an equivariant vector bundle $E$. Then $\hat{A}^{S^1}(\widetilde{\mathbb{V}}) = \hat{A}^{S^1}( T_{vert} S_{V,U})|_{\widetilde{Y}}$, where $T_{vert} S_{V,U}$ denotes the vertical tangent bundle of $S_{V,U} \to B$. But $T_{vert} S_{V,U} \oplus \mathbb{R} \cong V \oplus U \oplus \mathbb{R}$ and so it follows that $\hat{A}^{S^1}( \widetilde{\mathbb{V}} ) = \hat{A}^{S^1}(V \oplus U \oplus \mathbb{R}) = \hat{A}^{S^1}(V \oplus U)$. Next recall that $Y = \widetilde{Y}/S^1$ is the quotient of $\widetilde{X}$ by a free $S^1$-action. It follows that
\[
\hat{A}(\mathbb{V}) = \hat{A}^{S^1}( \widetilde{\mathbb{V}}) = \hat{A}^{S^1}(V \oplus U)
\]
where we have identified $S^1$-equivariant cohomology on $\widetilde{Y}$ with ordinary cohomology on $Y$.

Since assuming that $H^+$ admits a spin$^c$ structure, we can after stabilisation assume that $U' = H^+ \oplus U$ as spin$^c$ vector bundles. For a spin$^c$ vector bundle $W$ we let $\kappa_W$ denote the first Chern class of the associated line bundle of the spin$^c$-structure. In particular $\kappa = \kappa_{H^+}$ and
\[
\kappa_{U'} = \kappa + \kappa_{U}.
\]
Recall that $SW^K_m(f,\phi)$ is defined by $SW^K_m(f,\phi) = (\pi_Y)_*( \xi^m \smallsmile f^*\phi_*(1) )$. Now we proceed by direct calculation using Grothendieck-Riemann Roch:
\begin{equation*}
\begin{aligned}
& Ch( SW_m^K(f,\phi)) = Ch \left( (\pi_Y)_*( \xi^m \smallsmile f^* \phi_*(1) )\right) \\
&= (\pi_Y)_* \left( Ch( \xi^m \smallsmile f^* \phi_*(1) ) e^{\kappa_{\mathbb{V}}/2} \hat{A}(\mathbb{V}) \right) \\
&= (\pi_Y)_* \left( Ch( \xi^m \smallsmile f^* \phi_*(1) ) Td^{S^1}(V) e^{\kappa_U/2} \hat{A}(U) \right) \\
&= e^{\kappa_U/2} \hat{A}(U) \cdot (\pi_Y)_* \left( e^{mx} \smallsmile f^*( Ch(\phi_*(1))) Td^{S^1}(V) \right) \\
&= e^{\kappa_U/2} \hat{A}(U) \cdot (\pi_Y)_* \left( e^{mx} \smallsmile f^*( \phi_*( 1 ) ) Td^{S^1}(V')^{-1} e^{-\kappa_{U'}/2} \hat{A}(U')^{-1} Td^{S^1}(V) \right) \\
&= e^{(\kappa_U - \kappa_{U'})/2}\hat{A}(U)\hat{A}(U')^{-1} \cdot (\pi_Y)_* \left( e^{mx} \smallsmile f^*(\phi_*(1)) Td^{S^1}(V)Td^{S^1}(V')^{-1} \right) \\
&= e^{-\kappa/2}\hat{A}(H^+)^{-1} \cdot (\pi_Y)_* \left( e^{mx} \smallsmile f^*(\phi_*(1)) Td^{S^1}(D) \right) \\
&= e^{-\kappa/2}\hat{A}(H^+)^{-1} \cdot \sum_{j \ge 0} Td_j(D) \sum_{k \ge 0} \frac{m^k}{k!} \cdot (\pi_Y)_* \left( x^{j+k} \smallsmile f^*(\phi_*(1)) \right) \\
&= e^{-\kappa/2}\hat{A}(H^+)^{-1} \cdot \sum_{j \ge 0} Td_j(D) \sum_{k \ge 0} \frac{m^k}{k!} \cdot SW_{j+k}(f,\phi).
\end{aligned}
\end{equation*}
\end{proof}

\begin{example}\label{ex:point}
Let $X$ be a compact, oriented, smooth $4$-manifold with $b_1(X) = 0$ and $b^+(X) = 2p+1$ odd. Let $\mathfrak{s}$ be a spin$^c$-structure on $X$ and put $d = \frac{c(\mathfrak{s})^2-\sigma(X)}{8}$. Then we take $B = \{pt\}$. So $\hat{A}(H^+) = 1$, $\kappa = 0$ and
\[
Td^{S^1}(D) = \left(\frac{x}{1-e^{-x}} \right)^d.
\]
So $Td_j(D) = c_{j,d}$, where $c_{j,d}$ is the coefficient of $x^j$ in $\left(\frac{x}{1-e^{-x}} \right)^d$. Assume that $d \ge p+1$. The only non-zero Seiberg-Witten invariant is $SW_{d-p-1}(f,\phi)$ is the ordinary Seiberg-Witten invariant of $X$ (with respect to the chamber $\phi$ if $b^+(X)=1$). On the other hand $SW^K_m(f,\phi) \in \mathbb{Z}$ is the index of the spin$^c$ Dirac operator coupled to $\mathcal{L}^m$ on the Seiberg-Witten moduli space, which we denote as $Ind(D \otimes \mathcal{L}^m)$. Theorem \ref{thm:swk} gives:
\[
SW_m^K(f,\phi) = \left( \sum_{j = 0}^{d-p-1} c_{d-p-1-j,d} \frac{m^j}{j!} \right) SW(X , \mathfrak{s} , \phi).
\]
However, we notice that $\sum_{j = 0}^{d-p-1} c_{d-p-1-j,d} \frac{m^j}{j!}$ is the $x^{d-p-1}$-coefficient of 
\[
\left(\frac{x}{1-e^{-x}} \right)^d e^{mx}.
\]
But this is the same as:
\[
\int_{\mathbb{CP}^{d-1}} \left(\frac{x}{1-e^{-x}} \right)^d x^p e^{mx} = \int_{\mathbb{CP}^{d-1}}  Td( \mathbb{CP}^{d-1}) x^p Ch( \mathcal{O}(m))
\]
where now $x = c_1( \mathcal{O}(1))$. Let us denote this quantity by $n(d,m,p)$. Clearly when $p=0$, $n(d,m,p)$ is the Dolbeault index of $\mathcal{O}(m)$, which is $\binom{m+d-1}{m}$.

Define rational numbers $a_{p,l}$ to be the coefficients of the Taylor expansion (c.f. \cite[Theorem 3.7]{bafu}):
\[
log(1-y)^p = \sum_{l=0}^\infty a_{p,l} y^{p+l}.
\]
Then
\[
x^p = (-1)^p \sum_{l \ge 0} a_{p,l} (1-e^{-x})^{p+l}
\]
and thus
\begin{equation*}
\begin{aligned}
n(d,m,p) &= (-1)^p \sum_{l \ge 0} a_{p,l} \int_{\mathbb{CP}^{d-1}} Td(\mathbb{CP}^{d-1}) Ch ((1 - \xi^{-1})^{p+l} \xi^m) \\
&= (-1)^p \sum_{l \ge 0} a_{p,l} Res|_{x=0} \left( \frac{1}{(1-e^{-x})^d} (1-e^{-x})^{p+l} e^{mx} \right) \\
&= (-1)^p \sum_{l = 0}^{d-p-1} a_{p,l} Res|_{x=0} \left( \frac{1}{(1-e^{-x})^{d-p-l}} e^{mx} \right) \\
&= (-1)^p \sum_{l=0}^{d-p-1} a_{p,l} \binom{m+d-p-l-1}{m}.
\end{aligned}
\end{equation*}
So we have shown that:
\[
Ind(D \otimes \mathcal{L}^m) = \left( (-1)^p \sum_{l=0}^{d-p-1} a_{p,l} \binom{m+d-p-l-1}{m} \right) SW(X , \mathfrak{s} , \phi).
\]
Let us set $n = d-p-1$ and $q(m) = Ind(D \otimes \mathcal{L}^m)$, so that
\[
q(m) = (-1)^p \left( \sum_{l=0}^{n} a_{p,n-l} \binom{m+l}{l} \right) SW(X , \mathfrak{s} , \phi).
\]
Evidently $q(m)$ is a polynomial in $m$ with rational coefficients. Moreover $q(m) \in \mathbb{Z}$ whenever $m$ is a non-negative integer. Introduce the difference operator $\Delta$ on polynomial functions of $m$ by: $(\Delta f)(m) = f(m+1)-f(m)$. Then $\Delta \binom{m+k}{j} = \binom{m+k}{j-1}$ by Pascal's identity. Iterating, we get that $\Delta^k \binom{m+k}{k} = 1$ and $\Delta^{r} \binom{m+k}{k} = 0$ for all $r > k$. Now since $\Delta^l q(m)$ is integer valued for all $l,m \ge 0$, we see by induction on $l$ that $a_{p,n-l} SW(X , \mathfrak{s} , \phi) \in \mathbb{Z}$ for $l = 0, 1 , \dots , n$. Thus $SW(X , \mathfrak{s} , \phi)$ is divisible by the denominators of $a_{p,l}$ for $l = 0,1, \dots , n$. This is precisely the statement of \cite[Theorem 3.7]{bafu}.
\end{example}

\begin{proposition}\label{prop:torsion}
Suppose that $H^+$ is equipped with a spin$^c$-structure and let $\kappa$ be the first Chern class of the spin$^c$ line bundle of $H^+$. Suppose that the Chern classes of $D$ are torsion. Then
\[
e^{\kappa/2} \hat{A}(H^+) Ch(SW^K_m(f,\phi)) = \sum_{r \ge 0} (-1)^{d-r-1} \sum_{l=0}^r a_{d-r-1,r-l} \binom{m+l}{l}SW_r(f,\phi) \in H^*(B ; \mathbb{Q})
\]
where the rational numbers $a_{p,l}$ are defined as in Example \ref{ex:point} (note that $a_{p,l}$ is defined even when $p < 0$).
\end{proposition}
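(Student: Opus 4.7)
\medskip

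\noindent\textbf{Proof proposal.} The plan is to start from Theorem~\ref{thm:swk} and convert the Todd-class side into the closed form involving the coefficients $a_{p,l}$ by the same kind of residue/generating-function manipulation used in Example~\ref{ex:point}. First, I would multiply both sides of Theorem~\ref{thm:swk} by $e^{\kappa/2}\hat{A}(H^+)$ to get rid of the equivariant Thom-type prefactor, leaving
\[
e^{\kappa/2}\hat{A}(H^+)\,Ch(SW^K_m(f,\phi)) \;=\; \sum_{j\ge 0} Td_j(D) \sum_{k\ge 0}\frac{m^k}{k!}\,SW_{j+k}(f,\phi).
\]

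Next, I would invoke the torsion hypothesis: if every $c_i(D)\in H^{2i}(B;\mathbb{Z})$ is torsion then $c_i(D)=0$ in $H^{*}(B;\mathbb{Q})$, and since $Td^{S^1}(D)\in\widehat{\mathcal{H}}^{ev}_\mathbb{Q}$ is a universal polynomial expression in $x$ and the rational Chern classes of $D$, it depends only on the rank $d=\mathrm{rank}_{\mathbb{C}}(D)$. Computing this expression for a trivial rank-$d$ bundle with the standard $S^1$-action gives
\[
Td^{S^1}(D) \;=\; \Bigl(\frac{x}{1-e^{-x}}\Bigr)^{d} \quad\in\widehat{\mathcal{H}}^{ev}_\mathbb{Q},
\]
so $Td_j(D)$ equals the coefficient of $x^j$ in $(x/(1-e^{-x}))^{d}$. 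Reindexing the double sum via $r=j+k$ then gives
\[
e^{\kappa/2}\hat{A}(H^+)\,Ch(SW^K_m(f,\phi)) \;=\; \sum_{r\ge 0} F_{d,r}(m)\,SW_r(f,\phi),
\]
where $F_{d,r}(m)=\sum_{k=0}^{r}Td_{r-k}(D)\,m^k/k!$ is precisely the coefficient of $x^r$ in $(x/(1-e^{-x}))^{d}e^{mx}$.

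The remaining task is purely combinatorial: show that
\[
F_{d,r}(m) \;=\; (-1)^{d-r-1}\sum_{l=0}^{r} a_{d-r-1,\,r-l}\binom{m+l}{l}.
\]
For this I would express $F_{d,r}(m)$ as a residue,
\[
F_{d,r}(m) \;=\; \mathrm{Res}_{x=0}\,\frac{x^{d-r-1}\,e^{mx}}{(1-e^{-x})^{d}},
\]
substitute the identity
\[
x^{\,p}\;=\;(-1)^{p}\sum_{l\ge 0} a_{p,l}\,(1-e^{-x})^{p+l}
\]
(valid for any integer $p$, since $\log(1-y)^{p}=\sum_{l\ge 0} a_{p,l}y^{\,p+l}$ defines $a_{p,l}$ for all integer $p$) with $p=d-r-1$, and then perform the change of variables $y=1-e^{-x}$, $dx=dy/(1-y)$, $e^{mx}=(1-y)^{-m}$, reducing each term to
\[
\mathrm{Res}_{y=0}\,\frac{(1-y)^{-m-1}}{y^{\,r+1-l}} \;=\; \binom{m+r-l}{r-l}.
\]
Finally, reindexing $l\mapsto r-l$ in the resulting sum yields the stated formula.

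The plan has no genuine hard step: the torsion hypothesis reduces the problem to a universal computation in the variable $x$, and the remaining identity is the same residue computation already carried out in Example~\ref{ex:point}. The only point that deserves care is the bookkeeping when $d-r-1<0$, which is handled uniformly by the fact that $a_{p,l}$ is well defined for every integer $p$ and that the residue vanishes whenever $l>r$, so that the final sum is automatically truncated at $l\le r$ as claimed.
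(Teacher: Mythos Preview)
Your proposal is correct and follows essentially the same approach as the paper: multiply Theorem~\ref{thm:swk} through by $e^{\kappa/2}\hat A(H^+)$, use the torsion hypothesis to reduce $Td^{S^1}(D)$ to $(x/(1-e^{-x}))^d$, reindex by $r=j+k$, and then identify the inner coefficient via the residue computation of Example~\ref{ex:point}. The paper simply cites Example~\ref{ex:point} for the last step where you spell out the residue/change-of-variables argument again, but the content is identical.
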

\begin{proof}
Since the Chen classes of $D$ are torsion it follows that $Td^{S^1}(D) = \left( \frac{x}{1-e^{-x}} \right)^d$ and thus $Td_j(D) = c_{j,d}$, where $c_{j,d}$ are defined as in Example \ref{ex:point}. Substituting into Theorem \ref{thm:swk} we get:
\begin{equation*}
\begin{aligned}
e^{\kappa/2} \hat{A}(H^+) Ch(SW^K_m(f,\phi)) &= \sum_{j,k \ge 0} c_{j,d} \frac{m^k}{k!} SW_{j+k}(f,\phi)\\
&= \sum_{r \ge 0} \left( \sum_{j+k=r} c_{j,d} \frac{m^k}{k!} \right) SW_r(f,\phi) \\
&= \sum_{r \ge 0} n(d,m,d-r-1) SW_r(f,\phi)
\end{aligned}
\end{equation*}
where the rational numbers $n(d,m,p)$ are defined by 
\[
n(d,m,p) = \sum_{j+k=d-p-1} c_{d-p-1-j,d} \frac{m^j}{j!}.
\]
But from Example \ref{ex:point}, we find that
\[
n(d,m,d-r-1) = (-1)^{d-r-1} \sum_{l=0}^r a_{d-r-1,r-l} \binom{m+l}{l}
\]
and thus
\[
e^{\kappa/2} \hat{A}(H^+) Ch(SW^K_m(f,\phi)) = \sum_{r \ge 0} (-1)^{d-r-1} \sum_{l=0}^r a_{d-r-1,r-l} \binom{m+l}{l}SW_r(f,\phi).
\]
\end{proof}

\begin{corollary}\label{cor:divsphere}
Suppose that $B = S^{2r}$ is an even dimensional sphere and that $f$ is a finite dimensional approximation of the monopole map of a spin$^c$ family of $4$-manifolds $\pi : E \to B$ whose fibres are diffeomorphic to a $4$-manifold $X$ with $b_1(X)=0$ and $b^+(X) = 2p+1$ odd. Suppose that $n = r+d-p-1 \ge 0$ and that $c_r(D) = 0$. Then $SW_{n}(f,\phi) \in H^{2r}(S^{2r} ; \mathbb{Z}) \cong \mathbb{Z}$ is divisible by the denominators of $a_{p-r,l}$, for $l = 0,1, \dots n$.
\end{corollary}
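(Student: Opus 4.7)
The plan is to extend the unparametrised Bauer--Furuta divisibility argument of Example \ref{ex:point} to the family over $S^{2r}$, taking Proposition \ref{prop:torsion} as the starting point. Because $H^*(S^{2r};\mathbb{Z})$ is concentrated in degrees $0$ and $2r$, every Chern class $c_i(D)$ with $0<i<r$ or $i>r$ vanishes automatically on dimension grounds, and $c_r(D)=0$ by assumption, so Proposition \ref{prop:torsion} applies. The same dimension restriction collapses the families Seiberg-Witten invariants: $SW_{r'}(f,\phi)\in H^{2r'-(2d-b^+-1)}(S^{2r};\mathbb{Z})$ vanishes except for $r'=d-p-1$, giving an integer $\alpha_0:=SW(X,\mathfrak{s},\phi)$, and $r'=n$, giving $SW_n(f,\phi)=N\omega_r$ for the generator $\omega_r$ of $H^{2r}(S^{2r};\mathbb{Z})$ and $N\in\mathbb{Z}$ the integer whose divisibility we wish to control.

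Substituting into Proposition \ref{prop:torsion} yields
\[
e^{\kappa/2}\hat{A}(H^+)\, Ch(SW^K_m(f,\phi)) = (-1)^p\alpha_0\,Q_p(m) + (-1)^{p-r}N\,Q_{p-r}(m)\,\omega_r,
\]
where $Q_j(m):=\sum_l a_{j,n_j-l}\binom{m+l}{l}$ with $n_p=d-p-1$ and $n_{p-r}=n$. Since $SW^K_m(f,\phi)\in K^0(S^{2r})\cong \mathbb{Z}\oplus\mathbb{Z}$ is a genuine integer class for every $m\in \mathbb{Z}_{\geq 0}$, its Chern character splits as $q_0(m)+q_r(m)\omega_r$ with $q_0(m),q_r(m)\in \mathbb{Z}$, each a polynomial in $m$ taking integer values at integers. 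Equating the degree-zero parts gives $q_0(m)=(-1)^p\alpha_0 Q_p(m)$, which recovers the unparametrised Bauer--Furuta integrality of Example \ref{ex:point} and in particular forces $\alpha_0 a_{p,k}\in\mathbb{Z}$ for $k=0,\dots,d-p-1$. Equating the degree-$2r$ parts gives
\[
q_r(m) + c\,q_0(m) = (-1)^{p-r} N\, Q_{p-r}(m),
\]
where $c\omega_r$ is the degree-$2r$ component of $e^{\kappa/2}\hat{A}(H^+)\in H^*(S^{2r};\mathbb{Q})$.

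The next step is to expand both sides in the integer-valued polynomial basis $\binom{m+l}{l}$ for $l=0,\dots,n$. Writing $q_r(m)=\sum_l b_l\binom{m+l}{l}$ and $q_0(m)=\sum_l c_l\binom{m+l}{l}$ with $b_l,c_l\in\mathbb{Z}$ and $c_l=0$ for $l>d-p-1$ (since $q_0$ has degree $d-p-1$), coefficient matching yields
\[
b_l + c\,c_l = (-1)^{p-r}N\,a_{p-r,n-l} \qquad\text{for every } l=0,\dots,n.
\]
For $l>d-p-1$, equivalently $n-l<r$, the correction vanishes and $Na_{p-r,n-l}\in\mathbb{Z}$ is immediate, giving divisibility of $N$ by the denominators of $a_{p-r,j}$ for $j=0,\dots,r-1$.

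The hardest step is the remaining range $l\leq d-p-1$ (equivalently $j=n-l\geq r$), where the rational correction $c\,c_l$ must still be shown to land in $\mathbb{Z}$. Here I would exploit the explicit form $c_l=(-1)^p\alpha_0 a_{p,(d-p-1)-l}$ together with the sharp dimensional restrictions on $S^{2r}$: the class $\kappa\in H^2(S^{2r})$ vanishes as soon as $r\geq 2$, and $p_j(H^+)\in H^{4j}(S^{2r})$ vanishes unless $4j=2r$, so only very specific Pontryagin contributions can affect $c$ (in particular $c=0$ whenever $r\geq 3$ is odd, making the argument immediate in that case). Combining this control on the denominators of $c$ with the unparametrised Bauer--Furuta divisibility of $\alpha_0$ by the denominators of the $a_{p,k}$ should yield $c\,c_l\in\mathbb{Z}$ and hence $Na_{p-r,j}\in\mathbb{Z}$ for the full range $j=0,\dots,n$.
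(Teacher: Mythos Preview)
Your approach is on the right track but leaves the ``hardest step'' genuinely incomplete, and the paper's proof shows why: there is no hardest step, because the correction term $c$ vanishes identically. You overlooked that $H^+$ is actually a \emph{trivial} bundle over $S^{2r}$. The Gauss--Manin bundle $H^2 = R^2\pi_*\mathbb{R}$ is flat, hence trivial over the simply-connected base $S^{2r}$; and $H^+$ is a maximal positive-definite subbundle of this trivial bundle, so (the space of such subspaces being contractible) $H^+$ is itself trivial. Consequently one may take the spin$^c$-structure on $H^+$ to be the trivial spin structure, giving $\kappa=0$ and $\hat A(H^+)=1$, so $e^{\kappa/2}\hat A(H^+)=1$ on the nose.

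With $c=0$ the degree-$2r$ extraction from Proposition~\ref{prop:torsion} picks up \emph{only} the $SW_n$ term, and integrality of the Chern character on spheres gives directly
\[
\sum_{l=0}^n a_{p-r,\,n-l}\binom{m+l}{l}\, SW_n(f,\phi)\in \mathbb{Z}
\]
for all $m\ge 0$, after which the finite-difference argument you describe finishes immediately. Your attempt to control the denominators of $c$ via dimensional vanishing of $\kappa$ and $p_j(H^+)$ is both unnecessary and, as you yourself note, incomplete in the cases $r=1$ or $r$ even where those classes need not vanish for dimensional reasons alone.
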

\begin{proof}
Since $B$ is simply-connected, it follows that the bundle $H^2 = R^2 \pi_* \mathbb{R}$ over $B$ whose fibres are degree $2$ cohomology of the fibres of $E \to B$ is trivial. It follows then that $H^+$ is also trivial. Thus $H^+$ admits a spin structure and therefore a spin$^c$-structure with $\kappa = 0$. Also $\hat{A}(H^+) = 1$ since $H^+$ is trivial. From Proposition \ref{prop:torsion} we get:
\[
Ch(SW^K_m(f,\phi)) = \sum_{j=0}^d (-1)^{d-j-1} \sum_{l=0}^j a_{d-j-1,j-l} \binom{m+l}{l}SW_j(f,\phi) \in H^*(S^{2r} ; \mathbb{Q}).
\]
But for a sphere, the Chern character takes values in integral cohomology. In particular, extracting the degree $2r$ part, we find:
\[
\sum_{l=0}^{n} a_{p-r,n-l} \binom{m+l}{l} SW_n(f, \phi) \in H^{2r}(S^{2r} ; \mathbb{Z})
\]
for all $m \ge 0$, where $n = r+d-p-1$. Arguing as in Example \ref{ex:point}, this implies that $SW_n(f,\phi)$ is divisible by the denominators of $a_{p-r,l}$ for $l = 0,1, \dots , n$.
\end{proof}

\begin{remark}
In the setting of Corollary \ref{cor:divsphere}, if $r \ge 5$, then it follows by the families index theorem that $c_r(D) = 0$ holds automatically. To see this, note that if $r \ge 5$ then $H^6(E ; \mathbb{Z}) = H^8(E ; \mathbb{Z})=0$ by the Leray--Serre spectral sequence. The families index theorem gives
\[
Ch(D) = \int_{E/B} e^{c_1(\mathfrak{s})/2} \hat{A}( T_{vert} E )
\]
where $T_{vert}E = ker( \pi_* : TE \to TB )$ denotes the vertical tangent bundle and $\int_{E/B}$ denotes fibre integration. Let $p_1,p_2$ denote the first and second Pontryagin classes of $T_{vert}E$. Then $p_1^2 = p_2 = 0$, since $H^8(E ; \mathbb{Z}) = 0$. So
\[
\hat{A}(T_{vert}E) = 1 - \frac{1}{24} p_1.
\]
Similarly
\[
e^{c_1(\mathfrak{s})/2} = 1 + \frac{c_1(\mathfrak{s})}{2} + \frac{c_1(\mathfrak{s})^2}{8}
\]
as $c_1(\mathfrak{s})^3 \in H^6(E ; \mathbb{Z}) = 0$. It follows that all non-zero terms in $e^{c_1(\mathfrak{s})/2} \hat{A}(T_{vert}E)$ have degree $\le 8$ and thus all non-zero terms in $Ch(D)$ have degree $\le 4$. But  since $B = S^{2r}$ is a sphere, we have $Ch(D) = rk(D) + (-1)^r c_r(D)/(r-1)!$. So $c_r(D)$ is a non-zero multiple of the degree $2r$ term in $Ch(D)$, which is zero since $2r \ge 10$.
\end{remark}

\begin{remark}
To be in the range where there is no wall crossing, we need $b^+ > dim(B)+1$, which in the above corollary is equivalent to $p > r$. On the other hand if $r > p$ then the primary difference class $Obs(\phi,\psi)$ is easily seen to vanish, so there can only be wall crossing if $p=r$.
\end{remark}

\subsection{$K$-theoretic wall crossing formula}\label{sec:kwcf}

In this subsection we state a wall crossing formula for the $K$-theoretic Seiberg-Witten invariants. As usual, let $f : (S_{V,U} , B_{V,U}) \to (S_{V',U'} , B_{V',U'})$ be a finite dimensional monopole map. Let $U' = U \oplus H^+$ and suppose that $H^+$ is equipped with a spin$^c$-structure. As in the previous subsection, let $\kappa \in H^2(B ; \mathbb{Z})$ denote the first Chern class of the associated line bundle of this spin$^c$-structure. We will use a superscript $K$ to indicate pushforwards in $K$-theory.

Let $[\phi],[\psi] \in \mathcal{CH}(f)$ be chambers of $f$, represented by maps $\phi, \psi : B \to S(H^+)$.
\begin{lemma}\label{lem:exactk}
We have a short exact sequence
\[
0 \to K^*(B) \buildrel \pi^* \over \longrightarrow K^*( S(H^+) ) \buildrel \pi^K_* \over \longrightarrow K^{*-(b^+-1)}(B) \to 0.
\]
\end{lemma}
\begin{proof}
Use the section $\phi$ to identify $B$ with a subspace of $S(H^+)$. Then $\pi : S(H^+) \to B$ may be thought of as a retraction. Hence the long exact sequence in $K$-theory for the pair $( S(H^+) , \phi(B) )$, splits into short exact sequences
\[
0 \to K^*(B) \buildrel \pi^* \over \longrightarrow K^*( S(H^+) ) \to K^{*}( S(H^+) , B ) \to 0.
\]
By the Thom isomorphism, $K^{*}( S(H^+) , B ) \cong K^{*-(b^+-1)}( B )$ and under this isomorphism the map $K^*( S(H^+) ) \to K^{*}( S(H^+) , B )$ in the above sequence corresponds to $\pi^K_* : K^*( S(H^+) ) \to K^{*-(b^+-1)}( B )$.
\end{proof}

Now since $\phi$ and $\psi$ are sections of $\pi$, we have $\pi \circ \phi = \pi \circ \psi$ and so $\pi^K_*( \phi^K_*(1) - \psi^K_*(1) ) = 0$. Hence by Lemma \ref{lem:exactk}, there exists a unique class $Obs^K(\phi , \psi) \in K^{b^+-1}(B)$ such that 
\[
\phi^K_*(1) - \psi^K_*(1) = \pi^*( Obs^K(\phi , \psi) ).
\]
Clearly $Obs^K(\phi,\psi)$ is the $K$-theoretic analogue of the class $Obs(\phi,\psi)$ defined in \textsection \ref{sec:relativeobstruction}. In fact, the two are related as follows:

\begin{proposition}
For any $\phi,\psi : B \to S(H^+)$, we have
\[
Ch( Obs^K(\phi,\psi)) = e^{-\kappa/2}\hat{A}(H^+)^{-1}Obs(\phi , \psi) \in H^*(B ; \mathbb{Q}).
\]
\end{proposition}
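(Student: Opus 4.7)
The plan is to apply the Chern character to the defining relation $\phi^K_*(1) - \psi^K_*(1) = \pi^*(Obs^K(\phi,\psi))$, compute each $K$-theoretic pushforward via Grothendieck-Riemann-Roch as in the proof of Theorem \ref{thm:swk}, and then compare with the cohomological relation $\phi_*(1) - \psi_*(1) = \pi^*(Obs(\phi,\psi))$. Since $\pi : S(H^+) \to B$ admits a section (for instance $\phi$ itself), the pullback $\pi^* : H^*(B;\mathbb{Q}) \to H^*(S(H^+);\mathbb{Q})$ is injective, so it suffices to establish the desired identity after applying $\pi^*$.

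The main input is a computation of the GRR correction factor for the closed embedding $\phi : B \hookrightarrow S(H^+)$. The normal bundle of $\phi(B)$ is $\langle \phi \rangle^\perp$, the orthogonal complement in $H^+$ of the line spanned by $\phi$. Since $\phi$ is a unit-length section, $\langle \phi \rangle$ is canonically trivialised by $\phi$, so the orthogonal decomposition $H^+ \cong \langle \phi \rangle \oplus \langle \phi \rangle^\perp$ equips $\langle \phi \rangle^\perp$ with a spin$^c$-structure satisfying $\kappa_{\langle\phi\rangle^\perp} = \kappa$ and $\hat{A}(\langle\phi\rangle^\perp) = \hat{A}(H^+)$. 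Identical statements hold with $\psi$ in place of $\phi$, with \emph{the same} correction classes pulled back from $B$. Applying GRR to the spin$^c$ embedding $\phi$ (exactly as in the proof of Theorem \ref{thm:swk}) and invoking the projection formula together with $\pi \circ \phi = \mathrm{id}_B$ yields
\[
Ch(\phi^K_*(1)) = \phi_*(1) \cdot \pi^*\bigl(e^{-\kappa/2}\hat{A}(H^+)^{-1}\bigr),
\]
and the same formula holds with $\psi$ in place of $\phi$.

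Subtracting the two identities and substituting the cohomological relation for $Obs(\phi,\psi)$ gives
\[
Ch\bigl(\phi^K_*(1) - \psi^K_*(1)\bigr) = \pi^*\bigl(e^{-\kappa/2}\hat{A}(H^+)^{-1}\cdot Obs(\phi,\psi)\bigr),
\]
while applying $Ch$ directly to the definition of $Obs^K(\phi,\psi)$ gives $\pi^*(Ch(Obs^K(\phi,\psi)))$. Equating these two expressions and using injectivity of $\pi^*$ produces the claim. The only delicate point in this plan---and thus the main obstacle---is verifying that the GRR correction factors for $\phi$ and for $\psi$ agree and are pulled back from $B$; this is where the fact that both $\langle\phi\rangle$ and $\langle\psi\rangle$ are canonically trivialised by their defining unit-length sections enters in an essential way.
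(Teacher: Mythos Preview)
Your proposal is correct and follows essentially the same approach as the paper's own proof: identify the normal bundle of $\phi(B)$ in $S(H^+)$ with $\langle\phi\rangle^\perp$, observe that it is stably equivalent to $H^+$ so the GRR correction is $e^{-\kappa/2}\hat{A}(H^+)^{-1}$ pulled back from $B$, take the difference for $\phi$ and $\psi$, and conclude by injectivity of $\pi^*$. Your write-up is in fact slightly more careful than the paper's in spelling out the projection formula step and the reason the correction factors for $\phi$ and $\psi$ coincide.
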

\begin{proof}
The normal bundle of $\phi(B)$ in $S(H^+)$ is isomorphic to $\langle \phi \rangle^\perp$, the orthogonal complement of $\phi$ in $H^+$. But this is stably equivalent to $H^+$. Therefore Grothendieck-Riemann-Roch gives:
\[
Ch(\phi^K_*(1)) = \phi_*( Ch(1) e^{-\kappa/2} \hat{A}(H^+)^{-1} ) = e^{-\kappa/2}\hat{A}(H^+) \phi_*(1).
\]
Similarly, 
\[
Ch(\psi^K_*(1)) = e^{-\kappa/2}\hat{A}(H^+)\psi_*(1).
\]
Taking differences gives:
\begin{equation*}
\begin{aligned}
\pi^*(Ch( Obs^K(\phi,\psi))) &= Ch( \pi^*(Obs^K(\phi,\psi))) \\
&= Ch( \phi^K_*(1) - \psi^K_*(1)) \\
&= e^{-\kappa/2}\hat{A}(H^+)^{-1}( \phi_*(1) - \psi_*(1)) \\
&= e^{-\kappa/2}\hat{A}(H^+)^{-1}\pi^*(Obs(\phi,\psi)).
\end{aligned}
\end{equation*}
But $\pi^* : K^*(B) \to K^*( S(H^+) )$ is injective since $S(H^+)$ admits sections. Hence $Ch( Obs^K(\phi,\psi)) = e^{-\kappa/2}\hat{A}(H^+)^{-1}Obs(\phi , \psi)$.
\end{proof}

\begin{lemma}\label{lem:sym}
Let $V \to B$ be a complex vector bundle over $B$ of rank $a \ge 2$ and let $\pi_{\mathbb{P}(V)} : \mathbb{P}(V) \to B$ be the associated projective bundle. Set $\xi = \mathcal{O}_V(1)$. Let $a' \ge 0$ be a non-negative integer and set $d = a-a'$. For any $W \in K^0(B)$ and any integer $m$, define $S^+_m(W), S^-_m(W) \in K^0(B)$ by:
\[
S^+_m(W) = \begin{cases} Sym^m(W^*) & \text{if } m \ge 0, \\ 0 & \text{if } m < 0, \end{cases} \quad S^-_m(W) = \begin{cases} 0 & \text{if } m > -d, \\ (-1)^{d-1}Sym^{-m-d}(W) & \text{if } m \le -d. \end{cases}
\]
Also let $S_m(W) = S^+_m(W) + S^-_m(W)$. Then:
\[
(\pi_{\mathbb{P}(V)})_*^K( \xi^m(1-\xi^{-1})^{a'}) = S_m(V - \mathbb{C}^{a'}).
\]
\end{lemma}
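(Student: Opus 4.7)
The plan is to combine the splitting principle with the classical projective bundle pushforward formula, then extract the identity via generating-function manipulation. A useful secondary viewpoint is geometric: the Koszul resolution of the diagonal-type section shows that $(1-\xi^{-1})^{a'}$ equals the $K$-theoretic Euler class $\Lambda_{-1}(\mathbb{C}^{a'}\otimes\xi^{-1})$ of the bundle $\mathbb{C}^{a'}\otimes\xi^{-1}$ on $\mathbb{P}(V)$. If $V$ admits a surjection $V\twoheadrightarrow\mathbb{C}^{a'}$ with kernel $V_{0}$ (an honest rank $d$ subbundle), then by Koszul, $(1-\xi^{-1})^{a'}=\iota_{\ast}^{K}(1)$ where $\iota:\mathbb{P}(V_{0})\hookrightarrow\mathbb{P}(V)$, and the projection formula collapses the LHS to $\pi_{\mathbb{P}(V_{0}),\ast}^{K}(\xi^{m})$, which one recognizes as the expected expression in $S_{m}(V_{0})=S_{m}(V-\mathbb{C}^{a'})$. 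This reduces the general case to the case $a'=0$ for an honest rank-$d$ bundle, modulo the need to pass to a stabilization so that such a surjection exists.

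First, I would reduce to a direct sum $V=L_{1}\oplus\cdots\oplus L_{a}$ by the splitting principle. Then I would write down the pushforward of each $\xi^{n}$ on $\mathbb{P}(V)$: for $n\geq 0$ this is $Sym^{n}(V^{*})$; for $-a+1\leq n\leq -1$ it vanishes (since all $R^{j}\pi_{\ast}\mathcal{O}(n)=0$); for $n\leq -a$ it is computed by relative Serre duality using $\omega_{\pi}=\xi^{-a}\otimes\pi^{\ast}\det V^{\ast}$, giving a term proportional to $Sym^{-n-a}(V)$ with sign $(-1)^{a-1}$. These three cases constitute the input data.

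Next, I would expand $(1-\xi^{-1})^{a'}=\sum_{k=0}^{a'}(-1)^{k}\binom{a'}{k}\xi^{-k}$ and apply $\pi_{\ast}^{K}$ term by term. The resulting sum is the $t^{m}$-coefficient of the rational function
\[
\frac{(1-t)^{a'}}{\prod_{i}(1-tL_{i}^{-1})} \;=\; Sym_{t}\bigl((V-\mathbb{C}^{a'})^{\ast}\bigr),
\]
viewed as a Laurent series expanded at either $t=0$ or $t=\infty$ depending on the sign of $m$. The expansion at $t=0$ gives $Sym^{m}(W^{\ast})=S_{m}^{+}(W)$ for $m\geq 0$, while the expansion at $t=\infty$ (obtained either by partial fractions against the poles $t=L_{i}$, or by the substitution $u=1/t$ which turns the numerator into $Sym_{u}(W)$) produces, up to sign, the term $(-1)^{d-1}Sym^{-m-d}(W)=S_{m}^{-}(W)$ for $m\leq -d$. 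In the intermediate range $-d<m<0$ the contributions cancel (or the function has no $t^{m}$ coefficient in either expansion), matching $S_{m}=0$ there.

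The main obstacle will be the bookkeeping in the intermediate range and the careful matching of signs between the Serre-dual "negative symmetric power" and the formal definition of $S_{m}^{-}$: one must verify that for each $m$ the terms $\pi_{\ast}^{K}(\xi^{m-k})$ that land in the vanishing middle window do so compatibly with the binomial weights, and that the residual surviving terms assemble exactly into the single residue-like coefficient predicted by the generating function. The geometric reformulation via $\iota:\mathbb{P}(V_{0})\hookrightarrow\mathbb{P}(V)$ described above is what makes this bookkeeping transparent: it converts the algebraic manipulation into the tautological pushforward identity $\pi_{\mathbb{P}(V_{0}),\ast}^{K}(\xi^{m})=S_{m}(V_{0})$, which is the $a'=0$ case of the lemma applied to the rank-$d$ bundle $V_{0}$.
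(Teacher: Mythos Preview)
Your main approach---splitting principle, computing $(\pi_{\mathbb{P}(V)})_*^K(\xi^n)$ for all $n$ via Dolbeault cohomology and Serre duality, expanding $(1-\xi^{-1})^{a'}$ binomially, and packaging the result as coefficients of the generating function $(1-t)^{a'}/\prod_i(1-tL_i^{-1})$---is exactly what the paper does, with the same split into $+$ and $-$ pieces corresponding to expansions at $t=0$ and $t=\infty$. The Koszul/geometric reduction you sketch (realizing $(1-\xi^{-1})^{a'}$ as $\iota_*^K(1)$ for $\iota:\mathbb{P}(V_0)\hookrightarrow\mathbb{P}(V)$) is a pleasant alternative viewpoint that the paper does not mention, but since you ultimately fall back on the generating-function computation anyway, your proof is essentially identical to the paper's.
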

\begin{proof}
For any integer $m$, let us set $q_m = (\pi_{\mathbb{P}(V)})_*^K( \xi^m)$ and $h_m = (\pi_{\mathbb{P}(V)})_*^K( \xi^m(1-\xi^{-1})^{a'} )$. Then by expanding $(1-\xi^{-1})^{a'}$, we find:
\begin{equation}\label{equ:hm}
h_m = \sum_{j=0}^{a'} (-1)^j q_{m-j} \binom{a'}{j}.
\end{equation}
Next we observe that $q_m = (\pi_{\mathbb{P}(V)})_*^K( \mathcal{O}_V(m)  )$ is the families index of $\mathcal{O}_V(m)$. But $\mathbb{P}(V) \to B$ has a fibrewise complex structure and the families index corresponds to taking fibrewise index of the Dolbeault complex coupled to $\mathcal{O}_V(m)$. Hence for $m \ge 0$, $q_m$ is represented by the vector bundle over $B$ whose fibres consist of homogeneous degree $m$ polynomials, so $q_m = Sym^m(V^*)$ for $m \ge 0$. Combined with Serre duality we find that for any $m \in \mathbb{Z}$ $q_m = q_m^+ + q_m^-$, where
\[
q_m^+ = \begin{cases} Sym^m(V^*) & \text{if } m \ge 0 \\ 0 & \text{if } m < 0, \end{cases} \quad q^-_m = \begin{cases} 0 & \text{if } m > -a, \\ (-1)^{a-1}Sym^{-m-a}(V) & \text{if } m \le -a. \end{cases}
\]
The from Equation (\ref{equ:hm}) we have $h_m = h_m^+ + h_m^-$, where
\begin{equation}\label{equ:hpm}
h_m^\pm = \sum_{j=0}^{a'} (-1)^j q_{m-j}^\pm \binom{a'}{j}.
\end{equation}
To complete the lemma, it suffices to show that $h_m^\pm = S^\pm_m(V - \mathbb{C}^{a'})$. Consider the $+$ case first. Introduce the following formal power series in $t$ with coefficients in $K^0(B)$:
\[
Q^+ = \sum_{m \ge 0} q_m^+ t^m = \sum_{m \ge 0} Sym^m(V^*) t^m.
\]
By the splitting principle we may write $V = L_1 + \cdots + L_a$ for some line bundles. Then we have
\[
Q^+ = \prod_{j=1}^a \frac{1}{(1-tL_j^*)}.
\]
Next we note that since $q^+_m = 0$ for $m < 0$, then $h^+_m = 0$ for $m < -a'$. Hence setting
\[
H^+ = \sum_{m \in \mathbb{Z}} h^+_m t^m,
\]
we have that $H^+$ is a formal Laurent series in $t$ with coefficients in $K^0(B)$. Then from Equation (\ref{equ:hpm}), one finds that
\[
H^+ = Q^+(1-t)^{a'} = \left( \prod_{j=1}^a \frac{1}{(1-tL_j^*)} \right) (1-t)^{a'}.
\]
Equating coefficients one sees that $h_m^+ = S^+_m(V - \mathbb{C}^{a'})$.

Now we consider the $-$ case. Introduce the formal power series
\[
Q^- = \sum_{m < 0} q_m^- t^{-m} = (-1)^{a-1} t^a \left( \sum_{m \ge 0} Sym^m(V)t^m \right) = (-1)^{a-1} t^a \left( \prod_{j=1}^a \frac{1}{(1-tL_j)} \right).
\]
Similar to the $+$ case, we find that $h^-_m = 0$ for $m > -a$, so we may define the following formal Laurent series:
\[
H^- = \sum_{m \in \mathbb{Z}} h^-_m t^{-m}.
\]
Then from Equation (\ref{equ:hpm}), one finds that
\begin{equation*}
\begin{aligned}
H^- &= Q^-(1-t^{-1})^{a'} \\
&= (-1)^{a-1} t^a \left( \prod_{j=1}^a \frac{1}{(1-tL_j)} \right) (-1)^{a'} t^{-a'} (1-t)^{a'} \\
&= (-1)^{d-1} t^d \left( \prod_{j=1}^a \frac{1}{(1-tL_j)} \right) (1-t)^{a'}.
\end{aligned}
\end{equation*}
Equating coefficients, one sees that $h^-_m = S^-_m(V - \mathbb{C}^{a'})$.
\end{proof}

\begin{theorem}[$K$-theoretic wall crossing formula]
For any $\phi, \psi \in \mathcal{CH}(f)$ and any integer $m$, we have:
\[
SW^K_m(f,\phi)-SW^K_m(f,\psi) = Obs^k(\phi,\psi) S_m(D).
\]
where $S_m$ is defined as in Lemma \ref{lem:sym}.
\end{theorem}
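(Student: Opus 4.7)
The plan is to imitate the proof of the cohomological wall-crossing formula (Theorem~\ref{thm:wcf}) step by step, replacing each cohomological operation by its $K$-theoretic counterpart. After stabilization assume $V' \cong \mathbb{C}^{a'}$ and $U' \cong \mathbb{R}^{b'}$ are trivial. First I would factor $\phi : B \to S_{V',U'}$ as $\phi = \iota \circ \phi^{0}$ with $\phi^{0} : B \to S(H^+)$ and $\iota : S(H^+) \to S_{V',U'}$, and similarly for $\psi$. By the very definition of $Obs^K(\phi,\psi)$ and functoriality of $K$-theoretic pushforward,
\[
\phi^K_*(1) - \psi^K_*(1) = \iota^K_*\bigl(\phi^{0,K}_*(1) - \psi^{0,K}_*(1)\bigr) = \iota^K_*\bigl(\pi^*Obs^K(\phi,\psi)\bigr).
\]
Letting $q : S_{V',U'} \to B$ denote the bundle projection so that $\pi = q \circ \iota$, the projection formula gives $\iota^K_*(\pi^*Obs^K(\phi,\psi)) = q^*Obs^K(\phi,\psi) \cdot \iota^K_*(1)$.

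The central computation is then the $K$-theoretic analog of Lemma~\ref{lem:iota1}: I would factor $\iota$ through $\iota_1 : S(H^+) \to S_{0,U'}$ followed by $\iota_2 : S_{0,U'} \to S_{V',U'}$, establish $(\iota_1)^K_*(1) = \delta^K\tau^K_{0,U}$ (where $\delta^K\tau^K_{0,U}$ is the image of the $K$-theoretic Thom class of $U$ under the coboundary of the triple), and then compute $(\iota_2)^K_*(\delta^K\tau^K_{0,U}) = e^{K}_{V'} \cdot \delta^K\tau^K_{0,U} = (1-\xi^{-1})^{a'} \cdot \delta^K\tau^K_{0,U}$, using that the equivariant $K$-theoretic Euler class of the $S^1$-module $\mathbb{C}^{a'}$ (with scalar action) is $(1-\xi^{-1})^{a'}$. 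Both computations mirror those of Lemma~\ref{lem:iota1}, with equivariant Thom isomorphisms in $K$-theory (requiring the spin$^c$ structure hypotheses already in place) replacing those of Propositions~\ref{prop:equivthom2} and~\ref{prop:equivthom3}.

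Combining these steps with the $K$-theoretic analog of Theorem~\ref{thm:pdclass} (which was asserted in the introduction and whose proof is formally identical to the cohomological one) yields
\[
SW^K_m(f,\phi)-SW^K_m(f,\psi) = Obs^K(\phi,\psi) \cdot (\pi_Y)^K_*\bigl(\xi^m \cdot f^*(\iota^K_*(1))\bigr) = Obs^K(\phi,\psi) \cdot (\pi_Y)^K_*\bigl(\xi^m (1-\xi^{-1})^{a'} \delta^K\tau^K_{0,U}\bigr),
\]
where I have used that $f|_U$ is the inclusion $U \to U'$, so $f^*$ fixes $\delta^K\tau^K_{0,U}$. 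The final step is to reduce $(\pi_Y)^K_*$ on classes supported on $\delta^K\tau^K_{0,U}$ to fiber integration over $\mathbb{P}(V) \to B$, via the $K$-theoretic analog of the chain of isomorphisms used in Propositions~\ref{prop:equivthom3} and~\ref{prop:pushforwardsegre} (the $K$-theoretic $\iota_* = \delta$ identity of Lemma~\ref{lem:pushforward} being the essential ingredient). Applying Lemma~\ref{lem:sym} to the resulting integrand $\xi^m(1-\xi^{-1})^{a'}$ gives $S_m(V - \mathbb{C}^{a'}) = S_m(D)$, completing the argument.

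The main obstacle I anticipate is not conceptual but bookkeeping: carefully lifting the cohomological machinery of Section~\ref{sec:cohomological} to $K$-theory, in particular verifying that fiber integration $(\pi_Y)^K_*$ on classes proportional to $\delta^K\tau^K_{0,U}$ really agrees with $(\pi_{\mathbb{P}(V)})^K_*$, so that Lemma~\ref{lem:sym} is directly applicable. Lemma~\ref{lem:sym} is itself the nontrivial replacement for Proposition~\ref{prop:pushforwardsegre}, and it is precisely the appearance of the symmetric powers $S_m(D)$ (rather than Segre classes $s_{m-(d-1)}(D)$) which makes the $K$-theoretic formula valid for \emph{all} integers $m$, positive or negative, thereby also extending the range of validity beyond what the cohomological statement covers.
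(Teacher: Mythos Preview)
Your proposal is correct and follows essentially the same approach as the paper's own proof, which is deliberately only a brief sketch mirroring the cohomological case: stabilise so $V',U'$ are trivial, factor $\phi$ through $S(H^+)$, compute the $K$-theoretic analogue of Lemma~\ref{lem:iota1} to get $\iota^K_*(1) = (1-\xi^{-1})^{a'}\delta\tau^K_{0,U}$, reduce $(\pi_Y)^K_*$ to $(\pi_{\mathbb{P}(V)})^K_*$, and invoke Lemma~\ref{lem:sym}. Your outline in fact supplies more detail than the paper does, and your identification of the equivariant $K$-theoretic Euler class $(1-\xi^{-1})^{a'}$ and the role of Lemma~\ref{lem:sym} as the replacement for Proposition~\ref{prop:pushforwardsegre} is exactly right.
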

\begin{proof}
Since the proof is very similar to the cohomological wall crossing formula we will give only a brief sketch. By stabilisation, we can assume $U',V'$ are trivial. Then $U$ and $U'$ are both spin$^c$. We can also assume the rank of $V$ is at least $2$. Repeating the arguments of \textsection \ref{sec:wcf1}, we find
\[
SW_m^K(f,\phi) - SW_m^K(f,\psi) = (\pi_Y)_*^K( \xi^m \cdot f^*( \phi^K_*(1) - \psi^K_*(1)))
\]
where $\phi, \psi : (B, \emptyset) \to (S_{V',U'} , S_U)$ are representatives for the chambers. We can assume that $\phi,\psi$ take values in $S(H^+)$. Then as before we find
\[
\phi^K_*(1) - \psi^K_*(1) = \iota^K_*( (\phi^0)_*^K(1) - (\psi^0)^K_*(1) ) = Obs^K(\phi,\psi) \iota^K_*(1)
\]
where $\phi^0,\psi^0$ are the corresponding maps $\phi^0,\psi^0 : B \to S(H^+)$ and $\iota : (S(H^+) , \emptyset) \to (S_{V',U'} , S_U)$ is the inclusion. The $K$-theoretic analogue of Lemma \ref{lem:iota1} is $\iota^K_*(1) = (1-\xi^{-1})^{a'} \delta \tau^K_{0,U}$. Therefore we have:
\begin{equation*}
\begin{aligned}
SW_m^K(f,\phi) - SW_m^K(f,\psi) &= Obs^K(\phi,\psi) (\pi_Y)_*^K( \xi^m(1-\xi^{-1})^{a'} \delta \tau^K_{0,U}) \\
&= Obs^K(\phi,\psi) (\pi_{\mathbb{P}(V)})_*^K( \xi^m(1-\xi^{-1})^{a'}) \\
&= Obs^K(\phi,\psi) S_m(V - \mathbb{C}^{a'}) \\
&= Obs^K(\phi,\psi) S_m(D),
\end{aligned}
\end{equation*}
where we used Lemma \ref{lem:sym}.
\end{proof}


\bibliographystyle{amsplain}

\end{document}